\newcommand{\constNS}{\mathbb{\alpha}}
\newcommand{\constNP}{\mathbb{\beta}}
\newcommand{\constP}{\mathbb{\gamma}}
\newcommand{\R}{\mathbb{R}}
\newcommand{\dS}{\diff \sigma}
\newcommand{\T}{\mathcal{T}}
\newcommand{\Tmax}{T_{\textrm{max}}}
\newcommand{\Diss}{\mathcal{W}}
\newcommand{\N}{\mathbb{N}}
\newcommand{\V}{\mathbb{V}}
\newcommand{\W}{\mathbb{W}}
\newcommand{\E}{\mathcal{E}}
\newcommand\dualP[2]{\left\langle #1,\, #2 \right\rangle}
\newcommand\set[2]{\left \{ #1\, \lvert \, #2 \right \}}
\newcommand{\diff}{\mathrm{\hspace{2pt}d}}
\newcommand{\Rey}{\mathrm{Re}}
\newcommand{\Pe}{\mathrm{Pe}}
\newcommand\norm[1]{\left\lVert #1 \right\rVert}
\newcommand\bs[1]{\boldsymbol{#1}}
\newcommand{\X}{\bs{x}}
\newcommand{\Y}{\bs{y}}
\newcommand{\bv}{\bs{v}}
\newcommand{\tbv}{\tilde{\bs{v}}}
\newcommand{\bn}{\bs{n}}
\newcommand{\bu}{\bs{u}}
\newcommand{\cpm}{c^{\pm}}
\newcommand{\tcpm}{\tilde{c}^{\pm}}
\newcommand{\tpsi}{\tilde{\psi}}
\newcommand{\bd}{\bs{d}}
\newcommand{\lamMa}{\bs{\Lambda}(\bd)}
\newcommand{\varMa}{\bs{\mathcal{E}}(\bd)}
\newcommand\snorm[1]{\big\lVert #1 \big\rVert}
\newcommand{\lamMan}{\bs{\Lambda}(\bd_\kappa)}
\newcommand{\varMan}{\bs{\mathcal{E}}(\bd_\kappa)}
\newcommand{\sigmp}[1]{\;
  \begin{tikzpicture}[baseline=(content.base), inner sep=0pt]
    \node[inner sep=0pt] (content) {$\displaystyle #1$};
    \draw ([xshift=-0.1cm]content.west) -- ([xshift=-0.1cm, yshift=0.05cm]content.north west) -- ++(0.15,0);
    \draw  ([xshift=-0.1cm]content.west) -- ([xshift=-0.1cm,yshift=-0.05cm]content.south west) -- ++(0.15,0);
    \draw ([xshift=0.1cm]content.east) -- ([xshift=0.1cm, yshift=0.05cm]content.north east) -- ++(-0.15,0);
    \draw  ([xshift=0.1cm]content.east) -- ([xshift=0.1cm, yshift=-0.05cm]content.south east) -- ++(-0.15,0);
    \filldraw ([xshift=-0.1cm]content.north west) -- ([xshift=-0.05cm]content.west) --  ([xshift=-0.1cm]content.south west)  -- cycle;
    \filldraw ([xshift=0.1cm]content.north east) -- ([xshift=0.05cm]content.east) --  ([xshift=0.1cm]content.south east)  -- cycle;
    \node at ([xshift=0.2cm, yshift=0.03cm]content.north east) (C) {$\scriptscriptstyle \pm$};
    \vspace{-0.2cm}
  \end{tikzpicture}
  \hspace{-0.05cm}
}
\newcommand{\threedotsbin}{\text{\,\multiput(0,-2)(0,2){3}{$\cdot$}}\,\,\,}
\newtheorem{thm}{Theorem}[section]
\newtheorem{definition}[thm]{Definition}
\newtheorem{propo}[thm]{Proposition}
\newtheorem{remark}[thm]{Remark}
\newtheorem{lem}[thm]{Lemma}
\newtheorem{cor}[thm]{Corollary}
\newtheorem{ass}[thm]{Assumption}
\newcommand{\pat}[2]{\frac{\partial #1}{\partial #2}}
\DeclareMathOperator{\e}{e}
\begin{document}
\author[D.~Hömberg, R.~Lasarzik, L.~Plato]{%
\nofnmark{}				
Dietmar Hömberg\footnote{Weierstrass Institute \\
Mohrenstr. 39 \\ 10117 Berlin \\ Germany \\
E-Mail: dietmar.hoemberg@wias-berlin.de	
\\\hphantom{E-Mail:} robert.lasarzik@wias-berlin.de
\\\hphantom{E-Mail:} luisa.plato@wias-berlin.de
}, Robert Lasarzik, 
Luisa Plato
}
\title[Weak solutions to an anisotropic electrokinetic flow model]{Existence of suitable weak solutions to an anisotropic electrokinetic flow model}	
\nopreprint{3104}	
\selectlanguage{english}		
\subjclass[+++SubjectClassification-Edition+++]{35D30, 35G61, 35Q30}	
\keywords{Suitable weak solutions, nonlinear coupled PDEs, anisotropy}
\thanks{Funded by the Deutsche Forschungsgemeinschaft (DFG, German Research Foundation) under Germany's Excellence Strategy – The Berlin Mathematics Research Center MATH+ (EXC-2046/1, project ID: 390685689).}				
\maketitle
\begin{abstract}
	In this article we present a system of coupled non-linear PDEs modeling an anisotropic electrokinetic flow.
	We show the existence of suitable weak solutions in three spatial dimensions, that is weak solutions which fulfill an energy inequality, via a regularized system.
    The flow is modeled by a Navier--Stokes--Nernst--Planck--Poisson system and
    the anisotropy is introduced via space dependent diffusion matrices in the Nernst--Planck and Poisson equations.
\end{abstract}

\begin{section}{Introduction}
Electrokinetics is essential in the design of nano-fluid ``lab-on-a-chip'' devices \cite{zhao_yang_2012}. These devices require the mixing of tiny fluid volumes, but at such small scales, mechanical stirring becomes impractical due to, amongst other reasons, the high viscosity of the fluid. To address this challenge, one innovative approach involves dissolving ions in the fluid and using an electric field to manipulate the flow for effective mixing \cite{zhao_yang_2012}.

When charged particles $\cpm$ are dissolved in an incompressible fluid with velocity $\bv$ and under the influence of an external electric field $- \nabla \psi$, 
three major effects govern the movement of the charges. 
The charges diffuse, they are transported by the surrounding fluid and the electric field induces a directed movement, called electromigration.
Further, space charge exerts an electric body force on the fluid velocity.
An attempt to model this physical interaction leads us to a coupled Navier--Stokes--Nernst--Planck--Poisson (NSNPP) system, \cite[Chap.~3.4]{probstein}.
More explicitly, we are considering, 
\begin{subequations}\label{eq:system}
\begin{align}
	\mathrm{Re} \left(\partial_t \bv + (\bv \cdot \nabla) \bv \right) 
		- \Delta \bv + \nabla p + \constNS (c^+ - c^-) \nabla \psi &= 0 \quad \text{ in } \Omega \times (0,T),\label{eq:system_ns}\\
	\nabla \cdot \bv &= 0 \quad \text{ in } \Omega \times (0,T), \label{eq:system_div}\\
	\mathrm{Pe} \left( \partial_t c^{\pm} + \nabla \cdot (c^{\pm} \bv)  \right)
		- \nabla \cdot \left( \lamMa (
		\nabla c^{\pm} \pm \constNP c^{\pm} \nabla \psi) \right) &= 0 \quad \text{ in } \Omega \times (0,T), \label{eq:system_np}\\
	- \nabla \cdot \left( \varMa
		\nabla \psi \right) - \constP (c^+ - c^-)
		&= 0 \quad  \text{ in } \Omega \times (0,T) \label{eq:system_p},
\end{align}
\end{subequations}
where $\Omega \subseteq \R^3$ is a smooth and bounded domain, $\Rey, \Pe, \constNS, \constNP, \constP, \lambda, \varepsilon > 0$ 
are positive constants,
$\varMa := \bs{I} + \varepsilon \, \bd \otimes \bd$ and $\lamMa := \bs{I} + \lambda \, \bd \otimes \bd$, 
where $\bd$ is the so-called director 
with $\bd(\X) \in \R^3$ and $ \bd \cdot \bn = 0$ on $\Gamma := \partial \Omega$.
The evolution of the fluid's velocity field is described by the Navier--Stokes equations for incompressible fluids \eqref{eq:system_ns}--\eqref{eq:system_div}.
The charge densities evolve according to the Nernst--Planck equations \eqref{eq:system_np} including a diffusion term $- \nabla \cdot(\lamMa \nabla \cpm)$ as well as two transport terms, one due to the velocity field $\bv$ and one due to the electric field $- \nabla \psi$.
The notation $\cpm$ means an enumeration, so that \eqref{eq:system_np} in fact denotes two equations: one for the positive charges $c^+$ with a plus in front of the $\nabla \psi$ term and one for the negative charges $c^-$ with a minus  in front of the $\nabla \psi$.
Finally, the electric potential is given by the Poisson equation \eqref{eq:system_p}.

In isotropic fluids the movement of the charges and the formation of the electric field are independent of the direction.
Not so in anisotropic fluids, where the movement of the ions due to diffusion and electromigration, as well as the diffusion of the electric potential 
may depend on the direction. 
In our case they depend on the director $\bd$, which gives the preferred direction of motion.
Our choice  
for the matrices $\lamMa$ and $\varMa$ stems 
from the modeling of liquid crystals, most famously known for their application in LCDs.
One possible way to model liquid crystals are the Erickson--Leslie equations \cite{emmrich_klapp_lasarzik_2018}, 
including a time-evolution for the director $\bd$.
The mobilities of the charges vary depending on the motion being parallel or perpendicular to the director,
see \cite{calderer_2016} for an extensive model derivation for nematic electrolytes.
As it was done in \cite{calderer_2016} we choose the anisotropy matrices to be of the form $\varMa$ and $\lamMa$.
For simplicity, we assume that  $\varepsilon>0 $ and $\lambda > 0$. There are certain materials for which $\varepsilon$ and $\lambda$ may be negative or even have different signs. 
Typically, the conductivity and thus the mobility of the ions is larger parallel to the directors \cite[Chap.~5.3]{deGennes}.
This justifies the assumption $\lambda > 0$.
For $\norm{\bd}_{L^\infty(\Omega)} \leq 1$, we could also treat the case $\varepsilon > -1 $, but refrain from this technical improvement, since it would further impede the estimates.

Additionally, the system is equipped with the following boundary conditions
\begin{gather*}
	\bv = 0 \; \text{ and } \; \lamMa\left( \nabla \cpm \pm \constNP \cpm \nabla \psi \right) \cdot \bs{n} = 0 \text{ on } \Gamma \times [0,T],\\
	\varMa \nabla \psi \cdot \bn + \tau \psi = \xi \text{ on } \Gamma \times [0,T],
\end{gather*}
where $\tau$ is a positive constant and
$\xi \in W^{1,2}(0,T;W^{2,2}(\Gamma))$ the externally applied electric potential.
These are the standard no-slip boundary conditions for the velocity field, no-flux boundary condition for the charge densities, 
which correspond to the assumption that the charges cannot cross the boundary of the domain and Robin boundary conditions for the electric potential.
As in \cite{bothe_fischer_saal_2014}, we choose Robin boundary conditions
to model the electric double layer, which usually forms at an electrolyte-solid-interface  \cite[Chap.~7]{newman_thomas_alyea_2004}.

As described in \cite{calderer_2016} anisotropic fluids, like liquid crystals, have certain advantages to isotropic fluids in producing persistent flows 
under the influence of an alternating electric field.
In experiments it is possible to fix the director of the liquid crystal such that  an alternating field creates vortices in the fluid which enhance mixing \cite{peng_2015}. 
This was our motivation to consider the above system \eqref{eq:system}.
Our numerical simulations of system \eqref{eq:system} give qualitatively similar result to the observed flows in the experiments in \cite{peng_2015}.
The Python code used for the simulations is published on Zenodo \cite{zenodo}.
For example \cite[Fig.~4]{peng_2015} shows a given director field with the observed velocity field.
Using this director field, see Figure~\ref{fig:flow_sim_dir}, our simulation of the velocity field, \textit{cf.}~Figure~\ref{fig:flow_sim_sim}, is in good agreement with the observed one \cite[Fig.~4]{peng_2015}.
Figure~\ref{fig:flow_sim_sim} shows a tracer plot of the time-averaged velocity field.
\begin{figure}[h]\label{fig:flow_sim}
\centering 
\begin{subfigure}[b]{0.45\textwidth}
\includegraphics[height=5cm, width=\linewidth, trim = 23cm 13cm 22cm 13cm, clip]{./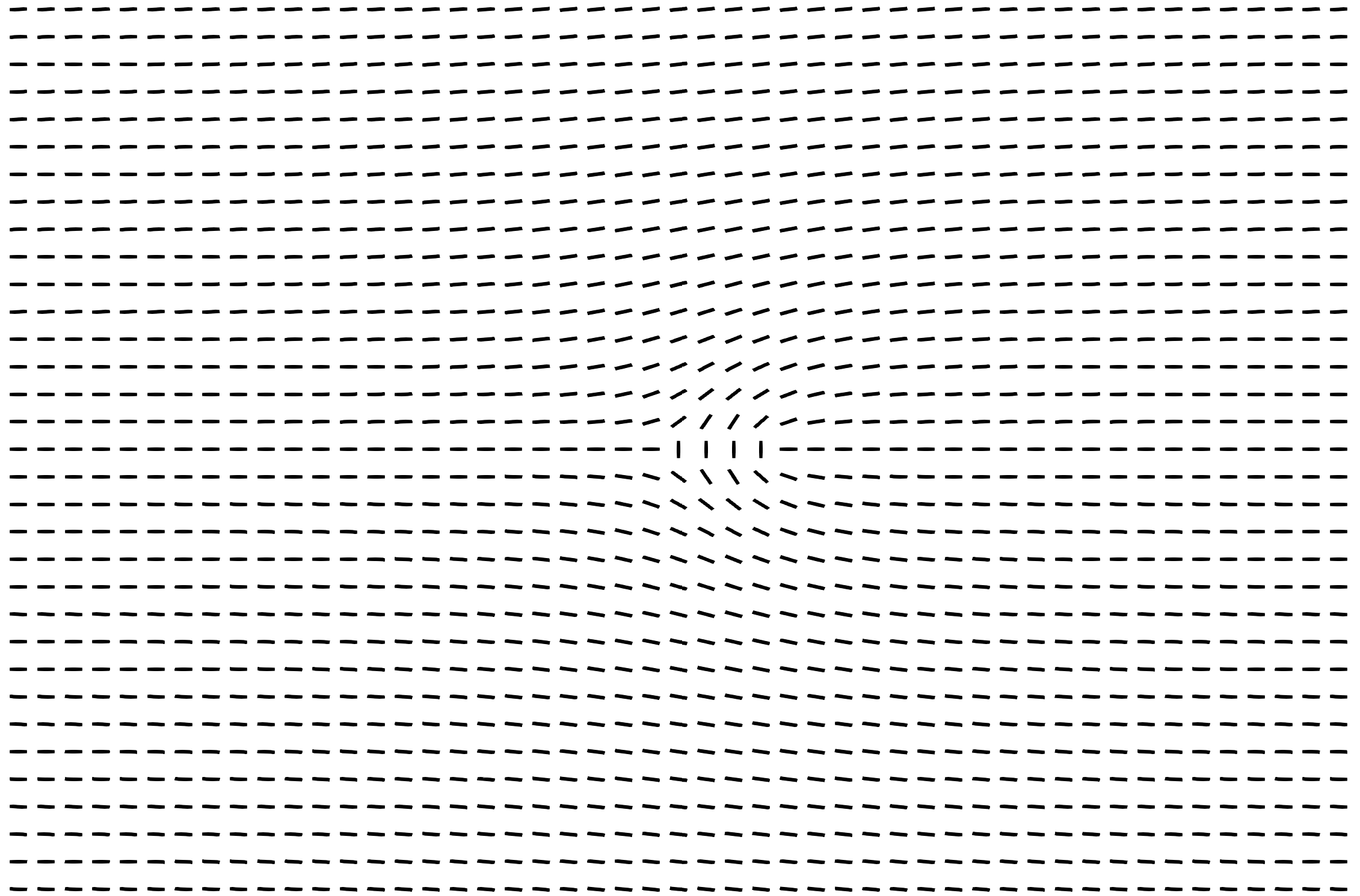}
\caption{Director field.}
\label{fig:flow_sim_dir}
\end{subfigure}\quad\quad
\begin{subfigure}[b]{0.45\textwidth}
\includegraphics[height=5cm, width=\linewidth]{./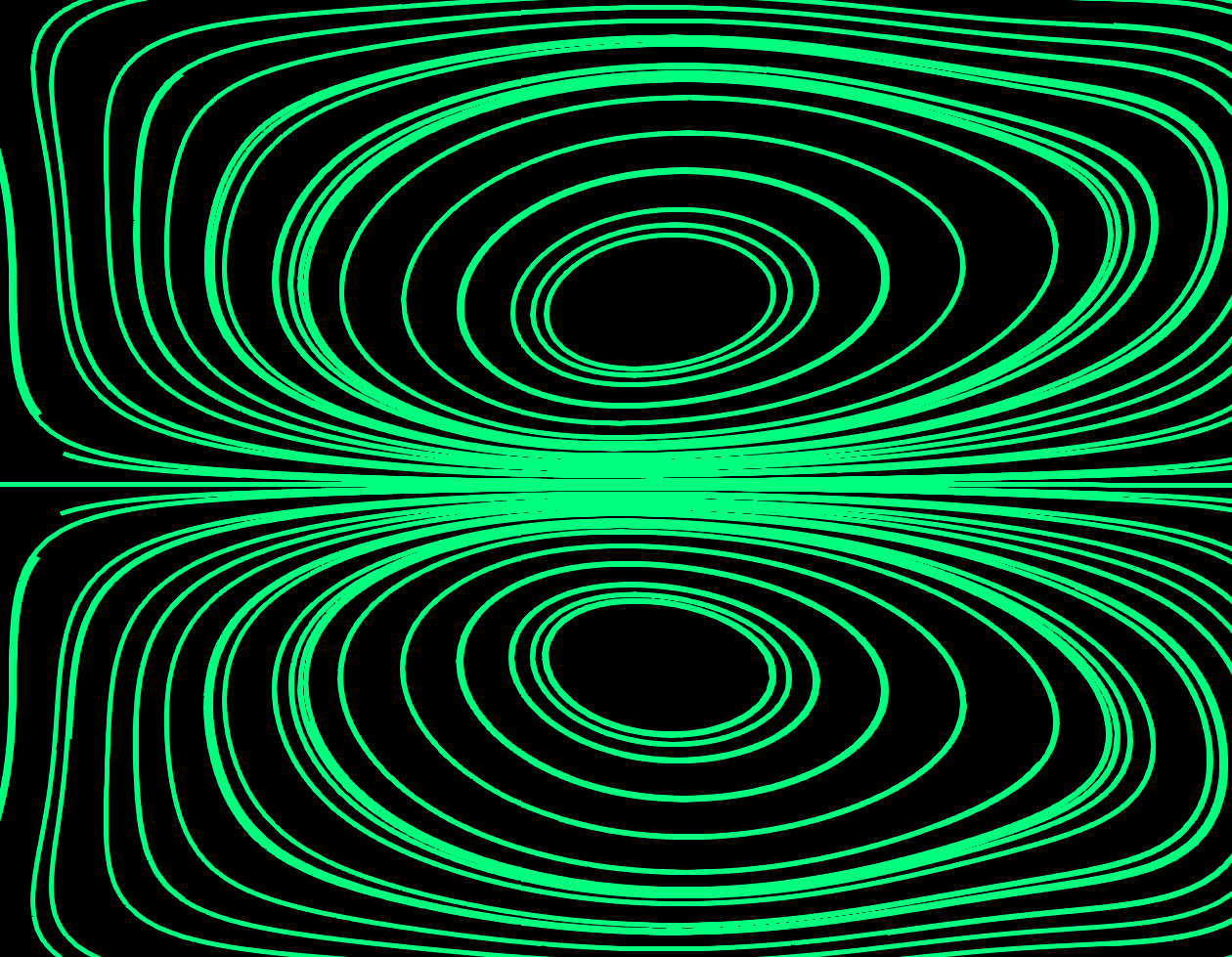}
\caption{Simulated velocity field.}
\label{fig:flow_sim_sim}
\end{subfigure}
\caption{Simulated velocity field for a given director field.}
\end{figure}
The simulation was performed via a finite element discretization using the Python package FEniCS \cite{fenics, fenics_book} 
and the figures were created with the visualization tool ParaView \cite{paraview}. 

In case of $c^{\pm}=0=\nabla \psi $, system~\eqref{eq:system} reduces to the well-known Navier--Stokes equations~\cite{Farwig}. 
In his seminal work, Jean Leray showed the existence of weak solutions for the Navier--Stokes equation in three space dimensions, 
which he termed ``turbulent solutions'' \cite{leray_1934}. 
In contrast to the two dimensional case, in three space dimensions these solutions may not be unique \cite{albritton_brue_colombo_2022}, 
but under additional smoothness assumption uniqueness can be shown, as it was already done by Leray himself \cite{leray_1934} and by
James Serrin in 1962 \cite{serrin_1962}.
As the Navier--Stokes equations are a sub-problem of  system \eqref{eq:system} we do not hope to attain uniqueness of our solutions, 
but instead show weak-strong uniqueness, which we will be done in a subsequent article.
The situation for the Nernst--Planck--Poisson equations is somewhat similar in the sense, 
that uniquness is known in dimension two but remains an open probem in dimension three. 
In \cite{choi_lui_1995} the stability of steady-state solutions was proven in two dimensions 
and conditionally under the assumption of an $L^\infty(L^2)-$estimate for the ions in dimension three.
Today, the existence of weak solutions is well understood in all space dimensions.
In~\cite{bothe_fischer_pierre_rolland_2014}, the existence of weak solutions for a Nernst--Planck--Poisson system with multiple ion species 
with diffusion coefficients variable in time and space and bounded reactions is shown in all space dimension.
Here again the question of uniqueness remains an open problem \cite{bothe_fischer_pierre_rolland_2014}.

The coupled NSNPP was introduced by Rubinstein in 1990 \cite{rubinstein} 
and there is a myriad of works on this system with scalar diffusion coefficient.  
One of the first mathematically rigorous investigations of the system was performed in \cite{jerome_2002},
where the existence of local strong solutions to an NSNPP system with two ion species with constant scalar diffusion was proven via a semigroup approach.
In~\cite{bothe_fischer_saal_2014}, the local existence of strong solutions as well as the global existence of strong solutions in dimension two is proven
via maximal $L^p-$regularity for multiple ion species.
The two dimensional case was also considered in \cite{constantin_ignatova_2019}, where the existence of global strong solutions as well as convergence to a steady-state was proven
for non-homogeneous Dirichlet boundary conditions for the ions.
In~\cite{schmuck_2009}, the existence and uniqueness of weak solutions to an NSNPP system for two ion species under no-flux boundary conditions for the ions was proven
in dimension two.  
In~\cite{constantin_ignatova_lee_2021}, an NSNPP as well as a Stokes--Nernst--Planck--Poisson (SNPP) system was considered with Dirichlet boundary conditions for the electric potential and the local existence of strong solutions was proven. 
Additionally, the global existence of strong solutions for arbitrary many ion species with identical diffusion coefficients and for two ion species with possibly different diffusion coefficient was proven for the SNPP system and,
under additional regularity assumptions on
the velocity field, also for the NSNPP system.
A similar result for Robin boundary conditions for the electric potential was obtained in \cite{lee_2022}.

All of the above mentioned results are on systems with scalar diffusion coefficients. 
An anisotropic Nernst--Planck--Poisson system similar to \eqref{eq:system_np}--\eqref{eq:system_p} coupled to Erickson--Leslie equations, modeling nematic liquid crystals, was considered in
\cite{banas_lasarzik_prohl_2021}. Here, the existence of dissipative solutions was proven, but the proof relied on the fact that the anisotropy matrix $\lamMa$ in the 
Nernst--Planck equations \eqref{eq:system_np} and the anisotropy matrix $\varMa$ in the Poisson equation \eqref{eq:system_p} are identical.
This property was also used in the weak-sequential stability proof in~\cite{FeireislRocca}. 

The inclusion of diffusion matrices in the Nernst--Planck and Poisson equations is indispensable to handle anisotropy
and to the best of the authors' such a general NSNPP system with diffusion matrices has not yet been considered.
We show that this anisotropic electrokinetic flow model admits a weak solution.
The difference between the anisotropy matrices $\varMa$ and $\lamMa$ is crucial for capturing the distinct physical properties of electric conductivity and electric permittivity,
which are inherently different physical properties. 
Furthermore, we establish that there are weak solutions that satisfy an energy-dissipation inequality. 
Energy dissipation is not only a physically relevant property but it is also essential for proving relative energy inequalities, 
which can be used to prove the weak-strong uniqueness of such solutions. 
This means that if a strong solution exists, any suitable weak solution emanating from the same initial data will coincide with that strong solution, ensuring its uniqueness within the class of suitable weak solutions.

We adapt the \textit{ansatz} from \cite{fischer_saal_2017} to show the existence of suitable weak solutions. 
Due to the differences in the anisotropy matrices new \textit{a priori} estimates 
for the first-order derivatives of the charges and the second-order derivatives of the electric potential are required.
These can be accomplished through integration by parts, necessitating careful control of the boundary integrals. 
This control is achieved using surface differential operators and performing integration by parts on the boundary.

The article is organized as follows. First, in the following section, Section~\ref{sec:main}, we introduce some notation and state our main result. 
Then, we recall some known results for our chosen regularization operators and Sobolev functions on the boundary of a domain, see Section~\ref{sec:pre}.  
In Section~\ref{sec:exist} we give the proof of the main result and auxiliary results are collected in the Appendix.
\end{section}

\begin{section}{Main results} \label{sec:main}
We begin by introducing some basic notation.
By $\Omega$, we denote a smooth bounded domain in $\mathbb{R}^3$ and $\Gamma := \partial \Omega$. 
For all $r \in (1, \infty)$ we define the function spaces
\[
	L^r_\sigma(\Omega) := \overline{C^\infty_{0,\sigma}(\Omega)}^{\norm{\cdot}_{L^r}} \quad
	\text{ and } \quad W_0^{1,r}(\Omega) = \overline{C^\infty_{0}(\Omega)}^{\norm{\cdot}_{W^{1,r}}}.
\]
The function space $ L^ p(\Omega) _+$ is defined via $  L^ p(\Omega) _+ : = \{f \in L^p(\Omega)\mid f \geq 0 \text{ a.e.~in }\Omega\}$.
The norm $ |\cdot |_{\lamMa} $ is defined via $ | \bs a |_{\lamMa}^2 = | \bs a | ^2 + \lambda (\bs d \cdot \bs a)^2$ and similarly for $| \cdot |_{\varMa}$.
The outer normal of $\Omega$ is denoted by $\bs n$.
The standard matrix and matrix-vector multiplication is written without an extra sign for bre\-vi\-ty,
$$\bs A \bs B =\left [ \sum _{j=1}^3 \bs A_{ij}\bs B_{jk} \right ]_{i,k=1}^3 \,, \quad  \bs A \bs a = \left [ \sum _{j=1}^3 \bs A_{ij}\bs a_j \right ]_{i=1}^3\, , \quad  \bs A \in \R^{3\times 3},\,\bs B \in \R^{3\times3} ,\, \bs a \in \R^3 .$$
The outer vector product is given by
 $\bs a \otimes \bs b := \bs a \bs b^T = \left [ \bs a_i  \bs b_j\right ]_{i,j=1}^3$ for two vectors $\bs a , \bs b \in \R^3$ and by $ \bs A \otimes \bs a := \bs A \bs a ^T = \left [ \bs A_{ij}  \bs a_k\right ]_{i,j,k=1}^3 $ for a matrix $ \bs A \in \R^{3\times 3} $ and a vector $ \bs a \in \R^3$. 
 We use  the Nabla symbol $\nabla $  for real-valued functions $f : \R^3 \to \R$, vector-valued functions $ \bs f : \R^3 \to \R^3$ as well as matrix-valued functions $\bs A : \R^3 \to \R^{3\times 3}$ denoting
\begin{align*}
\nabla f := \left [ \pat{f}{\bs x_i} \right ] _{i=1}^3\, ,\quad
\nabla \bs f  := \left [ \pat{\bs f _i}{ \bs x_j} \right ] _{i,j=1}^3 \, ,\quad
\nabla \bs A  := \left [ \pat{\bs A _{ij}}{ \bs x_k} \right ] _{i,j,k=1}^3\, .
\end{align*}
 The divergence of a vector-valued and a matrix-valued function is defined by
\begin{align*}
\nabla \cdot \bs f := \sum_{i=1}^3 \pat{\bs f _i}{\bs x_i} \, , \quad  \nabla \cdot \bs A := \left [\sum_{j=1}^3 \pat{\bs A_{ij}}{\bs x_j}\right] _{i=1}^3\, .
\end{align*}
These definitions give rise to different calculation rules, \textit{e.g.~}$\nabla \cdot ( \bs a \otimes \bs b) = \nabla \bs a \bs b + \nabla \cdot \bs b \bs a $ for $\bs a,\bs b\in W^{1,2}(\Omega)$. 
The use of $\sigmp{\,\cdot\,}$ implies a summation: for example $\sigmp{\pm \cpm} = c^+ - c^-$,  $\sigmp{\mp \cpm} = - c^+ + c^-$, and 
\[
	\sigmp{\cpm (\ln \cpm + 1)} = c^+ (\ln c^+ + 1) + c^- (\ln c^- + 1).
\]
So the sum $\sigmp{\, \cdot \,}$ consists of two terms: one where only the upper signs of $\pm$ and $\mp$ are used, and one where only the lower signs are used.
Outside of these brackets $\pm$ is used as an enumeration, \textit{e.g.~}$\cpm \in L^1(\Omega)$ means $c^+ \in L^1(\Omega)$ and $c^- \in L^1(\Omega)$.

For a given Banach space $V$, the space $C_w([0,T];V )$ denotes the functions on $[0,T]$ taking values in $V$ that are continuous with respect to the weak topology of $V$.

In the case $\Gamma \in C^m$, $m \in \N$ we denote for $p \in (1, \infty)$ by 
		 $S: W^{m,p}(\Omega) \to W^{m-\frac{1}{p}, p}(\Gamma)$,
		the usual trace operator and by 
	 $E: W^{m-\frac{1}{p}, p}(\Gamma) \to W^{m,p}(\Omega)$ its right-inverse such that
		$S(E(f)) = f $ for all $f \in W^{m-\frac{1}{p}, p}(\Gamma)$. 
  
Throughout this work $C > 0$ denotes a generic constant, which may change its value without an indication in the notation. 
We sometimes use $C(\cdot)$ to indicate dependencies of this constant.

With these notations ar hand we come back to our PDE model and give its precise formulation:
\begin{subequations}\label{eq:main_system}
\begin{align}
	\partial_t \bv + (\bv \cdot \nabla) \bv 
		- \Delta \bv + \nabla p + (c^+ - c^-) \nabla \psi &= 0, \text{ in } \Omega \times (0,T), \label{eq:main_system_ns}\\
	\nabla \cdot \bv &= 0, \text{ in } \Omega \times (0,T), \label{eq:main_system_div}\\
	\partial_t c^{\pm} + \nabla \cdot (c^{\pm} \bv)
		- \nabla \cdot \left( \lamMa (
		\nabla c^{\pm} \pm c^{\pm} \nabla \psi) \right) &= 0, \text{ in } \Omega \times (0,T), \label{eq:main_system_np}\\
	- \nabla \cdot \left( \varMa
		\nabla \psi \right) - (c^+ - c^-)
		&= 0, \text{ in } \Omega \times (0,T), \label{eq:main_system_p}
\end{align}
\end{subequations}
equipped with initial and the following boundary conditions
\begin{gather*}
	\bv = 0 \; \text{ and } \; \lamMa\left( \nabla \cpm \pm \cpm \nabla \psi \right) \cdot \bs{n} = 0 \text{ on } \Gamma \times [0,T], \label{eq:bc_nsnp}\\
	\varMa \nabla \psi \cdot \bn + \tau \psi = \xi \text{ on } \Gamma \times [0,T].\label{eq:bc_p}
\end{gather*}
Our proof works for arbitrary positive constants $\Rey, \Pe, \constNS, \constNP$ and $\constP$ in~\eqref{eq:system}
but as these have no major impact on our method of proof we set them all to one for now to improve readability.
\begin{ass}
    \label{ass:1}
    We require that $ \Omega$ is a smooth domain in $\mathbb{R}^3$ with boundary $\Gamma := \partial \Omega$, the constants $\lambda$, $\varepsilon$, and $\tau$ are positive.
    The director field $\bd \in W^{1,\infty}(\Omega;\R^3) $ fulfills $\bd \cdot \bs n = 0$ on $\Gamma$, 
and the externally applied electric field $\xi \in W^{1,2}(0,T;W^{2,2}(\Gamma))$ .
\end{ass}

Using the same regularization as in \cite{fischer_saal_2017} we can prove the existence of suitable weak solutions, 
where we define suitable weak solutions as follows.

\begin{definition}[Suitable weak solutions]\label{def:weak_sol}
	Let $\Omega \subseteq \R^3$ be a bounded, smooth domain, $T>0$,
	$(\bv_0, \cpm_0) \in L_\sigma^2(\Omega) \times L^2(\Omega)_+$,
	$\bd \in W^{1,\infty}(\Omega)$ and $\xi \in W^{1,2}(0,T; W^{2,2}(\Gamma))$.
	We call $(\bv, \cpm, \psi)$ suitable weak solution if 
	\begin{align*}
		\bv &\in C_w([0,T];L^2_\sigma(\Omega)) \cap L^2(0,T;W_0^{1,2}(\Omega))\cap L^{10/3}(\Omega \times (0,T)),\\
		\cpm &\in C_w([0,T];L^1(\Omega)_+) \cap L^{5/4}(0,T;W^{1,5/4}(\Omega))\cap L^{5/3}(\Omega \times (0,T)),\\
		\sqrt{\cpm} &\in L^2(0,T;W^{1,2}(\Omega)),\\
		\psi &\in C_w([0,T];W^{1,2}(\Omega)) 
        \cap L^2(0,T;W^{2,2}(\Omega)),\\
		\sqrt{\cpm} \nabla \psi &\in L^2(0,T;L^2(\Omega)),
	\end{align*}
	and \eqref{eq:main_system} is fulfilled in the weak sense, 
	that is for all test functions $\tbv \in L^2(0,T,W^{1,2}_{0, \sigma}(\Omega)) \cap C^1(\overline{\Omega} \times [0,T])$, 
	$\tcpm \in C^1(\overline{\Omega} \times [0,T])$ 
	and $\tpsi \in W^{1,2}(\Omega) \cap L^\infty(\Omega)$ we have
	\begin{align}
		\int_\Omega (\bv \cdot \tbv)(t) - \bv_0 \cdot \tbv(0)\diff \X + 
		\int_0^t \int_\Omega  
			\nabla \bv : \nabla \tbv 
            - \bv \partial_t \tbv
			+ \left( (\bv \cdot \nabla) \bv
			+ (c^+ - c^-) \nabla \psi \right) \cdot \tbv 
		\diff \X \diff s &= 0\label{eq:weak_form_v}\\
		\int_\Omega (\cpm \; \tcpm)(t) - \cpm_0 \tcpm(0) \diff \X
		+ \int_0^t \int_\Omega
			\left( \lamMa (\nabla \cpm \pm \cpm \nabla \psi) - \cpm \bv \right) \cdot \nabla \tcpm 
            - \cpm \partial_t \tcpm 
		\diff \X \diff s &= 0\label{eq:weak_form_c}\\
		\int_\Omega 
			\varMa \nabla \psi \cdot \nabla \tpsi
		\diff \X
		+ \int_\Gamma 
			(\tau \psi - \xi) \tpsi
		\dS
		- \int_\Omega
			(c^+ - c^-) \tpsi
		\diff \X &= 0\label{eq:weak_form_psi}
	\end{align}
	for all $t \in [0,T]$ and additionally the energy inequality
	\begin{multline}\label{eq:strong_en_ineq_def}
		\left[ \int_\Omega \frac{1}{2} |\bv|^2 + \sigmp{\cpm (\ln \cpm + 1)} + \frac{1}{2} |\nabla \psi|^2_{\varMa} \diff \X
		+ \frac{\tau}{2} \int_\Gamma |\psi|^2 \dS \right] \bigg|_0^t\\
		+ \int_0^t \int_\Omega |\nabla \bv|^2 + \sigmp{ \left| 2 \nabla \sqrt{\cpm} \pm \sqrt{\cpm} \nabla \psi \right|_{\lamMa}^2} \diff \X \diff s
		\leq 
		\int_0^t \int_\Gamma \psi \partial_t \xi \dS \diff s
	\end{multline}
	holds for all $t \in [0,T]$.
\end{definition}

\begin{thm}[Existence of suitable  weak solutions]\label{thm:exist_weak_sol}
	Let Assumption~\ref{ass:1} be fulfilled. 
	For for every $T>0$ and all initial data $(\bv_0, \cpm_0) \in L_\sigma^2(\Omega) \times L^2(\Omega)_+$ there exists a suitable weak 
	solution according to Definition \ref{def:weak_sol} .
\end{thm}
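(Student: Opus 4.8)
First, following the regularization scheme of \cite{fischer_saal_2017}, I would introduce for $n\in\N$ the Yosida-type approximations $J_n$ and $K_n$ built from the resolvents of the Stokes operator and of the Robin--Laplacian associated with $\varMa$; these are self-adjoint, positive, bounded by $1$, converge strongly to the identity, and map into the domains of the respective operators. Replacing each factor $\bv,\cpm,\psi$ inside a product in \eqref{eq:main_system} by $J_n\bv,K_n\cpm,K_n\psi$ while keeping the linear principal parts, and solving the elliptic Poisson equation first so that $\psi$ becomes a function of $\cpm$ and $\xi$, one obtains a regularized system whose nonlinearities are Lipschitz on bounded subsets of $L^2_\sigma(\Omega)\times L^2(\Omega)^2$; a Banach fixed-point argument then yields a unique local-in-time strong solution $(\bv_n,\cpm_n,\psi_n)$ on some $[0,T_n)$, and the parabolic structure of the Nernst--Planck equations (or a truncation removed a posteriori) keeps $\cpm_n\ge0$.

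Second, I would derive the $n$-uniform \emph{a priori} estimates. Testing the regularized momentum balance by $\bv_n$, the regularized Nernst--Planck equations by the electrochemical potentials $\mupm_n:=\ln\cpm_n+1\pm\psi_n$, and using the time-differentiated Poisson equation with $\varMa\nabla\partial_t\psi_n\cdot\bn=\partial_t\xi-\tau\partial_t\psi_n$ on $\Gamma$, all coupling terms cancel --- the self-adjointness of $J_n,K_n$ being what preserves these cancellations --- and one arrives at the regularized analogue of \eqref{eq:strong_en_ineq_def} with right-hand side $\int_0^t\int_\Gamma\psi_n\,\partial_t\xi\dS\diff s$, which is absorbed by Gronwall's lemma after bounding $\int_\Gamma|\psi_n|^2\dS$ by the energy and using $\xi\in W^{1,2}(0,T;W^{2,2}(\Gamma))$. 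This gives uniform bounds for $\bv_n$ in $L^\infty(0,T;L^2_\sigma(\Omega))\cap L^2(0,T;W^{1,2}_0(\Omega))$, for $\sqrt{\cpm_n}$ in $L^\infty(0,T;L^2(\Omega))\cap L^2(0,T;W^{1,2}(\Omega))$, for $\psi_n$ in $L^\infty(0,T;W^{1,2}(\Omega))$, and for $\sqrt{\cpm_n}\nabla\psi_n$ in $L^2(\Omega\times(0,T))$; writing $\nabla\cpm_n=2\sqrt{\cpm_n}\,\nabla\sqrt{\cpm_n}$ and invoking the three-dimensional Gagliardo--Nirenberg inequality then yields $\cpm_n$ bounded in $L^{5/3}(\Omega\times(0,T))\cap L^{5/4}(0,T;W^{1,5/4}(\Omega))$ and $\bv_n$ in $L^{10/3}(\Omega\times(0,T))$, and these bounds exclude finite-time blow-up, so $T_n=\infty$.

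Third --- and this is the crux, where the distinct matrices $\lamMa$ and $\varMa$ cause the real difficulty --- I would obtain the missing bound $\psi_n\in L^2(0,T;W^{2,2}(\Omega))$. In the isotropic theory one tests the Nernst--Planck equations by $\cpm$ and uses $-\Delta\psi=c^+-c^-$ to turn the electromigration contribution into the good term $-\tfrac12\int(c^+-c^-)^2(c^++c^-)\le0$; here $\nabla\cdot(\lamMa\nabla\psi_n)$ equals $-\tfrac\lambda\varepsilon(c_n^+-c_n^-)$ only modulo $(1-\tfrac\lambda\varepsilon)\Delta\psi_n$, which carries no sign. Instead I would test the Nernst--Planck equations by $\cpm_n$ and integrate by parts both in $\Omega$ and --- after the no-flux condition and $\bd\cdot\bn=0$ on $\Gamma$, so that $\lamMa\nabla\psi_n\cdot\bn=\varMa\nabla\psi_n\cdot\bn=\xi-\tau\psi_n$ --- on $\Gamma$ by means of the tangential differential operators; collecting the cubic contributions in $\cpm_n$ one finds they add up, despite the indefinite $\Delta\psi_n$ term, to the nonnegative quantity $\tfrac12\int(c_n^+-c_n^-)^2(c_n^++c_n^-)$, leaving only terms in which $\cpm_n$ is paired with $\nabla\psi_n$ and $\nabla^2\psi_n$. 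These are controlled via $W^{2,2}$-elliptic regularity for the Robin problem with Lipschitz coefficients (admissible since $\bd\in W^{1,\infty}$), namely $\norm{\psi_n}_{W^{2,2}(\Omega)}\le C(\norm{c_n^+-c_n^-}_{L^2(\Omega)}+\norm{\xi}_{W^{3/2,2}(\Gamma)}+\norm{\psi_n}_{L^2(\Omega)})$, together with three-dimensional interpolation of $\norm{\cpm_n}_{L^4(\Omega)}$ between $L^2$ and $W^{1,2}$ and Young's inequality, absorbing small multiples of the dissipative terms and closing the estimate by Gronwall's lemma with a coefficient in $L^1(0,T)$ built from the second-step bounds and from $\xi$. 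The genuinely hard part is the bookkeeping of the boundary integrals arising in these integrations by parts --- both those of type $\int_\Gamma(\cpm_n)^2(\xi-\tau\psi_n)\dS$ and those appearing in the elliptic estimate through tangential differentiation of the Robin condition.

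Finally, with all $n$-uniform bounds in hand, weak and weak-$*$ compactness yield a limit $(\bv,\cpm,\psi)$ in the spaces of Definition~\ref{def:weak_sol}; the Aubin--Lions--Simon lemma, using the equations to bound $\partial_t\bv_n$ and $\partial_t\cpm_n$ in negative-order spaces, gives strong convergence of $\bv_n$ and $\cpm_n$ in $L^2(\Omega\times(0,T))$, and elliptic regularity together with the convergence of $\cpm_n$ upgrades $\nabla\psi_n\to\nabla\psi$ to strong convergence in a suitable space; with $J_n,K_n\to I$ this suffices to pass to the limit in $(\bv_n\cdot\nabla)\bv_n$, $\cpm_n\bv_n$ and $\cpm_n\nabla\psi_n$ in \eqref{eq:weak_form_v}--\eqref{eq:weak_form_psi}. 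Nonnegativity of $\cpm$ is preserved, the weak-in-time continuity $\bv,\cpm,\psi\in C_w([0,T];\cdot)$ follows from the uniform bounds and the equations, and the energy inequality \eqref{eq:strong_en_ineq_def} is recovered from the regularized energy identity by weak lower semicontinuity --- using convexity of $s\mapsto s(\ln s+1)$ and of the quadratic energy and dissipation terms --- together with $\psi_n\,\partial_t\xi\to\psi\,\partial_t\xi$ in $L^1(\Gamma\times(0,T))$ from the strong boundary convergence of $\psi_n$.
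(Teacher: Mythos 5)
Your first, second and fourth steps essentially retrace the paper's construction (resolvent regularization built from the Stokes operator and the anisotropic Robin--Laplacian, the entropy/energy estimate obtained by testing with $\bv$ and $\ln\cpm+1\pm\psi$ together with the time-differentiated Poisson equation, and the Aubin--Lions limit passage with weak lower semicontinuity for the energy inequality), although you omit two ingredients the paper actually needs there: the regularized Poisson equation carries $S_\kappa(c^+-c^-)$ on its right-hand side, and $\bd$, $\xi$ and the initial data are mollified, with the smallness condition $\kappa C(1+\norm{\bd_\kappa}^2_{W^{2,\infty}(\Omega)})\le 1/32$ entering the higher-order estimate.

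The genuine gap is your third step. Testing the Nernst--Planck equations with $\cpm_n$ and using $-\nabla\cdot(\varMa\nabla\psi_n)=c^+_n-c^-_n$, the electromigration term produces
\[
-\tfrac12\int_\Omega (c^+_n-c^-_n)^2(c^+_n+c^-_n)\diff \X
+\tfrac{\lambda-\varepsilon}{2}\int_\Omega (c^+_n-c^-_n)(c^+_n+c^-_n)\,\nabla\cdot\big((\bd\cdot\nabla\psi_n)\bd\big)\diff \X
+\text{boundary terms},
\]
and the second integral neither cancels nor carries a sign; it is of exactly the same cubic order as the good first term, since $\nabla\cdot((\bd\cdot\nabla\psi_n)\bd)\sim\nabla^2\psi_n\sim c^\pm_n$ in $L^2$ by elliptic regularity. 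Your proposed control via $\norm{\cpm_n}_{L^4}$-interpolation, Young and Gronwall yields a Gronwall coefficient of the type $\norm{\cpm_n}_{L^2}^4$ (equivalently $\norm{\cpm_n}_{L^3}^3$), which is not in $L^1(0,T)$ under the step-two bounds ($\cpm_n\in L^\infty(0,T;L^1(\Omega))\cap L^1(0,T;L^3(\Omega))\cap L^{5/3}(\Omega\times(0,T))$ only), and absorbing it into the good cubic term by Young leaves $\int(c^+_n+c^-_n)|\nabla\cdot((\bd\cdot\nabla\psi_n)\bd)|^2$, which is no better; so the estimate does not close. This is precisely the obstruction the paper identifies, and it is why no $L^\infty(L^2)$-bound for $\cpm$ appears in Definition~\ref{def:weak_sol}. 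The paper's route is different: it takes the cross term $2\lamMa(\nabla c^+-\nabla c^-)\cdot\nabla\psi$ of the already-established entropy dissipation, integrates by parts and substitutes the regularized Poisson equation to obtain $2\int\nabla\cdot(\varMa\nabla\psi)\,\nabla\cdot(\lamMa\nabla\psi)$, which after two further integrations by parts in the bulk and on the boundary (surface gradient and divergence, $\bd\cdot\bn=0$ so that $\lamMa\nabla\psi\cdot\bn=\varMa\nabla\psi\cdot\bn$, Robin condition) is coercive in $|\nabla^2\psi|^2$ because $\varepsilon,\lambda>0$, all remainders being lower order or controlled by trace estimates, and the $\kappa$-terms created by $S_\kappa$ being handled through the smallness of $\kappa$. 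To repair your outline you would have to replace step three by an argument of this type, or supply a genuinely new mechanism for the indefinite cubic term; as written, that step fails and with it the $L^2(0,T;W^{2,2}(\Omega))$-bound for $\psi$ on which your limit passage relies.
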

%
\begin{remark}
    We include the energy inequality in the definition of a suitable solution since it is an important property of a solution. 
    It is not only a property a solution should fulfill from a point of view of physics, but it also helps to prove the so-called relative energy inequality,
    which we will consider in a subsequent article. 
    The relative energy inequality is an important tool in the analysis of nonlinear evolution equations. As an easy consequence, it provides the weak-strong uniqueness of solutions, \textit{i.e.,} all weak solutions coincide with a local strong solution emanating from the same initial value as long as the latter exists. 
Moreover, the relative energy inequality has been used to derive singular limits~\cite{singular}, \textit{a posteriori} error estimates~\cite{fischermodelerror}, convergence of numerical approximations~\cite{banas_lasarzik_prohl_2021}, or optimal control~\cite{approx} for an associated solution concept~\cite{lasarzik_2019_dissipative_EL}.
\end{remark}
\begin{remark}
    The main novelty of this result is that we can include an anisotropy in the system, the prescribed director field $\bd$. The main technical difficulty is to prove \textit{a priori} bounds that are strong enough to deduce via compactness results strong converging sequences of our approximate scheme in order to identify the limit as a weak solution in the sense of Definition~\ref{def:weak_sol}. The approximate scheme follows the ideas of~\cite{fischer_saal_2017}, which we extend to the considered setting.  

    We do not consider the cases $ \tau =0$, $\lambda = 0$, or $ \varepsilon =0$, but these cases are significantly easier than the considered case. The cases $\lambda =0$ or $\varepsilon=0$ could be dealt with by combining our proof with the one conducted in~\cite{fischer_saal_2017}. For $\tau=0$ the system is equipped with inhomogeneous Neumann boundary conditions. In this case, the electric potential is only determined up to a constant and this constant should be fixed \textit{a priori}. 
\end{remark}

\end{section}

\begin{section}{Preliminaries}\label{sec:pre}

In order show the existence of weak solutions we regularize our system and let the regularization coefficient $\kappa$ tend to zero. 
We use elliptic regularization operators of the form $(I - \kappa A)^{-1}$, where $A$ is a generator of a bounded $C_0-$semigroup.
For the convenience of the reader we first recall some basic properties of operators of that form, which hold true for all generators
and then we recall results, showing that our chosen operatos, namely the anisotropic Robin Laplacian, the Stokes operator, 
and the root of the Stokes operator indeed generate bounded $C_0-$semigroups.

  \begin{subsection}{Basic properties of the regularization operator}\label{sec:reg_op}

	\begin{lem}[Properties of the regularization operator]\label{lem:prop_reg_op}
		Let $X$ be a Banach space and $A:D(A) \subseteq X \to X$ the infinitesimal generator of a bounded $C_0-$semigroup.
		For all $\kappa > 0$ the operator $R_\kappa := (I - \kappa A)^{-1}: X \to D(A) \subseteq X$ is a well-defined linear and bounded operator
		with the following properties
		\begin{enumerate}
			\item{
				For $\kappa \searrow 0$ and $x_\kappa \to x$ in $X$ we have $R_\kappa(x_\kappa) \to x$ in $X$.
			}\label{item:R_kappa_strong_cont}
			\item{
				For $\kappa \searrow 0$ and $x_\kappa \rightharpoonup x$ in $X$ we have $R_\kappa(x_\kappa) \rightharpoonup x$ in $X$.
			}\label{item:R_kappa_weak_cont}
			\item{
				There exists $C>0$ independent of $\kappa$ such that $\norm{R_\kappa(x)}_X \leq C \norm{x}_X$.
			}\label{item:R_kappa_uniform_bound}
			\item{
				For any Banach space $Y$ such that $D(A) \hookrightarrow Y$ there exists a constant $C > 0$ independent of $\kappa$
				such that for all $x \in X$ we have $\norm{R_\kappa(x)}_Y \leq C ( 1 + 1/\kappa) \norm{x}_X$.		
			}\label{item:R_kappa_bound}
		\end{enumerate}
	\end{lem}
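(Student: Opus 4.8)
The plan is to reduce all four statements to the classical Hille--Yosida calculus. Write $(T(t))_{t\ge 0}$ for the bounded $C_0$-semigroup generated by $A$ and set $M:=\sup_{t\ge 0}\norm{T(t)}$, which is finite since the semigroup is bounded. Because $I-\kappa A=\kappa\bigl(\tfrac1\kappa-A\bigr)$ and $(0,\infty)\subseteq\rho(A)$, the operator $R_\kappa=(I-\kappa A)^{-1}=\tfrac1\kappa\bigl(\tfrac1\kappa-A\bigr)^{-1}$ is a well-defined bounded linear operator from $X$ into $D(A)$; moreover the Hille--Yosida resolvent estimate $\norm{\mu(\mu-A)^{-1}x}_X\le M\norm{x}_X$, valid for all $\mu>0$ and $x\in X$, applied with $\mu=1/\kappa$ yields $\norm{R_\kappa x}_X\le M\norm{x}_X$. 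This is already the uniform bound claimed in the third item, with a constant independent of $\kappa$.

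For the strong continuity in the first item I would argue by density. On $D(A)$ one has the resolvent identity $R_\kappa x-x=\kappa R_\kappa(Ax)$, so that $\norm{R_\kappa x-x}_X\le\kappa M\norm{Ax}_X\to 0$ as $\kappa\searrow 0$; combined with the density of $D(A)$ in $X$ and the uniform bound above, a routine $3\varepsilon$-argument gives $R_\kappa x\to x$ for every $x\in X$. If in addition $x_\kappa\to x$ in $X$, the splitting $R_\kappa x_\kappa-x=R_\kappa(x_\kappa-x)+(R_\kappa x-x)$ together with $\norm{R_\kappa(x_\kappa-x)}_X\le M\norm{x_\kappa-x}_X$ finishes the first item. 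The fourth item is then pure bookkeeping with the graph norm: equipping $D(A)$ with $\norm{\cdot}_X+\norm{A\,\cdot\,}_X$ makes the embedding $D(A)\hookrightarrow Y$ continuous with some constant $C_0$, and from $R_\kappa x\in D(A)$ and $AR_\kappa x=\tfrac1\kappa(R_\kappa x-x)$ one gets $\norm{AR_\kappa x}_X\le\tfrac1\kappa(M+1)\norm{x}_X$, hence $\norm{R_\kappa x}_Y\le C_0\bigl(\norm{R_\kappa x}_X+\norm{AR_\kappa x}_X\bigr)\le C_0(M+1)(1+1/\kappa)\norm{x}_X$.

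The one statement that needs genuine care, and which I expect to be the main obstacle, is the weak continuity in the second item: strong operator convergence $R_\kappa\to I$ together with uniform boundedness does not automatically propagate along weakly convergent sequences. My plan is to pass to the adjoint, $R_\kappa^*=\tfrac1\kappa\bigl(\tfrac1\kappa-A^*\bigr)^{-1}$, and to use that the Banach spaces $X$ occurring in the applications of this lemma are Hilbert spaces — in particular reflexive — so that the adjoint semigroup $(T(t)^*)$ is again a bounded $C_0$-semigroup with generator $A^*$. Applying the already established first item to $A^*$ gives $R_\kappa^* f\to f$ in $X^*$ for every $f\in X^*$. Since a weakly convergent sequence is bounded, $\norm{x_\kappa}_X\le K$, and for arbitrary $f\in X^*$ the decomposition
\[
\dualP{f}{R_\kappa x_\kappa-x}=\dualP{R_\kappa^*f-f}{x_\kappa}+\dualP{f}{x_\kappa-x}
\]
has first term bounded by $K\,\norm{R_\kappa^*f-f}_{X^*}\to 0$ and second term converging to $0$ because $x_\kappa\rightharpoonup x$; hence $R_\kappa x_\kappa\rightharpoonup x$, which is the second item. (In a genuinely non-reflexive Banach space one would instead run a weak-$*$ argument on the sun dual of $A$, but this refinement is not needed in the present work.)
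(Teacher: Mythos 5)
Your handling of the well-definedness and of items 1, 3 and 4 coincides in substance with the paper's own proof: both rest on the Hille--Yosida bound $\norm{\lambda R(\lambda,A)}_{\mathcal{L}(X)}\le M$ for $\lambda>0$, on the identity $AR_\kappa=\tfrac1\kappa(R_\kappa-I)$ for the graph-norm estimate in item 4, and, for item 1, on $\lambda R(\lambda,A)x\to x$, which the paper cites from Pazy and you re-derive via density of $D(A)$ plus uniform boundedness; these parts are correct.

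For item 2 you genuinely deviate from the paper, and your caution is justified. The paper splits $\dualP{\varphi}{R_\kappa x_\kappa-x}=\dualP{\varphi}{(R_\kappa-I)x_\kappa}+\dualP{\varphi}{x_\kappa-x}$ and then estimates $\norm{R_\kappa-I}_{\mathcal{L}(X)}\le \norm{A}_{\mathcal{L}(D(A),X)}\norm{R(1/\kappa,A)}_{\mathcal{L}(X)}\le C\kappa$; this composition mixes incompatible norms (one would need $\norm{R(1/\kappa,A)}_{\mathcal{L}(X,D(A))}$, which stays of order one), and operator-norm convergence $R_\kappa\to I$ would force $A$ to be bounded, so the paper's argument for this item is not sound as written. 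Your adjoint argument, $\dualP{f}{R_\kappa x_\kappa-x}=\dualP{R_\kappa^*f-f}{x_\kappa}+\dualP{f}{x_\kappa-x}$ with $R_\kappa^*f\to f$ in norm, is correct whenever the adjoint semigroup is again a bounded $C_0$-semigroup with generator $A^*$, in particular for reflexive $X$, and this covers the places where the paper actually invokes item 2 (the Stokes resolvent on $L^r_\sigma(\Omega)$, the Robin resolvent on $L^2(\Omega)$). Be aware, however, that your parenthetical appeal to the sun dual cannot repair the general Banach-space case: it only yields the pairing for functionals in the norm closure of $D(A^*)$, and in fact item 2 is false in full generality. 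For the translation semigroup $T(t)f=f(\cdot+t)$ on $X=C_0([0,\infty))$ and $x_\kappa(s)=\phi(s/\kappa)$ with a continuous bump $\phi$ supported in $[1/2,2]$, one has $x_\kappa\rightharpoonup 0$ (bounded and pointwise null), while $(R_\kappa x_\kappa)(0)=\int_0^\infty e^{-s}\phi(s)\,ds>0$ for every $\kappa$, so $R_\kappa x_\kappa\not\rightharpoonup 0$. The clean fix is therefore to state item 2 under an added hypothesis (reflexivity, or strong continuity of the adjoint semigroup), which is harmless for the applications in this paper, rather than to suggest the general claim can be recovered.
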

	The proof is a rather straight forward and relies on the uniform bound on the operator norm of the resolvent given in the Hille--Yosida generation theorem, 
	\textit{cf.~}\cite[Thm.~3.5]{engel_nagel}.
	We deferred  the proof into  the Appendix, see Section~\ref{sec:app_add_proofs}
\end{subsection}
\begin{subsection}{The Stokes operator as a generator}

	\begin{definition}[Stokes operator]\label{def:stokes}
		We define the Stokes operator $A_r$ with domain 
		\[
			D(A_r):= W^{2,r}(\Omega) \cap W_0^{1,r}(\Omega) \cap L^r_\sigma(\Omega)
		\]
		as an unbounded operator in $L_\sigma^r(\Omega)$ via
		\[
			A_r: D(A_r) \subseteq L^r_\sigma(\Omega) \to L^r_\sigma(\Omega), \quad v \mapsto -  P_r(\Delta v),
		\]
		where $P_r$ is the $L^r-$Helmholtz projection, \textit{cf.~}\cite[Thm.~1.4]{simader_sohr}.
	\end{definition}
We collect several standard properties of the stokes operator 
\begin{lem}\label{Stokes}
    Let $r\in(0,\infty)$, we infer 
    \begin{enumerate}
        \item For all $r \in (1, \infty)$ the Stokes operator $-A_r$ generates a bounded analytic $C_0-$semigroup on $L^r_\sigma(\Omega)$, 
		denoted by $\left( \e^{- t A_r} \right)_{t \geq 0}$.\label{lem:stokes_semigroup}
  \item  \label{lem:frac_Stokes_semigroup}
		For all $\alpha \geq 0$ the fractional power Stokes operator $- A_r^\alpha$ also generates a bounded analytic $C_0-$semi\-group 
		$\left( \e^{- t A^{\alpha}_r}\right)_{t \geq 0}$ in $L^r_\sigma(\Omega)$. 
  \item Let $\Omega \subseteq \R^d$ with $d \in \N$, be a bounded domain with smooth boundary then 
		$D(A_r^{1/2}) = W_0^{1,r}(\Omega) \cap L^r_{\sigma}(\Omega)$.
  \item \label{lem:fract_stokes_self_adjoint}
		 The operator $A^{1/2}_2$ is symmetric.
    \end{enumerate}
\end{lem}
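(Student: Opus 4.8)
The plan is to treat item~\ref{lem:stokes_semigroup} as the only genuinely analytic input and to derive the remaining three assertions from it by abstract semigroup and interpolation theory. For~\ref{lem:stokes_semigroup} I would recall the classical resolvent estimate for the Stokes operator on a bounded domain $\Omega$ with smooth boundary: for $1<r<\infty$ the resolvent set of $A_r$ contains a sector $\Sigma_{\pi-\delta_0}:=\{\lambda\in\mathbb C\setminus\{0\}\mid|\arg\lambda|<\pi-\delta_0\}$ for some $\delta_0\in(0,\pi/2)$, together with the bound $\|(\lambda+A_r)^{-1}\|_{\mathcal L(L^r_\sigma(\Omega))}\leq C/|\lambda|$ on that sector (due to Giga; see also Sohr's monograph). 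By the characterisation of generators of bounded analytic semigroups, \textit{cf.}~\cite{engel_nagel}, this already gives that $-A_r$ generates a bounded analytic $C_0$-semigroup on $L^r_\sigma(\Omega)$. I would also record two refinements available on bounded domains: first, $0\in\rho(A_r)$, since Poincar\'e's inequality makes the form $\int_\Omega\nabla u:\nabla v\,\diff\X$ coercive on $W^{1,2}_{0,\sigma}(\Omega)$, so $0\in\rho(A_2)$, and $\sigma(A_r)$ is discrete and independent of $r$; second, by Giga's bounded-imaginary-power estimate $\|A_r^{is}\|_{\mathcal L(L^r_\sigma(\Omega))}\leq C_\delta\e^{\delta|s|}$, valid for every $\delta>0$, the operator $A_r$ is sectorial of angle arbitrarily close to $0$.

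Items~\ref{lem:frac_Stokes_semigroup} and~\ref{lem:fract_stokes_self_adjoint} then follow abstractly. Since $A_r$ is boundedly invertible and sectorial of angle arbitrarily close to $0$, for each $\alpha\geq0$ the fractional power $A_r^\alpha$ is a well-defined closed operator that is again sectorial of angle arbitrarily close to $0$; in particular its sectoriality angle is $<\pi/2$, so $-A_r^\alpha$ generates a bounded analytic $C_0$-semigroup, which is item~\ref{lem:frac_Stokes_semigroup}. For item~\ref{lem:fract_stokes_self_adjoint} I would use that $A_2=-P_2\Delta$ with domain $W^{2,2}(\Omega)\cap W^{1,2}_0(\Omega)\cap L^2_\sigma(\Omega)$ is the operator associated with the symmetric, closed, coercive form $(u,v)\mapsto\int_\Omega\nabla u:\nabla v\,\diff\X$ on $W^{1,2}_{0,\sigma}(\Omega)$; hence $A_2$ is self-adjoint and strictly positive, and its square root $A_2^{1/2}$, defined via the spectral theorem, is self-adjoint and positive, in particular symmetric.

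The third assertion is the one that requires real work. Here the plan is to invoke again Giga's result that $A_r$ has bounded imaginary powers (with power angle $<\pi/2$), which by the standard theory of fractional powers of such operators identifies $D(A_r^{1/2})$ with the complex interpolation space $[L^r_\sigma(\Omega),D(A_r)]_{1/2}$, and then to compute that space. Using $D(A_r)=W^{2,r}(\Omega)\cap W^{1,r}_0(\Omega)\cap L^r_\sigma(\Omega)$, the classical interpolation identity $[L^r(\Omega),W^{2,r}(\Omega)\cap W^{1,r}_0(\Omega)]_{1/2}=W^{1,r}_0(\Omega)$ valid on smooth domains for $1<r<\infty$, and the boundedness of the Helmholtz projection $P_r$ (which commutes appropriately, so that the solenoidal constraint passes through the interpolation), one arrives at $D(A_r^{1/2})=W^{1,r}_0(\Omega)\cap L^r_\sigma(\Omega)$. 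I expect this interpolation step to be the main obstacle: one must keep careful track simultaneously of the Dirichlet boundary condition and of the divergence-free constraint under complex interpolation, and this is precisely where the smoothness of $\Gamma$ and the boundedness of the imaginary powers of $A_r$ enter. For $r=2$ there is also the shortcut that, by the spectral theorem, $D(A_2^{1/2})$ coincides directly with the form domain $W^{1,2}_{0,\sigma}(\Omega)$.
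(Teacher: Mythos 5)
Your proposal is mathematically reasonable, but it takes a different route from the paper: the paper does not reprove any of these facts, it simply cites \cite[Thm.~2]{giga_stokes} and \cite[Prop.~1.2]{giga_miyakawa_1985} for the first two items and \cite[Lem.~2.2.1]{sohr} for the last two. Your sketch essentially reconstructs how those classical results are obtained (sectorial resolvent estimate for item~1, fractional powers of a sectorial operator with small angle for item~2, form methods and the spectral theorem for item~4, bounded imaginary powers plus complex interpolation for item~3), so for items~1, 2 and~4 the two approaches buy the same thing, yours being self-contained modulo the resolvent and BIP estimates, the paper's being shorter. Note also that the paper only ever uses the case $r=2$ of item~3 (the identity $\norm{A_2^{1/2}\bv}_{L^2}=\norm{\nabla \bv}_{L^2}$ and $D(A_2^{1/2})=W^{1,2}_{0,\sigma}(\Omega)$), which is exactly the Hilbert-space statement of \cite[Lem.~2.2.1]{sohr} and is covered by your ``shortcut'' via the form domain.

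The one genuinely soft spot is the interpolation step you yourself flag for item~3 with general $r$. The identification $D(A_r^{1/2})=[L^r_\sigma(\Omega),D(A_r)]_{1/2}$ via BIP is fine, but the assertion that the Helmholtz projection ``commutes appropriately, so that the solenoidal constraint passes through the interpolation'' is not an argument: $P_r$ is bounded on $L^r(\Omega)$ and on $W^{1,r}(\Omega)$, but it does \emph{not} preserve the zero trace, so it is not a retraction onto $W^{1,r}_0(\Omega)\cap L^r_\sigma(\Omega)$, and the naive retraction--coretraction computation of $[L^r_\sigma(\Omega),W^{2,r}(\Omega)\cap W^{1,r}_0(\Omega)\cap L^r_\sigma(\Omega)]_{1/2}$ breaks down at exactly this point. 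Handling the Dirichlet condition and the divergence-free constraint simultaneously is the actual content of the classical theorem (Giga's characterization of the domains of fractional powers of the Stokes operator), so in a written-up version you should either cite that result directly, as the paper in effect does, or restrict to $r=2$ where the spectral-theoretic/form-domain argument you mention is complete and is all that is needed here.
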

\begin{proof}
    These reults are classical. The first two points can be found in \cite[Thm.~2]{giga_stokes} and \cite[Prop.~1.2]{giga_miyakawa_1985}.
 	The last two points can be found in \cite[Lem.~2.2.1]{sohr}.
\end{proof}

\end{subsection}
\begin{subsection}{The Robin Laplacian as a generator}

	\begin{definition}\label{def:robin_laplace}
	Let $\bd \in W^{1,\infty}(\Omega)$, we define the anisotropic Robin Laplacian $\Delta_{\varMa}$ in $L^2(\Omega)$ by
		\begin{align*}
			\Delta_{\varMa}: D(\Delta_{\varMa}) &:= 
			\set{\varphi \in W^{2,2}(\Omega)}{\varMa \nabla \varphi \cdot \bn + \tau \varphi = 0 \text{ on } \Gamma} 
			\subseteq L^2(\Omega) \to L^2(\Omega),\\ 
			\varphi &\mapsto \nabla \cdot \left( \varMa \nabla \varphi \right).
		\end{align*}
	\end{definition}
	The following result is quite standard and can for example be found in \cite[Thm.~3.1.3]{lunardi}.
	\begin{lem}[Robin semigroup in $L^2(\Omega)$]\label{lem:robin_semigroup_L2}
		The anisotropic Robin Laplacian from Definition~\ref{def:robin_laplace} generates a contraction $C_0-$semigroup in $L^2(\Omega)$.
	\end{lem}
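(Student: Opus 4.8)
The plan is to verify the hypotheses of the standard generation theorem for elliptic operators with Robin boundary conditions, the precise form of which is cited as \cite[Thm.~3.1.3]{lunardi}. To this end I would first introduce the bilinear form associated with the operator, namely
\begin{equation*}
	a(\varphi, \chi) := \int_\Omega \varMa \nabla \varphi \cdot \nabla \chi \diff \X + \tau \int_\Gamma \varphi \chi \dS,
	\qquad \varphi, \chi \in W^{1,2}(\Omega),
\end{equation*}
and observe that an integration by parts shows $a(\varphi, \chi) = -\int_\Omega (\Delta_{\varMa} \varphi) \chi \diff \X$ for $\varphi \in D(\Delta_{\varMa})$ and $\chi \in W^{1,2}(\Omega)$, so that $\Delta_{\varMa}$ is exactly the operator associated with the form $a$ on $L^2(\Omega)$. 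The key algebraic fact is that the matrix $\varMa = \bs I + \varepsilon \, \bd \otimes \bd$ is symmetric and, since $\varepsilon > 0$, uniformly positive definite: $\varMa \bs a \cdot \bs a = |\bs a|^2 + \varepsilon (\bd \cdot \bs a)^2 = |\bs a|_{\varMa}^2 \geq |\bs a|^2$, with an upper bound $(1 + \varepsilon \norm{\bd}_{L^\infty}^2) |\bs a|^2$. Combined with $\tau > 0$, this gives coercivity and boundedness of $a$ on $W^{1,2}(\Omega)$ (using that the boundary term is nonnegative for the lower bound and controlled by the trace theorem for the upper bound).

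From here the argument is the textbook one: the form $a$ is symmetric, continuous, and coercive on $W^{1,2}(\Omega)$ with $W^{1,2}(\Omega) \hookrightarrow L^2(\Omega)$ dense, hence by the Lax--Milgram / form method the associated operator $-\Delta_{\varMa}$ is self-adjoint, nonnegative, and generates an analytic $C_0$-semigroup on $L^2(\Omega)$. To upgrade ``analytic semigroup'' to ``contraction semigroup'' I would check the dissipativity estimate directly: for $\varphi \in D(\Delta_{\varMa})$,
\begin{equation*}
	\int_\Omega (\Delta_{\varMa} \varphi) \varphi \diff \X = - \int_\Omega |\nabla \varphi|_{\varMa}^2 \diff \X - \tau \int_\Gamma |\varphi|^2 \dS \leq 0,
\end{equation*}
so $\langle \Delta_{\varMa} \varphi, \varphi \rangle \leq 0$; together with the fact that $\Delta_{\varMa}$ is densely defined and that $I - \Delta_{\varMa}$ is surjective (again Lax--Milgram on the coercive form $\varphi \mapsto a(\varphi, \cdot) + (\varphi, \cdot)_{L^2}$, noting $D(\Delta_{\varMa})$ is nonempty and the weak solution lies in $W^{2,2}(\Omega)$ by elliptic regularity for the Robin problem on the smooth domain $\Omega$), the Lumer--Phillips theorem yields that $\Delta_{\varMa}$ generates a $C_0$-semigroup of contractions on $L^2(\Omega)$.

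The one genuinely nontrivial point — and the step I expect to be the main obstacle — is the elliptic regularity statement needed to identify the form domain with the classical domain $D(\Delta_{\varMa}) \subseteq W^{2,2}(\Omega)$: one must know that a function $\varphi \in W^{1,2}(\Omega)$ solving $a(\varphi, \chi) = (f, \chi)_{L^2}$ for all $\chi \in W^{1,2}(\Omega)$ actually belongs to $W^{2,2}(\Omega)$ and satisfies the Robin condition $\varMa \nabla \varphi \cdot \bn + \tau \varphi = 0$ in the trace sense. This is where the smoothness of $\Gamma$ and $\bd \in W^{1,\infty}(\Omega)$ enter, since the coefficient matrix $\varMa$ is Lipschitz; it is exactly the content subsumed in the cited \cite[Thm.~3.1.3]{lunardi}, so in the write-up I would simply invoke that reference rather than reprove $W^{2,2}$-regularity by localization and difference quotients. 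Everything else is routine verification of positivity of $\varMa$ and nonnegativity of the boundary term.
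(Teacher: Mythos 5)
Your argument is correct, but it is not what the paper does: the paper offers no proof at all for this lemma, it simply invokes the standard generation theorem \cite[Thm.~3.1.3]{lunardi}, which is proved there via elliptic a priori/resolvent estimates and yields even an analytic semigroup (and works in $L^p$). You instead give a self-contained Hilbert-space argument: the symmetric form $a(\varphi,\chi)=\int_\Omega \varMa\nabla\varphi\cdot\nabla\chi\diff\X+\tau\int_\Gamma\varphi\chi\dS$, dissipativity $\int_\Omega(\Delta_{\varMa}\varphi)\varphi\diff\X\le 0$, and the range condition for $I-\Delta_{\varMa}$ via Lax--Milgram on the shifted form, concluded by Lumer--Phillips. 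This is a perfectly valid alternative route; it is more elementary in the $L^2$ setting and gives the contraction property directly, whereas the citation route additionally delivers analyticity and $L^p$ generality. Both approaches stand or fall on the same nontrivial ingredient, which you correctly isolate: $W^{2,2}$ elliptic regularity for the Robin problem with the Lipschitz coefficient matrix $\varMa$ (recall $\bd\in W^{1,\infty}(\Omega)$ and $\Gamma$ smooth), needed to identify the form domain with $D(\Delta_{\varMa})$; deferring that to the cited reference is reasonable. One small precision: the unshifted form $a$ is strictly coercive on $W^{1,2}(\Omega)$ only via the generalized Friedrichs inequality with boundary term (the paper uses exactly this inequality elsewhere, \cite[Lem.~2.5]{fredi}); as you note, for the Lumer--Phillips range condition the shifted form $a(\cdot,\cdot)+(\cdot,\cdot)_{L^2}$ suffices, so this does not affect the validity of your proof.
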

	\begin{remark}\label{rem:C_indep_d}
		The constant $C$ from Lemma~\ref{lem:prop_reg_op} item~\ref{item:R_kappa_uniform_bound} and item~\ref{item:R_kappa_bound} 
		with $A = \Delta_{\varMa}$ can be chosen independently of $\bd$.
		This simply follows from the fact that $\Delta_{\varMa}$ generates a $C_0-$semigroup of contractions for all~$\bd\in W^{1,\infty}(\Omega)$.
	\end{remark}
	Following the proof of \cite[Thm.~8]{brezis_strauss} one can show the following Lemma, 
	which is not new but we could not find the result handling our boundary conditions in the literature.
	The proof of \cite[Thm.~8]{brezis_strauss} where the result was shown for Dirichlet boundary conditions instead of Robin boundary conditions,
	works completely analogous with only minor adaptions to accommodate our boundary conditions.
	\begin{lem}[Robin semigroup in $L^1(\Omega)$]\label{lem:robin_semigroup_L1}
		Let $\bd \in W^{1,\infty}(\Omega)$. We define the Robin Laplacian $A_1 = \Delta_{\varMa}$ in $L^1(\Omega)$ with domain 
		\[
			D(A_1) 
   = \set{\varphi \in W^{1,1}(\Omega)}{A_1 \varphi \in L^1(\Omega)},
		\]
		where the Laplacian is defined in the sense of distribution, that is we say $A_1 \varphi = f$ in $L^1(\Omega)$, if
		\[
			- \int_\Omega \varMa \nabla \varphi \cdot \nabla w \diff \X - \tau \int_\Gamma \varphi w \dS = \int_\Omega f w \diff \X
 		\]
 		holds for all $w \in W^{1, \infty}(\Omega)$.
 		Then $A_1$ generates a contracting $C_0-$semigroup 
 		and we have the embedding $D(A_1) \hookrightarrow  W^{1,q}(\Omega) $ for all $q \in [1, 3/2)$
 		and for all $\varphi \in D(A_1)$ it holds
 		\begin{align}\label{eq:ellip_W1q_est_def}
 			\norm{\varphi}_{W^{1,q}(\Omega)} \leq C \norm{A_1 \varphi}_{L^1(\Omega)},
 		\end{align}
 		where the constant $C > 0$ may depend on the domain $\Omega$ but is independent of the director $\bd$.
	\end{lem}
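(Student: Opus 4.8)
\textbf{Proof plan for Lemma~\ref{lem:robin_semigroup_L1}.}
The plan is to mirror the strategy of \cite[Thm.~8]{brezis_strauss} and split the argument into three parts: (i) the semigroup generation in $L^1(\Omega)$; (ii) the a priori estimate \eqref{eq:ellip_W1q_est_def}; and (iii) the embedding $D(A_1)\hookrightarrow W^{1,q}(\Omega)$ for $q\in[1,3/2)$ as a consequence of (ii). For (i), the key is to verify that $A_1$ is $m$-dissipative on $L^1(\Omega)$, so that the Lumer--Phillips theorem applies. Dissipativity is checked by testing the resolvent equation $\varphi-\sigma A_1\varphi=f$ against a regularized sign function $w=\sign_\delta(\varphi)\in W^{1,\infty}(\Omega)$: the anisotropic term contributes $-\sigma\int_\Omega\varMa\nabla\varphi\cdot\nabla\varphi\,\sign_\delta'(\varphi)\diff\X\le 0$ because $\varMa=\bs I+\varepsilon\,\bd\otimes\bd$ is positive semidefinite and $\sign_\delta'\ge 0$, while the boundary term $-\sigma\tau\int_\Gamma\varphi\,\sign_\delta(\varphi)\dS$ has the favorable sign since $\tau>0$; letting $\delta\to0$ yields $\norm{\varphi}_{L^1}\le\norm{f}_{L^1}$. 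Range density (that $I-\sigma A_1$ is surjective for small $\sigma>0$) follows from the already-established $L^2$ theory of Lemma~\ref{lem:robin_semigroup_L2} combined with a density/approximation argument, exactly as in \cite{brezis_strauss}.

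For the a priori estimate, which I expect to be the main obstacle, the idea is duality. Given $\varphi\in D(A_1)$ with $A_1\varphi=f\in L^1(\Omega)$, one wants $\norm{\nabla\varphi}_{L^q}\le C\norm{f}_{L^1}$ for $q<3/2$, i.e.\ $q'>3$. Fix $\bs g\in C^\infty(\overline\Omega;\R^3)$ and solve the adjoint Robin problem $-\nabla\cdot(\varMa\nabla w)=\nabla\cdot\bs g$ in $\Omega$, $\varMa\nabla w\cdot\bn+\tau w=\bs g\cdot\bn$ on $\Gamma$; since $\varMa$ is a bounded, uniformly elliptic, Lipschitz-in-$\X$ matrix field, elliptic regularity for Robin problems on the smooth domain $\Omega$ gives $w\in W^{2,q'}(\Omega)\hookrightarrow C^1(\overline\Omega)\cap L^\infty$ with $\norm{w}_{W^{2,q'}}+\norm{w}_{L^\infty}\le C\norm{\bs g}_{W^{1,q'}}$, the constant depending only on $\Omega$, $\tau$, and the ellipticity/Lipschitz bounds of $\varMa$ — hence, crucially, only on $\norm{\bd}_{W^{1,\infty}}$ through fixed structural bounds, and in fact one can arrange independence of $\bd$ exactly as in Remark~\ref{rem:C_indep_d}. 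Testing the definition of $A_1\varphi=f$ with this $w$ (which is admissible since $w\in W^{1,\infty}(\Omega)$) and using the adjoint equation to rewrite $\int_\Omega\varMa\nabla\varphi\cdot\nabla w\diff\X+\tau\int_\Gamma\varphi w\dS=\int_\Omega\nabla\varphi\cdot\bs g\diff\X$ — after an integration by parts that uses the Robin boundary condition for $w$ — one obtains $\big|\int_\Omega\nabla\varphi\cdot\bs g\diff\X\big|=\big|\int_\Omega f w\diff\X\big|\le\norm{f}_{L^1}\norm{w}_{L^\infty}\le C\norm{f}_{L^1}\norm{\bs g}_{W^{1,q'}}$. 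Taking the supremum over such $\bs g$ and invoking the density of $C^\infty(\overline\Omega;\R^3)$ in $W^{1,q'}(\Omega;\R^3)$ together with the duality $\big(W^{1,q'}\big)^*\supseteq$ (a suitable space containing $\nabla\varphi$) gives $\norm{\nabla\varphi}_{L^q}\le C\norm{f}_{L^1}$; combined with the $L^1$ bound on $\varphi$ from part (i), this is \eqref{eq:ellip_W1q_est_def}.

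Finally, the embedding $D(A_1)\hookrightarrow W^{1,q}(\Omega)$ for all $q\in[1,3/2)$ is immediate from \eqref{eq:ellip_W1q_est_def} once one knows $\varphi\in W^{1,q}(\Omega)$ qualitatively; this regularity is obtained by the same duality computation restricted to $\bs g$ of the form $\bs g=|\nabla\varphi|^{q-2}\nabla\varphi$ truncated, or more cleanly by approximating $f\in L^1$ by $f_n\in L^2$, using the $L^2$-elliptic regularity of Lemma~\ref{lem:robin_semigroup_L2} to get $\varphi_n\in W^{2,2}(\Omega)$, applying the already-proven estimate $\norm{\varphi_n}_{W^{1,q}}\le C\norm{f_n}_{L^1}$ uniformly in $n$, and passing to the limit. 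The threshold $q<3/2$ (equivalently $q'>3$) is exactly what is needed for the Sobolev embedding $W^{2,q'}(\Omega)\hookrightarrow L^\infty(\Omega)$ in $\R^3$, which is the place where $\norm{w}_{L^\infty}$ is controlled; this is the structural reason the statement cannot reach $q=3/2$. Throughout, the only genuinely new point compared to \cite{brezis_strauss} is bookkeeping the Robin boundary term and the anisotropic matrix $\varMa$, and verifying that all elliptic-regularity constants can be taken independent of $\bd$, which follows because $\varMa$ ranges over a set of matrix fields with uniformly controlled ellipticity constants ($1\le$ eigenvalues $\le 1+\varepsilon\norm{\bd}_{L^\infty}^2$) and Lipschitz seminorms.
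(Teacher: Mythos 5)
Your overall route is the same Brezis--Strauss route the paper invokes (dissipativity via sign-testing plus Lumer--Phillips, a duality argument for the gradient estimate, approximation through the $L^2$ theory), but the implementation of the duality step contains a genuine gap. You solve the adjoint Robin problem with right-hand side $\nabla\cdot\bs g$ and bound $\norm{w}_{W^{2,q'}(\Omega)}+\norm{w}_{L^\infty(\Omega)}\leq C\norm{\bs g}_{W^{1,q'}(\Omega)}$, so your final inequality reads $\left|\int_\Omega\nabla\varphi\cdot\bs g\diff\X\right|\leq C\norm{f}_{L^1(\Omega)}\norm{\bs g}_{W^{1,q'}(\Omega)}$. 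Taking the supremum over the unit ball of $W^{1,q'}(\Omega)$ only controls $\nabla\varphi$ in the dual space $(W^{1,q'}(\Omega))^*$, i.e.\ in a negative-order norm; no density argument upgrades this to $\norm{\nabla\varphi}_{L^q(\Omega)}$. To conclude in $L^q$ you must take the supremum over $\bs g$ in the unit ball of $L^{q'}(\Omega)$, and hence you need $\norm{w}_{L^\infty(\Omega)}\leq C\norm{\bs g}_{L^{q'}(\Omega)}$, which comes from $W^{1,q'}$-regularity (or a Stampacchia/Moser $L^\infty$ bound) for the Robin problem with divergence-form data, combined with $W^{1,q'}(\Omega)\hookrightarrow L^\infty(\Omega)$ for $q'>3$. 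This also exposes that your explanation of the threshold is wrong: in three dimensions $W^{2,q'}(\Omega)\hookrightarrow L^\infty(\Omega)$ already for $q'>3/2$, so if your version of the duality were sound it would give the estimate for all $q<3$, contradicting the known optimal range $q<3/2$; the true obstruction is $q'>3$ in the embedding $W^{1,q'}(\Omega)\hookrightarrow L^\infty(\Omega)$, which is exactly why the data must be measured only in $L^{q'}$.

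Two secondary points. First, $\sign_\delta(\varphi)$ is in general not an admissible test function in the defining identity for $A_1$, since its gradient $\sign_\delta'(\varphi)\nabla\varphi$ lies only in $L^1(\Omega)$ while the identity is stated for $w\in W^{1,\infty}(\Omega)$; the dissipativity computation should be performed on the regular ($L^2$-resolvent) approximations, where the solution has enough regularity, and then passed to the limit, as in Brezis--Strauss. Second, your justification of the $\bd$-independence of $C$ is too thin: both the upper ellipticity bound ($1+\varepsilon\norm{\bd}_{L^\infty(\Omega)}^2$) and the Lipschitz seminorm of $\varMa$ scale with $\norm{\bd}_{W^{1,\infty}(\Omega)}$, so quoting generic elliptic-regularity constants does not give a constant independent of the director; one has to set up the estimate for the dual solution $w$ so that only $\bd$-independent quantities enter (for instance an $L^\infty$ bound relying on the lower bound $\varMa\geq\bs I$ and the sign of the Robin term), which is precisely the kind of ``minor adaption'' of the Dirichlet argument the lemma's citation of Brezis--Strauss is meant to cover.
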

	
	\begin{cor}[Elliptic $L^1-$estimate]\label{cor:ellip_W1q_est}
		Let $\kappa > 0 $ and $f \in L^1(\Omega)$ be arbitrary, 
		then 
		\[
			\left( I - \kappa 
   A_1
   \right)^{-1}: L^1(\Omega) \to D(A_1) \subseteq L^1(\Omega)
		\]
		is a well-defined, bounded, and linear operator and it holds
		\[
			\norm{\left( I - \kappa 
    A_1
   \right)^{-1} f}_{W^{1,q}(\Omega)} \leq C \frac{\norm{f}_{L^1(\Omega)}}{\kappa}
		\]
		for $C > 0$ independent of $\kappa$ and $\bd$.
	\end{cor}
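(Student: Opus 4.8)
The plan is to deduce Corollary~\ref{cor:ellip_W1q_est} directly from Lemma~\ref{lem:robin_semigroup_L1} together with the abstract Lemma~\ref{lem:prop_reg_op}. First I would invoke Lemma~\ref{lem:robin_semigroup_L1} to obtain that $A_1 = \Delta_{\varMa}$ generates a contracting $C_0$-semigroup on $L^1(\Omega)$; in particular $A_1$ is the infinitesimal generator of a bounded $C_0$-semigroup, so the hypotheses of Lemma~\ref{lem:prop_reg_op} are met with $X = L^1(\Omega)$. Consequently, for every $\kappa > 0$ the operator $R_\kappa = (I - \kappa A_1)^{-1}\colon L^1(\Omega) \to D(A_1) \subseteq L^1(\Omega)$ is well-defined, linear and bounded, which is the first assertion of the corollary.

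For the quantitative estimate, fix $f \in L^1(\Omega)$ and set $\varphi := R_\kappa f = (I - \kappa A_1)^{-1} f \in D(A_1)$. By definition of $R_\kappa$ we have $\varphi - \kappa A_1 \varphi = f$, hence $\kappa A_1 \varphi = \varphi - f$ and therefore
\begin{align}\label{eq:A1phi_bound}
	\norm{A_1 \varphi}_{L^1(\Omega)} = \frac{1}{\kappa}\norm{\varphi - f}_{L^1(\Omega)} \leq \frac{1}{\kappa}\left( \norm{\varphi}_{L^1(\Omega)} + \norm{f}_{L^1(\Omega)} \right).
\end{align}
Since $A_1$ generates a contraction semigroup, the resolvent estimate from the Hille--Yosida theorem (equivalently, Lemma~\ref{lem:prop_reg_op} item~\ref{item:R_kappa_uniform_bound} with constant $C = 1$) gives $\norm{\varphi}_{L^1(\Omega)} = \norm{R_\kappa f}_{L^1(\Omega)} \leq \norm{f}_{L^1(\Omega)}$, so that \eqref{eq:A1phi_bound} yields $\norm{A_1 \varphi}_{L^1(\Omega)} \leq \tfrac{2}{\kappa}\norm{f}_{L^1(\Omega)}$. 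Now I apply the elliptic regularity estimate \eqref{eq:ellip_W1q_est_def} from Lemma~\ref{lem:robin_semigroup_L1}, valid for all $\varphi \in D(A_1)$ and all $q \in [1,3/2)$ with a constant $C$ depending only on $\Omega$, to conclude
\[
	\norm{R_\kappa f}_{W^{1,q}(\Omega)} = \norm{\varphi}_{W^{1,q}(\Omega)} \leq C \norm{A_1 \varphi}_{L^1(\Omega)} \leq \frac{2C}{\kappa}\norm{f}_{L^1(\Omega)},
\]
which is the claimed bound after absorbing the factor $2$ into the generic constant $C$; the independence of $C$ from $\kappa$ and from $\bd$ is inherited from the corresponding independence statements in Lemma~\ref{lem:robin_semigroup_L1} and Remark~\ref{rem:C_indep_d}.

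This argument is essentially bookkeeping once Lemma~\ref{lem:robin_semigroup_L1} is in hand, so I do not expect a genuine obstacle here; the only point requiring a little care is making sure that the $\bd$-independence of the constant is correctly tracked through the two ingredients (the contraction property, uniform in $\bd$ by Remark~\ref{rem:C_indep_d}, and the elliptic estimate \eqref{eq:ellip_W1q_est_def}, whose constant is stated to be $\bd$-independent). All the real work — the $L^1$-generation property and the $W^{1,q}$ regularity of $D(A_1)$, proved by adapting \cite[Thm.~8]{brezis_strauss} to Robin boundary conditions — has been relegated to Lemma~\ref{lem:robin_semigroup_L1}, which the corollary simply repackages in resolvent form for later use in the $\kappa$-regularized scheme.
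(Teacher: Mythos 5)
Your proof is correct and follows essentially the same route as the paper: well-definedness via Lemma~\ref{lem:robin_semigroup_L1} and Lemma~\ref{lem:prop_reg_op}, then the identity $\kappa A_1 R_\kappa f = R_\kappa f - f$ combined with the resolvent bound and the elliptic estimate \eqref{eq:ellip_W1q_est_def}. The paper writes the same computation through the resolvent identity $A_1(I-\kappa A_1)^{-1} = \tfrac{1}{\kappa}\bigl((I-\kappa A_1)^{-1} - I\bigr)$, which is exactly your rearrangement of $\varphi - \kappa A_1\varphi = f$.
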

	\begin{proof}
		By Lemma~\ref{lem:robin_semigroup_L1} we have that 
  $A_1$ generates a contraction $C_0-$semigroup
		and thus the well-definedness follows from Lemma~\ref{lem:prop_reg_op}.
		For the norm inequality we use the norm inequality \eqref{eq:ellip_W1q_est_def} from Lemma~\ref{lem:robin_semigroup_L1} above,
		\begin{align*}
			\norm{\left( I - \kappa 
   A_1\right)^{-1} f}_{W^{1,q}(\Omega)}
			&\leq
    C
				\norm{
    A_1 \left( 
    (I-\kappa A_1)^{-1}
    f\right) }_{L^1(\Omega)}
			=
    C \norm{\frac{1}{\kappa}
    \left(\left( I - \kappa A_1\right)^{-1}-I \right)f 
    }_{L^1(\Omega)}\\
			&\leq 
				\frac{C \norm{f}_{L^1(\Omega)}}{\kappa} 
				\left( 
     \norm{
     \left( I - \kappa A_1\right)^{-1}
     }_{\mathcal{L}(L^1(\Omega))} + 1 
				\right)
			\leq 
				\frac{C \norm{f}_{L^1(\Omega)}}{\kappa},
		\end{align*}
		where we used 
  $ A_1 (I-\kappa A_1)^{-1} = \frac{1}{\kappa}( (I-\kappa A_1)^{-1} -I)$, which follows from $(I-\kappa A_1)^{-1}(I-\kappa A_1)= I  $, as well as
		Lemma~\ref{lem:prop_reg_op} point 3. 
		This finishes the proof.
	\end{proof}

\end{subsection}
\begin{subsection}{Trace theorems}\label{subsec:trace_thm}
    We denote the trace operator by $S$ and the trace extension operator by $E$, \textit{cf.~}Section~\ref{sec:main}
and recall two results, which will be important tools in the  \textit{a priori} estimate.
First, one can relate the surface differential operators to the bulk ones.

\begin{thm}\label{thm:surf_grad_vs_tan_pro}
	Let $\Omega \subseteq \R^d$ be a bounded domain with $\Gamma \in C^{0,1}$ and $p \in (1,\infty)$. 
    Let $f \in W^{2,p}(\Omega)$ and $\bv \in W^{2,p}(\Omega)$ . 
    Then the surface gradient, denoted by $\nabla_\Gamma$, 
    is just the tangential projection of the bulk gradient, that is
	\[
		\nabla_\Gamma S(f) = S(\nabla f) - (S(\nabla f) \cdot \bn) \bn
        \quad \text{ and } \quad 
        \nabla_\Gamma S(\bv) = S(\nabla \bv) - S(\nabla \bv) \bn \otimes \bn
	\]
    and for the surface divergence $\nabla_\Gamma \cdot S(\bv)$ it holds
    \(
		\, \nabla_\Gamma \cdot S(\bv) 
			= 
				S(\nabla \cdot \bv) - S(\nabla \bv) \bn \cdot \bn.
	\)
\end{thm}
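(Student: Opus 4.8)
The plan is to reduce the global statement to a local one by a partition-of-unity argument and then verify the identities in a boundary chart where the boundary is flattened. First I would cover $\Gamma$ by finitely many open sets $U_j$ in which, after a rotation and a $C^{1,1}$ (indeed $C^{0,1}$ suffices, but we may regularize since $\Omega$ is smooth in our application) change of coordinates $\Phi_j$, $\Gamma \cap U_j$ becomes a piece of the hyperplane $\{y_d = 0\}$ and $\Omega \cap U_j$ lies in $\{y_d > 0\}$. Subordinate to this cover choose a smooth partition of unity $\{\chi_j\}$; since both sides of each claimed identity are linear in $f$ (resp.\ $\bv$) and localize, it suffices to prove them for $\chi_j f$, i.e.\ in a single chart. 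In the flattened coordinates the trace operator $S$ is literally restriction to $\{y_d=0\}$, the outer normal is $\bn = -e_d$, and the surface gradient $\nabla_\Gamma$ of a function on $\{y_d=0\}$ is $(\partial_{y_1},\dots,\partial_{y_{d-1}},0)$. Then $\nabla_\Gamma S(f) = (\partial_{y_1} f,\dots,\partial_{y_{d-1}} f, 0)\big|_{y_d=0}$, whereas $S(\nabla f) = (\partial_{y_1} f,\dots,\partial_{y_d} f)\big|_{y_d=0}$ and $(S(\nabla f)\cdot \bn)\bn = (\partial_{y_d} f|_{y_d=0}) e_d$; subtracting gives exactly $\nabla_\Gamma S(f)$. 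The vector- and matrix-valued statements follow componentwise by the same computation, and the surface divergence identity $\nabla_\Gamma \cdot S(\bv) = S(\nabla\cdot\bv) - S(\nabla\bv)\bn\cdot\bn$ is obtained by taking the trace (sum of diagonal entries) of the matrix identity, noting $S(\nabla\bv)\bn\otimes\bn$ has trace $S(\nabla\bv)\bn\cdot\bn$ and $\tr S(\nabla\bv) = S(\nabla\cdot\bv)$.

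The regularity bookkeeping is what makes this a genuine (if standard) lemma rather than a one-line remark: one must check that all the objects involved are well defined. Since $f,\bv \in W^{2,p}(\Omega)$, the gradient $\nabla f$ (resp.\ $\nabla\bv$) lies in $W^{1,p}(\Omega)$, so by the classical trace theorem $S(\nabla f) \in W^{1-1/p,p}(\Gamma) \hookrightarrow L^p(\Gamma)$ is well defined, and likewise $S(\nabla\bv)\bn$ and $S(\nabla\cdot\bv)$ make sense as $L^p(\Gamma)$-functions (here one uses $\bn \in C^{0,1}(\Gamma) \subseteq L^\infty(\Gamma)$, which holds because $\Gamma \in C^{0,1}$ — or, in our smooth setting, $\bn \in C^\infty$). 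On the other hand $S(f) \in W^{2-1/p,p}(\Gamma)$, so its surface gradient $\nabla_\Gamma S(f)$ is defined intrinsically and lies in $W^{1-1/p,p}(\Gamma)$. To identify the two sides rigorously I would first establish the identity for $f \in C^\infty(\overline\Omega)$, where everything is classical and the chart computation above is literal, and then pass to general $f \in W^{2,p}(\Omega)$ by density of $C^\infty(\overline\Omega)$ in $W^{2,p}(\Omega)$, using continuity of both sides: $f \mapsto S(\nabla f)$ and $f \mapsto (S(\nabla f)\cdot\bn)\bn$ are continuous $W^{2,p}(\Omega) \to L^p(\Gamma)$, and $f \mapsto \nabla_\Gamma S(f)$ is continuous $W^{2,p}(\Omega) \to W^{1,p}(\Omega) \to W^{1-1/p,p}(\Gamma) \hookrightarrow L^p(\Gamma)$, so equality on the dense subset forces equality everywhere.

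The main obstacle, such as it is, is handling the chart changes carefully: the surface gradient is a coordinate-invariant object, so one must check that its expression in the flattened chart really is the truncated Euclidean gradient, which amounts to the fact that the change of variables $\Phi_j$ can be taken to be an isometry on the tangent plane at the base point or, more simply, that $\nabla_\Gamma$ is defined via the Riemannian metric induced by the embedding and this metric transforms correctly — a point where $C^{0,1}$ regularity of $\Gamma$ is exactly the threshold for the metric coefficients to be in $L^\infty$ and the first fundamental form to make sense a.e. In our application $\Omega$ is smooth, so this is immediate; for the stated $C^{0,1}$ generality one invokes Rademacher's theorem and the existence of a.e.-defined tangent planes. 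Everything else is routine linear algebra (decomposing a vector into tangential and normal parts, and a matrix into the analogous pieces) plus the trace theorem, both of which are available.
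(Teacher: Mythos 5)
Your route is genuinely different from the paper's. The paper does not flatten the boundary: for the scalar identity it goes through the weak tangential trace $\pi_\tau$ defined by the curl integration-by-parts identity, shows via the Lagrange identity that $\pi_\tau(\nabla f)=S(\nabla f)-(S(\nabla f)\cdot\bn)\bn$, and then invokes the cited result of Skrepek (Thm.~4.2) to identify $\pi_\tau(\nabla f)$ with $\nabla_\Gamma S(f)$; the vector identity is obtained componentwise in Euclidean coordinates, and the divergence identity by expanding in the tangent basis $\tau_i,\tau^i$ (close in spirit to your ``take the trace of the matrix identity'' step). Your localize--flatten--density scheme is the classical alternative: it avoids the weak-trace machinery and the external citation, at the price of having to do the chart bookkeeping yourself. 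Both are legitimate; the paper's version buys a shorter argument under the regularity already available in the reference, yours is more self-contained.

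Two points in your plan need tightening. First, the statement that after flattening ``the outer normal is $\bn=-e_d$ and the surface gradient is the truncated Euclidean gradient'' is not literally true: a general bi-Lipschitz flattening is not an isometry, the ambient gradient transforms by $(D\Phi)^{-T}$ while the normal transforms differently, so the identity you verify in the flattened picture is the identity for the transformed function on the flattened domain and does not transfer back without tracking these transformations --- which is exactly the content of the theorem. You flag this yourself; the clean fix is to skip flattening altogether and compute in the chart of Definition~\ref{def:surf_grad_scal}: for smooth $f$, the chain rule gives $\partial_i(f\circ\varphi^{-1})=\nabla f\cdot\tau_i$, and since $\sum_i(\bs a\cdot\tau_i)\tau^i$ is the tangential projection of any $\bs a\in\R^d$, the scalar identity is immediate; the vector and divergence cases then follow componentwise as you describe. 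Second, in the density step, be careful not to assert continuity of $f\mapsto\nabla_\Gamma S(f)$ from $W^{2,p}(\Omega)$ into $W^{1,p}(\Gamma)$ outright, since boundedness of the trace into $W^{1,p}(\Gamma)$ is essentially the statement being proved; instead, pass to the limit distributionally in each chart ($S(f_n)\circ\varphi^{-1}\to S(f)\circ\varphi^{-1}$ in $L^p(\Delta)$, hence the tangential derivatives converge as distributions, while the right-hand sides converge in $L^p(\Gamma)$ by the ordinary trace theorem), and identify the limits. With these repairs your argument goes through.
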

The proof is based on \cite[Thm.~4.2]{skrepek_2023} and is presented in Appendix.
Secondly, we recall an integration by parts rule on the boundary.

\begin{cor}\label{cor:ibp_boundary_main}
    Let $\Omega \subseteq \R^d$ be a bounded domain with $\Gamma \in C^{2,1}$ and $p \in (1,\infty)$.
	For a general (not necessarily tangential) vector field $\bv \in W^{1,p}(\Gamma)$ and $f \in W^{1,p'}(\Gamma)$
	the following integration by parts holds
	\[
		\int_\Gamma f \; \nabla_\Gamma \cdot \bv 
        - f (\bv \cdot \bn) \nabla_\Gamma \cdot \bn + \bv \cdot \nabla_\Gamma f \diff \sigma  = 0.
	\]
\end{cor}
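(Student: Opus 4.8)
The plan is to reduce the boundary integration-by-parts rule to the classical divergence theorem on the closed manifold $\Gamma$. First I would recall that for $\Gamma \in C^{2,1}$, the boundary is a compact smooth hypersurface without boundary, so the surface divergence theorem holds: for any tangential vector field $\bs{w} \in W^{1,p}(\Gamma;\R^d)$ with $\bs{w} \cdot \bn = 0$ one has $\int_\Gamma \nabla_\Gamma \cdot \bs{w} \diff \sigma = 0$, and more generally the Leibniz/product rule $\nabla_\Gamma \cdot (f \bs{w}) = f \, \nabla_\Gamma \cdot \bs{w} + \bs{w} \cdot \nabla_\Gamma f$ gives, after integrating, $\int_\Gamma f \, \nabla_\Gamma \cdot \bs{w} + \bs{w} \cdot \nabla_\Gamma f \diff \sigma = 0$ whenever $\bs{w}$ is tangential. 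The point is to handle a \emph{general} $\bs{v}$, which need not be tangential.

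The key step is the decomposition $\bs{v} = \bs{v}_\top + (\bs{v} \cdot \bn)\bn$, where $\bs{v}_\top := \bs{v} - (\bs{v}\cdot\bn)\bn$ is the tangential part. Since $\Gamma \in C^{2,1}$, the normal field $\bn$ extends to a $W^{1,\infty}$ (indeed $C^{1,1}$) vector field in a neighbourhood of $\Gamma$, so $\bs{v}_\top \in W^{1,p}(\Gamma)$ and one may apply the tangential divergence theorem to $\bs{v}_\top$:
\begin{align*}
    \int_\Gamma f \, \nabla_\Gamma \cdot \bs{v}_\top + \bs{v}_\top \cdot \nabla_\Gamma f \diff \sigma = 0.
\end{align*}
It then remains to compute $\nabla_\Gamma \cdot \bs{v}$ in terms of $\nabla_\Gamma \cdot \bs{v}_\top$ and the normal component. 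Using the product rule for the surface divergence applied to the scalar $(\bs{v}\cdot\bn)$ and the vector $\bn$, one gets
\begin{align*}
    \nabla_\Gamma \cdot \bs{v} = \nabla_\Gamma \cdot \bs{v}_\top + \nabla_\Gamma(\bs{v}\cdot\bn)\cdot \bn + (\bs{v}\cdot\bn)\, \nabla_\Gamma \cdot \bn = \nabla_\Gamma \cdot \bs{v}_\top + (\bs{v}\cdot\bn)\, \nabla_\Gamma \cdot \bn,
\end{align*}
where the middle term vanishes because $\nabla_\Gamma g$ is tangential for any $g$, hence orthogonal to $\bn$. Similarly $\bs{v}\cdot\nabla_\Gamma f = \bs{v}_\top \cdot \nabla_\Gamma f$ because $\nabla_\Gamma f \perp \bn$. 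Substituting these two identities into the tangential divergence theorem above yields exactly
\begin{align*}
    \int_\Gamma f \, \nabla_\Gamma \cdot \bs{v} - f(\bs{v}\cdot\bn)\, \nabla_\Gamma \cdot \bn + \bs{v}\cdot\nabla_\Gamma f \diff \sigma = 0,
\end{align*}
which is the claim.

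The main obstacle, and the reason $C^{2,1}$ regularity is needed rather than merely $C^{0,1}$, is making sense of the quantities $\nabla_\Gamma \cdot \bn$ (a curvature term, which requires $\bn \in W^{1,\infty}(\Gamma)$, i.e. $\Gamma \in C^{1,1}$) and of the decomposition $\bs{v} = \bs{v}_\top + (\bs{v}\cdot\bn)\bn$ preserving the $W^{1,p}(\Gamma)$-class (again needing Lipschitz regularity of $\bn$), together with a density argument: one first proves the identity for smooth $f$ and $\bs{v}$, where all manipulations are classical, and then passes to the limit using that $C^\infty(\Gamma)$ is dense in $W^{1,p}(\Gamma)$ and in $W^{1,p'}(\Gamma)$, with the boundary integrals being continuous with respect to these norms by Hölder's inequality and the boundedness of the curvature. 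Care is also needed that the pairing $\langle f, \nabla_\Gamma \cdot \bs{v}\rangle$ makes sense for $f \in W^{1,p'}(\Gamma)$ and $\bs{v} \in W^{1,p}(\Gamma)$; since $\nabla_\Gamma \cdot \bs{v} \in L^p(\Gamma)$ and $f \in L^{p'}(\Gamma) \hookleftarrow W^{1,p'}(\Gamma)$, this is fine, and the other two terms are handled analogously.
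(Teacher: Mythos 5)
Your proposal is correct and follows essentially the same route as the paper: decompose $\bv = \bv_\tau + (\bv\cdot\bn)\bn$, apply the surface divergence theorem together with the product rule to the tangential part, expand $\nabla_\Gamma\cdot\bigl((\bv\cdot\bn)\bn\bigr)$, and use that surface gradients are tangential so the term $\nabla_\Gamma(\bv\cdot\bn)\cdot\bn$ drops out. The paper (Corollary~\ref{cor:ibp_boundary}) performs exactly this computation, relying on Theorem~\ref{thm:bd_div} and Lemma~\ref{lem:bd_product_rule}, so no further comment is needed.
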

The proof can be found in \cite[Sec.~2.1]{pruss_simonett_2016} and also in the Appendix, \textit{cf.~}Section~\ref{sec:bd_func}.

\end{subsection}
\end{section}

\begin{section}{Existence of suitable weak solutions}\label{sec:exist}

In this section we prove Theorem~\ref{thm:exist_weak_sol}, where we proceed as follows. 
We introduce a regularized system, using the resolvent of the Stokes operator and the Robin Laplacian as regularization operators.
We then show the existence of a weak solution to the regularized Nernst--Planck--Poisson subsystem via the Schaefer's fixed point theorem.
Next, we show the local existence of a weak solution to the coupled regularized Navier--Stokes--Nernst--Planck--Poisson system 
via semigroup theory and Banach's fixed point theorem.
Deriving appropriate energy estimates we can extend this local solution to a global one
and then extract a convergent subsequence for vanishing regularization, whose limit is a weak solution to the original system~\eqref{eq:main_system}.

\begin{subsection}{Our regularization operators}

	\begin{definition}\label{def:reg_op}
		Let $A_2$ be the $L_\sigma^2(\Omega)$ realization of the Stokes operator with $\nu = 1$. 
		For $\kappa > 0$ we define 
		\begin{align*}
			R_\kappa = \left( 1 + \kappa A_2 \right)^{-1}, \quad 
			R^{1/2}_\kappa := \left( 1 + \kappa A_2^{1/2} \right)^{-1}, \quad
			\text{ and } \quad S_\kappa = \left( 1 - \kappa \Delta_{\varMa} \right)^{-1}.
		\end{align*}
	\end{definition}
	
	\begin{remark}\label{lem:R_alpha_self_adjoint}
		Since $- A_2, - A_2^{1/2}$ and $\Delta_{\varMa}$ generate $C_0-$semigroups in $L_\sigma^2(\Omega)$ and $L^2(\Omega),$ respectively,
		the operators from Definition~\ref{def:reg_op} are well-defined, linear and bounded operators.
  		Moreover, the operator $R^{1/2}_\kappa$ is symmetric on $D(A_2^{1/2})$, which follows from a simple application of the symmetry of the root of the Stokes operator, \textit{cf.~}Lemma~\ref{Stokes} item~\ref{lem:fract_stokes_self_adjoint}.
	\end{remark}
	
\end{subsection}

\begin{subsection}{Local existence of a unique weak solution to a regularized system}

\begin{subsubsection}{The Nernst--Planck--Poisson subsystem}

We begin by fixing a velocity $\bv \in L^\infty(0,T;L^s(\Omega))$ for some $s > 3$.
Now we consider the regularized Nernst--Planck--Poisson system 
\begin{subequations}\label{eq:reg_npp}
\begin{align}
	\partial_t \cpm + \nabla \cdot \left( \cpm \bv - \lamMa\left( \nabla \cpm \pm \cpm \nabla \psi \right) \right) &= 0 
	&&\text{ in } \Omega \times (0,T) ,\label{eq:reg_np}\\
	- \nabla \cdot (\varMa \nabla \psi) &= S_\kappa(c^+ - c^-) =: \varphi &&\text{ in } \Omega \times (0,T) ,\label{eq:reg_p}
\end{align}
\end{subequations}
with the initial conditions $\cpm(0) = \cpm_0 \quad \text{ in } \Omega$ and the boundary conditions
\begin{equation}\label{eq:reg_bc}
	\left( \cpm \bv - \lamMa\left( \nabla \cpm \pm \cpm \nabla \psi \right) \right) \cdot \bn 
	= 0 \quad \text{and} \quad
	\varMa \nabla \psi \cdot \bn + \tau \psi = \xi \quad \text{ on } \Gamma \times [0,T],
\end{equation}
coupling the Nernst--Planck equation and the Poisson equation.
Our first goal is to show that this coupled system possesses a unique weak solution, 
which we will prove in the next proposition using the Schaefer's fixed point theorem.
For later reference we introduce the shorthand notation
\[
	\W(0,T) := L^2(0,T;W^{1,2}(\Omega)) \cap W^{1,2}(0,T;(W^{1,2}(\Omega))^*).
\]
\begin{propo}[Existence of weak solutions to system \eqref{eq:reg_npp}--\eqref{eq:reg_bc}]\label{lem:reg_exis_npp}
	Let $\Omega \subseteq \R^3$ be a bounded domain with $\Gamma \in C^{3,1}$, 
	$\bd \in C^{2,1}(\overline{\Omega})$,
	$\xi \in C^1([0,T];W^{3,2}(\Gamma))$, 
	$s>3$, 
	$T \in (0,\infty)$, 
	$\bv \in L^\infty(0,T;L^s(\Omega))$ 
	and $\cpm_0 \in L^2(\Omega)_+$.
	Then there exists a unique weak solution $(\cpm, \psi)$ such that
	\begin{gather*}
		\cpm \in \W(0,T), \quad \cpm \geq 0 \quad \text{and} \quad
		\psi \in C([0,T]; W^{4,2}(\Omega)),
	\end{gather*}
	equation \eqref{eq:reg_np} is fulfilled in the distributional sense and \eqref{eq:reg_p} is fulfilled pointwise.
\end{propo}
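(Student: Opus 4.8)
The plan is to apply Schaefer's fixed point theorem on the space $\W(0,T)^2$ (tracking the pair $(c^+,c^-)$). Given $(\bar c^+,\bar c^-)\in\W(0,T)^2$, I first solve the Poisson problem $-\nabla\cdot(\varMa\nabla\psi)=\varphi:=S_\kappa(\bar c^+-\bar c^-)$ with Robin data $\varMa\nabla\psi\cdot\bn+\tau\psi=\xi$. Here the key observation is that the regularization $S_\kappa$ buys regularity: by Corollary~\ref{cor:ellip_W1q_est} and Lemma~\ref{lem:prop_reg_op} item~\ref{item:R_kappa_bound} applied with the $L^2$-Robin Laplacian, $\varphi=S_\kappa(\bar c^+-\bar c^-)$ lies in $D(\Delta_{\varMa})\subseteq W^{2,2}(\Omega)$ and in fact (bootstrapping, since $\bar c^\pm\in L^2$ and $\bd\in C^{2,1}$, $\Gamma\in C^{3,1}$, $\xi\in W^{3,2}(\Gamma)$) the elliptic regularity for the anisotropic Robin problem gives $\psi\in W^{4,2}(\Omega)$ with $\|\psi(t)\|_{W^{4,2}}\le C(\kappa)\,(\|\bar c^+(t)-\bar c^-(t)\|_{L^2}+\|\xi(t)\|_{W^{3,2}(\Gamma)})$; continuity in $t$ follows from linearity and continuity of the data, so $\psi\in C([0,T];W^{4,2}(\Omega))$. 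In particular $\nabla\psi\in C([0,T];W^{3,2}(\Omega))\hookrightarrow C([0,T];L^\infty)$, which is what makes the drift term in the Nernst--Planck equation a bounded, benign coefficient.

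Second, with $\psi$ fixed I solve the \emph{linear} parabolic system for $(c^+,c^-)$: $\partial_t c^\pm+\nabla\cdot(c^\pm\bv-\lamMa(\nabla c^\pm\pm c^\pm\nabla\psi))=0$ with the no-flux boundary condition. Since $\lamMa$ is uniformly elliptic ($\lambda>0$), $\bv\in L^\infty(0,T;L^s)$ with $s>3$, and $\nabla\psi\in C([0,T];L^\infty)$, this is a standard linear parabolic problem with a Gelfand triple $W^{1,2}(\Omega)\hookrightarrow L^2(\Omega)\hookrightarrow (W^{1,2}(\Omega))^*$; Lions' theorem (or Galerkin) yields a unique solution in $\W(0,T)$. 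This defines the fixed-point map $\mathcal{F}\colon(\bar c^+,\bar c^-)\mapsto(c^+,c^-)$; a fixed point of $\mathcal{F}$ is exactly a weak solution of \eqref{eq:reg_npp}--\eqref{eq:reg_bc}. Continuity and compactness of $\mathcal{F}$ follow from the Aubin--Lions lemma ($\W(0,T)\hookrightarrow\hookrightarrow L^2(0,T;L^2(\Omega))$) together with the continuous dependence of $\psi$ on $\bar c^\pm$ and linear estimates for the parabolic solve.

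The heart of the argument is the a~priori bound on the set $\{(c^+,c^-)=\theta\,\mathcal{F}(c^+,c^-):\theta\in[0,1]\}$ required by Schaefer. Testing the $c^\pm$-equation with $c^\pm$ itself and using Young's inequality absorb the drift terms $c^\pm\bv\cdot\nabla c^\pm$ (via $\bv\in L^s$, $s>3$, Gagliardo--Nirenberg) and $c^\pm\nabla\psi\cdot\nabla c^\pm$ (via $\nabla\psi\in L^\infty$) into the coercive term $\lambda$-times-$\|\nabla c^\pm\|_{L^2}^2$, then Gronwall in time closes the estimate uniformly in $\theta$. Additionally I would establish nonnegativity $c^\pm\ge0$ by testing with the negative part $(c^\pm)^-$ — the drift terms vanish on $\{c^\pm<0\}$ up to controllable lower-order contributions, giving $\|(c^\pm)^-(t)\|_{L^2}=0$ when $c^\pm_0\ge0$; this also shows the map can be set up on the nonnegative cone so that the fixed point is automatically nonnegative.

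The main obstacle I expect is the elliptic regularity step producing $\psi\in C([0,T];W^{4,2}(\Omega))$ for the \emph{anisotropic} Robin Laplacian with a merely $C^{2,1}$ director field: one must check that the coefficient matrix $\varMa=\bs I+\varepsilon\,\bd\otimes\bd$ is regular enough for higher-order Schauder/$L^2$-elliptic estimates and that the oblique (Robin) boundary condition, which itself involves $\bd$ and $\bn$, satisfies the Lopatinskii--Shapiro covering condition — this is where the hypotheses $\Gamma\in C^{3,1}$, $\bd\in C^{2,1}(\overline\Omega)$, $\xi\in C^1([0,T];W^{3,2}(\Gamma))$ are consumed. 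Everything else (the parabolic solve, Schaefer's machinery, the Gronwall estimate, nonnegativity) is routine once this regularity for $\psi$ is in hand.
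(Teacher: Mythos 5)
There is a genuine gap in the step you yourself call the heart of the argument: the \emph{a priori} bound for the Schaefer set is circular as you set it up. You bound the drift coefficient via $\|\psi(t)\|_{W^{4,2}(\Omega)}\le C(\kappa)\bigl(\|\bar c^+(t)-\bar c^-(t)\|_{L^2(\Omega)}+\dots\bigr)$, i.e.\ $\|\nabla\psi(t)\|_{L^\infty(\Omega)}$ is controlled by exactly the $L^2$-norm of the charges that you are trying to estimate (on the Schaefer set $\bar c^\pm=c^\pm$). Testing the Nernst--Planck equation with $c^\pm$ then yields a differential inequality of the type $y'\le C\,y(1+y)$, which Gronwall does not close on an arbitrary interval $[0,T]$ — solutions of that inequality may blow up in finite time, and there is no smallness to exploit. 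The way the paper closes this estimate (and the reason the regularization $S_\kappa$ sits in the Poisson equation at all) is different: non-negativity plus integrating the equation gives mass conservation, hence an $L^\infty(0,T;L^1(\Omega))$ bound on $c^\pm$ depending only on $c^\pm_0$; then the elliptic $L^1$-regularity of the regularized Robin operator (Lemma~\ref{lem:robin_semigroup_L1}, Corollary~\ref{cor:ellip_W1q_est}) bounds $\varphi=S_\kappa(c^+-c^-)$ in $L^\infty(0,T;W^{1,q}(\Omega))$, $q<3/2$, by $C/\kappa$ times data only, and elliptic regularity then bounds $\psi$ in $L^\infty(0,T;W^{2,t}(\Omega))$, $t<3$, \emph{uniformly} over the Schaefer set. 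Only with this solution-independent bound on $\nabla\psi$ (in $L^6$, say, not $L^\infty$) does the Gronwall argument of Lemma~\ref{lem:bounds_for_c} close. Your plan never uses the $L^1\to W^{1,q}$ smoothing of $S_\kappa$, so as written the boundedness of the Schaefer set is not established.

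Two secondary issues. First, the paper runs the fixed point in the potential, $\T:\widehat\psi\mapsto\psi$ on $X=L^\infty(0,T;W^{1,r}(\Omega))$, $r>3$, precisely because the Poisson solve boosts regularity and makes compactness of the iteration map easy; your choice of $\W(0,T)^2$ as the ambient space is problematic, since the Aubin--Lions embedding you cite gives compactness of $\W(0,T)$ into $L^2(0,T;L^2(\Omega))$, not compactness of $\mathcal F$ as a self-map of $\W(0,T)^2$ — you would have to run Schaefer in a weaker space (e.g.\ $L^2(0,T;L^2(\Omega))^2$ or $C([0,T];L^2(\Omega))^2$) or reorganize as the paper does. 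Second, the statement asserts \emph{uniqueness} of the weak solution, which Schaefer does not provide and which your proposal does not address; in the paper this is settled by subtracting the Nernst--Planck equations for two solutions emanating from the same data and testing with the difference. The remaining ingredients of your plan (elliptic regularity for the anisotropic Robin problem giving $\psi\in C([0,T];W^{4,2}(\Omega))$, the linear parabolic solve in $\W(0,T)$, non-negativity by testing with the negative part) are consistent with the paper's argument.
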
%
\begin{remark}\label{rem:psi_c_cont}
	It is meaningful to require \eqref{eq:reg_p} to be fulfilled pointwise since 
	\(
		\cpm \in \W(0,T) \hookrightarrow C([0,T];L^2(\Omega))
	\)
	and then, by Lemma~\ref{lem:prop_reg_op}, $S_\kappa(c^+-c^-) \in C([0,T];W^{2,2}(\Omega)) \hookrightarrow C(\overline{\Omega} \times [0,T])$.
\end{remark}
The proof of Proposition~\ref{lem:reg_exis_npp} can be conducted by following the proof of \cite[Lem.~4.1]{fischer_saal_2017}.
For completeness we 
will
give a proof here but we keep
it
rather short.
We would like to point out that this proof holds without any assumptions on $\nabla \cdot \bv$. 
\begin{proof}[Proof (of Proposition~\ref{lem:reg_exis_npp})]
	Let $r > 3$ and $X:= L^\infty(0,T;W^{1,r}(\Omega))$. For any $\widehat{\psi} \in X$ there exists a unique weak solution
	$\widehat{c}^{\pm} \in \W(0,T)$ 
	of \eqref{eq:reg_np} with $\psi$ replaced by $\widehat{\psi}$, see \cite[Thm.~8.30 and Thm.~8.34]{roubicek}.
	For any such $\widehat{c}^{\pm}$ there is a unique weak solution $\psi \in C([0,T];W^{4,2}(\Omega))$ to \eqref{eq:reg_p} 
	with right-hand side $S_\kappa(\widehat{c}^+ -  \widehat{c}^-)$, see \cite[Thm.~2.5.1.1]{grisvard}.
	Thus the iterative solution operator 
	\[
		\T: X \to X, \qquad \widehat{\psi} \mapsto \psi
	\]
	is well-defined.
	Further, the non-negativity of the initial condition $\cpm_0$ transfers to the solution $\widehat{c}^{\pm}$, which can be seen by testing \eqref{eq:reg_np} with 
	$\max(0, \widehat{c}^{\pm})$ using Gargliardo--Nirenberg's and Gronwall's inequalities in the same way as in~\cite[p.~1395]{constantin_ignatova_2019}.
	The continuity and compactness follows by testing an applying an Aubin--Lions lemma, see \cite[Cor.~4]{simon}.

	To show the existence of a weak solution to the coupled system \eqref{eq:reg_npp} we make use of Scheafer's fixed point theorem, \textit{cf.~}\cite[Sec.~9, Thm.~4]{evans}. 
	To that aim, we need to show that the set
	\[
		M := \set{\psi \in X}{\exists \sigma \in [0,1] : \sigma \T(\psi) = \psi}
	\]
	is bounded.
	So we take an arbitrary $\psi \in M$ and $\sigma \in [0,1] $ such that $\sigma \T(\psi) = \psi$.
	Let $\cpm$ be the solution to the Nernst--Planck equation \eqref{eq:reg_np}  corresponding to $\psi$. 
	To get a bound of $\cpm$ independent of $\psi$ we multiply \eqref{eq:reg_np} by $1$ and integrate to find
	\[
		\int_\Omega \cpm(t) \diff \X = \int_\Omega \cpm_0 \diff \X
	\]
	for almost all $t \in [0,T]$.
	By the non-negativity of $\cpm$ we find $\norm{\cpm}_{L^\infty(0,T;L^1(\Omega))} \leq \norm{\cpm_0}_{L^1(\Omega)}$.
	Using the elliptic $L^1-$regularity of the Robin Laplacian, \textit{cf.~}Lemma~\ref{lem:robin_semigroup_L1}, 
	we find that $S_\kappa(c^+ - c^-) = \varphi$
	is in $L^\infty(0,T;W^{1,q}(\Omega))$ for all $q \in [1, 3/2)$ and by Corollary~\ref{cor:ellip_W1q_est} and Sobolev's embedding theorem
	we find
	\[
		\norm{\varphi}_{L^\infty(0,T;L^t(\Omega))} \leq
		C \norm{\varphi}_{L^\infty(0,T;W^{1,q}(\Omega))} 
		\leq \frac{C}{\kappa} \norm{c^+ - c^-}_{L^\infty(0,T;L^1(\Omega))}
		\leq \frac{C}{\kappa} \sigmp{\norm{\cpm_0}_{L^1(\Omega)}}.
	\]
	for all $t \in [1,3)$.
	Then by \cite[Thm.~2.4 and 2.6]{grisvard} we find $\T(\psi) \in L^\infty(0,T;W^{2,t}(\Omega))$ and by classical Agmon–Douglis–Nirenberg
	estimates, see for example \cite[Thm.~3.1.1 iii)]{lunardi}, we have 
	\begin{align*}
		\norm{\T(\psi)}&_{ L^\infty(0,T;W^{2,t}(\Omega))}\\
		&\leq
			C \left( 
				\norm{- \nabla \cdot (\varMa\nabla \T(\psi))}_{L^\infty(0,T;L^t(\Omega))}
				+ \norm{\T(\psi)}_{L^\infty(0,T;L^t(\Omega))}
				+ \norm{E(\xi)}_{L^\infty(0,T;W^{1,t}(\Omega))}
			\right)\\
		&\leq 
			C \left( 
				\norm{\varphi}_{L^\infty(0,T;L^t(\Omega))}
				+ \norm{\xi}_{L^\infty(0,T;W^{1,3}(\Gamma))}
			\right)
		\leq
			\frac{C}{\kappa} \left( \sigmp{\norm{\cpm_0}_{L^1(\Omega)}} + 1 \right),
	\end{align*}
	where $E$ is the trace extension operator, see Section~\ref{sec:main}.
	Here we used that we find an $L^\infty(0,T;W^{1,2}(\Omega))-$bound for $\T(\psi)$ dependent only on $\varphi$ and $\xi$ 
	by testing \eqref{eq:reg_p} with $\T(\psi)$.
	Thus
	\[
		\norm{\psi}_{ L^\infty(0,T;W^{2,t}(\Omega))} 
		= \sigma \norm{\T(\psi)}_{ L^\infty(0,T;W^{2,t}(\Omega))}
		\leq \frac{C}{\kappa} \left( \sigmp{\norm{\cpm_0}_{L^1(\Omega)}} + 1\right).
	\]
	Since the choice of $\psi \in M$ was arbitrary and for all $ r \in (3,\infty)$ there is $t \in [1,3)$ close enough to three such that
	\[
		L^\infty(0,T;W^{2,t}(\Omega)) \hookrightarrow L^\infty(0,T;W^{1,r}(\Omega)),
	\]
	the boundedness of $M$ in $X$ follows and the Schaefer's fixed point theorem guarantees
	the existence of a fixed point of $\T$, which then solves our system \eqref{eq:reg_npp}.
	
	Finally, the uniqueness of this fixed point, that is the uniqueness of weak solutions to the system \eqref{eq:reg_npp},
	follows by taking two solutions $(\cpm, \psi)$ and $(\tcpm, \tpsi)$ emanating from the same initial data,
	subtracting \eqref{eq:reg_np} for $\cpm$ and $\tcpm$ and testing with the difference $(\cpm - \tcpm)$.
\end{proof}
To show the existence of a unique weak solution to the whole coupled system we would like to use Banach's fixed point theorem.
In order to be able to prove that the considered mapping is a contraction, 
we need an explicit formulation of some bounds for $\cpm$, which we will prove next.
With explicit we mean explicit in $\bv$.
\begin{lem}[Bounds for $\cpm$]\label{lem:bounds_for_c}
	Let $(\cpm, \psi)$ be the weak solution to \eqref{eq:reg_npp} with
	$\bv \in L^\infty(0,T;W_{0, \sigma}^{1,2}(\Omega))$.
	We then have 
	\begin{align}\label{eq:bounds_for_c}
		\norm{\cpm}_{L^\infty(0,T;L^2(\Omega))} +  \norm{\cpm}_{L^3(0,T;L^3(\Omega))} + \norm{\psi}_{L^\infty(0,T;W^{4,2}(\Omega))} 
		\leq C(\kappa, T),
	\end{align}
	for some function $C(\kappa, \cdot): (0,\infty) \to \R$ which is strictly positive, monotonically increasing, independent of $\bv$
	and $C(\kappa, T) < \infty$ for all $T < \infty$ and all $\kappa > 0$.
\end{lem}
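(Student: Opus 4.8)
The plan is to run the standard energy-type estimates for the regularized Nernst--Planck--Poisson system, being careful to keep every constant explicit in $\bv$ (i.e.\ depending only on $\kappa$, $T$, the data, and the fixed norm $\norm{\bv}_{L^\infty(0,T;W^{1,2}_{0,\sigma})}$, but not on $\bv$ itself in a way that blows up). First I would test the weak formulation of \eqref{eq:reg_np} with $\cpm$ itself. The parabolic term gives $\tfrac12 \tfrac{d}{dt}\norm{\cpm}_{L^2}^2 + \norm{|\nabla\cpm|_{\lamMan}}_{L^2}^2$ on the left (recall $\lambda>0$ so this controls $\norm{\nabla\cpm}_{L^2}^2$ up to a constant). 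On the right there are two terms to absorb: the convective term $\int_\Omega \cpm\bv\cdot\nabla\cpm\diff\X$ and the electromigration term $\mp\int_\Omega \lamMan\cpm\nabla\psi\cdot\nabla\cpm\diff\X$. For the first, since $\bv$ is only in $W^{1,2}_{0,\sigma}$ one uses $\nabla\cdot\bv=0$ to rewrite it as $-\tfrac12\int_\Omega|\cpm|^2\nabla\cdot\bv\diff\X = 0$ pointwise in time, or more carefully integrates by parts and uses Hölder with $\bv\in L^6$, $\cpm\in L^6$ (Sobolev), $\nabla\cpm\in L^2$, then Young; either way the convective contribution is controlled by $\varepsilon_0\norm{\nabla\cpm}_{L^2}^2 + C(\norm{\bv}_{L^\infty(L^6)})\norm{\cpm}_{L^2}^2$.

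The electromigration term is where the regularization is used: by Remark~\ref{rem:psi_c_cont} and Proposition~\ref{lem:reg_exis_npp}, $\psi$ solves \eqref{eq:reg_p} with right-hand side $\varphi = S_\kappa(c^+-c^-)$, and by Corollary~\ref{cor:ellip_W1q_est} together with the elliptic estimate $\psi\in W^{2,t}$ (as in the proof of Proposition~\ref{lem:reg_exis_npp}) one has $\norm{\nabla\psi}_{L^\infty(0,T;L^\infty)}\le \tfrac{C}{\kappa}(\sigmp{\norm{\cpm}_{L^\infty(0,T;L^1)}}+1) \le \tfrac{C}{\kappa}(\sigmp{\norm{\cpm_0}_{L^1}}+1)$, using the conservation $\norm{\cpm}_{L^\infty(0,T;L^1)}\le\norm{\cpm_0}_{L^1}$ established in Proposition~\ref{lem:reg_exis_npp}. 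Hence $\bigl|\mp\int_\Omega\lamMan\cpm\nabla\psi\cdot\nabla\cpm\bigr| \le C\norm{\nabla\psi}_{L^\infty}\norm{\cpm}_{L^2}\norm{\nabla\cpm}_{L^2} \le \varepsilon_0\norm{\nabla\cpm}_{L^2}^2 + \tfrac{C}{\kappa^2}(\sigmp{\norm{\cpm_0}_{L^1}}+1)^2\norm{\cpm}_{L^2}^2$. Absorbing the $\varepsilon_0\norm{\nabla\cpm}_{L^2}^2$ terms into the left and applying Gronwall's inequality yields the $L^\infty(0,T;L^2(\Omega))$ bound with a constant of the claimed form $C(\kappa,T)$ — strictly positive, monotone increasing in $T$, finite for finite $T$, and independent of $\bv$ (beyond its fixed norm). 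Integrating the differential inequality in time also gives $\nabla\cpm\in L^2(0,T;L^2(\Omega))$ with the same dependence.

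For the $L^3(0,T;L^3(\Omega))$ bound I would use the interpolation $\norm{\cpm}_{L^3(\Omega\times(0,T))}^3 \le \int_0^T \norm{\cpm}_{L^2}^{3/2}\norm{\cpm}_{L^6}^{3/2}\diff t$ (the exponents $\tfrac{1}{3}=\tfrac{1}{2}\cdot\tfrac{1}{2}+\tfrac{1}{2}\cdot\tfrac{1}{6}$ in the Gagliardo--Nirenberg interpolation in space, raised to the third power and integrated), then bound $\norm{\cpm}_{L^6}\le C\norm{\cpm}_{W^{1,2}}$ by Sobolev and use $\cpm\in L^\infty(0,T;L^2)\cap L^2(0,T;W^{1,2})$ already obtained; this is the standard parabolic interpolation giving an $L^{10/3}$-type gain, and $L^3$ in both variables is weaker, so it follows. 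Finally the $\norm{\psi}_{L^\infty(0,T;W^{4,2}(\Omega))}$ bound comes from elliptic regularity for \eqref{eq:reg_p}: bootstrapping Agmon--Douglis--Nirenberg estimates as in the proof of Proposition~\ref{lem:reg_exis_npp}, $\psi\in W^{k+2,2}$ is controlled by $\varphi = S_\kappa(c^+-c^-)\in W^{k,2}$, and since $\cpm\in L^\infty(0,T;L^2)$ and $S_\kappa$ maps $L^2\to W^{2,2}$ boundedly (Lemma~\ref{lem:prop_reg_op}, item~\ref{item:R_kappa_bound}, with a $\tfrac1\kappa$-dependent constant), we get $\varphi\in L^\infty(0,T;W^{2,2})$, hence $\psi\in L^\infty(0,T;W^{4,2})$ with a constant of the form $C(\kappa,T)$ via the already-derived $L^2$-bound on $\cpm$. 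The main obstacle is purely bookkeeping: ensuring that every use of the regularization ($S_\kappa$, the elliptic $L^1$-estimate) is tracked so the final constant depends on $\kappa$ only polynomially and on $T$ monotonically, and that the convective term is handled without any hypothesis beyond $\bv\in L^\infty(0,T;W^{1,2}_{0,\sigma})$ — exactly the divergence-free structure that makes $\int\cpm\bv\cdot\nabla\cpm$ harmless.
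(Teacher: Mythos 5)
Your overall strategy is the paper's: test \eqref{eq:reg_np} with $\cpm$, let the convection term vanish through $\nabla\cdot\bv=0$, bound $\nabla\psi$ using only mass conservation plus the $\kappa$-regularized elliptic estimates, close with Gronwall, interpolate for the $L^3$ bound, and bootstrap elliptic regularity for $\norm{\psi}_{L^\infty(0,T;W^{4,2})}$. However, two steps as written do not hold up. First, the bound $\norm{\nabla\psi}_{L^\infty(0,T;L^\infty(\Omega))}\le \tfrac{C}{\kappa}\big(\norm{c^+_0}_{L^1}+\norm{c^-_0}_{L^1}+1\big)$ does not follow from the ingredients you cite: mass conservation and Corollary~\ref{cor:ellip_W1q_est} give $\varphi=S_\kappa(c^+-c^-)\in L^\infty(0,T;W^{1,q}(\Omega))$ only for $q<3/2$, hence $\varphi\in L^\infty(0,T;L^t)$ for $t<3$ and $\psi\in L^\infty(0,T;W^{2,t})$ for $t<3$, which embeds into $W^{1,r}$ for every finite $r$ but \emph{not} into $W^{1,\infty}$. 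An $L^\infty$ bound on $\nabla\psi$ at this stage would require $\varphi\in L^p$ with $p>3$, which is exactly what you do not yet have (it becomes available only after the $L^\infty(0,T;L^2)$ bound on $\cpm$, so using it inside the Gronwall step is circular). The repair is the one the paper makes: use $\varphi\in L^\infty(0,T;L^2)$ (via $W^{1,6/5}\hookrightarrow L^2$), hence $\psi\in L^\infty(0,T;W^{2,2})$ and $\nabla\psi\in L^\infty(0,T;L^6)$, split the electromigration term as $\norm{\cpm}_{L^3}\norm{\lamMa}_{L^\infty}\norm{\nabla\psi}_{L^6}\norm{\nabla\cpm}_{L^2}$, apply Gagliardo--Nirenberg to $\norm{\cpm}_{L^3}$ and Young, and then Gronwall.

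Second, your treatment of the convection term must not be "either way". The lemma asserts $C(\kappa,T)$ is independent of $\bv$ outright, not merely controlled by a fixed norm of $\bv$, and this is exactly what is needed in the proof of Lemma~\ref{lem:contraction}, where the bound on $G_\kappa$ has to be independent of $M$ before $M$ and $T^\star$ are chosen. The Hölder--Young alternative you propose leaves a term $C\big(\norm{\bv}_{L^\infty(0,T;L^6)}\big)\norm{\cpm}^2_{L^2}$ in the Gronwall factor and therefore proves a weaker statement than the lemma; only the exact cancellation $\int_\Omega \cpm\,\bv\cdot\nabla\cpm\,\diff\X=\tfrac12\int_\Omega\bv\cdot\nabla|\cpm|^2\diff\X=0$ (using $\nabla\cdot\bv=0$ and $\bv\in W^{1,2}_{0,\sigma}$) is admissible. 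With these two corrections your argument coincides with the paper's proof; the remaining steps ($L^3(0,T;L^3)$ by parabolic interpolation from $L^\infty(L^2)\cap L^2(W^{1,2})$, and $\psi\in L^\infty(0,T;W^{4,2})$ from $\varphi\in L^\infty(0,T;W^{2,2})$ via the $\kappa$-dependent $L^2$-elliptic estimate and higher-order regularity with the boundary datum $\xi$) are correct and match the paper.
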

The proof is based on elliptic regularity and a Gronwall argument and can be found in the Appendix. 
\end{subsubsection}

\begin{subsubsection}{The fully regularized system}

We consider the full regularized system, similar to \cite{fischer_saal_2017},
\begin{subequations}\label{eq:full_reg_system}
\begin{align}
	\partial_t \bv + A_2 \bv + P \left((R_\kappa(\bv) \cdot \nabla ) \bv \right)
		+ R^{1/2}_\kappa \left(P\left( (c^+ -c^-) \nabla \psi \right) \right) &= 0 &&\text{ in } \Omega \times (0,T) , \label{eq:full_reg_ns}\\
	\partial_t \cpm + \nabla \cdot \left( \cpm R^{1/2}_\kappa(\bv) - \lamMa\left( \nabla \cpm \pm \cpm \nabla \psi \right) \right) 
	&= 0 &&\text{ in } \Omega \times (0,T) ,\label{eq:full_reg_np}\\
	- \nabla \cdot (\varMa \nabla \psi) &= S_\kappa(c^+ - c^-) &&\text{ in } \Omega \times (0,T),\label{eq:full_reg_p}
\end{align}
\end{subequations}
equipped with the initial conditions $\bv(0) = \bv_0 \text{ and } \cpm(0) = \cpm_0 \text{ in } \Omega$
and the boundary conditions
\begin{equation}\label{eq:full_reg_system_bc}
		\bv = 0, \quad \left( \cpm \bv - \lamMa\left( \nabla \cpm \pm \cpm \nabla \psi \right) \right) \cdot \bn = 0,
		\quad \text{ and } \quad \varMa \nabla \psi \cdot \bn + \tau \psi = \xi 
		\quad \text{ on } \Gamma \times [0,T].
\end{equation}
We can show the existence of a unique local weak solution to system \eqref{eq:full_reg_system} via the variation of constants formula 
and the contraction mapping theorem.
We define 
\begin{align*}
	F_\kappa: D(A_2^{1/2}) &\to L^2(\Omega), \quad &&\bv \mapsto P\left( (R_\kappa(\bv) \cdot \nabla) \bv \right),\\
	G_\kappa: D(A_2^{1/2}) &\to L^2(\Omega), \quad &&\bv \mapsto R^{1/2}_\kappa \left(P\left( (c^+ -c^-) \nabla \psi \right) \right),
\end{align*}
where $(\cpm, \psi)$ in $G_\kappa$ denotes the unique solution to the Nernst--Planck--Poisson subsystem from Proposition~\ref{lem:reg_exis_npp}.
Additionally, we define the space 
\[
	X_T := C\left([0,T]; D(A^{1/2}_2) \right).
\]
Equipped with the norm $\norm{\cdot}_{X_T}$ given by
\[
	\norm{\bv}_{X_T} 
	:= 
		\sup_{t \in [0,T]} \left( \norm{\bv(t)}_{L^2(\Omega)} + \norm{A_2^{1/2} \bv(t)}_{L^2(\Omega)} \right)
	= 
		\sup_{t \in [0,T]} \left( \norm{\bv(t)}_{L^2(\Omega)} + \norm{\nabla \bv(t)}_{L^2(\Omega)} \right)
\]
$X_T$ is a Banach space, see \cite[Lem.~7.2.1]{emmrich}, where the equality follows from \cite[Lem.~2.2.1]{sohr}.
For $\bv_0 \in D(A_2^{1/2})$ we now define
\begin{equation}\label{eq:var_of_const}
	H_\kappa: X_T \to X_T, \quad \quad
	H_\kappa(\bv) 
	:= 
		\e^{- t A_2} \bv_0 - \int_0^t \e^{-(t-s)A_2} \left( 
			F_\kappa(\bv(s)) + G_\kappa(\bv(s))
		\right)\diff s,
\end{equation}
where the integral is understood in the Bochner sense with values in $L_\sigma^2(\Omega)$.
The integrand of \eqref{eq:var_of_const} is indeed Bochner integrable, since
$t \mapsto \e^{- t A_2} g$ is continuous from $[0,T]$ to $L_\sigma^2(\Omega)$ for all $g \in L_\sigma^2(\Omega)$ by the strong continuity of analytic semigroups, \cite[Prop.~4.3]{engel_nagel}
and $t \mapsto F_\kappa(\bv(t)) + G_\kappa(\bv(t))$ is continuous from $[0,T]$ to $L_\sigma^2(\Omega)$ by the boundedness of 
$P, R_\kappa$ and $R^{1/2}_\kappa$, \textit{cf.~}\cite[Thm.~1.4]{simader_sohr} and Lemma~\ref{lem:prop_reg_op}, respectively, the definition of $X_T$,
and the fact that by Remark~\ref{rem:psi_c_cont} we  have $(\cpm, \psi ) \in C([0,T];L^2(\Omega)) \times C([0,T];W^{4,2}(\Omega))$.
Furthermore, we note that $H_\kappa$ indeed maps to $X_T$. First, we see that for all $\bv \in X_T$ we have 
$H_\kappa(\bv)(t) \in D(A_2^{1/2})$,
since $H_\kappa(\bv)(0) = \bv_0 \in D(A_2^{1/2})$ and for $t>0$ we have $\e^{- t A_2}g \in D(A_2) \subseteq D(A^{1/2}_2)$ for all $g \in L^2_\sigma(\Omega)$, \textit{cf.}~\cite[Lem.~2.2.1]{sohr}. 
The continuity of $t \mapsto H_\kappa(\bv(t))$ from $[0,T]$ to $W^{1,2}(\Omega)$ follows from \cite[Prop.~4.2.1]{lunardi}.

For the existence of regularized solutions we take smoother data and collect all needed assumptions in the following:
\begin{ass}\label{ass:reg}
	We require $\Omega \subseteq \R^3$ to be a bounded smooth domain and take 
	${\bd \in W^{4,\infty}(\Omega)}$ with $\bd \cdot \bn = 0$ on $\Gamma$, 
	${\xi \in C^1([0,T];W^{3,10/3}(\Gamma))}$, $\varepsilon, \lambda > 0$, ${\cpm_0 \in L^2(\Omega)_+}$, and $\bv_0 \in D(A_2^{1/2})$. 
\end{ass}

\begin{lem}[Local weak solution to system \eqref{eq:full_reg_system}]\label{lem:contraction}
	Let $T_0 > 0$ and Assumption~\ref{ass:reg} hold, then there exists $M > \norm{\bv_0}_{D(A_2^{1/2})}$ and $T^\star  \in (0,T_0)$ such that $H_\kappa$ is a contraction on
	\[
		Z(M,T^\star) := \{\bv \in X_{T^\star} | \bv(0) = \bv_0, \; \norm{\bv}_{X_{T^\star}} \leq M \}
	\]
	and has a unique fixed point which solves the regularized system~\eqref{eq:full_reg_system}. Here  $T^\star$ may depend on $M$.
\end{lem}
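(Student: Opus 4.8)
The plan is to apply Banach's fixed point theorem to $H_\kappa$ on $Z(M,T^\star)$, choosing $M$ first in terms of $\norm{\bv_0}_{D(A_2^{1/2})}$ alone and then $T^\star\in(0,T_0)$ small in terms of $M$, $\kappa$ and $T_0$. Two properties of the Stokes semigroup (Lemma~\ref{Stokes}) are used throughout: boundedness on $L^2_\sigma(\Omega)$, which, since $\e^{-tA_2}$ commutes with $A_2^{1/2}$ on $D(A_2^{1/2})$, gives $\norm{\e^{-tA_2}\bv_0}_{X_{T_0}}\le C\norm{\bv_0}_{D(A_2^{1/2})}$; and analyticity, which yields the smoothing bound $\norm{A_2^{1/2}\e^{-\sigma A_2}g}_{L^2(\Omega)}\le C\sigma^{-1/2}\norm{g}_{L^2(\Omega)}$ for $\sigma>0$, $g\in L^2_\sigma(\Omega)$, whose time singularity is integrable. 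I also note that Assumption~\ref{ass:reg} entails the hypotheses of Proposition~\ref{lem:reg_exis_npp} and Lemma~\ref{lem:bounds_for_c} (e.g.\ $\bd\in W^{4,\infty}(\Omega)\hookrightarrow C^{2,1}(\overline\Omega)$ and $\xi\in C^1([0,T_0];W^{3,10/3}(\Gamma))\hookrightarrow C^1([0,T_0];W^{3,2}(\Gamma))$), and that for $\bv\in X_{T_0}$ the transport velocity $R^{1/2}_\kappa(\bv)$ is divergence-free, vanishes on $\Gamma$, and lies in $L^\infty(0,T_0;W^{1,2}_{0,\sigma}(\Omega))\hookrightarrow L^\infty(0,T_0;L^6(\Omega))$; hence by Proposition~\ref{lem:reg_exis_npp} the pair $(\cpm_\bv,\psi_\bv)$ solving \eqref{eq:reg_npp} with transport velocity $R^{1/2}_\kappa(\bv)$ is well defined, and $G_\kappa(\bv)$ is the one formed with this pair.

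First I would establish growth and Lipschitz bounds for the nonlinearities on $D(A_2^{1/2})$. For $F_\kappa$, the three-dimensional embedding $D(A_2)\hookrightarrow W^{2,2}(\Omega)\hookrightarrow C(\overline\Omega)$ together with Lemma~\ref{lem:prop_reg_op} item~\ref{item:R_kappa_bound} gives $\norm{R_\kappa(\bv_1)}_{L^\infty(\Omega)}\le C(\kappa)\norm{\bv_1}_{L^2(\Omega)}$, so that $\norm{F_\kappa(\bv_1)}_{L^2(\Omega)}\le C(\kappa)\norm{\bv_1}_{D(A_2^{1/2})}^2$ and, via $(R_\kappa(\bv_1)\cdot\nabla)\bv_1-(R_\kappa(\bv_2)\cdot\nabla)\bv_2=(R_\kappa(\bv_1-\bv_2)\cdot\nabla)\bv_1+(R_\kappa(\bv_2)\cdot\nabla)(\bv_1-\bv_2)$, one gets $\norm{F_\kappa(\bv_1)-F_\kappa(\bv_2)}_{L^2(\Omega)}\le C(\kappa)\bigl(\norm{\bv_1}_{D(A_2^{1/2})}+\norm{\bv_2}_{D(A_2^{1/2})}\bigr)\norm{\bv_1-\bv_2}_{D(A_2^{1/2})}$. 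For $G_\kappa$, Lemma~\ref{lem:bounds_for_c} gives $\norm{\cpm_\bv}_{L^\infty(0,T_0;L^2(\Omega))}+\norm{\psi_\bv}_{L^\infty(0,T_0;W^{4,2}(\Omega))}\le C(\kappa,T_0)$ uniformly in $\bv$; combined with $W^{4,2}(\Omega)\hookrightarrow C^1(\overline\Omega)$ and the boundedness of $P$ and $R^{1/2}_\kappa$ this yields $\sup_{s\le T_0}\norm{G_\kappa(\bv)(s)}_{L^2(\Omega)}\le C(\kappa,T_0)$, again uniformly in $\bv$. The Lipschitz bound for $G_\kappa$ rests on a stability estimate for \eqref{eq:reg_npp} with respect to the transport velocity. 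Setting $\delta c^\pm:=c^\pm_{\bv_1}-c^\pm_{\bv_2}$ and $\delta\psi:=\psi_{\bv_1}-\psi_{\bv_2}$, one subtracts the equations and tests the Nernst--Planck difference with $\delta c^\pm$; the term $\int_\Omega\delta c^\pm\,R^{1/2}_\kappa(\bv_1)\cdot\nabla\delta c^\pm$ vanishes because $R^{1/2}_\kappa(\bv_1)$ is divergence-free and vanishes on $\Gamma$, the coercivity of $\lamMa$ controls the diffusion term, and the rest is absorbed using $\norm{\nabla\psi_{\bv_1}}_{L^\infty(\Omega)}\le C(\kappa,T_0)$, the elliptic estimates $\norm{\delta\psi}_{W^{2,2}(\Omega)}\le C\norm{\delta c^+-\delta c^-}_{L^2(\Omega)}$ and $\norm{\delta\psi}_{W^{4,2}(\Omega)}\le C(\kappa)\norm{\delta c^+-\delta c^-}_{L^2(\Omega)}$ (from regularity of $S_\kappa$ and elliptic regularity for the anisotropic Robin problem), $\norm{R^{1/2}_\kappa(\bv_1-\bv_2)}_{W^{1,2}(\Omega)}\le C(\kappa)\norm{\bv_1-\bv_2}_{X_{T_0}}$, and---crucially---the $L^3(0,T_0;L^3(\Omega))$-bound on $\cpm_{\bv_2}$ from Lemma~\ref{lem:bounds_for_c}, which supplies $L^1$-in-time Gronwall coefficients and a source term of size $C(\kappa,T_0)\norm{\bv_1-\bv_2}_{X_{T_0}}$. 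Since $\delta c^\pm(0)=0$, Gronwall's lemma gives $\norm{\delta c^\pm}_{L^\infty(0,T_0;L^2(\Omega))}+\norm{\delta\psi}_{L^\infty(0,T_0;W^{4,2}(\Omega))}\le C(\kappa,T_0)\norm{\bv_1-\bv_2}_{X_{T_0}}$, hence $\sup_{s\le T_0}\norm{G_\kappa(\bv_1)(s)-G_\kappa(\bv_2)(s)}_{L^2(\Omega)}\le C(\kappa,T_0)\norm{\bv_1-\bv_2}_{X_{T_0}}$.

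With these bounds the fixed point argument is routine. For $\bv\in Z(M,T^\star)$ the smoothing bound and the growth estimates give $\norm{H_\kappa(\bv)}_{X_{T^\star}}\le C\norm{\bv_0}_{D(A_2^{1/2})}+C\bigl(C(\kappa)M^2+C(\kappa,T_0)\bigr)(T^\star)^{1/2}$; choosing $M:=2C\norm{\bv_0}_{D(A_2^{1/2})}+1>\norm{\bv_0}_{D(A_2^{1/2})}$ and then $T^\star\in(0,T_0)$ small enough that the second summand is at most $M/2$ (possible as $C(\kappa,T_0)<\infty$), and recalling from the discussion before the lemma that $t\mapsto H_\kappa(\bv)(t)$ is continuous into $D(A_2^{1/2})$ with $H_\kappa(\bv)(0)=\bv_0$, shows that $H_\kappa$ maps $Z(M,T^\star)$ into itself. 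Likewise, for $\bv_1,\bv_2\in Z(M,T^\star)$ the Lipschitz bounds and the same singular integral give $\norm{H_\kappa(\bv_1)-H_\kappa(\bv_2)}_{X_{T^\star}}\le C\bigl(C(\kappa)M+C(\kappa,T_0)\bigr)(T^\star)^{1/2}\norm{\bv_1-\bv_2}_{X_{T^\star}}$, so after possibly shrinking $T^\star$ once more the prefactor is strictly below $1$ and $H_\kappa$ is a contraction. Banach's fixed point theorem then yields a unique fixed point $\bv\in Z(M,T^\star)$, and standard parabolic bootstrapping---local Hölder continuity of $s\mapsto F_\kappa(\bv(s))+G_\kappa(\bv(s))$ into $L^2_\sigma(\Omega)$ on $(0,T^\star]$, then maximal regularity of the analytic semigroup $(\e^{-tA_2})_{t\ge0}$, as in \cite{fischer_saal_2017}---upgrades the mild solution to a strong solution of \eqref{eq:full_reg_ns}, which together with $(\cpm_\bv,\psi_\bv)$ solves the regularized system \eqref{eq:full_reg_system}.

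I expect the velocity-stability estimate for \eqref{eq:reg_npp} underlying the Lipschitz property of $G_\kappa$ to be the main obstacle: since the charge densities are controlled only in $L^3(0,T_0;L^3(\Omega))$, the coefficients arising in the energy identity for $\delta c^\pm$---through the terms $\int_\Omega c^\pm_{\bv_2}\bigl(R^{1/2}_\kappa(\bv_1)-R^{1/2}_\kappa(\bv_2)\bigr)\cdot\nabla\delta c^\pm$ and $\int_\Omega\lamMa\,c^\pm_{\bv_2}\,\nabla\delta\psi\cdot\nabla\delta c^\pm$---are only $L^{3/2}$ in time and must be absorbed carefully (via $W^{1,2}(\Omega)\hookrightarrow L^6(\Omega)$, Young's inequality and Lemma~\ref{lem:bounds_for_c}) before Gronwall's lemma applies; the remaining steps follow the semilinear mild-solution scheme of \cite{fischer_saal_2017}.
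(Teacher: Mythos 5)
Your proposal is correct and follows essentially the same route as the paper: a Banach fixed point argument for $H_\kappa$ on $Z(M,T^\star)$, with the self-map and contraction estimates obtained from the smoothing bound $\norm{A_2^{1/2}\e^{-tA_2}}\le C t^{-1/2}$, the $L^\infty$-bound $\norm{R_\kappa(\bv)}_{L^\infty(\Omega)}\le C(\kappa)\norm{\bv}_{L^2(\Omega)}$ for $F_\kappa$, and a velocity-stability (Lipschitz) estimate for the regularized Nernst--Planck--Poisson subsystem for $G_\kappa$, proved exactly as in the paper by testing the difference equation with $\cpm_1-\cpm_2$, invoking the elliptic $W^{2,2}$/$W^{4,2}$ estimates for $\psi_1-\psi_2$ and the bounds of Lemma~\ref{lem:bounds_for_c}, and closing with Gronwall. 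The only deviations are cosmetic (an explicit bootstrapping remark to pass from mild to strong solution, and choosing $M$ by an explicit formula rather than the inequality $C_S\norm{\bv_0}_{D(A_2^{1/2})}\le M/2$), neither of which changes the argument.
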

This proof is very similar to the proof of \cite[Lem.~4.2]{fischer_saal_2017}.

\begin{proof}
	For $T \in (0,T_0)$ the set $Z(M,T)$ is a closed subset of the Banach space $X_T$ and thus 
	by Banach's fixed point theorem, \textit{cf.~}\cite[Thm.~A.2.2]{emmrich}, 
	it is enough to show that $H_\kappa$ is a self-map from $Z(M,T)$ to $Z(M,T)$ and contracting.
	We first show the self-map property. We note that for $\bv \in Z(M,T)$ we have $H_\kappa(\bv)(0) = \bv_0$, 
	thus the only thing we need to show is that $\norm{H_\kappa(\bv)}_{X_T} \leq M$.
	We start by estimating $F_\kappa$ and $G_\kappa$ separately. For almost all $t \in (0,T)$ we have
	\begin{align*}
		\norm{F_\kappa(\bv(t))}_{L^2(\Omega)}
		&\leq 
			\norm{\nabla \bv(t) R_\kappa(\bv(t))}_{L^2(\Omega)}
		\leq 
			\norm{\nabla \bv(t)}_{L^2(\Omega)}
			\norm{R_\kappa(\bv(t))}_{L^\infty(\Omega)}\\
		&\leq	
			C(\kappa) \norm{\nabla \bv(t)}_{L^2(\Omega)}
			\norm{\bv(t)}_{L^2(\Omega)}
		\leq
			C(\kappa) \left( \snorm{A_2^{1/2} \bv(t)}_{L^2(\Omega)}^2 + \norm{\bv(t)}_{L^2(\Omega)}^2 \right)\\
		&\leq 
			C(\kappa) M^2
	\end{align*}
	by Lemma~\ref{lem:prop_reg_op} since $D(A_2) \hookrightarrow W^{2,2}(\Omega) \hookrightarrow L^\infty(\Omega)$.
	Additionally, we have
	\begin{align*}
		\norm{G_\kappa(\bv(t))}_{L^2(\Omega)} 
		&\leq
			C \norm{(c^+(t) - c^-(t)) \nabla \psi(t)}_{L^2(\Omega)}\\
		&\leq
			C \sigmp{\norm{\cpm}_{L^\infty(0,T;L^2(\Omega))}} \norm{\psi}_{L^\infty(0,T;W^{4,2}(\Omega))}
		\leq 
			C(\kappa, T_0)
	\end{align*}
	by \eqref{eq:bounds_for_c}, 
	where the upper bound on the right is independent of $\bv$ and thus of $M$ but depends on the boundary and initial data and follows from Lemma \ref{lem:bounds_for_c}.
	Next, we estimate
	\begin{align*}
		\norm{H_\kappa(\bv(t))}_{L^2(\Omega)}
		&\leq
			\norm{\e^{- t A_2}}_{\mathcal{L}(L_\sigma^2(\Omega))} \norm{\bv_0}_{L^2(\Omega)}\\
			&\hspace{1em}+ \int_0^t 
				\snorm{\e^{- (t-s) A_2}}_{\mathcal{L}(L_\sigma^2(\Omega))} 
				\left( \norm{F_\kappa(\bv(s))}_{L^2(\Omega)} + \norm{G_\kappa(\bv(s))}_{L^2(\Omega)} \right)
			\diff s\\
		&\leq
			C_S \norm{\bv_0}_{L^2(\Omega)}
			+  T (C(\kappa)M^2 + C(\kappa,T_0)),
	\end{align*}
	where we used the boundedness of the semigroup generated by the Stokes operator, see Lemma~\ref{Stokes} item~\ref{lem:stokes_semigroup}, 
	which means that there is a $C_S > 0$ such that for all $t \in (0,\infty)$ we have $\norm{\e^{-t A_2}}_{\mathcal{L}(L^2(\Omega))} \leq C_S$.
	Further, we have
	\begin{align*}
		\snorm{A_2^{1/2} H_\kappa(\bv(t))}_{L^2(\Omega)}
		&\leq 
			\snorm{A_2^{1/2}\e^{- t A_2} \bv_0}_{L^2(\Omega)}\\
			&\hspace{1em}+ \int_0^t 
				\snorm{A_2^{1/2}\e^{- (t-s) A_2}}_{\mathcal{L}(L_\sigma^2(\Omega))} 
				\left( \norm{F_\kappa(\bv(s))}_{L^2(\Omega)} + \norm{G_\kappa(\bv(s))}_{L^2(\Omega)} \right)
			\diff s\\
		&\leq
			\snorm{\e^{- t A_2}A_2^{1/2} \bv_0}_{L^2(\Omega)}
			+ \int_0^t 
				\frac{1}{\sqrt{t-s}} \left( C(\kappa) M^2 + C(\kappa,T_0) \right)
			\diff s\\
		&\leq 
			C_S \snorm{A_2^{1/2} \bv_0}_{L^2(\Omega)} + \sqrt{T} \left( C(\kappa) M^2 + C(\kappa, T_0) \right),
	\end{align*}
	where we used the estimate 
	\begin{equation}\label{eq:est_frac_stokes_semigroup}
		\norm{A_2^{1/2}\e^{- t A_2}}_{\mathcal{L}(L_\sigma^2(\Omega))} \leq \frac{C}{\sqrt{t \; }}
	\end{equation}
	 for all $t > 0$ for the root of the Stokes operator, see for example \cite[Prop.~1.2]{giga_miyakawa_1985}
	and the fact that for every $g \in D(A_2^{1/2})$ we have 
	\begin{equation}\label{eq:interchange_A_alpha_e_tA}
		A_2^{1/2} \e^{- t A_2} g = \e^{- t A_2} A_2^{1/2} g,
	\end{equation}
	which follows from \cite[Chap.~IV, below Lem.~1.5.1]{sohr}.
	First choosing $M>0$ large enough such that
	\[
		C_S \left( \norm{\bv_0}_{L^2(\Omega)} + \snorm{A_2^{1/2} \bv_0}_{L^2(\Omega)} \right)  \leq \frac{M}{2}
	\]
	and then choosing $T^\star>0$ small enough so that 
	\[
		\Big( T^\star + \sqrt{T^\star}\, \Big) \left( C(\kappa) M^2 + C(\kappa, T_0) \right) \leq \frac{M}{2},
	\]
	the self-map property of $H_\kappa$ follows.
	Next, we show that $H_\kappa$ is a contraction.
	For that we take $\bv_1, \bv_2 \in Z(M,T)$ and by Lemma~\ref{lem:prop_reg_op} we estimate
	\begin{align}\label{eq:contraction_F_est}
    \begin{split}
		&\norm{F_\kappa(\bv_1(t)) - F(\bv_2(t))}_{L^2(\Omega)}\\
		&\hspace{1em}\leq
			\norm{R_\kappa(\bv_1(t))}_{L^\infty(\Omega)} \norm{\nabla (\bv_1(t) - \bv_2(t))}_{L^2(\Omega)}
			+ \norm{R_\kappa(\bv_1(t)) - R_\kappa(\bv_2(t))}_{L^\infty(\Omega)} \norm{\nabla \bv_2(t)}_{L^2(\Omega)}\\
		&\hspace{1em}\leq
			C(\kappa) \norm{\bv_1(t)}_{L^2(\Omega)} \snorm{A_2^{1/2}(\bv_1(t) - \bv_2(t))}_{L^2(\Omega)}
			+ C(\kappa) M \norm{\bv_1(t) - \bv_2(t)}_{L^2(\Omega)}\\
		&\hspace{1em}\leq 
			C(\kappa) M \norm{\bv_1 - \bv_2}_{X_T}.
    \end{split}
	\end{align}
	Let $(\cpm_1, \psi_1)$ and $(\cpm_2, \psi_2)$ be the solutions to the Nernst--Planck--Poisson system associated to $\bv_1$ and $\bv_2$ respectively.
	Subtracting the Nernst--Planck equations for $\cpm_1$ and $\cpm_2$, testing with their difference, and adding the equations for the positve and negative charges,
	we obtain 
    \begin{align}\label{eq:L2_diff_c}
    \begin{split}
		&\sigmp{\frac{1}{2} \norm{\cpm_1(t) - \cpm_2(t)}^2_{L^2(\Omega)} 
		+ \int_0^t \int_\Omega |\nabla(\cpm_1 - \cpm_2)|^2_{\lamMa}\diff \X \diff s}\\
		&\hspace{1em}=
			\sigmp{\int_0^t \int_\Omega 
				\left( 
					(\cpm_1 R_\kappa^{1/2}(\bv_1) - \cpm_2 R_\kappa^{1/2}(\bv_2))
					\mp \left( \cpm_1 \lamMa \nabla \psi_1 - \cpm_2 \lamMa \nabla \psi_2 \right) 
				\right) \cdot \nabla (\cpm_1 - \cpm_2)
			\diff \X \diff s}\\
		&\hspace{1em}=
			\sigmp{\int_0^t \int_\Omega 
				\cpm_1 R_\kappa^{1/2}(\bv_1 - \bv_2) \cdot \nabla (\cpm_1 - \cpm_2)
				+ (\cpm_1 - \cpm_2) R_\kappa^{1/2}(\bv_2) \cdot \nabla (\cpm_1 - \cpm_2) 
			\diff \X \diff s}\\
			&\hspace{2em} + \sigmp{\mp \int_0^t \int_\Omega
				\cpm_1 \lamMa \nabla (\psi_1 - \psi_2 ) \cdot \nabla (\cpm_1 - \cpm_2)
				+ (\cpm_1 - \cpm_2) \lamMa \nabla \psi_2 \cdot \nabla (\cpm_1 - \cpm_2)
			\diff \X \diff s}\\
		&\hspace{1em}=
			\sigmp{\int_0^t \int_\Omega \cpm_1 R_\kappa^{1/2}(\bv_1 - \bv_2) \cdot \nabla (\cpm_1 - \cpm_2) \diff \X \diff s
			\mp \int_0^t \int_\Omega
				\cpm_1 \lamMa \nabla (\psi_1 - \psi_2 ) \cdot \nabla (\cpm_1 - \cpm_2)
			\diff \X \diff s}\\
			&\hspace{2em} + \sigmp{\mp \int_0^t \int_\Omega
				(\cpm_1 - \cpm_2) \lamMa \nabla \psi_2 \cdot \nabla (\cpm_1 - \cpm_2)
			\diff \X \diff s}\\
		&\hspace{1em}\leq
			\sigmp{\int_0^t 
				\norm{\cpm_1}_{L^3(\Omega)} 
				\norm{R_\kappa^{1/2}(\bv_1 - \bv_2)}_{L^6(\Omega)} 
				\norm{\nabla (\cpm_1 - \cpm_2)}_{L^2(\Omega)}
			\diff s}\\
			&\hspace{2em}+ \sigmp{\int_0^t
				\norm{\cpm_1}_{L^2(\Omega)} 
				\norm{\lamMa \nabla (\psi_1 - \psi_2)}_{L^\infty(\Omega)} 
				\norm{\nabla (\cpm_1 - \cpm_2)}_{L^2(\Omega)}
			\diff s}\\
			&\hspace{2em}+ \sigmp{\int_0^t
				\norm{\lamMa \nabla \psi_2}_{L^\infty(\Omega)}
				\norm{\cpm_1 - \cpm_2}_{L^2(\Omega)}
				\norm{\nabla (\cpm_1 - \cpm_2)}_{L^2(\Omega)}
			\diff s}\\
		&\hspace{1em}\leq
			\sigmp{\frac{1}{2} \int_0^t \norm{\nabla (\cpm_1 - \cpm_2)}^2_{L^2(\Omega)} \diff s
			+ \int_0^t 
				\norm{\cpm_1}^2_{L^3(\Omega)} 
				\norm{R_\kappa^{1/2}(\bv_1 - \bv_2)}^2_{L^6(\Omega)}
			\diff s}\\	
			&\hspace{2em}+ \sigmp{2 \int_0^t
				\norm{\cpm_1}^2_{L^2(\Omega)} 
				\norm{\lamMa \nabla (\psi_1 - \psi_2)}^2_{L^\infty(\Omega)} 
			\diff s}\\
			&\hspace{2em}+ \sigmp{2 \int_0^t
				\norm{\lamMa \nabla \psi_2}^2_{L^\infty(\Omega)}
				\norm{\cpm_1 - \cpm_2}^2_{L^2(\Omega)}
			\diff s},
    \end{split}
	\end{align}
	where we integrated by parts, see \cite[Cor.~8.1.10]{emmrich}, used that $\bv$ is divergence free and applied Young's inequality. 
	To upper bound the $L^\infty(\Omega)-$norm of $\nabla (\psi_1 - \psi_2)(t)$ we use elliptic estimates.
	Subtracting the equation for $\varphi_1$ and $\varphi_2$ and testing with the difference we find
	\begin{align}\label{eq:L2_diff_phi}
		\norm{(\varphi_1 - \varphi_2)(t)}_{L^2(\Omega)} \leq \sigmp{\norm{(\cpm_1 - \cpm_2)(t)}_{L^2(\Omega)}}
	\end{align}
	and thus with elliptic regularity, \textit{cf.}~\cite[Thm.~3.1.1]{lunardi},
	\begin{align}\label{eq:W22_diff_phi}
		\norm{(\varphi_1 - \varphi_2)(t)}_{W^{2,2}(\Omega)} 
		\leq 
			C(\kappa) \sigmp{\norm{(\cpm_1 - \cpm_2)(t)}_{L^2(\Omega)}}.
	\end{align}
	Subtracting \eqref{eq:full_reg_p} for $\psi_1$ and $\psi_2$ and testing with the difference, we obtain 
    \begin{equation}\label{eq:L2_diff_psi}
		\int_\Omega |\nabla (\psi_1 - \psi_2)(t)|^2_{\varMa} + \tau \int_\Gamma |(\psi_1 - \psi_2)(t)|^2 \dS
		\leq	
			\norm{(\varphi_1 - \varphi_2)(t)}_{L^2(\Omega)} \norm{(\psi_1 - \psi_2)(t)}_{L^2(\Omega)}.
	\end{equation}
	Using the generalized Friedrich's inequality, \textit{cf.~}\cite[Lem.~2.5]{fredi}, 
	\[
		\norm{f}^2_{W^{1,2}(\Omega)} = \int_\Omega |\nabla f|^2 + |f|^2 \diff \X 
		\leq 
			C_F \left( \int_\Omega |\nabla f|^2 \diff \X + \int_\Gamma |f|^2 \dS\right)
	\]
	for $f \in W^{1,2}(\Omega)$, the inequality \eqref{eq:L2_diff_psi} gives us
	\begin{align*}
		&\frac{\min(1, \tau)}{C_F} \norm{(\psi_1 - \psi_2)(t)}^2_{L^2(\Omega)}\\
		&\hspace{1em}\leq 
			\min(1, \tau) \left( \int_\Omega |\nabla (\psi_1 - \psi_2)(t)|^2 + \int_\Gamma |(\psi_1 - \psi_2)(t)|^2 \dS \right)\\
		&\hspace{1em}\leq
			\norm{(\varphi_1 - \varphi_2)(t)}_{L^2(\Omega)} \norm{(\psi_1 - \psi_2)(t)}_{L^2(\Omega)}
		\leq
			C \sigmp{\norm{(\cpm_1 - \cpm_2)(t)}_{L^2(\Omega)}} \norm{(\psi_1 - \psi_2)(t)}_{L^2(\Omega)}\\
		&\hspace{1em}\leq
			C \frac{C_F}{2 \min(1, \tau)} \sigmp{\norm{(\cpm_1 - \cpm_2)(t)}^2_{L^2(\Omega)}} 
			+ \frac{\min(1,\tau)}{2 C_F} \norm{(\psi_1 - \psi_2)(t)}^2_{L^2(\Omega)}
	\end{align*}
	by Young's inequality and \eqref{eq:L2_diff_phi}.
	Absorbing the last term on the right-hand side into the left-hand side gives us
	\begin{align*}
		\norm{(\psi_1 - \psi_2)(t)}_{L^2(\Omega)} 
		\leq 
			C \sigmp{\norm{(\cpm_1 - \cpm_2)(t)}_{L^2(\Omega)}}
	\end{align*}
	and again using \cite[Thm.~3.1.1]{lunardi} together with \eqref{eq:L2_diff_phi} we find
	\begin{align}\label{eq:W22_diff_psi}
    \begin{split}
		\norm{(\psi_1 - \psi_2)(t)}_{W^{2,2}(\Omega)} 
        &\leq 
			C \left(  \norm{(\varphi_1 - \varphi_2)(t)}_{L^2(\Omega)}  + \norm{(\psi_1 - \psi_2)(t)}_{L^2(\Omega)} \right)\\
		&\leq 
			C \sigmp{\norm{(\cpm_1 - \cpm_2)(t)}_{L^2(\Omega)}},
    \end{split}
	\end{align}
	where there is no boundary term on the right-hand side, since $\psi_1 - \psi_2$ fulfills Robin boundary conditions with zero right-hand side.
	Using the the higher order elliptic estimate \cite[Rem.~2.5.1.2]{grisvard}, we obtain
	\begin{align}\label{eq:W42_diff_psi}
    \begin{split}
		\norm{\nabla (\psi_1 - \psi_2)(t)}_{L^\infty(\Omega)} 
		&\leq 
			C \norm{(\psi_1 - \psi_2)(t)}_{W^{4,2}(\Omega)}\\
		&\leq		
			C \left( \norm{(\psi_1 - \psi_2)(t)}_{W^{2,2}(\Omega)} + \norm{(\varphi_1 - \varphi_2)(t)}_{W^{2,2}(\Omega)} \right)\\
		&\leq 
			C(\kappa) \sigmp{\norm{(\cpm_1 - \cpm_2)(t)}_{L^2(\Omega)}},
    \end{split}
	\end{align}
	where we used \eqref{eq:W22_diff_psi} and \eqref{eq:W22_diff_phi} for the last inequality.
	Using Young's inequality in \eqref{eq:L2_diff_c} to absorb the gradient terms of $\cpm_1 - \cpm_2$ into the left-hand side 
	and inserting the bounds from Lemma~\ref{lem:bounds_for_c},
	we obtain
	\begin{align*}
		&\sigmp{\frac{1}{2} \norm{(\cpm_1 - \cpm_2)(t)}^2_{L^2(\Omega)} 
		+ \frac{1}{2} \int_0^t \int_\Omega |\nabla(\cpm_1 - \cpm_2)|^2_{\lamMa}\diff \X \diff s}\\
		&\hspace{1em}\leq
			C(\kappa) \sigmp{\norm{\cpm_1}^2_{L^2(0,T;L^3(\Omega))}} \norm{\bv_1 - \bv_2}^2_{L^\infty(0,T;L^2(\Omega))}\\
			&\hspace{2em}+ C(\kappa) \norm{\lamMa}^2_{L^\infty(\Omega)} 
				 \sigmp{\left(\norm{\cpm_1}^2_{L^\infty(0,T;L^2(\Omega))} 
				+ \norm{\nabla \psi_2}^2_{L^\infty(0,T;L^\infty(\Omega))}
			\right) \int_0^t \norm{\cpm_1 - \cpm_2}^2_{L^2(\Omega)} \diff s}\\
		&\hspace{1em}\leq
			C(\kappa, T_0) \norm{\bv_1 - \bv_2}^2_{X_T} 
			+ C(\kappa, T_0) \int_0^t \sigmp{\norm{\cpm_1 - \cpm_2}^2_{L^2(\Omega)}} \diff s, 
	\end{align*}
	and with Gronwall's inequality \cite[Lem.~7.3.1]{emmrich} we get 
	\begin{equation}\label{eq:c_v_est}
		\sigmp{\norm{(\cpm_1 - \cpm_2)(t)}^2_{L^2(\Omega)}} 
            \leq C(\kappa, T_0) \norm{\bv_1 - \bv_2}^2_{X_T} \e^{C (\kappa, T_0) T_0 }.
	\end{equation}
	Now, we can estimate $G_\kappa$ by
	\begin{align}\label{eq:contraction_G_est}
    \begin{split}
		&\norm{G_\kappa(\bv_1(t)) - G_\kappa(\bv_2(t))}_{L^2(\Omega)}
		\leq 
			C \norm{((c^+_1 - c^-_1) \nabla \psi_1 - (c^+_2 - c^-_2) \nabla \psi_2)(t)}_{L^2(\Omega)}\\
		&\hspace{1em}\leq
			C \norm{
				((c^+_1 - c^+_2) \nabla \psi_1 
				+ c^+_2 \nabla (\psi_1 - \psi_2) 
				+ (c^-_2 - c^-_1) \nabla \psi_1 
				- c_2^- \nabla (\psi_1 - \psi_2))(t)}_{L^2(\Omega)}\\
		&\hspace{1em}\leq 
			C \sigmp{
				\norm{(\cpm_1 - \cpm_2) \nabla \psi_1}_{L^2(\Omega)}
				+ \norm{\cpm_2 \nabla( \psi_1 - \psi_2 )}_{L^2(\Omega)}
			}\\
		&\hspace{1em}\leq 
			C \left(
				\norm{\nabla \psi_1}_{L^\infty(0,T;L^\infty(\Omega))} \sigmp{\norm{(\cpm_1 - \cpm_2)(t)}_{L^2(\Omega)}}
				+ \sigmp{\norm{\cpm_2}_{L^\infty(0,T;L^2(\Omega))}} \norm{\nabla(\psi_1 - \psi_2)(t)}_{L^\infty(\Omega)}
			\right)\\
		&\hspace{1em}\leq
			C(\kappa, T_0) \norm{\bv_1 - \bv_2}_{X_T} \e^{C (\kappa, T_0) T_0 },
    \end{split}
	\end{align}
	where we used \eqref{eq:bounds_for_c}, \eqref{eq:W42_diff_psi} and \eqref{eq:c_v_est}.
	Putting \eqref{eq:contraction_F_est} and \eqref{eq:contraction_G_est} together we get the contraction property of $H_\kappa$ for
	\[
		T^\star \in (0,T_0) \text{ such that } 
		\left( C(\kappa, T_0) \e^{C (\kappa, T_0) T_0 } + C(\kappa) M \right) \left( T^\star + \sqrt{T^\star} \right) \leq \frac{1}{2},
	\]
	which follows from
	\begin{align*}
		&\norm{H_\kappa(\bv_1) - H_\kappa (\bv_2)}_{X_T}\\
		&\hspace{1em}=
			\sup_{t \in [0,T]} \left( \norm{H_\kappa(\bv_1(t)) - H_\kappa(\bv_2(t))}_{L^2(\Omega)}
			+ \norm{A^{1/2}_2\left( H_\kappa(\bv_1(t)) - H_\kappa(\bv_2(t)) \right) }_{L^2(\Omega)} \right)\\
		&\hspace{1em}\leq
			C_S \int_0^T 
				\norm{F_\kappa(\bv_1(t)) - F(\bv_2(t))}_{L^2(\Omega)}
				+ \norm{G_\kappa(\bv_1(t)) - G_\kappa(\bv_2(t))}_{L^2(\Omega)} 
			\diff t\\
			&\hspace{2em}+ C \int_0^T 
				\frac{1}{\sqrt{t-s}}\left( 
					\norm{F_\kappa(\bv_1(t)) - F(\bv_2(t))}_{L^2(\Omega)}
					+ \norm{G_\kappa(\bv_1(t)) - G_\kappa(\bv_2(t))}_{L^2(\Omega)}
				\right)
			\diff t\\
		&\hspace{1em}\leq 
			\left( C(\kappa, T_0) \e^{C (\kappa, T_0) T_0 } + C(\kappa) M \right)  \left( T + \sqrt{T} \right) \norm{\bv_1 - \bv_2}_{X_T}
		\leq 
			\frac{1}{2} \norm{\bv_1 - \bv_2}_{X_T},
	\end{align*}
	where we again used the boundedness of the semigroup generated by the Stokes operator, 
	the interchanging of the Stokes semigroup and the root of the Stokes operator, \textit{cf.~}\eqref{eq:interchange_A_alpha_e_tA},
	and the estimate for the root of the Stokes operator~\eqref{eq:est_frac_stokes_semigroup}.
	\end{proof}

\end{subsubsection}

\end{subsection}

\begin{subsection}{Energy estimates}

\begin{propo}[Energy inequality I for  \eqref{eq:full_reg_system}]\label{propo:reg_en_ineq_1}
	Let Assumption~\ref{ass:reg} hold and let $(\bv, \cpm, \psi)$ be the weak solution to \eqref{eq:full_reg_system} on the maximal time interval $[0,T_{\textrm{max}})$ 
	for some $\kappa > 0$ 
	given by Lemma~\ref{lem:contraction}. Then the regularized energy
	\begin{equation}\label{eq:reg_en}
		\E_{\mathrm{reg}}(\bv, \cpm, \psi) 
			:= 
				\int_\Omega \frac{1}{2} |\bv|^2 + \sigmp{\cpm (\ln \cpm + 1)} + \frac{1}{2} |\nabla \psi|^2_{\varMa} 
				+ \frac{\kappa}{2} |\varphi|^2 \diff \X
				+ \frac{\tau}{2} \int_\Gamma |\psi|^2 \dS
	\end{equation}
	fulfills the inequality
	\begin{multline}\label{eq:reg_en_ineq_1}
		\E_{\mathrm{reg}}(\bv, \cpm, \psi)(t) + \int_0^t \Diss(\bv, \cpm, \psi) \diff s\\
		\leq 
			\e^t \left( \E_{\mathrm{reg}}(\bv_0, \cpm_0, \psi_0) 
			+ C \left( 
				\norm{\xi}^2_{W^{1,2}(0,\Tmax;L^2(\Gamma))}
				+ \norm{\xi}_{W^{1,1}(0,\Tmax;L^\infty(\Gamma))}
				+ \Tmax
			\right) \right)
	\end{multline}
	for all $t \in (0,\Tmax)$, where $C > 0$ is a constant independent of $\xi, \bd$ and $\kappa$ and  
	\[
		\Diss(\bv, \cpm, \psi)  = \int_\Omega  |\nabla \bv|^2 
		+ \sigmp{ \big| 2 \nabla \sqrt{\cpm} \pm \sqrt{\cpm} \nabla \psi \big|^2_{\lamMa}} \diff \X.
	\]
\end{propo}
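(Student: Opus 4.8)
The plan is to run a structured energy estimate: test each equation of~\eqref{eq:full_reg_system} against its natural multiplier, add the resulting identities so that the coupling terms cancel, and close the remaining boundary terms by Young's and Gronwall's inequalities. Throughout, $(\bv,\cpm,\psi)$ denotes the local solution from Lemma~\ref{lem:contraction} and $\varphi=S_\kappa(c^+-c^-)$.

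\textbf{Step 1: the tested identities.} Testing the momentum equation~\eqref{eq:full_reg_ns} with $\bv$, the convective term vanishes because $R_\kappa(\bv)\in D(A_2)\subseteq W_0^{1,2}(\Omega)\cap L^2_\sigma(\Omega)$ is divergence free with vanishing trace, the principal part contributes $\norm{A_2^{1/2}\bv}_{L^2(\Omega)}^2=\norm{\nabla\bv}_{L^2(\Omega)}^2$, and --- using the symmetry of $R_\kappa^{1/2}$ (Remark~\ref{lem:R_alpha_self_adjoint}) and of the Helmholtz projection --- the forcing contributes $I_1:=\int_\Omega(c^+-c^-)\nabla\psi\cdot R_\kappa^{1/2}(\bv)\diff\X$, so that $\tfrac12\tfrac{\diff}{\diff t}\norm{\bv}_{L^2(\Omega)}^2+\norm{\nabla\bv}_{L^2(\Omega)}^2=-I_1$. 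Next I would test~\eqref{eq:full_reg_np} with the chemical potential $\mu^\pm:=\ln\cpm\pm\psi$; rigorously one uses $\ln(\cpm+\delta)\pm\psi$ (admissible since $\cpm\ge0$ and $\psi\in C([0,T];W^{4,2}(\Omega))\hookrightarrow C([0,T];C^1(\overline{\Omega}))$) and lets $\delta\searrow0$, invoking Fatou's lemma on the dissipation. The boundary term vanishes by the no-flux condition in~\eqref{eq:full_reg_system_bc} together with $R_\kappa^{1/2}(\bv)|_\Gamma=0$; writing $\nabla\mu^\pm=\tfrac{\nabla \cpm}{\cpm}\pm\nabla\psi$, the convective contributions collapse to $\pm\int_\Omega \cpm R_\kappa^{1/2}(\bv)\cdot\nabla\psi\diff\X$ and the diffusive ones to $-\int_\Omega\tfrac{1}{\cpm}\big|\nabla\cpm\pm\cpm\nabla\psi\big|_{\lamMa}^2\diff\X=-\int_\Omega\big|2\nabla\sqrt{\cpm}\pm\sqrt{\cpm}\nabla\psi\big|_{\lamMa}^2\diff\X$. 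Summing over $\pm$ and using mass conservation $\tfrac{\diff}{\diff t}\int_\Omega\cpm\diff\X=0$ (which also yields $\norm{\cpm(t)}_{L^1(\Omega)}=\norm{\cpm_0}_{L^1(\Omega)}$) gives
\begin{multline*}
\tfrac{\diff}{\diff t}\int_\Omega\sigmp{\cpm(\ln \cpm+1)}\diff\X+\int_\Omega\sigmp{\big|2\nabla\sqrt{\cpm}\pm\sqrt{\cpm}\nabla\psi\big|^2_{\lamMa}}\diff\X\\
=I_1-\int_\Omega\big(\partial_t(c^+-c^-)\big)\psi\diff\X .
\end{multline*}

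\textbf{Step 2: the potential part and the cancellations.} Let $\mathcal P:=\tfrac12\int_\Omega|\nabla\psi|_{\varMa}^2\diff\X+\tfrac\kappa2\int_\Omega|\varphi|^2\diff\X+\tfrac\tau2\int_\Gamma|\psi|^2\dS$ be the electric part of $\E_{\mathrm{reg}}$. A short computation combining the weak Poisson equation~\eqref{eq:full_reg_p}, its time derivative, and the time-differentiated relation $(1-\kappa\Delta_{\varMa})\varphi=c^+-c^-$, each tested against an appropriate one of $\psi$, $\partial_t\psi$, $\partial_t\varphi$ --- a manipulation in which the two contributions $\kappa\int_\Omega\varphi\,\partial_t\varphi\diff\X$ cancel --- yields
\[
\tfrac{\diff}{\diff t}\mathcal P=\int_\Omega\big(\partial_t(c^+-c^-)\big)\psi\diff\X+\int_\Gamma(\partial_t\xi)\,\psi\dS-\kappa\int_\Gamma\xi\,\partial_t\varphi\dS ;
\]
the time-regularity needed to differentiate $\varphi$ and $\psi$ in time is inherited from $\cpm\in\W(0,T)$ through the smoothing operator $S_\kappa$ and the elliptic solution operator of~\eqref{eq:full_reg_p}. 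Adding the momentum identity, the Nernst--Planck identity and this one, the electric-force term $I_1$ cancels between the first two and the term $\int_\Omega\partial_t(c^+-c^-)\,\psi\diff\X$ cancels between the last two, leaving exactly
\[
\tfrac{\diff}{\diff t}\E_{\mathrm{reg}}(\bv,\cpm,\psi)+\Diss(\bv,\cpm,\psi)=\int_\Gamma(\partial_t\xi)\,\psi\dS-\kappa\int_\Gamma\xi\,\partial_t\varphi\dS .
\]

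\textbf{Step 3: closing the estimate.} Integrating over $(0,t)$ it remains to control the two surface terms. For the first, Young's inequality, the trace theorem and the bound $\tfrac\tau2\norm{\psi(s)}_{L^2(\Gamma)}^2\le\E_{\mathrm{reg}}(s)+C$ (the constant coming from the elementary lower bound on $c(\ln c+1)$) give a contribution $\le C\norm{\xi}_{W^{1,2}(0,\Tmax;L^2(\Gamma))}^2+C\,\Tmax+C\int_0^t\E_{\mathrm{reg}}(s)\diff s$. For the second, I would integrate by parts in time, reducing $-\kappa\int_0^t\int_\Gamma\xi\,\partial_t\varphi\dS\diff s$ to $-\kappa\int_\Gamma\xi\varphi\,\big|_0^t$ and $\kappa\int_0^t\int_\Gamma(\partial_t\xi)\varphi\dS\diff s$, and bound each of these by means of the $\bd$- and $\kappa$-independent estimate
\[
\kappa\,\norm{\varphi(s)}_{L^1(\Gamma)}\le C\,\kappa\,\norm{\varphi(s)}_{W^{1,q}(\Omega)}\le C\,\norm{(c^+-c^-)(s)}_{L^1(\Omega)}\le C ,
\]
which comes from Corollary~\ref{cor:ellip_W1q_est}, the trace theorem and mass conservation; this yields a contribution $\le C\norm{\xi}_{W^{1,1}(0,\Tmax;L^\infty(\Gamma))}$. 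Gronwall's inequality applied to the nonnegative quantity $\E_{\mathrm{reg}}(\cdot)+C$ then produces~\eqref{eq:reg_en_ineq_1}, and by inspection all constants depend only on $\Omega$, $\tau$, $\lambda$, $\varepsilon$ and the initial data --- in particular not on $\xi$, $\bd$ or $\kappa$.

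\textbf{Main obstacle.} The delicate step is Step 2. Since the electric energy carries both the anisotropy matrix $\varMa$ and the regularization $\tfrac\kappa2|\varphi|^2$, one must arrange the test functions in the Poisson and $\varphi$-equations so that $\tfrac{\diff}{\diff t}\mathcal P$ produces $\int_\Omega\partial_t(c^+-c^-)\,\psi\diff\X$ --- rather than $\int_\Omega(c^+-c^-)\,\partial_t\psi\diff\X$ --- so that it cancels the corresponding term from testing the Nernst--Planck equation with $\mu^\pm$, while simultaneously all $\kappa$-dependent volume terms cancel and only the surface term $-\kappa\int_\Gamma\xi\,\partial_t\varphi\dS$ remains, which is controllable uniformly in $\kappa$ precisely because of the elliptic $L^1$-estimate of Corollary~\ref{cor:ellip_W1q_est}. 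A secondary technical point is the justification of testing with $\ln\cpm$ when only $\cpm\ge0$ is available, handled by the $\delta$-regularization and Fatou's lemma above.
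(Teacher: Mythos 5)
Your proposal is correct and follows essentially the same route as the paper's proof: testing \eqref{eq:full_reg_ns} with $\bv$, testing \eqref{eq:full_reg_np} with the $\delta$-regularized multiplier $\ln(\cpm+\delta)\pm\psi$ and passing $\delta\searrow 0$ via Fatou/dominated convergence, rewriting $\int_\Omega \partial_t(c^+-c^-)\,\psi\diff\X$ through the regularized Poisson equation and its time derivative with the Robin boundary conditions, integrating the $\kappa$-boundary term by parts in time and controlling $\kappa\norm{\varphi}_{L^1(\Gamma)}$ uniformly via mass conservation and the elliptic $L^1$ Robin estimate of Corollary~\ref{cor:ellip_W1q_est}, and closing with Gronwall. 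The only inaccuracy is the parenthetical claim that two contributions $\kappa\int_\Omega\varphi\,\partial_t\varphi\diff\X$ cancel: this term occurs once and equals exactly $\tfrac{\kappa}{2}\tfrac{\diff}{\diff t}\norm{\varphi}^2_{L^2(\Omega)}$, i.e.\ the $\kappa$-part of $\E_{\mathrm{reg}}$, while what actually cancels are the boundary contributions $\kappa\tau\int_\Gamma\psi\,\partial_t\varphi\dS$; your stated identity for $\tfrac{\diff}{\diff t}\mathcal{P}$ is nonetheless the correct one, matching \eqref{eq:full_reg_p_tested_final}.
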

\begin{proof}
	We start off by testing \eqref{eq:full_reg_ns} with $\bv$. 
	Since $\bv \in X_{T_{\textrm{max}}}$ we find by using the equation \eqref{eq:full_reg_ns} that 
	$\partial_t \bv \in L^\infty(0,T_{\textrm{max}}, W^{-1,2}(\Omega))$
	and thus by the weak* density of $C^\infty_c(0,T_{\textrm{max}}) \otimes D(A^{1/2}_2)$ 
	in $L^\infty(0,T_{\textrm{max}}; D(A^{1/2}_2))$ \cite[Lem.~A.2.2]{dimitri},
	we are allowed to test \eqref{eq:full_reg_ns} with $\bv$ it self.
	By an integration by parts, see for example \cite[Cor.~8.1.10]{emmrich}, we obtain
	\[
		\frac{1}{2} \norm{\bv}^2_{L^2(\Omega)} \bigg|_0^t
		+ \int_0^t \int_\Omega  |\nabla \bv|^2 
		+ R_\kappa^{1/2}(P((c^+ - c^- ) \nabla \psi)) \cdot \bv \diff \X \diff s = 0
	\]
	 for almost all $t \in [0,T_{\textrm{max}})$. 
	 Since $R^{1/2}_\kappa$ is symmetric on $D(A^{1/2}_2)$ due to Remark~\ref{lem:R_alpha_self_adjoint}, we can rewrite this as 
	 \begin{equation}\label{eq:full_reg_ns_tested}
	 	\frac{1}{2} \norm{\bv}^2_{L^2(\Omega)} \bigg|_0^t
		+\int_0^t \int_\Omega  |\nabla \bv|^2
		+ (c^+ - c^- ) \nabla \psi \cdot R^{1/2}_\kappa(\bv) \diff \X \diff s = 0,
	 \end{equation}
	 where we can omit the Helmholtz projection since $R_\kappa^{1/2}(\bv)$ is already divergence free.
	 Next, we test \eqref{eq:full_reg_np} with $(\ln(\cpm + \delta) + 2 \pm \psi)$ for some $\delta \in (0,1)$, 
	 which is a well-defined test function since $\cpm \geq 0$ by Proposition~\ref{lem:reg_exis_npp}.
	 By maximal $L^p-$regularity for the diffusion part of the Nernst--Planck equations \eqref{eq:full_reg_np} \cite[Thm.~2.1]{denk_hieber_pruess},
	 and since
	 \[
	 	\partial_t \cpm - \nabla \cdot (\lamMa \nabla \cpm) 
	 	= 
	 		- \nabla \cdot \left( \cpm R^{1/2}_\kappa(\bv) \right) 
	 		\pm \nabla \cdot \left( \cpm \lamMa \nabla \psi \right) \in L^{3/2}(0,T_{\textrm{max}};L^{3/2}(\Omega))
	 \]
	 we find $\partial_t \cpm \in L^{3/2}(0,T_{\textrm{max}};L^{3/2}(\Omega))$
	 and
   by Lemma~\ref{lem:bounds_for_c}, we have
   $\cpm \in L^3(0;T_{\textrm{max}};L^3(\Omega))$ 
	 and $\psi \in L^\infty(0,T_{\textrm{max}};W^{4,2}(\Omega)) \hookrightarrow L^3(0,T_{\textrm{max}};L^3(\Omega))$ such that 
	 we know that $(\ln(\cpm + \delta) + 2 \pm \psi)$ is an admissible test function.
	 We add the equations for $c^+$ and $c^-$ to obtain
	 \begin{align}\label{eq:full_reg_np_tested_delta_pre}
        \begin{split}
	 	\int_0^t \int_\Omega &\sigmp{\partial_t \cpm (\ln(\cpm + \delta) + 2)} \diff \X \diff s
	 	+ \int_0^t \int_\Omega \partial_t (c^+ - c^-) \psi \diff \X  \diff s\\
	 	&- \int_0^t \int_\Omega \sigmp{\frac{\cpm}{\cpm + \delta} R^{1/2}_\kappa(\bv) \cdot \nabla \cpm} \diff \X \diff s
	 	- \int_0^t \int_\Omega (c^+ - c^-) R^{1/2}_\kappa(\bv) \cdot \nabla \psi \diff \X \diff s\\
	 	&+ \int_0^t \int_\Omega \sigmp{\lamMa (\nabla \cpm \pm \cpm \nabla \psi) \cdot \nabla (\ln(\cpm + \delta ) \pm \psi)} \diff \X \diff s
	 	= 0.
        \end{split}
	 \end{align}
	 We note that 
	 \begin{gather*}
	 	\nabla \cpm = 2 \sqrt{\cpm + \delta} \; \nabla \sqrt{\cpm + \delta}, \qquad
	 	\sqrt{\cpm + \delta} \; \nabla \ln (\cpm + \delta) = 2 \nabla \sqrt{\cpm + \delta},\\
	 	\partial_t \left( (\cpm + \delta) (\ln(\cpm + \delta) + 1 )\right) = \partial_t \cpm (\ln(\cpm + \delta) + 2),
	 \end{gather*}
	 which holds by the product and the chain rule for weak derivatives, \cite[Prob.~21.3\,d)+e)]{zeidler2}.
	 Now, we can rewrite \eqref{eq:full_reg_np_tested_delta_pre} as
	 \begin{align}\label{eq:full_reg_np_tested_delta}
        \begin{split}
	 	&\sigmp{\int_\Omega (\cpm + \delta) (\ln(\cpm + \delta) + 1) \diff \X \bigg|_0^t} 
	 	+ \int_0^t \int_\Omega \partial_t (c^+ - c^-) \psi
	 	- \sigmp{\frac{\cpm}{\cpm + \delta} R^{1/2}_\kappa(\bv) \cdot \nabla \cpm} \diff \X \diff s\\
	 	&- \int_0^t \int_\Omega 
	 		(c^+ - c^-) R^{1/2}_\kappa(\bv) \cdot \nabla \psi 
	 	\diff \X \diff s
	 	+ \int_0^t \int_\Omega
	 		\sigmp{\big| 2 \nabla \sqrt{\cpm + \delta} \pm \sqrt{\cpm + \delta } \nabla \psi \big|^2_{\lamMa}}
	 	\diff \X \diff s\\
	 	&+ \sigmp{ \mp \int_0^t \int_\Omega 	 
	 		\left( \frac{\delta}{ \sqrt{\cpm + \delta} } \right) \lamMa \nabla \psi \cdot 
	 		\left( 2 \nabla \sqrt{\cpm + \delta} \pm \sqrt{\cpm + \delta } \nabla \psi \right)
	 	\diff \X \diff s }
	 	= 0. 
    \end{split}
	\end{align}
	Using H\"{o}lder's and Young's inequalities, we find that $\nabla \sqrt{\cpm + \delta}$ is bounded in $L^2(0,T_{\textrm{max}};L^2(\Omega))$
	independently of~$\delta$, since
	\begin{align*}
		&\hspace{-1em} \sigmp{\left( \int_\Omega (\cpm + \delta) (\ln(\cpm + \delta) + 1) + \frac{1}{\e^2} \diff \X \right) (t)
	 	+ \int_0^t \int_\Omega
	 		\big| 2 \nabla \sqrt{\cpm + \delta}\big|^2_{\lamMa}
	 		+ (\cpm + \delta) |\nabla \psi|_{\lamMa}^2
	 	\diff \X \diff s}\\
	 	&\leq
	 		\sigmp{\int_\Omega (\cpm_0 + \delta) (\ln(\cpm_0 + \delta) + 1) + \frac{1}{\e^2} \diff \X
	 		+ \norm{\partial_t \cpm}_{L^{3/2}(0,\Tmax;L^{3/2}(\Omega))} \norm{\psi}_{L^3(0,T;L^3(\Omega))}}\\
	 		&\hspace{1em}+ \sigmp{\norm{\frac{\cpm}{\cpm + \delta}}_{L^\infty(\Omega \times (0,\Tmax))} 
	 		\norm{R^{1/2}_\kappa(\bv)}_{L^2(0,\Tmax;L^2(\Omega))} \norm{\nabla \cpm}_{L^2(0,\Tmax;L^2(\Omega))}}\\
	 		&\hspace{1em}+ \sigmp{\norm{\cpm}_{L^2(0,\Tmax;L^2(\Omega))}} \norm{R^{1/2}_\kappa(\bv)}_{L^\infty(0,\Tmax;L^2(\Omega))} 
	 		\norm{\nabla \psi}_{L^2(0,\Tmax;L^\infty(\Omega))}\\
	 		&\hspace{1em}+ \int_0^t \int_\Omega \sigmp{ 2 |\nabla \cpm \cdot \nabla \psi|_{\lamMa}^2
	 		+ \delta |\nabla \psi|_{\lamMa}^2
	 		+ \frac{1}{2} \left( \frac{\delta}{ \sqrt{\cpm + \delta} } \right)^2 |\lamMa \nabla \psi|^2
	 		+ \frac{1}{2} \big|2 \nabla \sqrt{\cpm + \delta}\big|^2} \diff \X \diff s.
	\end{align*}
	The last term on the right-hand side can be absorbed into the left-hand side and all other terms are bounded independently of $\delta$, 
	since $\cpm_0 \in L^2(\Omega)$.
	Thus we have that $\nabla \sqrt{\cpm + \delta}$ is bounded in $L^2(\Omega \times (0,\Tmax))$ independently of $\delta$ and so, 
	by the reflexivity of this space, we find 
	$\nabla \sqrt{\cpm + \delta} \rightharpoonup a$ in $L^2(\Omega \times (0,\Tmax))$ for $\delta \searrow 0$ and some 
	$a \in L^2(\Omega \times (0,\Tmax))$.
	Since $\sqrt{\cpm + \delta} \to \sqrt{\cpm}$ pointwise almost everywhere for $\delta \searrow 0$ and 
	$|\sqrt{\cpm + \delta}| \leq \sqrt{\cpm + 1} \in L^2(\Omega \times (0,\Tmax))$, 
	since we chose $\delta \in (0,1)$, we find by the dominated convergence theorem
    $a = \nabla \sqrt{\cpm}$.
	Now, we can pass to the limit with $\delta \searrow 0$ in \eqref{eq:full_reg_np_tested_delta}. 
	We use dominated convergence for the term including $\delta$ and $\bv$ in the first line since $|\cpm/(\cpm + \delta)| \leq 1$.
	We use a generalized Fatou's lemma for functions bounded from below on finite domains, \cite[Chap.~5, Ex~5.4]{elstrodt2018}, 
	for the terms in the first line dependent on $t$.
	Using $\cpm_0 \in L^2(\Omega)$ we can apply dominated convergence to find
	\[
		\lim_{\delta \searrow 0} \int_\Omega (\cpm_0 + \delta) (\ln(\cpm_0+\delta) + 1) \diff \X = \int_\Omega \cpm_0 (\ln \cpm_0 + 1) \diff \X.
	\]
	Using $\ln(x) \leq x - 1$ and $x (\ln(x) + 1) \geq -1/\e^2$ for all $x > 0$ we can find a majorant by estimating,
	\[
		\left| (\cpm_0 + \delta) (\ln(\cpm_0+\delta) + 1) + \frac{1}{\e^2} \right| 
		\leq 
			(\cpm_0 + \delta)^2 + \frac{1}{\e^2} 
		\leq 
		\left( \cpm_0 \right)^2 + 2 \cpm_0 + 1 + \frac{1}{\e^2} \in L^1(\Omega).
	\]
	The integral in the last line of \eqref{eq:full_reg_np_tested_delta} vanishes for $\delta \searrow 0$, since 
	$|\delta/\sqrt{\cpm + \delta}| = \sqrt{\delta} \; \big| \sqrt{\delta}/\sqrt{\cpm + \delta} \big| \leq \sqrt{\delta}$ and thus
	\begin{align*}
		& \left| \sigmp{ \mp \int_0^t \int_\Omega 	 
	 		\left( \frac{\delta}{ \sqrt{\cpm + \delta} } \right) \lamMa \nabla \psi \cdot 
	 		\left( 2 \nabla \sqrt{\cpm + \delta} \pm \sqrt{\cpm + \delta } \nabla \psi \right)
	 	\diff \X \diff s } \right|\\
	 	&\hspace{1em}\leq 
	 		\norm{\lamMa}_{L^\infty(\Omega)} 
	 		\sigmp{
	 			\left( \delta + \frac{\sqrt{\delta}}{2} \right) \norm{\nabla \psi}^2_{L^2(0,T_{\textrm{max}}; L^2(\Omega))}
	 			+ \frac{\sqrt{\delta}}{2} \norm{2 \nabla \sqrt{\cpm + \delta}}^2_{L^2(0,T_{\textrm{max}}; L^2(\Omega))} 
	 		}\\
	 	&\hspace{1em}\leq C \left( \delta + \sqrt{\delta} \right),
	\end{align*}
	since weakly convergent sequences are bounded.
	Thus we obtain by the weak lower semicontinuity of the norm, taking $\delta \searrow 0$ in \eqref{eq:full_reg_np_tested_delta}
	\begin{multline}\label{eq:full_reg_np_tested}
	 	\int_\Omega \sigmp{\cpm (\ln \cpm+ 1)} \diff \X \bigg|_0^t 
	 	+ \int_0^t \int_\Omega \partial_t (c^+ - c^-) \psi \diff \X \diff s
	 	- \int_0^t \int_\Omega \sigmp{R^{1/2}_\kappa(\bv) \cdot \nabla \cpm} \diff \X \diff s\\
	 	- \int_0^t \int_\Omega 
	 		(c^+ - c^-) R^{1/2}_\kappa(\bv) \cdot \nabla \psi 
	 	\diff \X \diff s
	 	+ \int_0^t \int_\Omega
	 		\sigmp{\big| 2 \nabla \sqrt{\cpm}  \pm \sqrt{\cpm} \nabla \psi \big|^2_{\lamMa}}
	 	\diff \X \diff s
	 	\leq 0,
	\end{multline}
	and since $R^{1/2}_\kappa(\bv)$ is divergence free the last term on the first line vanishes.
	To rewrite the remaining term including the time derivative of $\cpm$ we differentiate \eqref{eq:full_reg_p} in time and test with $\psi$.
	By maximal $L^p-$regularity we already have $\partial_t \cpm \in L^{3/2}(0,\Tmax; L^{3/2}(\Omega))$. 
	We then find $\varphi \in W^{1,3/2}(0,\Tmax; W^{2,3/2}(\Omega))$
	and $\psi \in W^{1,3/2}(0,\Tmax; W^{4,3/2}(\Omega))$
	by elliptic regularity, see for example \cite[Thm.~2.5.1.1]{grisvard}
	and thus we can perform the following integration by parts, 
	\begin{multline}\label{eq:full_reg_p_tested_1}
		\int_0^t \int_\Omega \partial_t (c^+ - c^-) \psi \diff \X \diff s 
		= 
		\int_0^t \int_\Omega 
			\varMa \nabla \partial_t \psi \cdot \nabla \psi 
			+ \kappa \varMa \nabla \partial_t \varphi \cdot \nabla \psi 
		\diff \X \diff s\\
		- \int_0^t \int_\Gamma 
			\psi \left( \varMa \nabla \partial_t \psi + \kappa \varMa \nabla \partial_t \varphi \right) \cdot \bn 
		\dS \diff s.
	\end{multline}
	We can also differentiate the Robin boundary conditions for $\psi$ and $\varphi$ in time and since $\bd$ is independent of time we obtain
	\begin{equation}\label{eq:time_deriv_bc}
		\varMa \nabla \partial_t \psi \cdot \bn + \tau \partial_t \psi = \partial_t \xi \quad
		\text{ and } \quad \varMa \nabla \partial_t \varphi \cdot \bn + \tau \partial_t \varphi = 0
		\quad \text{ on } \Gamma.
	\end{equation}
	Testing \eqref{eq:full_reg_p} with $\partial_t \varphi$ we obtain
	\begin{equation}\label{eq:full_reg_p_tested_2}
		\int_0^t \int_\Omega 
			\varMa \nabla \psi \cdot \nabla \partial_t \varphi
		\diff \X \diff s 
		- \int_0^t \int_\Gamma
			\partial_t \varphi \; \varMa \nabla \psi \cdot \bn 
		\dS \diff s
		= \frac{1}{2} \int_\Omega |\varphi|^2 \diff \X \bigg|_0^t.
	\end{equation}
	Plugging \eqref{eq:time_deriv_bc} and \eqref{eq:full_reg_p_tested_2} back into \eqref{eq:full_reg_p_tested_1} we obtain
	\begin{align}\label{eq:full_reg_p_tested_final}
    \begin{split}
		\int_0^t &\int_\Omega \partial_t (c^+ - c^-) \psi \diff \X \diff s\\
		&= 
			\frac{1}{2} \int_\Omega |\nabla \psi|^2_{\varMa} \diff \X \bigg|_0^t 
			+ \frac{\kappa}{2} \int_\Omega |\varphi|^2 \diff \X \bigg|_0^t
			+  \int_0^t \int_\Gamma
				\kappa \partial_t \varphi (\xi - \tau \psi)
				- \psi (\partial_t \xi - \tau \partial_t \psi - \tau \kappa \partial_t \varphi ) 
			\dS \diff s\\
		&= 
			\left( \frac{1}{2} \int_\Omega |\nabla \psi|^2_{\varMa} \diff \X 
			+ \frac{\kappa}{2} \int_\Omega |\varphi|^2 \diff \X 
			+ \frac{\tau}{2} \int_\Gamma |\psi|^2 \dS \right) \bigg|_0^t
			+ \int_0^t \int_\Gamma 
				\kappa \partial_t \varphi \xi - \psi \partial_t \xi
			\dS \diff s.
   \end{split}
	\end{align}
	Inserting \eqref{eq:full_reg_p_tested_final} into \eqref{eq:full_reg_np_tested} and adding \eqref{eq:full_reg_ns_tested} we obtain
	\begin{multline}\label{eq:full_reg_tested_added}
		\left( 
			\int_\Omega 
				\frac{1}{2} |\bv|^2 + \sigmp{\cpm (\ln \cpm + 1)} + \frac{1}{2} |\nabla \psi|^2_{\varMa} + \frac{\kappa}{2} |\varphi|^2
			\diff \X 
			+ \frac{\tau}{2}\int_\Gamma |\psi|^2 \dS
		\right) \bigg|_0^t
		+ \int_0^t \Diss(\bv, \cpm, \psi) \diff s\\
		\leq 
			- \int_0^t \int_\Gamma 
				\kappa \partial_t \varphi \xi - \psi \partial_t \xi
			\dS \diff s
		= 
			\int_0^t \int_\Gamma 
				\psi \partial_t \xi 
				+ \kappa \partial_t \xi \varphi
			\dS \diff s
			- \kappa \int_\Gamma \xi \varphi \dS \bigg|_0^t
	\end{multline}
	By mass conservation,
	\[
		\int_\Omega \cpm(t) \diff \X = \int_\Omega \cpm_0 \diff \X,
	\]
	and elliptic $L^1-$regularity of the Robin Laplacian, 
	\textit{cf.~}Lemma~\ref{lem:robin_semigroup_L1} together with Corollary~\ref{cor:ellip_W1q_est}, we have
	\[
		\norm{\varphi(t)}_{W^{1,1}(\Omega)} \leq \frac{1}{\kappa} \sigmp{\norm{\cpm(t)}_{L^1(\Omega)}}
		\leq \frac{1}{\kappa} \sigmp{\norm{\cpm_0}_{L^1(\Omega)}}
		\leq \frac{C}{\kappa}
	\]
	and thus by the trace theorem for $W^{1,1}(\Omega)$, \textit{cf.~}\cite[Thm.~18.18]{leoni_2017},
    we have $\norm{\varphi(t)}_{L^1(\Gamma)} \leq C/\kappa$.
	Plugging this back into \eqref{eq:full_reg_tested_added} we obtain
	\begin{align}\label{eq:full_reg_tested_added_estimated}
    \begin{split}
		&\hspace{-1em}\left( 
			\int_\Omega 
				\frac{1}{2} |\bv|^2 + \sigmp{\cpm (\ln \cpm + 1)} + \frac{1}{2} |\nabla \psi|^2_{\varMa} + \frac{\kappa}{2} |\varphi|^2
			\diff \X 
			+ \frac{\tau}{2}\int_\Gamma |\psi|^2 \dS 
		\right) \bigg|_0^t
		+ \int_0^t \Diss(\bv, \cpm, \psi) \diff s\\
		&\leq 
			C \norm{\xi(0)}_{L^\infty(\Gamma)} + C \norm{\xi(t)}_{L^\infty(\Gamma)}
			+ \int_0^t 
				\frac{\tau}{2} \norm{\psi}^2_{L^2(\Gamma)} 
				+ \frac{1}{2 \tau} \norm{\partial_t \xi}^2_{L^2(\Gamma)}
				+ C \norm{\partial_t \xi}_{L^\infty(\Gamma)}
			\diff s\\
		&\leq 
			C \left( 
				\norm{\xi}_{C([0,\Tmax];L^\infty(\Gamma))} 
				+ \norm{\xi}^2_{W^{1,2}(0,\Tmax;L^2(\Gamma))}
				+ \norm{\xi}_{W^{1,1}(0,\Tmax;L^\infty(\Omega))}
			\right)
			+ \int_0^t 
				\frac{\tau}{2} \norm{\psi}^2_{L^2(\Gamma)} 
			\diff s\\
		&\leq 
			C \left( 
				\norm{\xi}^2_{W^{1,2}(0,\Tmax;L^2(\Gamma))}
				+ \norm{\xi}_{W^{1,1}(0,\Tmax;L^\infty(\Omega))}			
			\right)
			+ \int_0^t 
				\E_{\mathrm{reg}}(\bv, \cpm, \psi)(s) + \frac{2|\Omega|}{\e^2}
			\diff s,
    \end{split}
	\end{align}
	with $C>0$ independent of $\kappa, \bd$ and $\xi$, 
	where we added the constant $\frac{2|\Omega|}{\e^2}$ to make the energy non-negative ($\cpm(\ln \cpm + 1) + 1/\e^2 \geq 0$),
    so that we can estimate $\norm{\psi}^2_{L^2(\Gamma)} \leq \E_{\mathrm{reg}}(\bv, \cpm, \psi) + 2|\Omega|/\mathrm \e^2$,
	and used the embedding
	\[
		W^{1,1}(0,\Tmax;L^\infty(\Gamma)) \hookrightarrow C([0,\Tmax]; L^\infty(\Gamma)),
	\]
	\textit{cf.~}\cite[Lem.~7.1]{roubicek}.
	Now we can apply Gronwall's inequality to infer
	\begin{multline*}
		\E_{\mathrm{reg}}(\bv, \cpm, \psi)(t) + \int_0^t \Diss(\bv, \cpm, \psi) \diff s\\
		\leq
			\e^t \left( \E_{\mathrm{reg}}(\bv_0, \cpm_0, \psi_0) 
			+ C \left(  
				\norm{\xi}^2_{W^{1,2}(0,\Tmax;L^2(\Gamma))}
				+ \norm{\xi}_{W^{1,1}(0,\Tmax;L^\infty(\Omega))}
				+ \Tmax
			\right) \right).
	\end{multline*}
    By the continuity of $(\bv, \cpm, \psi)$, \textit{cf.~}Remark~\ref{rem:psi_c_cont} and Lemma~\ref{lem:contraction}
    this inequality holds for all $t \in [0,\Tmax)$,
	which finishes our proof.
\end{proof}
From Proposition~\ref{propo:reg_en_ineq_1} we can derive a second energy estimate.
This is achieved through integration by parts, with the primary technical challenge being the control of second-order derivatives of the electric potential on the boundary. 
We address this challenge using the surface gradient, surface divergence, and integration by parts on the boundary. 
The necessary results on surface differential operators are provided in the Appendix, \textit{cf.~}Section~\ref{sec:bd_func}, for the reader's convenience. 
The integration by parts on the boundary then enables us to plug in the Robin boundary condition for the electric potential, 
thereby reducing the order of derivatives of the electric potential on the boundary to a controllable level. 
A crucial assumption in this step is the tangentiality of the director field $\bd \cdot \bn = 0 $ on $\Gamma$, ensuring that
\[
	\lamMa \nabla \psi \cdot \bn = \varMa \nabla \psi \cdot \bn = \nabla \psi \cdot \bn + \varepsilon (\nabla \psi \cdot \bd) (\bd \cdot \bn) = \nabla \psi \cdot \bn.
\]
This gives us the necessary flexibility for incorporating the Robin boundary condition.
\begin{propo}[Energy inequality II for \eqref{eq:full_reg_system}]\label{propo:reg_en_ineq_2}
Let Assumption~\ref{ass:reg} hold and let  $(\bv, \cpm, \psi)$ be the weak solution to \eqref{eq:full_reg_system} on the maximal time interval $[0,T_{\textrm{max}})$ 
	for some $\kappa > 0$ 
	given by Lemma~\ref{lem:contraction}.
Then for $\kappa > 0 $ small enough 
(that is $ \kappa C ( 1 + \norm{\bd}^2_{W^{2,\infty}(\Omega)}) \leq 1/32$ for some constant $C > 0$), 
there exists a constant $C>0$ such that 
\begin{align}\label{eq:reg_en_ineq_2}
	\int_0^{\Tmax} \int_\Omega 
		\sigmp{|\nabla \sqrt{\cpm}|_{\lamMa}^2 + \cpm |\nabla \psi|^2_{\lamMa}} 
		+ |\nabla^2 \psi |^2 \diff \X \diff t
	\leq
 		C\,.
\end{align}
\end{propo}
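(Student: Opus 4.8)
The plan is to ``decorrelate'' the perfect square that Energy inequality~I (Proposition~\ref{propo:reg_en_ineq_1}) controls, at the price of one delicate integration by parts. Since $2\sqrt{\cpm}\,\nabla\sqrt{\cpm}=\nabla\cpm$, one has pointwise
\[
\sigmp{\big|2\nabla\sqrt{\cpm}\pm\sqrt{\cpm}\,\nabla\psi\big|^{2}_{\lamMa}}
=4\sigmp{|\nabla\sqrt{\cpm}|^{2}_{\lamMa}}+\sigmp{\cpm\,|\nabla\psi|^{2}_{\lamMa}}+2\,\bigl(\nabla c^{+}-\nabla c^{-}\bigr)\cdot\lamMa\nabla\psi.
\]
Abbreviating the three space--time integrals on the left of \eqref{eq:reg_en_ineq_2} by $\mathsf{A}$, $\mathsf{B}$, $\mathsf{P}$ and the cross term by $\mathsf{I}:=\int_{0}^{\Tmax}\!\int_{\Omega}(\nabla c^{+}-\nabla c^{-})\cdot\lamMa\nabla\psi\,\diff\X\diff t$, Proposition~\ref{propo:reg_en_ineq_1} yields $4\mathsf{A}+\mathsf{B}+2\mathsf{I}\le C$ with $C$ independent of $\kappa$. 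It therefore suffices to bound $\mathsf{I}$ from below by $-\eta(\mathsf{A}+\mathsf{B}+\mathsf{P})-C(\eta)$ with $\eta$ as small as we wish, together with the elliptic bound on $\mathsf{P}$ obtained below.

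First I would integrate $\mathsf{I}$ by parts in $\Omega$. Writing $\lamMa=\varMa+(\lambda-\varepsilon)\,\bd\otimes\bd$, using the Poisson equation \eqref{eq:full_reg_p}, i.e.\ $-\nabla\cdot(\varMa\nabla\psi)=\varphi$, and exploiting $\bd\cdot\bn=0$ --- which forces $\lamMa\nabla\psi\cdot\bn=\varMa\nabla\psi\cdot\bn=\nabla\psi\cdot\bn=\xi-\tau\psi$ on $\Gamma$ --- one gets
\[
\int_{\Omega}\bigl(\nabla c^{+}-\nabla c^{-}\bigr)\cdot\lamMa\nabla\psi\,\diff\X=\int_{\Omega}(c^{+}-c^{-})\,\varphi\,\diff\X-(\lambda-\varepsilon)\!\int_{\Omega}(c^{+}-c^{-})\,\nabla\!\cdot\!\bigl((\bd\otimes\bd)\nabla\psi\bigr)\diff\X+\int_{\Gamma}(c^{+}-c^{-})(\xi-\tau\psi)\dS.
\]
The first term is the workhorse. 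Since $\varphi=S_{\kappa}(c^{+}-c^{-})$ with $S_{\kappa}=(1-\kappa\Delta_{\varMa})^{-1}$ positive self-adjoint on $L^{2}(\Omega)$ and $\varphi$ obeying homogeneous Robin conditions, integrating by parts once more gives $\int_{\Omega}(c^{+}-c^{-})\varphi\,\diff\X=\norm{\varphi}_{L^{2}(\Omega)}^{2}+\kappa\!\int_{\Omega}|\nabla\varphi|_{\varMa}^{2}\diff\X+\kappa\tau\!\int_{\Gamma}|\varphi|^{2}\dS\ge\norm{\varphi}_{L^{2}(\Omega)}^{2}\ge0$. Combined with the $W^{2,2}$--regularity of the anisotropic Robin Laplacian, $\norm{\psi}_{W^{2,2}(\Omega)}\le C\bigl(\norm{\varphi}_{L^{2}(\Omega)}+\norm{\xi}_{W^{1/2,2}(\Gamma)}\bigr)$ (cf.\ \cite[Thm.~3.1.1]{lunardi}), this term is nonnegative and controls $\mathsf{P}$ through $\mathsf{P}\le C\!\int_{0}^{\Tmax}\!\norm{\varphi}_{L^{2}}^{2}\diff t+C\le C\!\int_{0}^{\Tmax}\!\!\int_{\Omega}(c^{+}-c^{-})\varphi\,\diff\X\diff t+C$. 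The boundary term $\int_{\Gamma}(c^{+}-c^{-})(\xi-\tau\psi)\dS$ is comparatively mild: estimating the trace of $\cpm$ via $\norm{\cpm}_{L^{1}(\Gamma)}\le C\norm{\cpm}_{W^{1,1}(\Omega)}\le C(1+\norm{\nabla\sqrt{\cpm}}_{L^{2}})$, the traces of $\xi$ and $\psi$ in $L^{\infty}(\Gamma)$, the regularity of $\xi$, and Young's inequality, it is absorbed up to $\eta(\mathsf{A}+\mathsf{P})+C(\eta)$.

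The decisive --- and, I expect, hardest --- term is the anisotropy mismatch $-(\lambda-\varepsilon)\int_{\Omega}(c^{+}-c^{-})\,\nabla\!\cdot\!((\bd\otimes\bd)\nabla\psi)\,\diff\X$. Because $\varMa\ne\lamMa$, its integrand --- which contains $\bd^{T}\nabla^{2}\psi\,\bd$ together with terms of order $\norm{\bd}_{W^{1,\infty}}|\nabla\bd|\,|\nabla\psi|$ --- does \emph{not} assemble into anything with a favourable sign, so the identification with the Poisson equation that saves the isotropic case (where this term is absent) no longer suffices. The idea is to rewrite it by further integrations by parts: moving the derivative back onto $\psi$ produces only boundary integrals that vanish by $\bd\cdot\bn=0$, since $(\bd\otimes\bd)\nabla\psi\cdot\bn=(\bd\cdot\bn)(\bd\cdot\nabla\psi)=0$, while its genuinely second-order bulk part is controlled by reinserting the Poisson equation and the elliptic $W^{2,2}$--bound. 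The price is a family of boundary integrals carrying second-order derivatives of $\psi$; these are the real obstacle and are tamed by writing surface derivatives as tangential traces of bulk derivatives (Theorem~\ref{thm:surf_grad_vs_tan_pro}), integrating by parts on $\Gamma$ (Corollary~\ref{cor:ibp_boundary_main}), and then substituting the Robin condition $\nabla\psi\cdot\bn=\xi-\tau\psi$ to lower the order of the $\psi$-derivatives on $\Gamma$ to one controlled by $\norm{\psi}_{W^{2,2}(\Omega)}$ and the data $\xi$. Tracking constants, this yields a bound of the shape
\[
\Bigl|(\lambda-\varepsilon)\!\int_{0}^{\Tmax}\!\!\int_{\Omega}(c^{+}-c^{-})\,\nabla\!\cdot\!((\bd\otimes\bd)\nabla\psi)\,\diff\X\diff t\Bigr|\le\bigl(\eta+\kappa\,C(1+\norm{\bd}_{W^{2,\infty}}^{2})\bigr)(\mathsf{A}+\mathsf{B}+\mathsf{P})+C(\eta),
\]
which is exactly why the smallness requirement $\kappa\,C(1+\norm{\bd}_{W^{2,\infty}}^{2})\le1/32$ appears.

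Finally, I would combine $4\mathsf{A}+\mathsf{B}+2\mathsf{I}\le C$ with the lower bound on $\mathsf{I}$ just obtained, with $\mathsf{P}\le C\!\int_{0}^{\Tmax}\!\!\int_{\Omega}(c^{+}-c^{-})\varphi\,\diff\X\diff t+C$, and with $\int_{0}^{\Tmax}\!\!\int_{\Omega}(c^{+}-c^{-})\varphi\,\diff\X\diff t\le\tfrac12\bigl(C-4\mathsf{A}-\mathsf{B}\bigr)+\eta(\mathsf{A}+\mathsf{B}+\mathsf{P})+C(\eta)$ (again read off from $4\mathsf{A}+\mathsf{B}+2\mathsf{I}\le C$), to arrive at $\mathsf{A}+\mathsf{B}+\mathsf{P}\le C'+c_{0}\,\eta\,(\mathsf{A}+\mathsf{B}+\mathsf{P})$ for an absolute constant $c_{0}$; choosing $\eta$ --- and hence the admissible $\kappa$ --- small enough absorbs the right-hand side and gives \eqref{eq:reg_en_ineq_2} with $C$ independent of $\kappa$. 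In short, the whole proof hinges on the anisotropy-mismatch term and, inside it, on the boundary integrals containing second-order derivatives of $\psi$; controlling these is precisely what requires the surface differential calculus of Section~\ref{subsec:trace_thm} and the smallness condition on $\kappa$.
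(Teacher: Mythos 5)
Your skeleton matches the paper's: expand the dissipation square from Energy inequality I, integrate the cross term $2\int\lamMa(\nabla c^+-\nabla c^-)\cdot\nabla\psi$ by parts using the Robin condition and $\bd\cdot\bn=0$, bring in the regularized Poisson equation, tame the boundary terms with the surface calculus of Theorem~\ref{thm:surf_grad_vs_tan_pro} and Corollary~\ref{cor:ibp_boundary_main}, and invoke smallness of $\kappa$ for the regularization pieces. Your handling of the boundary term $\int_\Gamma(c^+-c^-)(\xi-\tau\psi)\dS$ is essentially the paper's term $\mathrm{I}$, and extracting $\int_\Omega(c^+-c^-)\varphi\,\diff\X\ge\norm{\varphi}^2_{L^2(\Omega)}$ and then recovering $\int|\nabla^2\psi|^2$ by elliptic $W^{2,2}$ regularity (rather than the paper's hand-made double integration by parts of $\int(\Delta\psi)^2$, \eqref{eq:full_sec_deriv}) is a legitimate shortcut, provided you note that the elliptic constant is uniform thanks to the uniform $W^{1,\infty}$ bounds on the director.

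The gap is in the decisive term, your anisotropy mismatch $-(\lambda-\varepsilon)\int(c^+-c^-)\,\nabla\cdot((\bd\otimes\bd)\nabla\psi)$, for which you claim a bound of the form $(\eta+\kappa C(1+\norm{\bd}^2_{W^{2,\infty}}))(\mathsf A+\mathsf B+\mathsf P)+C(\eta)$ with $\eta$ arbitrarily small. This cannot hold. Estimating it directly pairs $c^+-c^-$ against $\bd^T\nabla^2\psi\,\bd\in L^2$, so you would need a $\kappa$-uniform bound on $\norm{\cpm}_{L^2(0,\Tmax;L^2(\Omega))}$, which is not available at this stage (Lemma~\ref{lem:bounds_for_c} only gives $C(\kappa,T)$, and Energy inequality I gives $L^\infty(L^1)$ plus the full square, from which $L^2_tL^2_x$ does not follow). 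If instead you ``reinsert the Poisson equation'', i.e.\ replace $c^+-c^-$ by $\varphi=-\nabla\cdot(\varMa\nabla\psi)$ up to the $\kappa$-part, the term becomes $(\lambda-\varepsilon)\int\nabla\cdot(\varMa\nabla\psi)\,\nabla\cdot((\bd\otimes\bd)\nabla\psi)$, which is genuinely quadratic in $\nabla^2\psi$ with the fixed coefficient $|\lambda-\varepsilon|\,\norm{\bd}^2_{L^\infty}$: there is no small parameter, so it can be absorbed neither into $\eta\,\mathsf P$ nor into $\norm{\varphi}^2_{L^2}$, whose coefficient is exactly one. Your split would only close under an extra smallness assumption on $|\lambda-\varepsilon|$. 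The paper's resolution is structural rather than perturbative: it keeps $\lamMa$ whole, so that after the Poisson substitution the relevant quantity is $\int\nabla\cdot(\varMa\nabla\psi)\,\nabla\cdot(\lamMa\nabla\psi)=\int(\Delta\psi)^2+\varepsilon\lambda|\nabla\cdot((\bd\cdot\nabla\psi)\bd)|^2+(\varepsilon+\lambda)\,\Delta\psi\,\nabla\cdot((\bd\cdot\nabla\psi)\bd)$, \textit{cf.}~\eqref{eq:en_ineq_sq_expan_3}, and the cross term -- now carrying the positive coefficient $\varepsilon+\lambda$ -- is shown by two further integrations by parts, \eqref{eq:bd_exp_2} and \eqref{eq:matrix_exp_2}, to have the nonnegative principal part $\int|\nabla^2\psi\,\bd|^2$ plus controllable bulk and boundary remainders; it has a good sign, it is not small. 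The same structural point, not a modulus bound, is what saves the $\kappa$-weighted third-order terms in Lemma~\ref{lem:reg_en_terms}. To repair your argument you would have to regroup the mismatch term exactly in this way, which brings you back to the paper's computation.
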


\begin{proof}
	This follows from extending the first energy inequality from Proposition~\ref{propo:reg_en_ineq_1}.
	Extending the squared sum of the dissipation $\Diss$, the first energy inequality \eqref{eq:reg_en_ineq_1} implies
	\begin{multline}\label{eq:first_expand}
		\int_0^{\Tmax} \int_\Omega
	 		\sigmp{4 |\nabla \sqrt{\cpm}|^2_{\lamMa} 
	 		+ \cpm |\nabla \psi|^2_{\lamMa}}
	 		+ 2 \lamMa \left(\nabla c^+ - \nabla c^- \right) \cdot \nabla \psi
	 	\diff \X \diff t\\
	 	\leq 
	 		\e^{\Tmax} \left( \E_{\mathrm{reg}}(\bv_0, \cpm_0, \psi_0) 
			+ C \left(  
				\norm{\xi}^2_{W^{1,2}(0,T;L^2(\Gamma))}
				+ \norm{\xi}_{W^{1,1}(0,T;L^\infty(\Gamma))}
				+ \Tmax
			\right) \right),
	\end{multline}
	for some $C>0$ independent of $\kappa$.
	Using the elliptic $L^2-$estimate for the Robin Laplacian, \textit{cf.~}Lemma~\ref{lem:robin_semigroup_L2} and Lemma~\ref{lem:prop_reg_op}, 
	we can estimate 
	\[
		\frac{\kappa}{2} \norm{\varphi_0}^2_{L^2(\Omega)} \leq \frac{\kappa}{2} \sigmp{\norm{\cpm_0}^2_{L^2(\Omega)}}
	\]
	and thus $\E_{\mathrm{reg}}(\bv_0, \cpm_0, \psi_0) \leq C$.
	The first two terms on the left-hand side of~\eqref{eq:first_expand} already have a the right sign so we turn directly to the third term 
	and integrate it by parts to see that in the end it also gives a ``good'' term at least in the terms quadratic in the highest order derivatives
	of $\psi$:
	\begin{multline}\label{eq:en_ineq_sq_expan_1}
		\int_\Omega
	 		2 \lamMa \left(\nabla c^+ - \nabla c^- \right) \cdot \nabla \psi
	 	\diff \X\\
	 	=
	 		- \int_\Omega
	 			2 (c^+ -  c^-) \nabla \cdot \left( \lamMa \nabla \psi \right)
	 		\diff \X 
	 		+\int_\Gamma
	 			2 (c^+ -  c^-) ( \xi - \tau \psi) 
	 		\dS,
	\end{multline}
	where we used the Robin boundary condition for the electric potential and the fact that $\bd \cdot \bn = 0 $ on $\Gamma$ and thus
	\[
		\lamMa \nabla \psi \cdot \bn = \varMa\nabla \psi \cdot \bn.
	\]
	Using the regularized Poisson equation \textit{cf.}~\eqref{eq:full_reg_p}, we can rewrite the volume integral of \eqref{eq:en_ineq_sq_expan_1} as
	\begin{multline}\label{eq:en_ineq_sq_expan_2}
		- \int_\Omega
	 		2 (c^+ - c^-) \nabla \cdot \left( \lamMa \nabla \psi \right)
	 	\diff \X
	 	= 
	 		- \int_\Omega
	 			2 \varphi \; \nabla \cdot \left( \lamMa \nabla \psi \right)
	 		\diff \X\\
	 		- \kappa\int_\Omega
	 			2 \varMa \nabla \varphi \cdot \nabla \left( \nabla \cdot \left( \lamMa \nabla \psi \right) \right)
	 		\diff \X 
	 		+ \kappa \int_\Gamma
	 			2 \nabla \cdot \left( \lamMa \nabla \psi \right) \; \varMa \nabla \varphi \cdot \bn
	 		\dS.
	\end{multline}
	The first term on the right-hand side turns out to give us the term with second order derivatives of $\psi$ with a ``good'' sign 
	needed for \eqref{eq:reg_en_ineq_2}.
	To see that, we again use the regularized Poisson equation \eqref{eq:full_reg_p},
	\begin{multline}\label{eq:en_ineq_sq_expan_3}
		- \int_\Omega
	 		2 \varphi \; \nabla \cdot \left( \lamMa \nabla \psi \right)
	 	\diff \X
	 	=
	 		2 \int_\Omega
	 			 \nabla \cdot \left( \varMa \nabla \psi \right) \; \nabla \cdot \left( \lamMa \nabla \psi \right)
	 		\diff \X\\
	 	=
	 		2 \int_\Omega
	 			 (\nabla \cdot \nabla \psi) (\nabla \cdot \nabla \psi)
	 			 + \varepsilon \lambda |\nabla \cdot ((\bd \cdot \nabla \psi) \bd)|^2
	 			 + (\varepsilon + \lambda) (\nabla \cdot \nabla \psi) (\nabla \cdot ((\bd \cdot \nabla \psi) \bd))
	 		\diff \X.
	\end{multline}
	The first and second term already have a good sign but to get a term with a good sign in the full second order derivative of $\psi$ 
	we integrate the first term by parts two times
	and to see that the third term also has a positive sign in the term quadratic in the second order derivative of $\psi$
	we also integrate it by parts two times.
	Starting with the first term on the right-hand side of \eqref{eq:en_ineq_sq_expan_3}, we find
	\begin{align}\label{eq:full_sec_deriv}
    \begin{split}
		\int_\Omega (\nabla \cdot \nabla \psi) (\nabla \cdot \nabla \psi) \diff \X
		&=
			- \int_\Omega  \nabla (\nabla \cdot \nabla \psi) \cdot \nabla \psi \diff \X
			+ \int_\Gamma (\nabla \cdot \nabla \psi) \nabla \psi \cdot \bn \dS\\
		&=
			\int_\Omega  (\nabla^2 \psi)^T : \nabla^2 \psi \diff \X
			-  \int_\Gamma (\nabla^2 \psi)^T \bn \cdot \nabla \psi \dS
			+ \int_\Gamma (\nabla \cdot \nabla \psi) \nabla \psi \cdot \bn \dS\\
		&=
			\int_\Omega  |\nabla^2 \psi|^2 \diff \X
			- \int_\Gamma (\nabla^2 \psi) \bn \cdot \nabla \psi \dS
			+ \int_\Gamma (\nabla \cdot \nabla \psi) (\xi - \tau \psi) \dS.
    \end{split}
	\end{align}
	The first term is exactly the one we aimed for so we move to the boundary terms.
	To be able to take the full gradient of the outer normal field $\bn$ we introduce the trace extension operator $E$ and the trace operator $S$,
    this can be seen in the following calculation. For the sake of readability we will again omit $E$ and $S$ in all other calculations. 
    It should be clear from the context if one needs to consider a function on the boundary or in the bulk.
	Using the characterization of the surface gradient $\nabla_\Gamma$, 
	\textit{cf.~}Theorem~\ref{thm:surf_grad_vs_tan_pro},
	we can rewrite the first boundary integral on the right-hand side of \eqref{eq:full_sec_deriv} as 
	\begin{align}\label{eq:bd_exp_1}
    \begin{split}
		- &\int_\Gamma (\nabla^2 \psi) \bn \cdot \nabla \psi \dS
		=
			- \int_\Gamma S \left( [\nabla(\nabla \psi \cdot E(\bn)) - \nabla E(\bn)^T \nabla \psi] \cdot \nabla \psi \right) \dS\\
		&=
			- \int_\Gamma 
				\left[ 
					\nabla_\Gamma (S(\nabla \psi) \cdot \bn) 
					+  ( S(\nabla (\nabla \psi \cdot E(\bn))) \cdot \bn ) \bn
					- S(\nabla E(\bn)^T \nabla \psi) 
				\right] \cdot S(\nabla \psi) 
			\dS\\
		&=
			 \int_\Gamma 
			 	S(\nabla E(\bn)^T \nabla \psi \cdot \nabla \psi)
				- \nabla_\Gamma (\xi - \tau S(\psi))  \cdot S(\nabla \psi)
				- (S(\nabla (\nabla \psi \cdot E(\bn))) \cdot \bn ) (\xi - \tau S(\psi))
			\dS\\
		&=
			\int_\Gamma 
				S(\nabla E(\bn)^T \nabla \psi \cdot \nabla \psi)
				- \nabla_\Gamma (\xi - \tau S(\psi))  \cdot \nabla_\Gamma S(\psi)
				+  (S(\nabla (\nabla \psi \cdot E(\bn))) \cdot \bn ) (\xi - \tau S(\psi))
			\dS\\
		&=
			\int_\Gamma 
				\tau |\nabla_\Gamma S(\psi)|^2 
				- \nabla_\Gamma \xi \cdot \nabla_\Gamma S(\psi)
				- (S(\nabla (\nabla \psi \cdot E(\bn))) \cdot \bn ) (\xi - \tau S(\psi))
			\dS\\
			&\hspace*{1em}+ \int_\Gamma 
				S(\nabla E(\bn)^T \nabla \psi \cdot \nabla \psi)
			\dS,
    \end{split}
	\end{align}
	where we used the integration by parts rule from Corollary~\ref{cor:ibp_boundary_main} two times to plug in the Robin boundary condition
	and since $\nabla \psi$ is not necessarily tangential on the boundary a curvature term appears,
	but as we integrated by parts two times it vanishes again.
	The first term has a good sign, the second and last term can be estimated due to the assumed regularity of $\xi$ and $\bn$
	and the third term cancels with part of the second boundary integral in \eqref{eq:full_sec_deriv}.
	Which follows from rewriting the second boundary integral on the right-hand side of \eqref{eq:full_sec_deriv}
	with the help of the surface divergence Theorem~\ref{thm:surf_grad_vs_tan_pro},
	\begin{align}\label{eq:bd_exp_1_1}
    \begin{split}
		\int_\Gamma (\nabla \cdot \nabla \psi) (\xi - \tau \psi) \dS
		&=
			\int_\Gamma [\nabla_\Gamma \cdot (\nabla \psi) + \nabla (\nabla \psi) \bn \cdot \bn] (\xi - \tau \psi) \dS\\
		&=
			\int_\Gamma
				- \nabla_\Gamma \psi \cdot \nabla_\Gamma (\xi - \tau \psi)
				+ (\xi - \tau \psi) (\nabla \psi \cdot \bn) \nabla_\Gamma \cdot \bn
			\dS\\
			&\hspace{1em}+ \int_\Gamma
				(\nabla (\nabla \psi \cdot \bn) \cdot \bn) (\xi - \tau \psi)
				- \nabla \bn^T \nabla \psi \cdot \bn (\xi - \tau \psi)
			\dS\\
		&=
			\int_\Gamma
				\tau |\nabla_\Gamma \psi|^2 
				- \nabla_\Gamma \psi \cdot \nabla_\Gamma \xi
				+ (\xi - \tau \psi)^2 \nabla_\Gamma \cdot \bn
			\dS\\
			&\hspace{1em}+ \int_\Gamma
				(\nabla (\nabla \psi \cdot \bn) \cdot \bn) (\xi - \tau \psi)
				- \nabla \bn^T \nabla \psi \cdot \bn (\xi - \tau \psi)
			\dS.
    \end{split}
	\end{align}
	The first term again has a good sign, the second, third and last can be absorbed 
	and the fourth cancels with the third term on the right-hand side of \eqref{eq:bd_exp_1}.
	We now turn to the third term on the right-hand side of \eqref{eq:en_ineq_sq_expan_3} and proceed similarly.
	Integrating this term by parts two times, we obtain
	\begin{align}\label{eq:bd_exp_2}
    \begin{split}
		 \int_\Omega
	 		(\nabla \cdot \nabla \psi) (\nabla \cdot ((\bd &\cdot \nabla \psi) \bd))
	 	\diff \X\\
	 	&=
	 		- \int_\Omega
	 			\nabla \psi \cdot (\nabla \cdot \nabla((\bd \cdot \nabla \psi) \bd)^T)
	 		\diff \X
	 		+ \int_\Gamma
	 			\nabla \psi \cdot \bn \; (\nabla \cdot ((\bd \cdot \nabla \psi) \bd))
	 		\dS\\
	 	&=
	 		\int_\Omega
	 			\nabla^2 \psi : \nabla((\bd \cdot \nabla \psi) \bd)^T
	 		\diff \X
	 		- \int_\Gamma
	 			(\nabla((\bd \cdot \nabla \psi) \bd)^T \bn) \cdot \nabla \psi
	 		\dS\\
	 		&\hspace{1em}+ \int_\Gamma
	 			(\xi - \tau \psi) (\nabla \cdot ((\bd \cdot \nabla \psi) \bd))
	 		\dS.
    \end{split}
	\end{align}
	All terms quadratic in the second derivative of $\psi$ in the matrix-scalar product have a good sign 
	and the boundary integrals can partly be absorbed and partly cancel each other out.
	The following calculation shows that the terms quadratic in the second derivative of $\psi$ in the matrix-scalar product have a good sign.
	Using the symmetry of $\nabla^2 \psi$ we have $\nabla^2 \psi : \bs A^T = \nabla^2 \psi : \bs A $ for any matrix $\bs A \in \R^{3\times 3}$ and
	\[
		\nabla \left( (\bd \cdot \nabla \psi) \bd \right) 
			=
			(\bd \cdot \nabla \psi) \nabla \bd + \bd \otimes  (\nabla \bd^T \nabla \psi) + \bd \otimes (\nabla^2 \psi \bd).
	\]
	Thus we obtain
	\begin{align}\label{eq:matrix_exp_2}
    \begin{split}
		\nabla^2 \psi : \nabla((\bd \cdot \nabla \psi) \bd)^T
		&=
			\nabla^2 \psi : \nabla((\bd \cdot \nabla \psi) \bd)\\
		&=
			\nabla^2 \psi : \left( \bd \otimes (\nabla^2 \psi \bd) + (\bd \cdot \nabla \psi) \nabla \bd + \bd \otimes  (\nabla \bd^T \nabla \psi) \right)\\
		&=
			|\nabla^2 \psi \bd|^2 + \nabla^2 \psi : \left( (\bd \cdot \nabla \psi) \nabla \bd + \bd \otimes  (\nabla \bd^T \nabla \psi) \right).
    \end{split}
	\end{align}
    The first term on the right-hand side of \eqref{eq:matrix_exp_2} has a good sign and the second term can be handled by Young's inequality.
	We collect the lower order terms ($\mathrm{l.o.t.}$) in 
	\begin{align}\label{eq:omega_lot}
		\mathrm{l.o.t.}_\Omega 
		= 
			(\varepsilon + \lambda ) \nabla^2 \psi : \left( (\bd \cdot \nabla \psi) \nabla \bd + \bd \otimes  (\nabla \bd^T \nabla \psi) \right).
	\end{align}
	We now turn to the boundary integrals on the right-hand side of \eqref{eq:bd_exp_2}.
	The first boundary term on the right-hand side of \eqref{eq:bd_exp_2} can be rewritten as
	\begin{align}\label{eq:bd_exp_3}
    \begin{split}
		- &\int_\Gamma
	 		(\nabla((\bd \cdot \nabla \psi) \bd)^T \bn) \cdot \nabla \psi
	 	\dS
	 	=
	 		- \int_\Gamma
	 			\left[ \nabla((\bd \cdot \nabla \psi) \bd \cdot \bn) - (\bd \cdot \nabla \psi) \nabla \bn^T \bd \right] \cdot \nabla \psi
	 		\dS\\
	 	&=
	 		- \int_\Gamma
	 			\nabla_\Gamma((\bd \cdot \nabla \psi) \bd \cdot \bn) \cdot \nabla \psi
	 			+ (\nabla((\bd \cdot \nabla \psi) \bd \cdot \bn) \cdot \bn) \bn \cdot \nabla \psi
	 			- (\bd \cdot \nabla \psi) \nabla \bn^T \bd  \cdot \nabla \psi
	 		\dS\\
	 	&=
	 		- \int_\Gamma
	 			(\nabla((\bd \cdot \nabla \psi) \bd \cdot \bn) \cdot \bn) (\xi - \tau \psi)
	 			- (\bd \cdot \nabla \psi) \nabla \bn^T\bd  \cdot \nabla \psi
	 		\dS,
    \end{split}
	\end{align}
	where we used the fact that $\bd \cdot \bn = 0$ on $\Gamma$.
	For the second boundary term on the right-hand side of \eqref{eq:bd_exp_2} we note
	\begin{align}\label{eq:bd_exp_4}
    \begin{split}
		\int_\Gamma
	 		(\xi - \tau \psi) (\nabla &\cdot ((\bd \cdot \nabla \psi) \bd))
	 	\dS
	 	=
	 		\int_\Gamma
	 			(\xi - \tau \psi) \left[ \nabla_\Gamma \cdot ((\bd \cdot \nabla \psi) \bd) + \nabla((\bd \cdot \nabla \psi) \bd) \bn \cdot \bn \right]
	 		\dS\\
	 	&=
	 		\int_\Gamma
	 			- \nabla_\Gamma (\xi - \tau \psi) ((\bd \cdot \nabla \psi) \bd) 
	 			+ (\xi - \tau \psi) \nabla((\bd \cdot \nabla \psi) \bd \cdot \bn) \cdot \bn
            \dS\\
            &\hspace{2em}-\int_\Gamma
	 			(\xi - \tau \psi) (\bd \cdot \nabla \psi) \nabla \bn^T \bd \cdot \bn
	 		\dS\\
	 	&=
	 		\int_\Gamma
	 			\tau |\nabla_\Gamma \psi \cdot \bd|^2
	 			- (\nabla_\Gamma \xi \cdot \bd) (\nabla_\Gamma \psi \cdot \bd) 
            \dS\\
            &\hspace{1em}+\int_\Gamma
	 			(\xi - \tau \psi) \nabla((\bd \cdot \nabla \psi) \bd \cdot \bn) \cdot \bn
	 			- (\xi - \tau \psi) (\bd \cdot \nabla \psi) \nabla \bn^T \bd \cdot \bn
	 		\dS,
    \end{split}
	\end{align}
	where we again used the integration by parts rule on the boundary from Corollary~\ref{cor:ibp_boundary_main},
	since the vector field $\bd$ is indeed tangential there is no curvature term.
	The first term has a good sign, the second and last term can be absorbed and the third term cancels with 
	the first term on the right-hand side of \eqref{eq:bd_exp_3}. 
	We collect the lower order boundary terms of \eqref{eq:bd_exp_1}, \eqref{eq:bd_exp_1_1}, \eqref{eq:bd_exp_3} and \eqref{eq:bd_exp_4} in
	\begin{align}\label{eq:gamma_lot}
	\begin{split}
		\mathrm{l.o.t.}_{\Gamma} &:= 
			- 2 \nabla_\Gamma \xi \cdot \nabla_\Gamma \psi
			+ \nabla \bn^T \nabla \psi \cdot \nabla \psi
			+ (\xi - \tau \psi)^2 \nabla_\Gamma \cdot \bn
			- \nabla \bn^T \nabla \psi \cdot \bn (\xi - \tau \psi)\\
			&\hspace*{1em}+ (\varepsilon + \lambda)\left( 
				(\bd \cdot \nabla \psi) \nabla \bn^T\bd  \cdot \nabla \psi
				- (\nabla_\Gamma \xi \cdot \bd) (\nabla_\Gamma \psi \cdot \bd)
				- (\xi - \tau \psi) (\bd \cdot \nabla \psi) \nabla \bn^T \bd \cdot \bn
			\right).
	\end{split}
	\end{align}
	Now, we turn to the volume integral of \eqref{eq:en_ineq_sq_expan_2} with prefactor $\kappa$ to see that this gives a term with good sign for the 
	third order derivatives of $\psi$. 
	\begin{align}\label{eq:kappa_exp}
	\begin{split}
		- \int_\Omega
	 		\varMa \nabla \varphi \cdot \nabla \left( \nabla \cdot \left( \lamMa \nabla \psi \right) \right)
	 	\diff \X
	 	&=
	 		\int_\Omega
	 			\nabla (\nabla \cdot (\varMa \nabla \psi))
	 			\cdot \varMa \nabla \left( \nabla \cdot \left( \lamMa \nabla \psi \right) \right)
	 		\diff \X\\
	 	&=
	 		\int_\Omega
	 			|\nabla (\Delta \psi)|^2_{\varMa}
	 			+ \varepsilon \lambda |\nabla (\nabla \cdot ((\bd \cdot \nabla \psi) \bd))|^2_{\varMa}
			\diff \X\\
			&\hspace{1em}+ \int_\Omega
	 			(\varepsilon + \lambda) \nabla (\Delta \psi) \cdot \varMa \nabla (\nabla \cdot ((\bd \cdot \nabla \psi) \bd))
	 		\diff \X.
	\end{split}
	\end{align}
	To see that the third term also gives a good sign for the critical order term 
	we proceed as above, integrating by parts two times and estimating the occurring boundary term.
	This calculation is quite lengthy and uses the same technique as above thus we perform it in the Appendix
	for thoroughness and for the interested reader.
	By Lemma~\ref{lem:reg_en_terms} from the Appendix we have
	\begin{align*}
		(\varepsilon + \lambda) \int_\Omega
	 		\nabla (\Delta \psi) \cdot \varMa \nabla (\nabla \cdot ((\bd \cdot \nabla \psi) \bd))
	 	\diff \X
	 	=
	 		(\varepsilon + \lambda) \int_\Omega
	 			|\nabla^3 \psi \cdot \bd |^2 + \varepsilon |(\nabla^3 \psi \cdot \bd) \bd|^2
	 		\diff \X
	 		+ \mathrm{l.o.t}_{\kappa}.
	\end{align*}
	Finally, we consider the boundary integral of \eqref{eq:en_ineq_sq_expan_2},
	\begin{align}\label{eq:en_ineq_sq_exan_kappa}
    \begin{split}
		 \kappa \int_\Gamma
	 		2 \nabla \cdot &\left( \lamMa \nabla \psi \right) \; \varMa \nabla \varphi \cdot \bn
	 	\dS\\
	 	&= 
	 		- 2 \kappa \tau \int_\Gamma
	 			\nabla \cdot \left( \lamMa \nabla \psi \right)  \varphi
	 		\dS
	 	= 
	 		2 \kappa \tau \int_\Gamma
	 			\nabla \cdot \left( \lamMa \nabla \psi \right)  \nabla \cdot (\varMa \nabla \psi)
	 		\dS\\
	 	&=
	 		2 \kappa \tau \int_\Gamma
	 			|\Delta \psi|^2 + \varepsilon \lambda |\nabla \cdot ((\bd \cdot \nabla \psi) \bd )|^2
	 			+ (\varepsilon + \lambda) \Delta \psi \cdot \left( \nabla \cdot ((\bd \cdot \nabla \psi) \bd) \right)
	 		\dS.
    \end{split}
	\end{align}
	Collecting the transformations from \eqref{eq:en_ineq_sq_expan_2}--\eqref{eq:en_ineq_sq_exan_kappa} 
	we can rewrite \eqref{eq:en_ineq_sq_expan_1}
	\begin{align*}
    \begin{split}
		\int_\Omega
	 		2 \lamMa &\left(\nabla c^+ - \nabla c^- \right) \cdot \nabla \psi
	 	\diff \X
	 	=
	 		2 \int_\Omega
	 			|\nabla^2 \psi|^2 
	 			+ \varepsilon \lambda  |\nabla \cdot ((\nabla \psi \cdot \bd) \bd)|^2 
	 			+ (\varepsilon + \lambda ) |\nabla^2 \psi \bd|^2
	 		\diff \X\\
	 		&+ 2 \kappa \int_\Omega
	 			|\nabla (\Delta \psi)|_{\varMa}^2 
	 			+ \varepsilon \lambda |\nabla( \nabla \cdot ((\nabla \psi \cdot \bd) \bd))|_{\varMa}^2 
            \diff \X\\
            &+ 2 (\varepsilon + \lambda)\kappa \int_\Omega
	 			|\nabla^3 \psi \cdot \bd|^2
	 			+  \varepsilon |(\nabla^3 \psi \cdot \bd) \bd|^2
	 		\diff \X\\
	 		&+ 2 \tau \int_\Gamma 
	 			2 |\nabla_\Gamma \psi|^2 
	 			+ (\varepsilon + \lambda)   |\nabla_\Gamma \psi \cdot \bd|^2
	 		\dS
	 		+ 2 \kappa \tau \int_\Gamma 
	 			|\Delta \psi|^2 + \varepsilon \lambda |\nabla \cdot ((\bd \cdot \nabla \psi) \bd)|^2
	 		\dS\\
	 		&+ 2 \int_\Gamma
	 			(c^+ -  c^-) ( \xi - \tau \psi) 
	 		\dS
	 		+ 2 \kappa \tau \int_\Gamma
	 			(\varepsilon + \lambda) \Delta \psi \; \nabla \cdot ((\bd \cdot \nabla \psi) \bd)
	 		\dS\\
	 		&+ 2 \left( \int_\Omega \mathrm{l.o.t.}_\Omega \diff \X + \int_\Gamma \mathrm{l.o.t.}_\Gamma \dS
	 		+ \kappa \mathrm{l.o.t.}_\kappa \right).
    \end{split}
	\end{align*}
	Inserting this back into \eqref{eq:first_expand} we obtain 
	\begin{align}\label{eq:reg_en_ineq_2_collection}
    \begin{split}
		&\hspace{-3em}\int_{0}^{\Tmax} \int_\Omega
	 		\sigmp{4 |\nabla \sqrt{\cpm}|^2_{\lamMa} 
	 		+ \cpm |\nabla \psi|^2_{\lamMa}}
	 		+ 2 |\nabla^2 \psi|^2 
	 		+ 2 \varepsilon \lambda  |\nabla \cdot ((\nabla \psi \cdot \bd) \bd)|^2 
	 	\diff \X \diff t \\
        &+ 2 \int_0^{\Tmax} \int_\Omega 
            (\varepsilon + \lambda ) |\nabla^2 \psi \bd|^2
            + \kappa |\nabla (\Delta \psi)|_{\varMa}^2 
	 		+ \kappa \varepsilon \lambda |\nabla( \nabla \cdot ((\nabla \psi \cdot \bd) \bd))|_{\varMa}^2 
        \diff \X\\
	 	&+ 2 \kappa (\varepsilon + \lambda) \int_0^{\Tmax} \int_\Omega
	 		|\nabla^3 \psi \cdot \bd|^2
	 		+  \varepsilon |(\nabla^3 \psi \cdot \bd) \bd|^2
	 	\diff \X \diff t\\
	 	&+ 2 \tau \int_0^{\Tmax} \int_\Gamma2
	 		|\nabla_\Gamma \psi|^2 
	 		+ (\varepsilon + \lambda)   |\nabla_\Gamma \psi \cdot \bd|^2
	 	\dS \diff t\\
	 	&+ 2 \kappa  \tau \int_0^{\Tmax} \int_\Gamma 
	 		|\Delta \psi|^2 + \varepsilon \lambda |\nabla \cdot ((\bd \cdot \nabla \psi) \bd)|^2
	 	\dS \diff t\\
	 	&\hspace{-1em}\leq 
	 		C \e^{\Tmax} \left(
				\norm{\xi}^2_{W^{1,2}(0,\Tmax;L^2(\Gamma))}
				+ \norm{\xi}_{W^{1,1}(0,\Tmax;L^\infty(\Gamma))}
				+ \Tmax
				+ 1
			\right)\\
	 		&\hspace{1em}\underbrace{- 2 \int_0^{\Tmax} \int_\Gamma
	 			(c^+ -  c^-) ( \xi - \tau \psi) 
	 		\dS \diff t}_{=:\hypertarget{firstest}{\mathrm{\,I \, }}}
	 		\underbrace{- 2 \kappa \tau \int_0^{\Tmax} \int_\Gamma
	 			(\varepsilon + \lambda) \Delta \psi \; \nabla \cdot ((\bd \cdot \nabla \psi) \bd)
	 		\dS \diff t}_{=: \hypertarget{fsecondest}{\mathrm{\,II}}}\\
	 		&\hspace{1em}- \underbrace{ 
	 			2 \int_0^{\Tmax} \int_\Omega \mathrm{l.o.t.}_\Omega \diff \X 
	 			+ \int_\Gamma \mathrm{l.o.t.}_\Gamma \dS + \kappa \mathrm{l.o.t.}_\kappa \diff t}_{=: \hypertarget{thirdest}{\mathrm{\,III}}}.
    \end{split}
	\end{align}
	Now, we can start to estimate the right-hand side.
	For the boundary integral in $\hyperlink{firstest}{\mathrm{I}}$ we use the trace estimate \cite[Prop.~8.2]{diBenedetto},
	which gives us that for all $p \in [1, 3)$ there exists $C>0$ such that for all $\delta > 0$ and all $u \in W^{1,p}(\Omega)$ 
	\begin{align}\label{eq:diBene_trace_est}
		\norm{u}_{L^q(\Gamma)} \leq \delta \norm{\nabla u}_{L^p(\Omega)} + C \left( 1 + \frac{1}{\delta} \right) \norm{u}_{L^p(\Omega)}
	\end{align}
	holds, for $q \in \left[1,2p/(3 - p)\right]$.
	Choosing
	\(
		\delta 
		= (C ( \norm{\xi}_{L^\infty(0,\Tmax;L^\infty(\Gamma))} + \norm{\psi}_{L^\infty(0,\Tmax;W^{1,2}(\Omega))} )  + 1 )^{-1/2} 
	\)
	we obtain
	\begin{align*}
		|\hyperlink{firstest}{\mathrm{I}}|
	 	&\leq
		 	2 \int_0^{\Tmax} 
		 		\sigmp{\norm{\cpm}_{L^{\frac{4}{3}}(\Gamma)}} \norm{\xi - \tau \psi}_{L^4(\Gamma)}
		 	\diff t\\
        &\leq
		 	C \left( \norm{\xi}_{L^\infty(0,\Tmax ;L^\infty(\Gamma))} + \norm{\psi}_{L^\infty(0,\Tmax;W^{1,2}(\Omega))} \right)
		 	\int_0^{\Tmax} 
		 		\sigmp{\snorm{\sqrt{\cpm \,}}^2_{L^{\frac{8}{3}}(\Gamma)}}
		 	\diff t\\
		 \underset{\eqref{eq:diBene_trace_est}}&{\leq} 
		 	C \left( \norm{\xi}_{L^\infty(0,\Tmax;L^\infty(\Gamma))} + \norm{\psi}_{L^\infty(0,\Tmax;W^{1,2}(\Omega))} \right)\\
		 	&\hspace{2em}\times \int_0^{\Tmax} 
		 		\sigmp{\delta^2 \snorm{\nabla \sqrt{\cpm}}^2_{L^2(\Omega)} 
		 		+ C \left( 1 + \frac{1}{\delta^2} \right) 
		 		\snorm{\sqrt{\cpm}}^2_{L^2(\Omega)}}
		 	\diff t\\
		&\leq
		 	\int_0^{\Tmax} 
		 		\sigmp{\snorm{\nabla \sqrt{\cpm}}^2_{L^2(\Omega)} 
		 		+ C \left( 
		 			1 +\norm{\xi}^2_{L^\infty(0,\Tmax;L^\infty(\Gamma)))} 
		 			+ \norm{\psi}^2_{L^\infty(0,\Tmax;W^{1,2}(\Omega))} 
		 		\right) 
		 		\snorm{\sqrt{\cpm}}^2_{L^2(\Omega)}}
		 	\diff t\\
		 &\leq 
		 	\sigmp{\snorm{\nabla \sqrt{\cpm}}^2_{L^2(0,\Tmax;L^2(\Omega))}}\\
		 	&\hspace{2em}+ C \left( 
		 			1 +\norm{\xi}^2_{L^\infty(0,\Tmax;L^\infty(\Gamma)))} 
		 			+ \norm{\psi}^2_{L^\infty(0,\Tmax;W^{1,2}(\Omega))} 
		 		\right) 
		 		\sigmp{\snorm{\sqrt{\cpm}}^2_{L^2(0,\Tmax;L^2(\Omega))}}.
	\end{align*}
	Next, we turn to the boundary integral in $\hyperlink{secondest}{\mathrm{II}}$, where we again use the trace estimate
	\cite[Prop.~8.2]{diBenedetto}, \textit{cf.}~\eqref{eq:diBene_trace_est} with 
	$u = \Delta \psi$ and $\delta = \left( \frac{\varepsilon \lambda}{(\varepsilon + \lambda)^2 \tau} \right)^{1/2}$. 
	We find
	\begin{align*}
		|\hyperlink{secondest}{\mathrm{II}}|
	 	\underset{\mathrm{Young}}&{\leq}
	 		\int_0^{\Tmax} 
	 			\kappa \frac{(\varepsilon + \lambda)^2 \tau}{2 \varepsilon \lambda}\norm{\Delta \psi}^2_{L^2(\Gamma)}
	 			+ 2 \kappa \tau \varepsilon \lambda \norm{\nabla \cdot ((\bd \cdot \nabla \psi) \bd)}^2_{L^2(\Gamma)}
	 		\diff t\\
		&\leq
			\int_0^{\Tmax} 
	 			\frac{\kappa}{2} \norm{\nabla (\Delta \psi)}^2_{L^2(\Omega)}
	 			+ \kappa  C(\varepsilon, \lambda, \tau) \norm{\Delta \psi}^2_{L^2(\Omega)}
	 			+ 2 \kappa \tau \varepsilon \lambda \norm{\nabla \cdot ((\bd \cdot \nabla \psi) \bd)}^2_{L^2(\Gamma)}
	 		\diff t.
	\end{align*}
	The first term can be absorbed into the term 
    $\kappa \norm{\nabla (\Delta \psi)}^2_{L^2(\Omega)}$ 
	on the left-hand side of \eqref{eq:reg_en_ineq_2_collection},
	the second term can be absorbed into the good term of $\norm{\nabla^2 \psi}^2_{L^2(\Omega)}$ on the left-hand side of \eqref{eq:reg_en_ineq_2_collection} 
	(without prefactor $\kappa$) using the smallness assumption of $\kappa$
	and the last term can also be absorbed into the according term on the left-hand side of \eqref{eq:reg_en_ineq_2_collection}.
	Using Lemma~\ref{lem:all_lower_ord_terms} from the Appendix, we find
	\begin{multline*}
		|\hyperlink{thirdest}{\mathrm{III}}| 
		\leq 
			C
			+ \frac{\tau}{2}  \norm{\nabla_\Gamma \psi}^2_{L^2(0,\Tmax;L^2(\Gamma))}
			+ \tau \frac{\varepsilon + \lambda}{2} \norm{\nabla_\Gamma \psi \cdot \bd}^2_{L^2(0,\Tmax;L^2(\Gamma))}\\
			+ \frac{7}{8} \norm{\nabla^2 \psi}^2_{L^2(0,\Tmax;L^2(\Omega))}
			+ \frac{\kappa}{2} \norm{\nabla (\Delta \psi)}^2_{L^2(0,\Tmax;L^2(\Omega))},
	\end{multline*}
	Putting the estimates for $\hyperlink{firstest}{\mathrm{I}}-\hyperlink{thirdhest}{\mathrm{III}}$ together, 
	we can rewrite \eqref{eq:reg_en_ineq_2_collection} as
	\begin{align*}
		\int_0^{\Tmax} \int_\Omega
	 		\sigmp{|\nabla \sqrt{\cpm}|^2_{\lamMa} 
	 		+ \cpm |\nabla \psi|^2_{\lamMa}}
	 		+ |\nabla^2 \psi|^2
	 	\diff \X \diff t
	 	\leq
	 		C.
	\end{align*}
	This finishes our proof.
\end{proof}
\end{subsection}

\begin{subsection}{Limit passage}

\begin{lem}\label{lem:global_sol}
	Under the Assumption~\ref{ass:reg}, $\kappa > 0$ as in Lemma~\ref{lem:contraction}, and all $T \in (0,\infty)$ there exists a unique weak solution to the regularized system \eqref{eq:full_reg_system}.
\end{lem}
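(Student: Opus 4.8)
The plan is to extend the local weak solution from Lemma~\ref{lem:contraction} to all of $[0,T]$ by means of a uniform-in-time bound for $\bv$ in $D(A_2^{1/2})$, and then to upgrade local uniqueness to global uniqueness by a connectedness argument. First, Lemma~\ref{lem:contraction} produces a unique local weak solution $(\bv,\cpm,\psi)$, a fixed point of $H_\kappa$ in $Z(M,T^\star)$; restarting the fixed point argument from later times with $(\bv(t_0),\cpm(t_0))$ as new initial data (note $\cpm(t_0)\in L^2(\Omega)_+$ by Remark~\ref{rem:psi_c_cont} and the preservation of non-negativity in Proposition~\ref{lem:reg_exis_npp}) yields a unique solution on a maximal subinterval $[0,\Tmax)$ of $[0,T]$. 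The structural point I would exploit is that $T^\star$ in Lemma~\ref{lem:contraction} depends only on an upper bound $M$ for $\norm{\bv_0}_{D(A_2^{1/2})}$, on $\kappa$, and on the horizon $T$ (the constants there being monotone in the time horizon), but not on the base time $t_0$. Hence, if
\[
	\sup_{t\in[0,\Tmax)}\norm{\bv(t)}_{D(A_2^{1/2})} < \infty,
\]
then $M$ and thereby $T^\star$ may be chosen once and for all, so the solution extends beyond $\Tmax - T^\star/2$; together with the continuity $\bv\in C([0,T);D(A_2^{1/2}))$ this forces $\Tmax=T$ and the solution to reach $[0,T]$. Everything thus reduces to the displayed a priori bound.

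\emph{The a priori bound.} Since $T<\infty$, Proposition~\ref{propo:reg_en_ineq_1} applied on $[0,\Tmax)$ gives $\norm{\bv}_{L^\infty(0,\Tmax;L_\sigma^2(\Omega))}\le C(\kappa,T)$, and Lemma~\ref{lem:bounds_for_c} applied on each $[0,T']$ with $T'<\Tmax$, together with the monotonicity of $C(\kappa,\cdot)$, gives $\norm{\cpm}_{L^\infty(0,\Tmax;L^2(\Omega))}+\norm{\psi}_{L^\infty(0,\Tmax;W^{4,2}(\Omega))}\le C(\kappa,T)$. With these, exactly the estimates already carried out in the proof of Lemma~\ref{lem:contraction} (using Lemma~\ref{lem:prop_reg_op}, the embeddings $D(A_2)\hookrightarrow L^\infty(\Omega)$ and $W^{4,2}(\Omega)\hookrightarrow W^{1,\infty}(\Omega)$, and the boundedness of $P$) give, for a.e.\ $s\in(0,\Tmax)$,
\[
	\norm{F_\kappa(\bv(s))}_{L^2(\Omega)}\le C(\kappa)\,\norm{\bv(s)}_{L^2(\Omega)}\,\norm{A_2^{1/2}\bv(s)}_{L^2(\Omega)}\le C(\kappa,T)\,\norm{A_2^{1/2}\bv(s)}_{L^2(\Omega)}
\]
and $\norm{G_\kappa(\bv(s))}_{L^2(\Omega)}\le C\norm{\cpm(s)}_{L^2(\Omega)}\norm{\psi(s)}_{W^{4,2}(\Omega)}\le C(\kappa,T)$. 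Applying $A_2^{1/2}$ to the Duhamel formula~\eqref{eq:var_of_const}, using the interchange property~\eqref{eq:interchange_A_alpha_e_tA}, the smoothing estimate~\eqref{eq:est_frac_stokes_semigroup}, and the boundedness of the Stokes semigroup, and writing $\phi(t):=\norm{A_2^{1/2}\bv(t)}_{L^2(\Omega)}$, one arrives at the weakly singular integral inequality
\[
	\phi(t)\le C_S\norm{A_2^{1/2}\bv_0}_{L^2(\Omega)}+C(\kappa,T)\int_0^t\frac{1+\phi(s)}{\sqrt{t-s}}\,\diff s,\qquad t\in[0,\Tmax).
\]
A Gronwall argument for weakly singular kernels then yields $\phi(t)\le C(\kappa,T)$ on $[0,\Tmax)$, which together with the $L^\infty(0,\Tmax;L_\sigma^2(\Omega))$ bound is the required uniform $D(A_2^{1/2})$ bound for $\bv$. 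By the previous paragraph this gives $\Tmax=T$, hence a global weak solution $(\bv,\cpm,\psi)$ on $[0,T]$; as $T\in(0,\infty)$ was arbitrary, this proves existence for every $T$.

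\emph{Uniqueness.} Given two weak solutions $(\bv_1,\cpm_1,\psi_1)$ and $(\bv_2,\cpm_2,\psi_2)$ on $[0,T]$, the set of times $t$ for which they coincide on $[0,t]$ is nonempty ($t=0$), closed (by continuity, cf.\ Remark~\ref{rem:psi_c_cont}), and open: at any agreement time $t_0$ both solve the same Cauchy problem with datum $\bv_1(t_0)=\bv_2(t_0)\in D(A_2^{1/2})$, and for times close to $t_0$ both lie, after a time shift, in a common ball $Z(M,\cdot)$, so they coincide there by the uniqueness of the fixed point of $H_\kappa$ in Lemma~\ref{lem:contraction}. Since $[0,T]$ is connected, the two solutions agree on all of $[0,T]$.

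I expect the point requiring most care to be the continuation bookkeeping: verifying that $T^\star$ in Lemma~\ref{lem:contraction} is genuinely uniform over restarts (it depends on $M$, $\kappa$ and the horizon, not on the base time) and that the bounds of Proposition~\ref{propo:reg_en_ineq_1} and Lemma~\ref{lem:bounds_for_c} are available up to $\Tmax$ (this uses monotonicity of the constants in the horizon). The weakly singular Gronwall step is routine once the $L^\infty(0,\Tmax;L_\sigma^2(\Omega))$ bound for $\bv$ is in hand; note that no $\kappa$-uniformity of the constants is needed here, that being relevant only for the later limit $\kappa\to0$.
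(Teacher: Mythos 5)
Your argument is correct, but it reaches the crucial a priori bound by a genuinely different route than the paper. The paper invokes the maximal-existence/blow-up alternative of \cite[Prop.~7.1.8]{lunardi} and then excludes blow-up of $\norm{\bv(t)}_{D(A_2^{1/2})}$ by an energy estimate: it tests the regularized Navier--Stokes equation \eqref{eq:full_reg_ns} with $A_2\bv$ (justified via maximal $L^p$-regularity of the Stokes operator), bounds the forcing using \emph{both} energy inequalities together with elliptic $L^1$-regularity (giving $\cpm$ in $L^2(0,\Tmax;L^{3/2}(\Omega))$ and $\nabla\psi$ in $L^\infty(0,\Tmax;L^6(\Omega))$), and closes with a standard Gronwall argument. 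You instead stay entirely within the mild-solution framework of Lemma~\ref{lem:contraction}: you apply $A_2^{1/2}$ to the Duhamel formula \eqref{eq:var_of_const}, use the smoothing estimate \eqref{eq:est_frac_stokes_semigroup} and the $\bv$-independent, $\kappa$-dependent bounds of Lemma~\ref{lem:bounds_for_c} for $G_\kappa$, and conclude with a Gronwall lemma for weakly singular kernels; continuation and uniqueness are then handled by uniform restarts of the fixed point and a connectedness argument rather than by citing the abstract alternative. Your route avoids the maximal-regularity justification for testing with $A_2\bv$ and never needs the second energy inequality of Proposition~\ref{propo:reg_en_ineq_2}, at the price of invoking the singular Gronwall lemma and of the restart bookkeeping you already flag: since the constant $C(\kappa,T_0)$ in Lemma~\ref{lem:contraction} enters through Lemma~\ref{lem:bounds_for_c} and hence depends on the $L^2$-norm of the charge densities at the restart time, the uniform lower bound on the restart length $T^\star$ requires the uniform bound $\sup_{t<\Tmax}\sigmp{\norm{\cpm(t)}_{L^2(\Omega)}}\leq C(\kappa,T)$, which your appeal to Lemma~\ref{lem:bounds_for_c} on $[0,T']$, $T'<\Tmax$, together with the monotonicity of $C(\kappa,\cdot)$ indeed supplies; similarly, when you assert that the two solutions in the uniqueness step coincide as fixed points, you are implicitly using that any solution in $X_T$ satisfies the variation-of-constants formula, a standard but unstated fact. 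The paper's approach buys estimates that are closer in spirit to those reused later (and partially $\kappa$-uniform), while yours is more elementary in that it recycles exactly the bounds already established for the contraction and requires no additional regularity theory beyond the semigroup smoothing.
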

The proof is conducted analogously to the proof of \cite[Lem.~4.4]{fischer_saal_2017}.
\begin{proof}
	Let $\bar{T}>0$ be such that we have a weak solution $\bv \in C([0,\bar{T}]; D(A_2^{1/2}))$ to \eqref{eq:full_reg_ns} 
	given by the fixed point of Lemma~\ref{lem:contraction}.
	By \cite[Prop.~7.1.8]{lunardi} there exists a maximal interval of existence $[0, T_{\max})$ for some $T_{\max} > \bar{T}$, such that
	there is a solution $\bv$ on $[0,T]$ for all $T< \Tmax$ and for $T_{\max} < \infty$ we have
	\[
		\lim_{t \nearrow T_{\max}} \norm{\bv(t)}_{D(A_2^{1/2})}  \nearrow \infty.
	\]
	By the two energy inequalities from Propositions~\ref{propo:reg_en_ineq_1} and~\ref{propo:reg_en_ineq_2}, 
	we can infer the boundedness $\norm{\bv(t)}_{D(A_2^{1/2})} \leq C(\Tmax)$ with $C(\Tmax) < \infty$ for $\Tmax < \infty$ and all $t \in [0,T_{\max})$ by testing the Navier--Stokes equation \eqref{eq:full_reg_ns} with $A_2(\bv)$ and thus $T_{\max} = \infty$ follows.
	We now derive this bound of $\norm{\bv(t)}_{D(A_2^{1/2})}$.
	By the energy inequalities \eqref{eq:reg_en_ineq_1} and \eqref{eq:reg_en_ineq_2} there exists a constant $C>0$
	depending on initial values, the external field $\xi$ and the regularization coefficient $\kappa$, such that
	\begin{enumerate}
		\item{ 
			$\bv$ is bounded in $L^\infty(0,T_{\max}; L^2(\Omega)) \cap L^2(0,T_{\max}; D(A_2^{1/2}))$ by $C \e^{C \Tmax}$.
		}
		\item{
			$\sqrt{\cpm}$ is bounded in $L^\infty(0,T_{\max}; L^2(\Omega)) \cap L^2(0,T_{\max}; W^{1,2}(\Omega))$ by $C \e^{C \Tmax}$.
			This implies that $\cpm$ is bounded in $L^2(0,T_{\max}; L^{3/2}(\Omega))$ by $C \e^{C \Tmax}$.
		}
		\item{
			By elliptic $L^1-$regularity we have that $\psi$ is bounded in $L^\infty(0,T_{\max}; W^{3,q}(\Omega))$ for all $q \in [1, 3/2)$
			and thus $\nabla \psi$ is bounded in $L^\infty(0,T_{\max}; L^q(\Omega))$ for all $q \in [1,\infty)$ by $C \e^{C \Tmax}$.
		}
	\end{enumerate}
	Using these bounds and the $L^p-$realization of the fractional power Stokes operator, \textit{cf.~}Definition~\ref{def:stokes}, 
	Lemma~\ref{Stokes}, and Lemma~\ref{lem:prop_reg_op}, we obtain
	\begin{align*}
		\norm{R^{1/2}_\kappa(P((c^+ - c^-) \nabla \psi ))}_{L^2(\Omega \times (0,T))}
		&\leq 
			C \norm{R^{1/2}_\kappa(P((c^+ - c^-) \nabla \psi ))}_{L^2(0,T;W^{1,6/5}(\Omega))}\\
		&\leq 
			C(\kappa) \norm{(c^+ - c^-) \nabla \psi}_{L^2(0,T;L^{6/5}(\Omega))}\\
		&\leq 
			C(\kappa) \sigmp{\norm{\cpm}_{L^2(0,T;L^{3/2}(\Omega))}} \norm{\nabla \psi}_{L^\infty(0,T;L^6(\Omega))}
		\leq 
			C(\kappa) \e^{C \Tmax}
	\end{align*}
	and
	\(
		\norm{\nabla \bv R_\kappa(\bv)}_{L^2(0,T;L^2(\Omega))} 
		\leq 
			C \norm{\nabla \bv}_{L^2(0,T;L^2(\Omega))} \norm{R_\kappa(\bv)}_{L^\infty(\Omega \times (0,T))}
		\leq 
			C(\kappa) \e^{C \Tmax}.
	\)
	Now, we test the regularized Navier--Stokes equation \eqref{eq:full_reg_ns} with $A_2(\bv)$.
	This is indeed an admissible test function by the maximal $L^p-$regularity of the Stokes operator, 
	\textit{cf.~}\cite[Thm.~4.2]{solonnikov}.
	Thus we find, using Young's inequality, that
	\begin{align*}
		\frac{1}{2} \frac{\diff}{\diff t} \norm{\nabla \bv}^2_{L^2(\Omega)} 
		&\leq 
			C \left( 
				\norm{\nabla \bv R_\kappa(\bv)}^2_{L^2(\Omega)} + \norm{R^{1/2}_\kappa(P((c^+ - c^-) \nabla \psi ))}^2_{L^2(\Omega)}
			\right)\\
		&\leq
			C \left(
				\norm{\nabla \bv}^2_{L^2(\Omega)} \norm{R_\kappa(\bv)}^2_{L^\infty(\Omega)}
				+ \norm{R^{1/2}_\kappa(P((c^+ - c^-) \nabla \psi ))}^2_{W^{1,6/5}(\Omega)}
			\right)\\
		&\leq
			C(\kappa)\left(
				\norm{\nabla \bv}^2_{L^2(\Omega)} \norm{\bv}^2_{L^2(\Omega)}
				+ \norm{(c^+ - c^-) \nabla \psi}^2_{L^{6/5}(\Omega)}
			\right)\\
		&\leq
			C(\kappa) \left(
				\norm{\nabla \bv}^2_{L^2(\Omega)} \norm{\bv}^2_{L^2(\Omega)}
				+ \sigmp{\norm{\cpm}^2_{L^{3/2}(\Omega)}} \norm{\nabla \psi}^2_{L^6(\Omega)}
			\right).
	\end{align*}
	Integrating over $(0,t)$, we find
	\begin{align*}
		\norm{\nabla \bv(t)}^2_{L^2(\Omega)}
		&\leq 
			C(\kappa) \norm{\nabla \bv}^2_{L^2(0,T;L^2(\Omega))} \norm{\bv}^2_{L^\infty(0,T;L^2(\Omega))}\\
			&\hspace{1em}+ C(\kappa) \sigmp{\norm{\cpm}^2_{L^2(0,T;L^{3/2}(\Omega))}} \norm{\nabla \psi}^2_{L^\infty(0,T;L^6(\Omega))}	
			+ \norm{\nabla \bv_0}^2_{L^2(\Omega)}
		\leq 
			C(\kappa) \e^{C \Tmax} < \infty,
	\end{align*}
	for all $T_{\max} < \infty$ and thus the global existence of a regularized solution to \eqref{eq:full_reg_system} follows.
\end{proof}

\begin{propo}\label{propo:convergences}
	Let Assumption~\ref{ass:1} hold and let $\{ \bd_\kappa \} \subseteq W^{4,\infty}(\Omega)$ and $\{ \kappa \}$ be sequences with $\kappa > 0 $ and $\kappa \searrow 0$ such that $\bd_\kappa \cdot \bn = 0$ on $\Gamma$, $\bd_{\kappa} \to \bd$ pointwise almost everywhere and 
	strongly in $L^\infty(\Omega)$
	and $\nabla \bd_\kappa \rightharpoonup^* \nabla \bd $ in $L^\infty(\Omega)$ and 
	\begin{equation}\label{eq:d_W2inf_bound}
		C \kappa \left(1 + \norm{\bd_\kappa}^2_{W^{2,\infty}(\Omega)} \right) \leq 1/32.
	\end{equation}
    Additionally, take $\{ \cpm_{0 \kappa} \} \subseteq L^2(\Omega)_+ \cap W^{1,2}(\Omega)$ 
	and $\{ \bv_{0 \kappa} \} \subseteq D(A_2)$ to be such that
	\begin{align*}
	 \cpm_{0 \kappa} \to \cpm_0 \quad \text{ in } L^2(\Omega)_+ \quad \text{ and } \quad \bv_{0 \kappa} \to \bv_0 \quad \text{ in } L^2_\sigma(\Omega),
	\end{align*}
	and $\{ \xi_\kappa \} \subseteq \, C^\infty([0,T]) \otimes C^\infty(\Gamma)$ such that $\, \xi_\kappa \to \xi $ in $W^{1,2}(0,T;W^{2,2}(\Gamma))$.
	Then there exists a subsequence of global weak solutions to \eqref{eq:full_reg_system}, from Lemma~\ref{lem:global_sol}, 
	which we call $(\bv_\kappa, \cpm_\kappa, \psi_\kappa)$ such that
	\begin{subequations}\label{eq:convergences}
	\begin{align}
		\psi_{0 \kappa} &\to \psi_0 &&\text{ in } W^{1,2}(\Omega),\label{eq:strong_conv_psi_0}\\
		\bv_\kappa &\rightharpoonup \bv &&\text{ in } L^2(0,T;W^{1,2}_\sigma(\Omega)), \label{eq:weak_conv_v}\\
		\bv_\kappa &\to \bv &&\text{ in } C_w([0,T];L^2_\sigma (\Omega)),\label{eq:weak_cont_conv_v}\\
		\bv_\kappa &\to \bv &&\text{ in } L^2(0,T;L^2_\sigma(\Omega)),\label{eq:strong_conv_v}\\
		\sqrt{\cpm_\kappa} &\rightharpoonup \sqrt{\cpm} &&\text{ in } L^2(0,T;W^{1,2}(\Omega)),\label{eq:weak_conv_sqrt_c}\\
		\cpm_\kappa &\to \cpm &&\text{ in } L^{5/3}(\Omega \times (0,T)) \cap L^1(0,T;L^p(\Omega)) \text{ for all } p \in [1,3), \label{eq:strong_conv_c}\\
		\nabla \cpm_\kappa &\rightharpoonup \nabla \cpm &&\text{ in } L^{5/4}(\Omega\times (0,T)),\label{eq:weak_conv_nabla_c}\\
        \cpm_\kappa &\rightharpoonup \cpm &&\text{ in } W^{1,10/9}(0,T;W^{-1,10/9}(\Omega)), \label{eq:weak_star_cont_conv_c}\\
        \cpm_\kappa(t) &\rightharpoonup \cpm(t) &&\text{ in } L^1(\Omega) \text{ for \underline{all} }  t \in [0,T], \label{eq:weak_conv_c_ALL_t}\\
         \psi_\kappa(t) &\rightharpoonup \psi(t) &&\text{ in } W^{1,2}(\Omega) \text{ for \underline{all} }  t \in [0,T],
        \label{eq:weak_conv_psi_ALL_t}\\
		\psi_\kappa &\rightharpoonup \psi &&\text{ in } L^2(0,T;W^{2,2}(\Omega)), \label{eq:weak_conv_psi_W22}\\
		\sqrt{\cpm_\kappa} \nabla \psi_\kappa 
		&\rightharpoonup \sqrt{\cpm} \nabla \psi &&\text{ in } L^2(0,T;L^2(\Omega)),\label{eq:weak_conv_c_nabla_psi}
	\end{align}
	\end{subequations}
	for $\kappa \searrow 0$, 
	whose limit fulfills the weak formulation \eqref{eq:weak_form_v}--\eqref{eq:weak_form_psi} for all $t \in [0,T]$.
\end{propo}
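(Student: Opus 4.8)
The plan is a by-now standard weak-compactness argument: first extract from the two regularized energy inequalities a hierarchy of uniform-in-$\kappa$ bounds, then pass to weakly and strongly convergent subsequences, and finally identify the nonlinearities in the weak formulations \eqref{eq:weak_form_v}--\eqref{eq:weak_form_psi}. The starting point is that the right-hand sides of \eqref{eq:reg_en_ineq_1} and \eqref{eq:reg_en_ineq_2} are bounded uniformly in $\kappa$, since the constants there are $\kappa$-independent and the approximating data converge; moreover \eqref{eq:d_W2inf_bound} guarantees that the smallness hypothesis of Proposition~\ref{propo:reg_en_ineq_2} holds along the whole sequence.

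\emph{Step 1 (uniform bounds).} From Propositions~\ref{propo:reg_en_ineq_1}--\ref{propo:reg_en_ineq_2} I would read off that $\bv_\kappa$ is bounded in $L^\infty(0,T;L^2_\sigma(\Omega))\cap L^2(0,T;W^{1,2}_{0,\sigma}(\Omega))$, that $\sqrt{\cpm_\kappa}$ is bounded in $L^\infty(0,T;L^2(\Omega))\cap L^2(0,T;W^{1,2}(\Omega))$ --- hence, by Gagliardo--Nirenberg interpolation, $\cpm_\kappa$ is bounded in $L^{5/3}(\Omega\times(0,T))$ and $\sqrt{\cpm_\kappa}$ in $L^{10/3}(\Omega\times(0,T))$ --- that $\psi_\kappa$ is bounded in $L^\infty(0,T;W^{1,2}(\Omega))\cap L^2(0,T;W^{2,2}(\Omega))$ (using the generalized Friedrichs inequality and the $|\nabla^2\psi|^2$-term of \eqref{eq:reg_en_ineq_2}), and that $\sqrt{\cpm_\kappa}\nabla\psi_\kappa$ is bounded in $L^2(\Omega\times(0,T))$. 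Writing $\nabla\cpm_\kappa=2\sqrt{\cpm_\kappa}\nabla\sqrt{\cpm_\kappa}$ and applying H\"older gives a uniform bound for $\cpm_\kappa$ in $L^{5/4}(0,T;W^{1,5/4}(\Omega))$, and mass conservation combined with Corollary~\ref{cor:ellip_W1q_est} (with the constant independent of $\bd_\kappa$) controls $\psi_\kappa$ in $L^\infty(0,T;W^{1,q}(\Omega))$ for $q<3/2$. Inserting these into \eqref{eq:full_reg_ns}--\eqref{eq:full_reg_np}, and using that $R_\kappa,R^{1/2}_\kappa$ commute with $A_2^{1/2}$ and are bounded on $L^2_\sigma(\Omega)$ uniformly in $\kappa$ (Lemma~\ref{lem:prop_reg_op}), I would deduce uniform bounds on $\partial_t\bv_\kappa$ in $L^{s}(0,T;(W^{1,s'}_{0,\sigma}(\Omega))^*)$ for some $s>1$ and on $\partial_t\cpm_\kappa$ in $L^{10/9}(0,T;W^{-1,10/9}(\Omega))$.

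\emph{Step 2 (compactness).} The weak/weak-$*$ limits \eqref{eq:weak_conv_v}, \eqref{eq:weak_conv_sqrt_c}, \eqref{eq:weak_conv_nabla_c}, \eqref{eq:weak_star_cont_conv_c}, \eqref{eq:weak_conv_psi_W22} are immediate from Step~1 after passing to a subsequence; the strong convergences \eqref{eq:strong_conv_v} and \eqref{eq:strong_conv_c} follow from the Aubin--Lions--Simon lemma \cite[Cor.~4]{simon} applied to $(\bv_\kappa)$ and $(\cpm_\kappa)$. Extracting a further subsequence, $\cpm_\kappa\to\cpm$ a.e.\ on $\Omega\times(0,T)$, so $\sqrt{\cpm_\kappa}\to\sqrt{\cpm}$ a.e., and the uniform $L^{10/3}$-bound upgrades this by Vitali's theorem to strong convergence of $\sqrt{\cpm_\kappa}$ in $L^p(\Omega\times(0,T))$ for every $p<10/3$. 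The pointwise-in-time statements \eqref{eq:weak_cont_conv_v}, \eqref{eq:weak_conv_c_ALL_t}, \eqref{eq:weak_conv_psi_ALL_t} come from an Arzel\`a--Ascoli argument in the weak topology: the uniform time-derivative bounds give equicontinuity of $t\mapsto\langle\bv_\kappa(t),\cdot\rangle$ and $t\mapsto\langle\cpm_\kappa(t),\cdot\rangle$ on a dense set of test functions which, together with the uniform $L^\infty_t$-bounds in $L^2$ resp.\ $L^1$, yields convergence for \emph{every} $t$ to a weakly continuous limit; for $\psi_\kappa(t)$ one then passes to the limit, at each fixed $t$, in the elliptic weak form \eqref{eq:weak_form_psi}, using $\varMan\to\varMa$ in $L^\infty(\Omega)$, $\xi_\kappa\to\xi$, the $L^1$-contractivity of the Robin Laplacian to control $\psi_\kappa(t)$ uniformly, and the self-adjointness of $S_\kappa$ on $L^2(\Omega)$ together with Lemma~\ref{lem:prop_reg_op} to replace $S_\kappa(c^+_\kappa-c^-_\kappa)$ by $c^+-c^-$. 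Finally \eqref{eq:strong_conv_psi_0} is \eqref{eq:weak_conv_psi_ALL_t} at $t=0$, improved to strong convergence by convergence of the associated energies.

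\emph{Step 3 (identification and limit passage).} The products $\bv_\kappa\otimes R_\kappa(\bv_\kappa)$ and $\cpm_\kappa R^{1/2}_\kappa(\bv_\kappa)$ converge in $L^1(\Omega\times(0,T))$ to $\bv\otimes\bv$ and $\cpm\bv$, since $R_\kappa(\bv_\kappa),R^{1/2}_\kappa(\bv_\kappa)\to\bv$ strongly in $L^2(\Omega\times(0,T))$ by Lemma~\ref{lem:prop_reg_op}~item~\ref{item:R_kappa_strong_cont} and \eqref{eq:strong_conv_v}, the remaining factors being bounded in the dual exponent; the decisive term $\sqrt{\cpm_\kappa}\nabla\psi_\kappa$ is bounded in $L^2(\Omega\times(0,T))$ and converges distributionally to $\sqrt{\cpm}\nabla\psi$ because $\sqrt{\cpm_\kappa}\to\sqrt{\cpm}$ strongly in $L^2$ and $\nabla\psi_\kappa\rightharpoonup\nabla\psi$ weakly in $L^2$, which gives \eqref{eq:weak_conv_c_nabla_psi}, and writing $(c^+_\kappa-c^-_\kappa)\nabla\psi_\kappa=\sqrt{\cpm_\kappa}\,(\sqrt{\cpm_\kappa}\nabla\psi_\kappa)$ and likewise for $\cpm_\kappa\nabla\psi_\kappa$, the strong $L^p$-convergence of $\sqrt{\cpm_\kappa}$ identifies the electric forcing terms in \eqref{eq:weak_form_v} and \eqref{eq:weak_form_c}. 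Together with $\lamMan\to\lamMa$ and $\varMan\to\varMa$ in $L^\infty(\Omega)$, the strong continuity of $R^{1/2}_\kappa$ and $S_\kappa$, and the weak convergences of Step~2, one passes to the limit $\kappa\searrow0$ in every term of \eqref{eq:weak_form_v}--\eqref{eq:weak_form_psi}, the ``for all $t$'' validity being inherited from \eqref{eq:weak_cont_conv_v}, \eqref{eq:weak_conv_c_ALL_t}, \eqref{eq:weak_conv_psi_ALL_t}.

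I expect the main obstacle to be the simultaneous bookkeeping of the three distinct regularizations $R_\kappa,R^{1/2}_\kappa,S_\kappa$ and the director approximation $\bd_\kappa$ when identifying the electric coupling --- concretely, securing the strong $L^p$-convergence of $\sqrt{\cpm_\kappa}$ (which rests on the Aubin--Lions compactness of $(\cpm_\kappa)$ from the critical-exponent bounds of Step~1) and the pointwise-in-time convergence of $\psi_\kappa(t)$ in $W^{1,2}(\Omega)$, where one must exploit the $L^1$-contractivity of the Robin Laplacian and the continuity of $S_\kappa$ to obtain a uniform handle on $\psi_\kappa$ at \emph{every} $t$ rather than almost every $t$.
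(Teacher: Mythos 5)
Your overall skeleton coincides with the paper's: uniform bounds from the two energy inequalities, Aubin--Lions compactness for $\bv_\kappa$ and $\cpm_\kappa$, pointwise-in-time weak limits from the time-derivative bounds, and identification of the nonlinearities as products of strongly and weakly convergent factors. Two steps, however, contain genuine gaps. First, your treatment of \eqref{eq:strong_conv_psi_0} and of the source term $S_\kappa(c^+_\kappa-c^-_\kappa)$ overlooks that $S_\kappa=(1-\kappa\Delta_{\varMan})^{-1}$ is built from the \emph{approximate} director $\bd_\kappa$, so its generator changes with $\kappa$; Lemma~\ref{lem:prop_reg_op} is stated for a fixed generator and cannot be invoked directly to ``replace $S_\kappa(c^+_\kappa-c^-_\kappa)$ by $c^+-c^-$'' or to get $\varphi_{0\kappa}=S_\kappa(c^+_{0\kappa}-c^-_{0\kappa})\to c^+_0-c^-_0$. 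The paper's missing idea here is the auxiliary problem \eqref{eq:approx_approx} with the \emph{fixed} director $\bd$, to which the resolvent lemma does apply, followed by an energy estimate for the difference $\varphi_{0\kappa}-\tilde\varphi_{0\kappa}$ exploiting $\norm{\varMan-\varMa}_{L^\infty}\to 0$ and the bound $\kappa\norm{\nabla\tilde\varphi_{0\kappa}}_{L^2}^2\le C$. Your alternative route to \eqref{eq:strong_conv_psi_0} --- take \eqref{eq:weak_conv_psi_ALL_t} at $t=0$ and upgrade by ``convergence of the associated energies'' --- is circular: identifying the $t=0$ weak limit with $\psi_0$ (the solution of the limit Poisson problem with datum $c^+_0-c^-_0$), and computing the limit energy via the tested equation, both already presuppose exactly the convergence $\varphi_{0\kappa}\to c^+_0-c^-_0$ that is at issue.

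Second, for \eqref{eq:weak_conv_c_ALL_t} your Arzel\`a--Ascoli argument (equicontinuity of $t\mapsto\langle\cpm_\kappa(t),\phi\rangle$ from $\partial_t\cpm_\kappa\in L^{10/9}(0,T;W^{-1,10/9}(\Omega))$ plus a uniform $L^\infty(0,T;L^1(\Omega))$ bound) only yields, at each $t$, weak-$*$ convergence in the sense of measures: bounded sets of $L^1(\Omega)$ are not weakly sequentially compact. To land in $L^1(\Omega)$ with weak convergence you need equi-integrability, which the paper extracts from the entropy term: the energy inequality gives a $\kappa$-uniform bound on $\norm{\cpm_\kappa(t)\ln\cpm_\kappa(t)}_{L^1(\Omega)}$ for \emph{all} $t$, and then the de la Vall\'ee--Poussin theorem provides weak $L^1$ compactness, with the limit identified through the pointwise convergence in $W^{-1,10/9}(\Omega)$. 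This ingredient is available from your Step~1 but must be invoked explicitly; without it the ``for all $t$'' statements for $\cpm_\kappa$, and consequently your identification of $\psi_\kappa(t)$ through the Poisson equation, do not follow as written. The remaining parts of your plan (time-derivative estimates, strong convergence of $R_\kappa(\bv_\kappa)$, $R^{1/2}_\kappa(\bv_\kappa)$ via the fixed Stokes generator, and the strong-times-weak identification of $\cpm_\kappa\nabla\psi_\kappa$ and $\cpm_\kappa R^{1/2}_\kappa(\bv_\kappa)$) match the paper's argument.
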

\begin{remark}[Existence of approximate sequences for the director field and the external electric potential]
    The existence of a sequence $\{ \bd_\kappa \}$ fulfilling the properties assumed in Proposition~\ref{propo:convergences} can be proven via
a standard mollification, \cite[Sec.~C.4]{evans}. 
In \cite[Sec.~5.3.3]{evans} the density of $C^\infty(\overline{\Omega})$ in $W^{1,p}(\Omega)$ for all $p \in [1, \infty)$ is proven.
A straight forward calculation proves that the approximating sequence given in the proof there also fulfills the convergence $\nabla \bd_\kappa \rightharpoonup^* \nabla \bd$ in $L^\infty(\Omega)$. The standard mollification can be altered, to ensure that all elements of the approximating sequence already 
fulfill the boundary condition, that is $\bd_\kappa \cdot \bn = 0 $ on $\Gamma$ for all $\kappa$. 
The existence of a sequence $\{ \xi_\kappa \}$ with the properties from the lemma follows from the density
of $C^\infty(\Gamma)$ in $W^{2,2}(\Gamma)$, \textit{cf.~}\cite[Sec.~4.3]{lions_magenes_1961}, 
then \cite[Thm.~5.12]{bochner} gives the density of $C^\infty([0,T]) \otimes C^\infty(\Gamma)$ in $W^{1,2}(0,T;W^{2,2}(\Gamma))$.
\end{remark}

\begin{proof}
Let $\tilde{\varphi}_{0 \kappa}$ be the solution to
\begin{subequations}
\begin{align}
	& &\tilde{\varphi} - \kappa \nabla \cdot (\varMa \nabla \tilde{\varphi}) &= c^+_{0 \kappa} - c^-_{0 \kappa} &  &\text{ in } \Omega,
	\label{eq:approx_approx}\\
	& &\varMa \nabla \tilde{\varphi} \cdot \bn + \tau \tilde{\varphi} &= 0 &  &\text{ on } \Gamma.
\end{align}
\end{subequations}
Then by Lemma~\ref{lem:prop_reg_op}, point~\ref{item:R_kappa_strong_cont},  we have $\tilde{\varphi}_{0 \kappa} \to c^+_0 - c^-_0$ as $\kappa \to 0$.
Note, that here we took $A$ from Lemma~\ref{lem:prop_reg_op} to be $ \nabla \cdot (\varMa \nabla \cdot)$, 
and thus independent of $\bd_\kappa$. Thus the lemma is indeed applicable.
To find the convergence of $\varphi_{0 \kappa}$, the solution to
\begin{subequations}\label{eq:approx_sys}
\begin{align}
	& &\varphi - \kappa \nabla \cdot (\varMan \nabla\varphi) &= c^+_{0 \kappa} - c^-_{0 \kappa} & &\text{ in } \Omega,
	\label{eq:approx}\\
	& &\varMan \nabla \varphi \cdot \bn + \tau \varphi &= 0 & &\text{ on } \Gamma.
\end{align}
\end{subequations} 
to $c^+_0 - c^-_0$, we show that $\varphi_{0 \kappa} - \tilde{\varphi}_{0 \kappa}$ goes to zero.
With that, the convergence $\varphi_{0 \kappa} \to c^+_0 - c^-_0$ follows by the triangle inequality.
We now show $\varphi_{0 \kappa} - \tilde{\varphi}_{0 \kappa} \to 0$, by subtracting \eqref{eq:approx_approx}, 
the equation for $\tilde{\varphi}_{0 \kappa}$, 
from the equation \eqref{eq:approx} for $\varphi_{0 \kappa}$, and testing with the difference
$\varphi_{0 \kappa} - \tilde{\varphi}_{0 \kappa}$, which yields
\begin{align*}
	\int_\Omega |\varphi_{0 \kappa} &- \tilde{\varphi}_{ 0 \kappa }|^2 \diff \X
	+ \kappa \int_\Omega 
		\left( 
			\varMan \nabla \varphi_{0 \kappa} - \varMa \nabla \tilde{\varphi}_{0 \kappa}
		\right) 
		\cdot \nabla (\varphi_{0 \kappa} - \tilde{\varphi}_{0 \kappa})
	\diff \X
	+ \tau \int_\Gamma 
		|\varphi_{0 \kappa} - \tilde{\varphi}_{0 \kappa}|^2 
	\dS\\
	&=
		\int_\Omega |\varphi_{0 \kappa} - \tilde{\varphi}_{ 0 \kappa }|^2 \diff \X
		+ \tau \int_\Gamma |\varphi_{0 \kappa} - \tilde{\varphi}_{0 \kappa}|^2 \dS
		+ \kappa \int_\Omega 
			|\nabla (\varphi_{0 \kappa} - \tilde{\varphi}_{0 \kappa})|^2_{\varMan}
		\diff \X\\
		&\hspace{1em}+ \kappa  \int_\Omega 
			\left( \varMan - \varMa \right) \nabla \tilde{\varphi}_{0 \kappa} 
			\cdot \nabla  (\varphi_{0 \kappa} - \tilde{\varphi}_{0 \kappa})
		\diff \X
	=
		0.
\end{align*}
Using Young's inequality on the last term and the estimate 
\[
	\kappa \norm{\nabla \tilde{\varphi}_{0 \kappa}}^2_{L^2(\Omega)} \leq \sigmp{\norm{\cpm_{0 \kappa}}^2_{L^2(\Omega)}},
\]
which follows simply by testing \eqref{eq:approx_approx} with $\tilde{\varphi}_{0 \kappa}$, we can estimate
\begin{align}\label{eq:strong_conv_phi_0}
\begin{split}
    	\int_\Omega |\varphi_{0 \kappa} - \tilde{\varphi}_{ 0 \kappa }|^2 \diff \X
		&+ \tau \int_\Gamma |\varphi_{0 \kappa} - \tilde{\varphi}_{0 \kappa}|^2 \dS
		+ \kappa \int_\Omega 
			|\nabla (\varphi_{0 \kappa} - \tilde{\varphi}_{0 \kappa})|^2_{\varMan}
		\diff \X\\
	&\leq	
		\frac{\kappa}{2} \norm{\left( \varMan - \varMa \right) \nabla \tilde{\varphi}_{0 \kappa} }^2_{L^2(\Omega)}
		+ \frac{\kappa}{2} \norm{\nabla  (\varphi_{0 \kappa} - \tilde{\varphi}_{0 \kappa})}^2_{L^2(\Omega)}\\
	&\leq
		\frac{1}{2} \norm{\varMan - \varMa}^2_{L^\infty(\Omega)}
		\sigmp{\norm{\cpm_{0 \kappa}}^2_{L^2(\Omega)}}
		+\frac{\kappa}{2} \norm{\nabla  (\varphi_{0 \kappa} - \tilde{\varphi}_{0 \kappa})}^2_{L^2(\Omega)}.
\end{split}
\end{align}
The first term on the right-hand side goes to zero by the boundedness of $\norm{\cpm_{0 \kappa}}^2_{L^2(\Omega)}$ and the strong convergence
of $\bd_\kappa$ in $L^\infty(\Omega)$ and the second term can be absorbed into the left-hand side.
Thus we get $\varphi_{0 \kappa} - \tilde{\varphi}_{ 0 \kappa } \to 0$ and 
\begin{equation}\label{eq:strong_conv_phi_0_final}
	\varphi_{0 \kappa} \to c^+_0 - c_0^-  \text{ in } L^2(\Omega).
\end{equation}

Now we turn to the convergence in \eqref{eq:strong_conv_psi_0}.
Let $\psi_0$ be the solution to \eqref{eq:main_system_p} with right-hand side $c^+_0 - c^-_0$. 
Subtraction the equation for $\psi_0$ and $\psi_{0 \kappa}$ and testing with the difference, we obtain by Young's inequality
\begin{align}
    \begin{split}
        \label{eq:conv_psi_0_test}
	\norm{\psi_0 - \psi_{0 \kappa}}^2_{W^{1,2}(\Omega)}
	\leq{}&
		C \left( \int_\Omega |\nabla (\psi_0 - \psi_{0 \kappa})|^2_{\varMan} \diff \X 
		+ \tau \int_\Gamma |\psi_0 - \psi_{0 \kappa}|^2 \dS \right) \\
	\leq {}&
		C \int_\Omega
		(\varMan - \varMa) \nabla \psi_0 \cdot \nabla (\psi_0 - \psi_{0 \kappa})
		+ (c_0^+ - c_0^- - \varphi_{0 \kappa})  (\psi_0 - \psi_{0 \kappa})
		\diff \X
    \\&+ C \int_\Gamma
        (\xi - \xi_\kappa) (\psi - \psi_\kappa)
    \dS\\
	\leq{}&
		C \left( 
			\norm{(\varMan - \varMa) \nabla \psi_0}^2_{L^2(\Omega)} 
			+ \norm{c_0^+ - c_0^- - \varphi_{0 \kappa}}^2_{L^2(\Omega)}
            + \norm{\xi - \xi_\kappa}^2_{L^2(\Gamma)}
		\right)\\
		&+ \delta \norm{\psi_0 - \psi_{0 \kappa}}^2_{W^{1,2}(\Omega)}
 \end{split}
\end{align}
for all $\delta > 0$. Thus, we can absorb the second term on the right-hand side into the left-hand side and by the strong convergence of $\varphi_{0 \kappa}$, 
\textit{cf.~}\eqref{eq:strong_conv_phi_0_final},
the boundedness of $\norm{\nabla \psi_0}_{L^2(\Omega)} \leq C \sigmp{\norm{\cpm_0}_{L^2(\Omega)}}$, which follows by a simple testing, 
the strong convergence of $\bd_\kappa $ in $L^\infty(\Omega)$,
and the strong convergence of $\xi_\kappa$ in $W^{1,2}(0,T;L^2(\Gamma))$
the first term on right-hand side goes to zero and we find \eqref{eq:strong_conv_psi_0}.

The right-hand side of the first energy inequality, \textit{cf.}~\eqref{eq:reg_en_ineq_1}, can be bounded independently of $\kappa$.
This can be seen by
\begin{multline}\label{eq:en_0_kappa}
	\left| \E_{\mathrm{reg}}(\bv_{0 \kappa}, \cpm_{0 \kappa}, \psi_{0 \kappa}) \right|\\
	\leq 
		C \left( 
			\norm{\bv_{0 \kappa}}^2_{L^2(\Omega)}
			+ (1 + \kappa) \sigmp{\norm{\cpm_{0 \kappa}}^2_{L^2(\Omega)}} 
			+ (1 + \norm{\bd_\kappa}^2_{L^\infty(\Omega)}) \norm{\psi_{0 \kappa}}^2_{W^{1,2}(\Omega)}
		\right)
	\leq C,
    \end{multline}
with $C > 0 $ independent of $\kappa$,  since convergent sequences are bounded,
where we used the elliptic estimate for the Robin Laplacian in $L^2(\Omega)$, 
\textit{cf.~}Lemma~\ref{lem:robin_semigroup_L2} and Lemma~\ref{lem:prop_reg_op},
and the fact that
\begin{equation}\label{eq:xi_kappa_bounded}
	\{\xi_\kappa \} \text{ is bounded in } 
	W^{1,2}(0,T;L^\infty(\Gamma))
\end{equation}
independently from $\kappa$. 
This boundedness follows from the fact that $\{ \xi_\kappa \}$ is bounded in $W^{1,2}(0,T;W^{2,2}(\Gamma))$, since it is convergent in that space,
and from the embedding $W^{2,2}(\Gamma) \hookrightarrow L^\infty(\Gamma)$, \textit{cf.~}\cite[Thm.~3.81]{demengel_demengel}.
Thus we have the boundedness of $\{ \bv_\kappa \}$ in 
\begin{equation}\label{eq:L103_bound_v}
	\V := L^\infty(0,T;L^2_\sigma(\Omega)) \cap L^2(0,T;W^{1,2}(\Omega)) \hookrightarrow L^{10/3}(\Omega \times (0,T) ),
\end{equation}
where the embedding is a simple consequence of H\"{o}lder's and Sobolev's inequalities.
By the boundedness of $\{ \bv_\kappa \}$ in the space $\V$ the weak convergence~\eqref{eq:weak_conv_v} follows directly.
We use an Aubin--Lions lemma to deduce the strong $L^2(\Omega \times (0,T) )-$convergence of $\{ \bv_\kappa \}$. 
To get an estimate for the time derivative of $\{ \bv_\kappa \}$ we use the equation 
\eqref{eq:full_reg_ns} and write
\begin{equation}\label{eq:est_time_deriv_v}
    \partial_t \bv_\kappa 
    = 
        -A_2 \bv_\kappa - P((R_\kappa(\bv_\kappa) \cdot \nabla )    \bv_\kappa)) - P(R^{1/2}_\kappa((c^+_\kappa - c^-_\kappa) \nabla \psi_\kappa)).
\end{equation}
The first term on the right-hand side is bounded in $L^2(0,T;W^{-1,2}(\Omega))$ and the second term is bounded in $L^{5/4}(0,T;L^{5/4}(\Omega))$, which we will prove below.
Over all the right-hand side is bounded in 
$L^{5/4}(0,T;W^{-1,2}(\Omega))$. Using the compact Aubin--Lions embedding
\[
    L^2(0,T;W^{1,2}(\Omega)) \cap W^{1,5/4}(0,T;W^{-1,2}(\Omega)) \hookrightarrow \hookrightarrow L^2(0,T;L^2(\Omega))
\]
we obtain the strong convergence \eqref{eq:strong_conv_v}.
The boundedness of $\{ \bv_\kappa \}$ in $L^\infty(0,T;L^2(\Omega))$ from the energy inequality~\eqref{eq:reg_en_ineq_1} together with \cite[Prop.~4.9]{droniou_eymard_talbot_2015}, gives \eqref{eq:weak_cont_conv_v}, at least along a subsequence.
We now show that the second term on the right-hand side of \eqref{eq:est_time_deriv_v} is indeed bounded in $L^{5/4}(\Omega \times (0,T) )$ independently of $\kappa$,
\begin{align*}
	\norm{P((R_\kappa(\bv_\kappa) \cdot \nabla ) \bv_\kappa))}_{L^{5/4}(\Omega \times (0,T) )} 
	\underset{\textnormal{H\"{o}lder}}&{\leq} 
		\norm{\nabla \bv_\kappa}_{L^2(\Omega \times (0,T) )} \norm{R_\kappa(\bv_\kappa)}_{L^{10/3}(\Omega \times (0,T) )}\\
	\underset{\mathrm{Lem.~}\ref{lem:prop_reg_op}}&{\leq}
		C \norm{\nabla \bv_\kappa}_{L^2(\Omega \times (0,T) )} \norm{\bv_\kappa}_{L^{10/3}(\Omega \times (0,T) )}
	\underset{\eqref{eq:L103_bound_v}}{\leq}
		C.
\end{align*}
Next, we estimate the third term on the right-hand side of \eqref{eq:est_time_deriv_v}
\begin{align*}
	\norm{R^{1/2}_\kappa(P((c^+_\kappa - c^-_\kappa) \nabla \psi_\kappa))}_{L^{5/4}(\Omega \times (0,T) )} 
	\underset{\mathrm{Lem.~}\ref{lem:prop_reg_op}}&{\leq}
		C \norm{(c^+_\kappa - c^-_\kappa) \nabla \psi_\kappa}_{L^{5/4}(\Omega \times (0,T) )}\\
	\underset{\textnormal{H\"{o}lder}}&{\leq} 
		C \sigmp{\snorm{\sqrt{\cpm_\kappa}}_{ L^{10/3}(\Omega \times (0,T) ) }
		\snorm{\sqrt{\cpm_\kappa} \nabla \psi_\kappa}_{L^2(\Omega \times (0,T) )}},
\end{align*}
by the energy estimates \eqref{eq:reg_en_ineq_1} and \eqref{eq:reg_en_ineq_2} the right-hand side is bounded, since by these estimates we have that $\big\{ \sqrt{\cpm_\kappa \,} \, \big\}$ is bounded in 
\begin{equation}\label{eq:bound_sqrt_c}
	L^\infty(0,T;L^2(\Omega)) \cap L^2(0,T;W^{1,2}(\Omega)) \hookrightarrow L^{{10}/{3}}(\Omega \times (0,T) ).
\end{equation}

We finished proving that the third term on the right-hand side of \eqref{eq:est_time_deriv_v} is bounded in $L^{5/4}(\Omega \times (0,T) )$ 
and thus we have proven the strong convergence in 
\eqref{eq:strong_conv_v}.
With this strong convergence we can deduce $R_\kappa(\bv_\kappa) \to \bv_\kappa$ and $R^{1/2}_\kappa(\bv_\kappa) \to \bv_\kappa$ in 
$L^2(\Omega \times (0,T) )$ by Lemma~\ref{lem:prop_reg_op}.

From the boundedness of $\big \{ \sqrt{\cpm_\kappa} \; \big\}$ in \eqref{eq:bound_sqrt_c} and the equation $\nabla \cpm_\kappa = 2 \sqrt{\cpm_\kappa} \nabla \sqrt{\cpm_\kappa}$, we infer the estimate,
\begin{align}\label{ineq:cpm}
    \| \nabla \cpm_\kappa\|_{L^{5/4}(\Omega\times (0,T))} \leq 2 \big \| \sqrt{\cpm_\kappa}\big\|_{L^{10/3}(\Omega\times (0,T))} \big \| \nabla \sqrt{\cpm_\kappa}\big\| _{L^{2}(\Omega\times (0,T))} \leq C \,.
\end{align}
Using the equations~\eqref{eq:full_reg_np}, we may estimate the time derivative $\partial_t \cpm_\kappa $ via 
\begin{align*}
    \| \partial_t \cpm_\kappa \|_{L^{10/9}(0,T;W^{-1,10/9}(\Omega))} 
    \leq {}& \big \| - R^{1/2}_\kappa(\bv_\kappa) \cpm_\kappa 
		+  \lamMan (\nabla \cpm_\kappa \pm \cpm_\kappa \nabla \psi_\kappa) \big\|_{L^{10/9}(\Omega\times (0,T))} 
  \\
  \leq {}& \big \|  R^{1/2}_\kappa(\bv_\kappa)  \big\|_{L^{10/3}(\Omega\times (0,T))} \big \| \sqrt{\cpm_\kappa}\big\|_{L^{10/3}(\Omega\times (0,T))}^2 
  \\&+ C \| \lamMan \| _{L^\infty(\Omega)}  \| \nabla \cpm_\kappa\|_{L^{5/4}(\Omega\times (0,T))}
  \\&+ C \| \lamMan \| _{L^\infty(\Omega)} \snorm{\sqrt{\cpm_\kappa}}_{ L^{10/3}(\Omega \times (0,T) ) }
		\snorm{\sqrt{\cpm_\kappa} \nabla \psi_\kappa}_{L^2(\Omega \times (0,T) )} \,.
\end{align*}
Hence, we infer that $\{\cpm_\kappa\}$ is bounded in \begin{equation}\label{eq:time_reg_sqrt_c}
	W^{1,10/9}(0,T;W^{-1,10/9}(\Omega) ) \cap L^{5/4}(0,T;W^{1,5/4}(\Omega)) \hookrightarrow \hookrightarrow L^{5/4}(0,T;L^{5/4}(\Omega))
\end{equation}
due to the Aubin--Lions lemma. From the \textit{a priori} estimate providing the boundedness of $ \{ \cpm_{\kappa}\}$ in 
\[
    L^\infty(0,T;L^1(\Omega)) \cap L^1(0,T;L^3(\Omega)) \hookrightarrow L^{5/3}(\Omega\times (0,T))\,, 
\]
we infer $\cpm_\kappa \to \cpm $ in $L^p(\Omega\times (0,T))$ for $p< 5/3$ as well as $\cpm_\kappa \to \cpm $ in $L^1(0,T; L^r(\Omega))$ for $r<3$. This strong convergence and Lebesgue's theorem of dominated convergence provides that $\sqrt{\cpm_\kappa} \to \sqrt{\cpm}$ in $L^{q}(\Omega \times (0,T))$ for $q< 10/3$. 
The bound of the time derivative of $\{ \cpm_\kappa \}$ additionally gives us the convergence \eqref{eq:weak_star_cont_conv_c}. 
The embedding $ W^{1,10/9}(0,T;W^{-1,10/9}(\Omega)) \hookrightarrow C([0,T]; W^{-1,10/9}(\Omega)) $ even implies the pointwise convergence 
\begin{equation}
    \label{eq:pointwisec}
    \cpm_\kappa (t)  \rightharpoonup\cpm (t) \quad \text{ in } W^{-1,10/9}(\Omega) \text{ for all }t\in [0,T]\,.
\end{equation}
By the regularized energy \eqref{eq:reg_en_ineq_1}, which holds for all $t \in [0,T]$, we obtain for all $t \in [0,T]$ uniform (in $\kappa$) boundedness of $\cpm_\kappa(t)$ and $\cpm_\kappa(t) \ln \cpm_\kappa(t)$ in $L^1(\Omega)$
and by the de la Vall\'{e}e--Poussin theorem, \cite[Thm.~2]{claude_meyer_1978}, we can extract a weakly convergent 
sequence such that $\cpm_\kappa(t) \rightharpoonup a(t)$ in $L^1(\Omega)$.
With the help of the convergence \eqref{eq:pointwisec} we can identify
$a(t) = c(t)$ and thus we obtain \eqref{eq:weak_conv_c_ALL_t}.
By the regularized energy inequality we also obtain $\psi_\kappa(t) \rightharpoonup \chi(t)$ in $W^{1,2}(\Omega)$ for all $t \in [0,T]$
and by the convergence \eqref{eq:weak_conv_c_ALL_t} we can identify
$\chi(t) = \psi(t)$ through the equation and thus we obtain \eqref{eq:weak_conv_psi_ALL_t}.

Finally, we turn our attention to $\{ \psi_\kappa \}$.
By the boundedness of $\big \{ \sqrt{\cpm_\kappa} \nabla \psi_\kappa \big \}$ in $L^2(0,T;L^2(\Omega))$,
by the second energy inequality \eqref{eq:reg_en_ineq_2}, we find $\sqrt{\cpm_\kappa} \nabla \psi_\kappa \rightharpoonup \sqrt{\cpm} \nabla \psi$
in $L^2(0,T;L^2(\Omega))$ which gives \eqref{eq:weak_conv_c_nabla_psi}.
From the second energy inequality \eqref{eq:reg_en_ineq_2} we also find $\psi_\kappa \rightharpoonup \psi$ in $L^2(0,T;W^{2,2}(\Omega))$.

With the continuity properties of $R_\kappa$ and $R^{1/2}_\kappa$, \textit{cf.~}Lemma~\ref{lem:prop_reg_op},
the convergences \eqref{eq:convergences} are
enough to pass to the limit in the weak formulation of~\eqref{eq:full_reg_system} and thus the limit is a weak solution to \eqref{eq:main_system}.
We only show the proof of the convergence of the nonlinear terms, as the convergence of the linear terms is 
straight forward.
By the strong convergence of $R_\kappa(\bv_\kappa)$ in $L^2(\Omega \times (0,T) )$ and
the weak convergence of $\nabla \bv_\kappa$ in $L^2(\Omega \times (0,T) )$ we get the weak convergence of 
$(R_\kappa(\bv_\kappa) \cdot \nabla) \bv_\kappa$ in $L^1(\Omega \times (0,T) )$. 
By the strong convergence $\bd_\kappa \to \bd$ in $L^\infty(\Omega)$
together with the weak convergence of $\nabla \cpm_\kappa$, \textit{cf.~}\eqref{eq:weak_conv_nabla_c}, 
we find
\[
	\bd_\kappa \otimes \bd_\kappa \nabla \cpm_\kappa \rightharpoonup \bd \otimes \bd \, \nabla \cpm 
	\quad\quad \text{ in } L^{5/4}(0,T;L^{5/4}(\Omega)) \,.
\]
From the estimate $ \| \cpm_\kappa\nabla \psi_\kappa\|_{L^{5/4}(\Omega\times (0,T))} \leq \| \sqrt {\cpm_\kappa}\|_{L^{10/3}(\Omega\times(0,T))}\| \sqrt{\cpm_\kappa}\nabla \psi_\kappa\|_{L^2(\Omega\times (0,T))}$, we infer weak convergence and can immediately identify the weak limit due to the strong convergence~\eqref{eq:strong_conv_c} such that
$$\cpm_\kappa \nabla \psi_\kappa \rightharpoonup \cpm \nabla \psi \quad\quad \text{ in } L^{5/4}(0,T;L^{5/4}(\Omega))\,.$$
%
Again using that the product of a weakly and a strongly convergent sequence is again weakly convergent, we obtain
\[
	\cpm_\kappa \bd_\kappa \otimes \bd_\kappa \nabla \psi_\kappa \rightharpoonup \cpm \bd \otimes \bd \, \nabla \psi 
	\quad\quad \text{ in } L^{5/4}(\Omega\times (0,T)).
\]
The bound $ \|\cpm_\kappa R^{1/2}_\kappa(\bv_\kappa)  \| _{L^{10/9}(\Omega \times (0,T))}\leq  \|\cpm_\kappa  \| _{L^{5/3}(\Omega \times (0,T))} \| R^{1/2}_\kappa(\bv_\kappa)  \| _{L^{10/3}(\Omega \times (0,T))} \leq C$, the 
 continuity of $R^{1/2}_\kappa$ together with the convergences \eqref{eq:weak_cont_conv_v}
and \eqref{eq:strong_conv_c},
also implies $$ \cpm_\kappa R^{1/2}_\kappa(\bv_\kappa) \rightharpoonup \cpm \bv \quad\quad \text{ in }L^{10/9}(\Omega \times (0,T) )\,.$$
Now, we have shown that we can pass to the limit in the weak formulation \eqref{eq:weak_form_c} for the charged particles $\cpm$.
Next, we turn to the Poisson equation.
By the pointwise convergence~\eqref{eq:weak_conv_psi_ALL_t}, we infer 
\[
	\left( \bd_\kappa \cdot \nabla \psi_\kappa(t) \right) \bd_\kappa \rightharpoonup^* (\bd \cdot \nabla \psi(t)) \bd \quad \quad \text{ in } L^2(\Omega) \text{ for all }t\in[0,T]
 \,.
\]
Thus $(\bv, \cpm, \psi)$ fulfills the weak formulation \eqref{eq:weak_form_v}--\eqref{eq:weak_form_psi} for all $t \in [0,T]$.
%
\end{proof}

\begin{remark}
	By the uniqueness of the regularized solution it is straight forward to show the existence of a global weak solution to the regularized system.
	One can then show the existence of a global weak solution to the original system by a standard diagonal sequence argument.
\end{remark}

We next show that for our weak solutions a variant of the  energy inequality for the regularized system from Proposition~\ref{propo:reg_en_ineq_1} transfers to the limit.
\begin{propo}[Energy inequality for \eqref{eq:main_system}]\label{propo:stronger_en_ineq}
	Let Assumption~\ref{ass:1} hold and let $(\bv, \cpm, \psi)$ be the weak solution given by the limit from Proposition~\ref{propo:convergences} then 
	\begin{multline}\label{eq:strong_en_ineq}
		\left[ \int_\Omega \frac{1}{2} |\bv|^2 + \sigmp{\cpm (\ln \cpm + 1)} + \frac{1}{2} |\nabla \psi|^2_{\varMa} \diff \X
		+ \frac{\tau}{2} \int_\Gamma |\psi|^2 \dS \right] \bigg|_0^t\\
		+ \int_0^t \int_\Omega 
			|\nabla \bv|^2 
			+ \sigmp{|\nabla \sqrt{\cpm} \pm \sqrt{\cpm} \nabla \psi|_{\lamMa}^2} 
		\diff \X \diff s
		\leq 
		\int_0^t \int_\Gamma \psi \partial_t \xi \dS \diff s
	\end{multline}
	holds for \underline{all} $t \in (0,T)$.
\end{propo}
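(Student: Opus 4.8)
The strategy is to pass to the limit in the regularized energy inequality \eqref{eq:reg_en_ineq_1}, or rather in an unintegrated (differential) form of the energy balance, using the convergences collected in Proposition~\ref{propo:convergences}. The key point is that all the terms appearing in the target inequality \eqref{eq:strong_en_ineq} are \emph{lower} semicontinuous along the approximating sequence, either because they are (squared) norms of weakly convergent quantities, or because of the pointwise-in-time weak convergences \eqref{eq:weak_conv_c_ALL_t} and \eqref{eq:weak_conv_psi_ALL_t}, so that ``$\leq$'' is preserved. Concretely, I would start from the identity \eqref{eq:full_reg_tested_added} (before Gronwall is applied), which reads, for the regularized solution $(\bv_\kappa,\cpm_\kappa,\psi_\kappa)$ with director $\bd_\kappa$,
\[
	\E^{\kappa}_{\mathrm{reg}}(\bv_\kappa,\cpm_\kappa,\psi_\kappa)\big|_0^t
	+ \int_0^t \Diss_\kappa(\bv_\kappa,\cpm_\kappa,\psi_\kappa)\diff s
	= \int_0^t\int_\Gamma \psi_\kappa\,\partial_t\xi_\kappa + \kappa\,\partial_t\xi_\kappa\,\varphi_\kappa \dS\diff s
	- \kappa\int_\Gamma \xi_\kappa\varphi_\kappa\dS\bigg|_0^t,
\]
where $\E^\kappa_{\mathrm{reg}}$ and $\Diss_\kappa$ carry the $\kappa$-terms and the $\varMan$, $\lamMan$ matrices. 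I would then take $\limsup_{\kappa\searrow 0}$ of the left-hand side and $\lim$ of the right-hand side.

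\textbf{Step 1: the right-hand side.} The terms with a prefactor $\kappa$ vanish: by mass conservation and the elliptic $L^1$-estimate (Lemma~\ref{lem:robin_semigroup_L1}, Corollary~\ref{cor:ellip_W1q_est}) one has $\norm{\varphi_\kappa(t)}_{L^1(\Gamma)}\leq C/\kappa$ uniformly, hence $\kappa\int_\Gamma\xi_\kappa\varphi_\kappa\dS \leq C\norm{\xi_\kappa}_{L^\infty(\Gamma)}\to 0$ for each fixed time (and for $t=0$ as well), and similarly $\kappa\int_0^t\int_\Gamma\partial_t\xi_\kappa\varphi_\kappa\to 0$ using the uniform bound \eqref{eq:xi_kappa_bounded}. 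For the principal term $\int_0^t\int_\Gamma\psi_\kappa\partial_t\xi_\kappa$, I would combine the weak $L^2(0,T;W^{2,2}(\Omega))$ convergence \eqref{eq:weak_conv_psi_W22} of $\psi_\kappa$ (hence, via the trace operator, strong $L^2(0,t;W^{1,2}(\Gamma))$ compactness of the traces after an Aubin--Lions/interpolation argument, or simply weak convergence of the traces in $L^2(0,t;W^{3/2,2}(\Gamma))$) with the strong convergence $\partial_t\xi_\kappa\to\partial_t\xi$ in $L^2(0,T;W^{2,2}(\Gamma))\hookrightarrow L^2(0,T;L^2(\Gamma))$, which suffices to pass to the limit in the product.

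\textbf{Step 2: the left-hand side.} For the dissipation term $\int_0^t\int_\Omega|\nabla\bv_\kappa|^2\diff\X\diff s$ I use weak lower semicontinuity of the $L^2$-norm together with \eqref{eq:weak_conv_v}. For $\int_0^t\int_\Omega\sigmp{|2\nabla\sqrt{\cpm_\kappa}\pm\sqrt{\cpm_\kappa}\nabla\psi_\kappa|^2_{\lamMan}}$, I would first write $|\,\cdot\,|^2_{\lamMan}=|\,\cdot\,|^2+\lambda(\bd_\kappa\cdot(\,\cdot\,))^2$; the quantity $2\nabla\sqrt{\cpm_\kappa}\pm\sqrt{\cpm_\kappa}\nabla\psi_\kappa$ converges weakly in $L^2(\Omega\times(0,T))$ to $2\nabla\sqrt{\cpm}\pm\sqrt{\cpm}\nabla\psi$ by \eqref{eq:weak_conv_sqrt_c} and \eqref{eq:weak_conv_c_nabla_psi}, and because $\bd_\kappa\to\bd$ strongly in $L^\infty$ the ``weighted'' vector $\Lambda(\bd_\kappa)^{1/2}(2\nabla\sqrt{\cpm_\kappa}\pm\sqrt{\cpm_\kappa}\nabla\psi_\kappa)$ still converges weakly in $L^2$ to $\Lambda(\bd)^{1/2}(2\nabla\sqrt{\cpm}\pm\sqrt{\cpm}\nabla\psi)$; weak lower semicontinuity of the $L^2$-norm then gives the desired inequality (and $|2\nabla\sqrt{c}\pm\sqrt c\nabla\psi|^2 = 4|\nabla\sqrt c\pm\tfrac12\sqrt c\nabla\psi|^2$ matches the factor in \eqref{eq:strong_en_ineq} after noting the statement there writes $|\nabla\sqrt{\cpm}\pm\sqrt{\cpm}\nabla\psi|^2_{\lamMa}$ — I would keep the normalization consistent with \eqref{eq:strong_en_ineq_def}). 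For the energy $\E^\kappa_{\mathrm{reg}}(t)$: the kinetic term $\tfrac12\norm{\bv_\kappa(t)}^2_{L^2}$ and the boundary term $\tfrac\tau2\norm{\psi_\kappa(t)}^2_{L^2(\Gamma)}$ are handled by weak lower semicontinuity using \eqref{eq:weak_cont_conv_v} and \eqref{eq:weak_conv_psi_ALL_t}; the electric term $\tfrac12\int_\Omega|\nabla\psi_\kappa(t)|^2_{\varMan}\diff\X$ again by weak lower semicontinuity of the (weighted) $L^2$-norm at each $t$, using strong $L^\infty$-convergence of $\bd_\kappa$; the entropy term $\int_\Omega\sigmp{\cpm_\kappa(t)(\ln\cpm_\kappa(t)+1)}$ is convex and bounded below by $-|\Omega|/\e^2$, so de la Vallée--Poussin plus \eqref{eq:weak_conv_c_ALL_t} and Ioffe's theorem (lower semicontinuity of convex integral functionals under weak $L^1$-convergence) give $\liminf\geq\int_\Omega\sigmp{\cpm(t)(\ln\cpm(t)+1)}$; and the $\tfrac\kappa2\norm{\varphi_\kappa(t)}^2_{L^2}$ term is $\geq 0$, so dropping it only helps. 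For the initial energy $\E^\kappa_{\mathrm{reg}}(0)=\E^\kappa_{\mathrm{reg}}(\bv_{0\kappa},\cpm_{0\kappa},\psi_{0\kappa})$, I would show it \emph{converges} (not just $\liminf$) to the un-regularized initial energy $\E(\bv_0,\cpm_0,\psi_0)$: this uses the strong convergences $\bv_{0\kappa}\to\bv_0$, $\cpm_{0\kappa}\to\cpm_0$ in $L^2$ (whence $\cpm_{0\kappa}\ln\cpm_{0\kappa}\to\cpm_0\ln\cpm_0$ in $L^1$ by dominated convergence with the majorant already exhibited after \eqref{eq:full_reg_np_tested}), the strong convergence $\psi_{0\kappa}\to\psi_0$ in $W^{1,2}$ from \eqref{eq:strong_conv_psi_0}, $\bd_\kappa\to\bd$ in $L^\infty$, and $\kappa\norm{\varphi_{0\kappa}}^2_{L^2}\leq\kappa\sigmp{\norm{\cpm_{0\kappa}}^2_{L^2}}\to 0$.

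\textbf{Main obstacle.} The delicate point is the \emph{pointwise-in-time} passage to the limit, i.e.\ obtaining the inequality for \underline{all} $t\in(0,T)$ rather than almost every $t$. The convergences \eqref{eq:weak_conv_c_ALL_t} and \eqref{eq:weak_conv_psi_ALL_t} are precisely engineered (via the uniform $L^\infty(0,T;\cdot)$ energy bounds, the de la Vallée--Poussin theorem and the pointwise weak convergence \eqref{eq:pointwisec} in the negative Sobolev space) to hold for every $t$, so this is available; one just has to be careful that the subsequence is chosen once and for all, independently of $t$ — which is exactly what Proposition~\ref{propo:convergences} delivers. A secondary subtlety is that the dissipation integral $\int_0^t\Diss_\kappa\diff s$ must be controlled from below at a fixed upper limit $t$; writing it as $\int_0^T\mathbf 1_{[0,t]}\Diss_\kappa\diff s$ and using weak lower semicontinuity on $(0,T)$ of the convex integrand applied to the weakly convergent arguments $\mathbf 1_{[0,t]}\cdot(2\nabla\sqrt{\cpm_\kappa}\pm\sqrt{\cpm_\kappa}\nabla\psi_\kappa)$ etc.\ handles this. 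Once all pieces are assembled, combining $\limsup$ of the left side $\leq$ $\lim$ of the right side yields \eqref{eq:strong_en_ineq} for every $t\in(0,T)$, completing the proof.
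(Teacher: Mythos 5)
Your overall strategy coincides with the paper's: both pass to the limit in \eqref{eq:full_reg_tested_added}/\eqref{eq:strong_en_ineq_kappa}, treat the $\kappa$-weighted boundary terms separately, use the convergences of Proposition~\ref{propo:convergences} (in particular the for-all-$t$ convergences \eqref{eq:weak_conv_c_ALL_t}, \eqref{eq:weak_conv_psi_ALL_t}) together with weak lower semicontinuity of the norms and of the convex entropy, handle $\int_0^t\int_\Gamma\psi_\kappa\,\partial_t\xi_\kappa$ by pairing weak convergence of the traces of $\psi_\kappa$ with strong convergence of $\partial_t\xi_\kappa$, and send the approximate initial energies to the limit initial energy by the strong convergences of the data.

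However, your Step 1 has a genuine gap: the estimate you invoke for the $\kappa$-terms does not give decay. From $\norm{\varphi_\kappa(t)}_{L^1(\Gamma)}\leq C/\kappa$ you only get $\kappa\left|\int_\Gamma\xi_\kappa\varphi_\kappa\dS\right|\leq C\norm{\xi_\kappa}_{L^\infty(\Gamma)}$, which is bounded but does not tend to zero (the same issue occurs for $\kappa\int_0^t\int_\Gamma\partial_t\xi_\kappa\,\varphi_\kappa$). The elliptic $L^1$-bound scales exactly like $1/\kappa$, so multiplying by $\kappa$ is borderline, and one must gain a positive power of $\kappa$ by interpolating against better-weighted bounds. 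This is precisely what the paper does: at $t=0$ it combines $\norm{\varphi_{0\kappa}}_{L^2(\Omega)}\leq C$ with $\norm{\varphi_{0\kappa}}_{W^{2,2}(\Omega)}\leq C/\kappa$ via Gagliardo--Nirenberg to obtain $O(\kappa^{1/2})$; for the time-integrated term it uses the energy bound $\sqrt{\kappa}\,\norm{\varphi_\kappa}_{L^\infty(0,T;L^2(\Omega))}\leq C$ from \eqref{eq:reg_en_ineq_1} together with $\norm{\varphi_\kappa(t)}_{W^{2,2}(\Omega)}\leq\kappa^{-1}\sigmp{\norm{\cpm_\kappa(t)}_{L^2(\Omega)}}$ and the uniform bound on $\norm{\cpm_\kappa}_{L^1(0,T;L^2(\Omega))}$ to obtain $O(\kappa^{1/4})$; and for $\kappa\int_\Gamma\xi_\kappa(t)\varphi_\kappa(t)\dS$ it interpolates in Sobolev--Slobodeckij spaces between $L^2(\Omega)$ and $W^{1,5/4}(\Omega)$, applies Young's inequality, absorbs $\frac{\kappa}{2}\norm{\varphi_\kappa(t)}^2_{L^2(\Omega)}$ into the left-hand side (so the term you propose to simply drop is in fact used for absorption), and is left with $O(\kappa^{1/7})$. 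Without an argument of this type your proof does not show that the regularization terms vanish, and the limit inequality \eqref{eq:strong_en_ineq} is not reached; the remainder of your argument (lower semicontinuity, pointwise-in-time validity) matches the paper.
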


\begin{proof}
	Let $\{\bv_\kappa, \cpm_\kappa, \psi_\kappa\}$, $\{ \bd_\kappa \}$ and $\{ \xi_\kappa \}$ 
	be the sequences from Proposition~\ref{propo:convergences}, then 
	from \eqref{eq:full_reg_tested_added} in the proof of Proposition~\ref{propo:reg_en_ineq_1}, we have
	\begin{align}\label{eq:strong_en_ineq_kappa}
    \begin{split}
		\bigg[ \int_\Omega 
			\frac{1}{2} |\bv_\kappa|^2 
			&+ \sigmp{\cpm_\kappa (\ln \cpm_\kappa + 1)} 
			+ \frac{1}{2} |\nabla \psi_\kappa|^2_{\varMan} 
		\diff \X
		+ \frac{\tau}{2} \int_\Gamma |\psi_\kappa|^2 \dS \bigg] \bigg|_0^t 
        + \frac{\kappa}{2} \int_\Omega |\varphi_\kappa(t)|^2 \diff \X\\
		&+ \int_0^t \int_\Omega 
			|\nabla \bv_\kappa|^2 
			+ \sigmp{ |\nabla \sqrt{\cpm_\kappa} \pm \sqrt{\cpm_\kappa} \nabla \psi_\kappa|_{\lamMan}^2 } 
		\diff \X \diff s\\
		&\hspace{-1em}\leq 
			\frac{\kappa}{2} \norm{\varphi_{0 \kappa}}^2_{L^2(\Omega)}
			+ \int_0^t \int_\Gamma 
				\psi_\kappa \partial_t \xi_\kappa - \kappa \partial_t \xi_\kappa \varphi _\kappa
			\dS \diff s
			- \kappa \int_\Gamma \xi_\kappa \varphi_\kappa \dS \bigg|_0^t 
    \end{split}
	\end{align}
    for all $t \in (0,T)$, where $\varphi_\kappa$ is the solution to \eqref{eq:approx_sys} with right-hand side $c^+_\kappa - c^-_\kappa$.
    First, we observe that all terms with the factor $\kappa$ vanish as $\kappa\to 0$. 
	This follows from a Young estimate, the elliptic estimates for the Robin Laplacian in $L^2(\Omega)$, 
	\textit{cf.~}Lemma~\ref{lem:robin_semigroup_L2}, and for the Robin Laplacian in $L^1(\Omega)$,
	\textit{cf.~}Lemma~\ref{lem:robin_semigroup_L1} together with Lemma~\ref{lem:prop_reg_op} item \ref{item:R_kappa_bound}.
	These elliptic estimates give us
	\begin{subequations}
	\begin{align}
		\norm{\varphi_{0 \kappa}}_{L^2(\Omega)} &\leq \sigmp{\norm{\cpm_{0 \kappa }}_{L^2(\Omega)}} \leq C,\label{eq:ellip_L2_est_c_0}\\
		\norm{\varphi_{0 \kappa}}_{W^{2,2}(\Omega)} &\leq \frac{1}{\kappa} \sigmp{\norm{\cpm_{0 \kappa }}_{L^2(\Omega)}}
		\leq \frac{C}{\kappa},\label{eq:ellip_W22_est_c_0}\\
		\norm{\varphi_\kappa(t)}_{W^{1,q}(\Omega)} &\leq \frac{C}{\kappa} \sigmp{\norm{\cpm_\kappa(t)}_{L^1(\Omega)}}
		= \frac{C}{\kappa} \sigmp{\norm{\cpm_{\kappa 0}}_{L^1(\Omega)}}
		\leq \frac{C}{\kappa} \text{ for } q \in[1, 3/2), \label{eq:ellip_W1q_est}\\
		\norm{\varphi_\kappa(t)}_{W^{2,2}(\Omega)} 
		&\leq \frac{1}{\kappa} \sigmp{\norm{\cpm_\kappa(t)}_{L^2(\Omega)}} \label{eq:ellip_W22_est}
	\end{align}
	\end{subequations}
	for almost all $t \in (0,T)$ and $C>0$ independent of $\kappa$ and in particular independent of $\bd_\kappa$, 
	\textit{cf.~}Remark~\ref{rem:C_indep_d}, 
	and by the regularized energy inequality \eqref{eq:reg_en_ineq_1} we have
	\begin{equation}\label{eq:en_est_phi}
		\sqrt{\kappa} \norm{\varphi_\kappa}_{L^\infty(0,T;L^2(\Omega))} \leq C.
	\end{equation}
	To see that the terms on the right-hand side of \eqref{eq:strong_en_ineq_kappa} with prefactor $\kappa$ vanish,
	we use Gagliardo--Nirenberg's inequality \cite[Thm.~1.24]{roubicek},
	\[
		\norm{ \varphi}_{W^{1,2}(\Omega)} 
		\leq 
			C_{\mathrm{GN}} \norm{\varphi}^{1/2}_{L^2(\Omega)} \norm{\varphi}^{1/2}_{W^{2,2}(\Omega)}.
	\]
	With that, we estimate
	\begin{align*}
		\left|\int_0^t \int_\Gamma \kappa \partial_t \xi_\kappa \varphi_\kappa \dS \diff s \right|
		\leq{}&
			\kappa \norm{\partial_t \xi_\kappa}_{L^2(0,T;L^\infty(\Gamma))} \norm{\varphi_\kappa}_{L^2(0,T;L^1(\Gamma))}\\
		\underset{\mathrm{trace~estimate}}{\phantom{estimate}\leq}{}&
			C \kappa \norm{\partial_t \xi_\kappa}_{L^2(0,T;L^\infty(\Gamma))} \norm{\varphi_\kappa}_{L^2(0,T;W^{1,2}(\Omega))}\\
		\underset{\mathrm{GN}}{\leq}{}&
			{\kappa} C \norm{\partial_t \xi_\kappa}_{L^2(0,T;L^\infty(\Gamma))}
	 C_{\mathrm{GN}} \norm{\varphi_\kappa}_{L^\infty(0,T;L^2(\Omega))}^{1/2} \norm{ \varphi_\kappa}_{L^1(0,T;W^{2,2}(\Omega))}^{1/2}
   \\
		\underset{\eqref{eq:en_est_phi}, \eqref{eq:ellip_W22_est}}{\phantom{M}\leq}{}&
			\kappa^{1/4} C \norm{\partial_t \xi_\kappa}_{L^2(0,T;L^\infty(\Gamma))}
   		\sigmp{\norm{\cpm_\kappa}_{L^1(0,T;L^2(\Omega))}^{1/2}}
		 \searrow 0,
	\end{align*}
	where we used that $\norm{\cpm_\kappa}_{L^1(0,T;L^2(\Omega))}$ is bounded independently of $\kappa$, 
	since $\cpm_\kappa$ is convergent in $L^1(0,T;L^2(\Omega))$, \textit{cf.~}\eqref{eq:strong_conv_c},
	and the boundedness of $\{ \xi_\kappa \}$, \textit{cf.~}\eqref{eq:xi_kappa_bounded}.
	Since $\cpm_{0 \kappa}$ is bounded in $L^2(\Omega)$ independently of $\kappa$ 
	the elliptic estimate for the Robin Laplacian \eqref{eq:ellip_L2_est_c_0} implies 
	\[
		\frac{\kappa}{2}\norm{\varphi_{0 \kappa}}_{L^2(\Omega)} \leq \frac{\kappa}{2} \sigmp{\norm{\cpm_{0 \kappa}}_{L^2(\Omega)}} 
		\leq C \frac{\kappa}{2}  \searrow 0
	\]
	and the elliptic estimate \eqref{eq:ellip_W22_est_c_0} implies from $ W^{1,2}(\Omega) \hookrightarrow H^{1/2}(\Gamma ) \hookrightarrow L^2(\Gamma) \hookrightarrow L^1(\Gamma) $ that
	\begin{align*}
		\left| \kappa \int_\Gamma \xi_\kappa(0) \varphi_{0 \kappa} \dS \right| 
		\leq{}&
			\kappa \norm{\xi_\kappa}_{C([0,T];L^\infty(\Gamma))} \norm{\varphi_{0 \kappa}}_{L^1(\Gamma)}
		\leq 
			C \kappa \norm{\xi_\kappa}_{C([0,T];L^\infty(\Gamma))} \norm{\varphi_{0 \kappa}}_{W^{1,2}(\Omega)}\\
		\leq{}&
			C \kappa \norm{\xi_\kappa}_{C([0,T];L^\infty(\Gamma))} 
			\norm{\varphi_{0 \kappa}}^{1/2}_{L^2(\Omega)} \norm{\varphi_{0 \kappa}}^{1/2}_{W^{2,2}(\Omega)}\\
		\leq{}&
			C \kappa^{1/2} \norm{\xi_\kappa}_{C([0,T];L^\infty(\Gamma))} 
        \searrow 0.	
	\end{align*}
	For $s,p \in \R$ with $p \in (1,2)$ and $s > 1/p$ it holds $W^{s,p}(\Omega) \hookrightarrow W^{s - 1/p, p}(\Gamma)$, 
	\cite[Thm.~5.2]{lions_magenes_1961},
	where $W^{s,p}(\Omega)$ and $W^{s - 1/p, p}(\Gamma)$ denote the Sobolev--Slobodeckij spaces, 
	see \cite[Def.~2.2 and Def.~2.3]{lions_magenes_1961}. 
	Additionally, we use a generalized Gagliardo--Nirenberg inequality for Sobolev--Slobodeckij spaces \cite[Thm.~II.3-3]{oru_1998}.
	For $0 \leq s_1 < s_2 < \infty$, $1 < p_1, p_2 < \infty$, $\theta \in (0,1)$ and 
	$f \in W^{s_1,p_1}(\Omega) \cap W^{s_2,p_2}(\Omega)$ it holds
	\begin{equation}\label{eq:frac_interpol}
		\norm{f}_{W^{s,p}(\Omega)}
		\leq 
			C \norm{f}^{\theta}_{W^{s_1,p_1}(\Omega)} \norm{f}^{1 - \theta}_{W^{s_2,p_2}(\Omega)},
	\end{equation}
	for $s = \theta s_1 + (1-\theta) s_2 $ and $\frac{1}{p} = \frac{\theta}{p_1} + \frac{1 - \theta}{p_2}$.
	By the elliptic estimate \eqref{eq:ellip_W1q_est} with $q = 5/4$ and \eqref{eq:frac_interpol} with $\theta = 1/4 $, $s_1=0$, $p_1=2$,
	$s_2 = 1$ and $p_2 = 5/4$, that is with $s=3/4$ and $p = 40/29$ we obtain:
	\begin{align*}
		\left| \kappa \int_\Gamma \xi_\kappa(t) \varphi_\kappa(t) \dS \right|
		\leq{}&
			\kappa \norm{\xi_\kappa}_{C([0,T];L^\infty(\Gamma))} \norm{\varphi_\kappa(t)}_{L^1(\Gamma)}
		\leq 
			C \kappa \norm{\xi_\kappa}_{C([0,T];L^\infty(\Gamma))}  \norm{\varphi_\kappa(t)}_{L^{40/29}(\Gamma)}\\
		\leq{}&
			C \kappa \norm{\xi_\kappa}_{C([0,T];L^\infty(\Gamma))}  \norm{\varphi_\kappa(t)}_{W^{3/4 - 29/40, 40/29}(\Gamma)}\\
		\underset{\mathrm{trace~estimate}}{\phantom{estimate}\leq}{}&
			C \kappa \norm{\xi_\kappa}_{C([0,T];L^\infty(\Gamma))}  \norm{\varphi_\kappa(t)}_{W^{3/4, 40/29}(\Omega)}\\
		\underset{\mathrm{interpolation~}\eqref{eq:frac_interpol}}{\phantom{interpolati}\leq}{}&
			C \kappa \norm{\xi_\kappa}_{C([0,T];L^\infty(\Gamma))} 
			\norm{\varphi_\kappa(t)}^{1/4}_{L^2(\Omega)}
			\norm{\varphi_\kappa(t)}^{3/4}_{W^{1, 5/4}(\Omega)}\\
		\underset{\mathrm{Young~with~} p = 8}{\phantom{Young with}\leq}{}&
			\frac{\kappa}{2} \norm{\varphi_\kappa(t)}^2_{L^2(\Omega)}
			+ C \kappa \norm{\xi_\kappa}^{8/7}_{C([0,T];L^\infty(\Gamma))}
			\norm{\varphi_\kappa(t)}^{6/7}_{W^{1, 5/4}(\Omega)}\\
		\underset{\eqref{eq:ellip_W1q_est}}{\phantom{M}\leq}{}&
			\frac{\kappa}{2} \norm{\varphi_\kappa(t)}^2_{L^2(\Omega)}
			+ C \kappa \norm{\xi_\kappa}^{8/7}_{C([0,T];L^\infty(\Gamma))} \kappa^{- 6/7}\\
		={}&
			\frac{\kappa}{2} \norm{\varphi_\kappa(t)}^{2}_{L^2(\Omega)}
			+ C  \kappa^{1/7} \norm{\xi_\kappa}^{8/7}_{C([0,T];L^\infty(\Gamma))}.
	\end{align*}
	The first term can be absorbed into the left-hand side of \eqref{eq:strong_en_ineq_kappa} 
	and the other term vanishes for $\kappa \searrow 0$.
	
	The only thing left to do is to argue that the convergences given by Proposition~\ref{propo:convergences} 
	are enough to pass to the limit in \eqref{eq:strong_en_ineq_kappa}.
	This follows from
 	the convergence \eqref{eq:weak_conv_psi_W22}, which implies the convergence $\psi_\kappa \rightharpoonup \psi $ in $L^2(0,T;L^2(\Gamma))$
	by the continuity and linearity of the trace operator.
	Together with the strong convergence $\partial_t \xi_\kappa \to \partial_t \xi$ in $L^2(0,T;L^2(\Gamma))$ we obtain the weak
	convergence of the product $\partial_t \xi_\kappa \psi_\kappa \rightharpoonup \partial_t \xi \psi$ in $L^1(0,T;L^1(\Gamma))$.
	By the weak convergences \eqref{eq:weak_cont_conv_v}, \eqref{eq:weak_conv_c_ALL_t}, \eqref{eq:weak_conv_psi_ALL_t}, \eqref{eq:weak_conv_v}, \eqref{eq:weak_conv_sqrt_c} and \eqref{eq:weak_conv_c_nabla_psi}
	and the weak lower semicontinuity of the norm and the convex function $a\mapsto a(\ln a+1)$, we can pass to the limit in \eqref{eq:strong_en_ineq_kappa} keeping the inequality sign
	and thus we obtain \eqref{eq:strong_en_ineq}.
\end{proof}

Putting the results of Proposition~\ref{propo:convergences} and \ref{propo:stronger_en_ineq} together we can prove our main theorem.
\begin{proof}[Proof (of the Theorem~\ref{thm:exist_weak_sol})]
    The limit from Proposition~\ref{propo:convergences} already fulfills the weak formulation \eqref{eq:weak_form_v}--\eqref{eq:weak_form_psi} 
    and the energy inequality \eqref{eq:strong_en_ineq_def} for all $t \in [0,T]$, \textit{cf.~}Proposition~\ref{propo:stronger_en_ineq}.
    The only thing left to prove for $(\bv, \cpm, \psi)$ to be a weak solution according to Definition~\ref{def:weak_sol} is
    \begin{align*}
        \cpm \in C_w([0,T];L^1(\Omega)) \quad \text{ and } \quad 
        \psi \in C_w([0,T];W^{1,2}(\Omega)).
    \end{align*}
  
    We will see that $\cpm \in C_w([0,T];L^1(\Omega))$ by an application of
    the de la Vall\'{e}e--Poussin theorem, \cite[Thm.~B.104]{leoni_2017}.
    We take an arbitrary $t \in [0,T]$ and any sequence $(t_n)_n \subseteq [0,T]$ such that 
    $t_n \to t$. 
    Now, we consider an arbitrary subsequence $(t_{n_k})_k$ and get the uniform boundedness 
    \[
        \sup_k \left( \norm{\cpm(t_{n_k})}_{L^1(\Omega)} 
        + \norm{\cpm(t_{n_k}) \ln(\cpm(t_{n_k}))}_{L^1(\Omega)} \right) \leq C,
    \]
    for some $C>0$ independent of $n$ and $k$,
    by the energy inequality for the limit \textit{cf.~}Proposition~\ref{propo:stronger_en_ineq}.
    Thus, by the de la Vall\'{e}e--Poussin theorem, we find a subsequence of $(t_{n_k})_n$ which we will not relabel such 
    that $\cpm(t_{n_k}) \rightharpoonup \cpm(t)$ in $L^1(\Omega)$,
    where we identified the limit with the help of $\cpm \in C([0,T];W^{-1,10/9}(\Omega))$ by \eqref{eq:weak_conv_c_ALL_t}.
    Since the subsequence $(t_{n_k})_k$ was arbitrary we obtain the convergence
    of the original sequence and thus $\cpm(t_n) \rightharpoonup \cpm(t)$ in $L^1(\Omega)$ and we have proven $\cpm \in C_w([0,T];L^1(\Omega))$.
    By the boundedness of $(\psi(t_{n_k}))_k$ in $W^{1,2}(\Omega)$ from Proposition~\ref{propo:stronger_en_ineq}
    and the reflexivity of $W^{1,2}(\Omega)$, we can extract a weakly convergent subsequence and we identify the limit with the help of the continuity of 
    $\cpm \in C_w([0,T]; L^1(\Omega))$. Thus, our proof is complete.
\end{proof}

\end{subsection}

\end{section}

\bibliography{phd.bib}
\bibliographystyle{plainurl}

\renewcommand{\thesection}{A}
\setcounter{section}{0}
\setcounter{thm}{0}
\renewcommand{\thethm}{A.\arabic{thm}}
\numberwithin{equation}{section}
\begin{section}{Appendix}

    \begin{subsection}{Functions on the boundary}\label{sec:bd_func}

In this section we recall the definitions of Sobolev spaces on the boundary and state some important properties,
namely an integration by parts rule on the boundary and the relation of the tangential projection of a gradient of a function defined on the whole space 
and the surface gradient of that functions restriction to the boundary.
We begin by recalling the definition of the regularity of a domain in dimension $d \in \N$ with $d \geq 2$.
For that we need the notion of local coordinates, which are simply translated and rotated coordinates, \textit{cf.~}\cite[below Def.~9.57]{leoni_2017}.
\begin{definition}\label{def:local_coord}
	We call $T: \R^d \to \R^d$ rigid motion if it is given by $T(\X) = \bs c + R(\X)$ for some $\bs c \in \R^d$ and $R: \R^d \to \R^d$ a 
	rotation, \textit{cf.~}\cite[Def.~9.23]{leoni_2017}.
	We then call $\Y = T(\X)$ local coordinates.
\end{definition}
We can now define the notion of regularity of a domain, where we follow \cite[Sec.~1.1.3]{necas_2012}
and \cite[Def.~9.57]{leoni_2017}.
\begin{definition}\label{def:l_domain}
	We call a bounded domain $\Omega \subseteq \R^d $ of type $C^{k,1}$ with $k \in \N_0$
    and write $\Gamma = \partial \Omega \in C^{k,1}$, if there exist $\alpha, \beta > 0$, $M \in \N$ and local coordinates 
	$\Y_r = (y_{r1}, \dots, y_{rd}) = T_r(\X)$ for rigid motions $T_r$, $r = 1, \dots, M$ and functions 
	\[
		a_r: \Delta_r:= \set{\Y'_r = (y_{r1}, \dots, y_{r(d-1)}) \in \R^{d-1}}{|y_{ri}| < \alpha \text{ for } i = 1, \dots, d-1} \to \R
	\]
	with $a \in C^{k,1}(\Delta_r)$ 
	such that for all $\bs p \in \Gamma$ there exists an $r$ and $\Y'_r \in \Delta_r$ such that $T_r(\bs p) = (\Y'_r, a(\Y'_r))$
	and 
	\begin{align*}
		\set{\Y_r \in \R^d}{a_r(\Y'_r) < y_{rd} < a_r(\Y'_r) + \beta} &\subseteq T_r(\Omega) \text{ and }\\
		\set{\Y_r \in \R^d}{a_r(\Y'_r) - \beta < y_{rd} < a_r(\Y'_r)} &\subseteq T_r\left( \R^d \setminus \overline{\Omega} \right).
	\end{align*}
	We now define $\varphi^{-1}_r: \Delta_r \to \Gamma$ by $\varphi^{-1}_r(\Y'_r) = T_r^{-1}(\Y'_r, a(\Y'_r))$.
	This map is injective and of class $C^{k,1}$ and thus has an inverse $\varphi_r: V_r \subseteq \Gamma \to \Delta_r$, where $V_r := \varphi^{-1}(\Delta_r)$.
	We call $\varphi_r$ local chart of $\Gamma$ at $\bs p$ and the set $\set{(\varphi_r, V_r)}{r = 1, \dots, M}$ atlas of $\Gamma$.
	If $k = 0$, that is if the $a_r$ are only Lipschitz continuous, we call the domain $\Omega$ Lipschitz.
\end{definition}
\begin{remark}\label{rem:bi_Lipschitz}
	The function $\varphi^{-1}$ from Definition~\ref{def:l_domain} is even bi-Lipschitz continuous, since
	for $\X', \Y' \in \Delta$ we have
	\begin{align*}
		\norm{\left( \begin{array}{c} \X' \\ a(\X') \end{array} \right) - \left( \begin{array}{c} \Y' \\ a(\Y') \end{array} \right)}
		&=
			\sqrt{\norm{\X' - \Y'}^2 + |a(\X') - a(\Y')|}
		\geq 
			\norm{\X' - \Y'}\\
		\norm{\left( \begin{array}{c} \X' \\ a(\X') \end{array} \right) - \left( \begin{array}{c} \Y' \\ a(\Y') \end{array} \right)}
		&=
			\sqrt{\norm{\X' - \Y'}^2 + |a(\X') - a(\Y')|}
		\leq  
			(1 + L) \norm{\X' - \Y'},
	\end{align*}
	since $a$ is Lipschitz. 
	The bi-Lipschitz continuity of $\varphi^{-1}$ now follows from the fact that a rigid motion $T$, \textit{cf.~}Definition~\ref{def:local_coord},
	preserves angles and distances. 
	From the bi-Lipschitz property of $\varphi^{-1}$ the bi-Lipschitz property of $\varphi$ follows from a straight forward calculation.
\end{remark}
Next, we define $L^p-$spaces on the boundary, \textit{cf.~}\cite[Sec.~2.4.1]{necas_2012}, where $p \in (1,\infty)$ throughout this section and 
$p' = p/(p-1)$ its dual exponent.
For the remainder of this section we assume the domain to be Lipschitz unless explicitly stated otherwise. 
\begin{definition}\label{def:bd_Lp}
	Let $\set{(\varphi_r, V_r)}{r = 1, \dots, M}$ be an atlas of $\Gamma$ and $k \in \N$.
	The space $L^p(\Gamma)$ is defined as the set of functions $f: \Gamma \to \R^k$ defined almost everywhere 
	for which $f \circ \varphi_r^{-1}$ is in $L^p(\Delta_r)$ for all $r = 1, \dots, M$.
\end{definition}

\begin{remark}\label{rem:surf_measure}
	Almost everywhere in $\Gamma$ is to be understood with respect to the surface measure given by
	\[
		\mu(\Sigma) = \int_{\varphi(\Sigma)} \sqrt{\det\left( (D\varphi^{-1})^T D \varphi^{-1} \right)}\diff \Y'_r
	\]
	for $\Sigma \subseteq \varphi^{-1}(\Delta)$
	and generalized to arbitrary subsets of $\Gamma$ by a partition of unity, 
	\textit{cf.~}\cite[Prop.~B.4]{skrepek_2023}.
\end{remark}

And similarly we define Sobolev space of the boundary, \textit{cf.~}\cite[below Thm.~4.10]{necas_2012}.
\begin{definition}
	Let $\set{(\varphi_r, V_r)}{r = 1, \dots, M}$ be an atlas of $\Gamma$ and $k \in \N$.
	The space $W^{1,p}(\Gamma)$ is defined as the set of functions $f: \Gamma \to \R^k$ defined almost everywhere,
	such that $f \circ \varphi_r^{-1}$ is in $W^{1,p}(\Delta_r)$ for all $r = 1, \dots, M$.
\end{definition}

\begin{remark}
	These definitions are independent of the choice of atlas, \textit{cf.~}\cite[Sec.~3.1, Lem.~1.1]{necas_2012}.
    The proof is based on a coordinate transformation and the fact that if $u \in W^{1,p}(U)$ and $\phi: U \to \tilde{U}$ is a bi-Lipschitzian map 
	between two bounded open sets of $\R^d$
	than $u \circ \phi^{-1} \in W^{1,p}(\tilde{U})$,
	\textit{cf.~}\cite[Cor.~3.46]{demengel_demengel}.
\end{remark}
Following \cite[Sec.~2.4]{necas_2012}, we recall.
\begin{lem}[$L^p(\Gamma)$ is Banach]
	Let $\set{(\varphi_r, V_r)}{r = 1, \dots, M}$ be an atlas of $\Gamma$ and $\set{\alpha_r}{r = 1, \dots, M}$ a partition of unity
	subordinate to the open cover 
	$T^{-1}_r \left( \Delta_r \times \set{y_{rd} \in \R}{\exists \Y'_r \in \Delta_r : |y_{rd} - a(\Y'_r)| < \beta } \right)$ of $\Gamma$.
	Equipped with the norm $\norm{\cdot}_{L^p(\Gamma)}$ given by
	\begin{equation}\label{eq:bd_Lp_norm}
		\norm{f}_{L^p(\Gamma)} 
		:= 
			\left( \int_\Gamma |f|^p \diff \sigma \right)^{\frac{1}{p}} 
		:=
			\left( \sum_{r = 1}^M \int_{\Delta_r} |f  \circ \varphi_r^{-1}|^p \; 
			(\alpha_r \circ \varphi_r^{-1}) \; \sqrt{\det((D\varphi_r^{-1})^T D \varphi_r^{-1}) \;} \; \diff \Y'_r \right)^{\frac{1}{p}}
	\end{equation}
	the space $L^p(\Gamma)$ from Definition~\ref{def:bd_Lp} is a Banach space.
\end{lem}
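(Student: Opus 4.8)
The statement to be proved is that $L^p(\Gamma)$, as defined via an atlas and equipped with the norm \eqref{eq:bd_Lp_norm}, is a Banach space. The plan is to first verify that $\norm{\cdot}_{L^p(\Gamma)}$ is indeed a norm on the vector space $L^p(\Gamma)$, and then to prove completeness by reducing to the well-known completeness of $L^p$ on bounded open subsets of $\R^{d-1}$ via the local charts. First I would observe that since the atlas is finite ($r = 1, \dots, M$) and the partition of unity $\set{\alpha_r}{r=1,\dots,M}$ satisfies $\sum_r \alpha_r \equiv 1$ on $\Gamma$ with $0 \le \alpha_r \le 1$, the expression \eqref{eq:bd_Lp_norm} is a finite sum of nonnegative terms. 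The Jacobian factor $\sqrt{\det((D\varphi_r^{-1})^T D\varphi_r^{-1})}$ is bounded above and below by positive constants on $\Delta_r$: this follows from Remark~\ref{rem:bi_Lipschitz}, since $\varphi_r^{-1}$ is bi-Lipschitz, so $D\varphi_r^{-1}$ exists almost everywhere with $\norm{D\varphi_r^{-1}} \le C$ and the Gram determinant bounded below away from zero. Consequently, for each fixed $r$, the quantity $\big(\int_{\Delta_r} |f\circ\varphi_r^{-1}|^p (\alpha_r\circ\varphi_r^{-1}) \sqrt{\det(\cdots)}\,\diff\Y_r'\big)^{1/p}$ is comparable (up to constants depending only on $\Gamma$ and the chosen atlas and partition of unity) to $\norm{(\alpha_r\circ\varphi_r^{-1})^{1/p} (f\circ\varphi_r^{-1})}_{L^p(\Delta_r)}$.

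The norm axioms then follow: homogeneity is immediate; $\norm{f}_{L^p(\Gamma)} = 0$ forces $f\circ\varphi_r^{-1} = 0$ a.e.\ on $\{\alpha_r \ne 0\}$ for each $r$, and since the sets $\{\alpha_r \ne 0\}$ cover $\Gamma$, this gives $f = 0$ a.e.\ on $\Gamma$ with respect to the surface measure (Remark~\ref{rem:surf_measure}); the triangle inequality follows from Minkowski's inequality applied in each $L^p(\Delta_r)$ together with the elementary inequality $\big(\sum_r (a_r + b_r)^p\big)^{1/p} \le \big(\sum_r a_r^p\big)^{1/p} + \big(\sum_r b_r^p\big)^{1/p}$ for the finite sum over $r$ (i.e.\ the triangle inequality in $\ell^p(\{1,\dots,M\})$). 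For completeness, let $(f_n)$ be Cauchy in $L^p(\Gamma)$. By the comparability established above, for each fixed $r$ the sequence $\big((\alpha_r\circ\varphi_r^{-1})^{1/p}(f_n\circ\varphi_r^{-1})\big)_n$ is Cauchy in $L^p(\Delta_r)$, hence converges to some $g_r \in L^p(\Delta_r)$. On the open set $U_r' := \{\Y_r' \in \Delta_r : (\alpha_r\circ\varphi_r^{-1})(\Y_r') > 0\}$ one recovers a limit for $f_n\circ\varphi_r^{-1}$ itself, namely $h_r := (\alpha_r\circ\varphi_r^{-1})^{-1/p} g_r$, at least locally in $L^p$ on compact subsets of $U_r'$; passing to a subsequence gives a.e.\ convergence. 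Gluing these a.e.\ limits across the charts (they agree on overlaps by uniqueness of a.e.\ limits and since the $\{\alpha_r > 0\}$ cover $\Gamma$) yields a measurable function $f: \Gamma \to \R^k$ with $f_n \to f$ a.e.; the uniform (in $r$, finitely many) $L^p$ bounds then give $f \in L^p(\Gamma)$ and $\norm{f_n - f}_{L^p(\Gamma)} \to 0$ by Fatou's lemma applied chart by chart.

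The main obstacle I anticipate is the bookkeeping in the gluing step: one must check that the pointwise a.e.\ limits obtained in distinct charts are compatible on the overlaps $V_r \cap V_s$, which requires the transition maps $\varphi_s \circ \varphi_r^{-1}$ to be bi-Lipschitz (so that null sets are preserved), a fact already recorded in the remark following the definition of $W^{1,p}(\Gamma)$ and ultimately resting on Remark~\ref{rem:bi_Lipschitz}. A second, milder point is to handle the degeneracy of the weight $\alpha_r$ near $\partial\{\alpha_r > 0\}$: here the cleanest route is to avoid dividing by $\alpha_r$ altogether and instead note that for any compact $K \subset U_r'$ one has $\alpha_r \circ \varphi_r^{-1} \ge c_K > 0$ on $K$, so $L^p$ convergence of $(\alpha_r\circ\varphi_r^{-1})^{1/p}(f_n\circ\varphi_r^{-1})$ forces $L^p(K)$ convergence of $f_n\circ\varphi_r^{-1}$, which is all that is needed to extract an a.e.-convergent subsequence by a diagonal argument over an exhaustion of $U_r'$ by compacts. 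Everything else is routine once the bi-Lipschitz structure and finiteness of the atlas are in hand.
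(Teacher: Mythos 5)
Your proof is correct in substance, but it takes a genuinely different route from the paper: the paper disposes of this lemma entirely by citation to Ne\v{c}as (independence of the norm from the atlas and partition of unity, plus the completeness statements in [Sec.~2.4, Thm.~4.1] and [Sec.~3.1, Lem.~1.1--1.2] of that reference), whereas you give a self-contained argument reducing everything to the completeness of Euclidean $L^p(\Delta_r)$ through the finitely many bi-Lipschitz charts. What your route buys is transparency: it makes explicit exactly where Remark~\ref{rem:bi_Lipschitz} enters (two-sided bounds on the Gram determinant $\det((D\varphi_r^{-1})^TD\varphi_r^{-1})$, and preservation of null sets by the transition maps $\varphi_s\circ\varphi_r^{-1}$ in the gluing step), and your device of avoiding division by $\alpha_r$ by working on compact subsets of $\{\alpha_r\circ\varphi_r^{-1}>0\}$ with an exhaustion-plus-diagonal extraction is the clean way to handle the degenerate weight; since $M$ is finite you can also pass to one common subsequence for all charts, which makes the "uniqueness of a.e.\ limits" identification on overlaps painless. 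What the citation buys, by contrast, is the atlas- and partition-independence of the norm, which your argument does not address (and does not need for the Banach property as stated, since the atlas and partition are fixed in the lemma). One step you should spell out rather than assert: that the glued limit $f$ belongs to $L^p(\Gamma)$ in the sense of Definition~\ref{def:bd_Lp}, i.e.\ that $f\circ\varphi_r^{-1}\in L^p(\Delta_r)$ \emph{without} the weight. The weighted bound in chart $r$ alone does not give this where $\alpha_r\circ\varphi_r^{-1}$ vanishes; instead use $\sum_s\alpha_s\equiv 1$ (so a.e.\ point of $V_r$ has some $\alpha_s\geq 1/M$), split $\Delta_r$ accordingly, and transfer the chart-$s$ weighted bound to chart $r$ via the bi-Lipschitz change of variables, whose Jacobian is bounded above and below. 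This is routine with the tools you already invoke, so it is a bookkeeping gap rather than a flaw, but as written the final sentence of your completeness argument skips it.
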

\begin{proof}
	By \cite[Sec.~3.1, Lem.~1.3]{necas_2012} the definition of the norm in \eqref{eq:bd_Lp_norm} is independent of the atlas and of the partition of unity.
	The Banach property follows from \cite[Sec.~2.4, Thm.~4.1]{necas_2012}, \cite[Sec.~3.1, Lem.~1.1]{necas_2012} 
	and \cite[Sec.~3.1, Lem.~1.2]{necas_2012}.
\end{proof}
We next would like to write the derivative of functions in $W^{1,p}(\Gamma)$ more explicitly. 
So we recall some definitions and results for local coordinate systems of manifolds.
First of all the partial derivatives of $\varphi_r^{-1}$ form a basis for the tangent space, \cite[Rem.~10.5]{amann_escher_II}.
\begin{lem}
	Let $\bs p \in \Gamma$ and $\varphi$ a local chart at $\bs p$ and $\Y' \in \Delta$ such that $\varphi(\bs p) = \Y'$. 
	Then the set 
    \[
        \set{ \tau_i = \partial_i \varphi^{-1}_i(\Y')}{i = 1, \dots, d-1}
    \]
    forms a basis for the tangent space of $\Gamma$ in $\bs p$.
\end{lem}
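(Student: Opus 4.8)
The plan is to show that $D\varphi^{-1}(\Y')$, regarded as a linear map $\R^{d-1}\to\R^d$, is injective — so that its $d-1$ columns $\tau_i=\partial_i\varphi^{-1}(\Y')$ are linearly independent — and then to identify their span with the tangent space $T_{\bs p}\Gamma$ by realizing every tangent direction through an explicit curve in $\Gamma$.

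First I would invoke the description of $\varphi^{-1}$ from Definition~\ref{def:l_domain}: in the local coordinates $\Y=T(\X)$ attached to $\bs p$ one has $\varphi^{-1}(\Y')=T^{-1}(\Y',a(\Y'))$ with $a\in C^{k,1}(\Delta)$, $k\ge 1$. Writing the rigid motion as $T(\X)=\bs c+R\X$ with $R$ a rotation, this reads $\varphi^{-1}(\Y')=R^{-1}\bigl((\Y',a(\Y'))-\bs c\bigr)$, so
\[
	D\varphi^{-1}(\Y') = R^{-1}\begin{pmatrix} I_{d-1} \\ \nabla a(\Y')^{T} \end{pmatrix}.
\]
The $(d-1)\times(d-1)$ identity block makes the Jacobian on the right have rank $d-1$, and $R^{-1}$ is invertible, hence $D\varphi^{-1}(\Y')$ has rank $d-1$. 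In particular the columns $\tau_1,\dots,\tau_{d-1}$ are linearly independent and span a $(d-1)$-dimensional subspace $V\subseteq\R^d$.

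Next I would show $V=T_{\bs p}\Gamma$, taking the tangent space in the sense of \cite[Rem.~10.5]{amann_escher_II} as the set of velocities $\gamma'(0)$ of $C^1$ curves $\gamma$ in $\Gamma$ with $\gamma(0)=\bs p$. Given such a curve, for $|t|$ small it remains in the chart neighbourhood $V_r$, so $\eta:=\varphi\circ\gamma$ is a $C^1$ curve in $\Delta$ with $\eta(0)=\Y'$ (here $\varphi$ is bi-Lipschitz by Remark~\ref{rem:bi_Lipschitz} and $\varphi^{-1}$ is $C^1$ because $a$ is), and the chain rule yields $\gamma'(0)=D\varphi^{-1}(\Y')\,\eta'(0)=\sum_{i=1}^{d-1}\eta_i'(0)\,\tau_i\in V$. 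Conversely, for any $v=(v_1,\dots,v_{d-1})\in\R^{d-1}$ the curve $\gamma(t):=\varphi^{-1}(\Y'+tv)$ lies in $\Gamma$, passes through $\bs p$ at $t=0$, and has $\gamma'(0)=\sum_i v_i\tau_i$; letting $v$ vary gives $V\subseteq T_{\bs p}\Gamma$. Thus $V=T_{\bs p}\Gamma$ and $\{\tau_i\}_{i=1}^{d-1}$ is a basis of it.

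The only genuine point of care — and the main obstacle — is regularity: the derivatives $\tau_i$ and the chain-rule identities only make sense pointwise if $\varphi^{-1}$ is truly $C^1$, i.e. $a\in C^{k,1}$ with $k\ge 1$ rather than merely Lipschitz; this is consistent with the $C^{2,1}$/$C^{3,1}$ hypotheses used elsewhere in the paper. Beyond that, the argument is the standard "a graph is an immersed submanifold" computation.
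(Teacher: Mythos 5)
Your argument is correct, and it is worth noting that the paper itself does not prove this lemma at all: it simply records the statement with a citation to \cite[Rem.~10.5]{amann_escher_II}. What you have written is essentially the standard ``graphs are immersed submanifolds'' argument that underlies that citation, made self-contained: the graph representation $\varphi^{-1}(\Y')=R^{-1}\bigl((\Y',a(\Y'))-\bs c\bigr)$ from Definition~\ref{def:l_domain} gives the Jacobian $R^{-1}\left(\begin{smallmatrix} I_{d-1}\\ \nabla a(\Y')^{T}\end{smallmatrix}\right)$ of rank $d-1$, hence linear independence of the $\tau_i$, and the two curve constructions give the two inclusions between $\mathrm{span}\{\tau_i\}$ and the tangent space defined via velocities of curves. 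Your closing remark on regularity is also the right one: the lemma only makes pointwise sense when $a\in C^{k,1}$ with $k\ge 1$, consistent with the $C^{2,1}$ and smoother hypotheses used where the surface calculus is applied.

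One small step deserves a cleaner justification. In the inclusion $T_{\bs p}\Gamma\subseteq\mathrm{span}\{\tau_i\}$ you apply the chain rule to $\eta=\varphi\circ\gamma$ and assert that $\eta$ is $C^1$, but bi-Lipschitz continuity of $\varphi$ (Remark~\ref{rem:bi_Lipschitz}) only yields that $\eta$ is Lipschitz. The fix is immediate: on the chart neighbourhood, $\varphi$ is the restriction to $\Gamma$ of the affine map $\bs q\mapsto \pi'\bigl(T(\bs q)\bigr)$, where $\pi'$ projects onto the first $d-1$ local coordinates; hence $\eta(t)=\pi'\bigl(T(\gamma(t))\bigr)$ is $C^1$ whenever $\gamma$ is, and the chain-rule identity $\gamma'(0)=D\varphi^{-1}(\Y')\,\eta'(0)$ follows by differentiating $\gamma=\varphi^{-1}\circ\eta$. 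With that one-sentence repair the proof is complete.
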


Next, we define the first fundamental matrix $g$.
\begin{definition}\label{def:fund_matrix}
	Let $\bs p \in \Gamma$ and $\varphi$ a local chart at $\bs p$ and $\Y' \in \Delta$ such that $\varphi(\bs p) = \Y'$. 
	We then define the first fundamental matrix $g$ by
	\[
		g = (g_{ij})^{d-1}_{i,j=1} = \left( \tau_i \cdot \tau_j \right)_{i,j=1}^{d-1}.
	\]
\end{definition}

\begin{remark}
	The first fundamental matrix $g$ is positive definite, \cite[Rem.~10.3e)]{amann_escher_II}.
	Thus it is invertible. We will denote $g^{-1} = (g^{ij})_{i,j=1}^{d-1}$.
	So the entries of the inverse matrix are also called $g$ but with superscripts instead of subscripts.
	This might be confusing at first but as this seems to be the common notation
	we decided to use it as well.
	We will denote $\tau^i = \sum_{j=1}^{d-1} g^{ij} \tau_j$, 
	so again we are using superscripts instead of subscripts for the dual (``inverse'') basis.
\end{remark}

We now define the surface gradient of a scalar function on the boundary.
\begin{definition}\label{def:surf_grad_scal}
	Let $f \in W^{1,p}(\Gamma)$. We define the surface gradient $\nabla_\Gamma f : \Gamma \to \R^d$ 
	at $\bs p \in \Gamma$ with local chart $\varphi$ at $\bs p$ by
	\[
		\nabla_\Gamma f = \left( \sum_{i=1}^{d-1} \partial_i (f \circ \varphi^{-1}) \; \tau^{i} \right) \circ \varphi,
	\]
	where the derivative is to be understood in the weak sense.
\end{definition}
Similarly we define the gradient of a vector valued function on the boundary.
\begin{definition}\label{def:surf_grad_vec}
	Let $\bv \in W^{1,p}(\Gamma)^d$ we then define the surface gradient $\nabla_\Gamma \bv : \Gamma \to \R^{d \times d}$ 
	at $\bs p \in \Gamma$ with local chart $\varphi$ at $\bs p$ by
	\[
		\nabla_\Gamma \bv = \left( \sum_{i=1}^{d-1} \partial_i (\bv \circ \varphi^{-1}) \otimes \tau^{i} \right) \circ \varphi,
	\]
	where the derivative is to be understood in the weak sense.
\end{definition}

\begin{remark}\label{rem:surf_grad_tang}
	For these definitions to be meaningful they need to be independent of the chosen local chart.
	For the gradient of a scalar valued function, \textit{cf.~}Definition~\ref{def:surf_grad_scal}, 
	the independence of the local chart is proven in \cite[Prop.~B.3]{skrepek_2023}
    and the proof for the vector valued functions works analogously
    thus we omit it here.
	Additionally, it is easy to see that we have $\nabla_\Gamma f \cdot \bn = 0 $ and  $(\nabla_\Gamma \bv) \bn = 0$ on $\Gamma$. 
\end{remark}
Additionally the surface gradient fulfills the following product rules.
\begin{lem}\label{lem:surf_prod_rule}
	For $\Gamma \in C^{1,1}$, $f \in W^{1,p}(\Gamma)$, and $g \in W^{1,p'}(\Gamma)$ it holds 
	$\nabla_\Gamma (fg) = g \nabla_\Gamma f + f \nabla_\Gamma g$
	and for two tangential vector fields $\bv \in W^{1,p}(\Gamma)$ and $\bu \in W^{1,p'}(\Gamma)$ 
	it holds
	\begin{equation}\label{eq:surf_prod_rule}
		\nabla_\Gamma (\bv \cdot \bu) = (\nabla_\Gamma \bv)^T \bu + (\nabla_\Gamma \bu)^T \bv.
	\end{equation}
\end{lem}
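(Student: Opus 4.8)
The statement to prove is the product rule for the surface gradient: for $\Gamma \in C^{1,1}$, $f \in W^{1,p}(\Gamma)$, $g \in W^{1,p'}(\Gamma)$ one has $\nabla_\Gamma(fg) = g\,\nabla_\Gamma f + f\,\nabla_\Gamma g$, and for tangential vector fields $\bv \in W^{1,p}(\Gamma)$, $\bu \in W^{1,p'}(\Gamma)$ one has $\nabla_\Gamma(\bv \cdot \bu) = (\nabla_\Gamma \bv)^T \bu + (\nabla_\Gamma \bu)^T \bv$. The plan is to reduce everything to the definitions of $\nabla_\Gamma$ in local charts (Definitions~\ref{def:surf_grad_scal} and~\ref{def:surf_grad_vec}), where the identities become the ordinary Leibniz rule for weak derivatives on the open set $\Delta \subseteq \R^{d-1}$.

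First I would fix $\bs p \in \Gamma$ and a local chart $\varphi$ at $\bs p$ with image $\Delta$, and abbreviate $\widehat f := f \circ \varphi^{-1}$, $\widehat g := g \circ \varphi^{-1} \in W^{1,p}(\Delta), W^{1,p'}(\Delta)$. By Hölder's inequality $\widehat f \widehat g \in L^1(\Delta)$ with $\partial_i(\widehat f \widehat g) = \widehat g\,\partial_i \widehat f + \widehat f\,\partial_i \widehat g \in L^1(\Delta)$ — this is the standard product rule for weak derivatives of Sobolev functions with conjugate exponents, which I would cite exactly as the paper cites \cite[Prob.~21.3\,d)+e)]{zeidler2} elsewhere (or an analogous reference). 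Then plugging into Definition~\ref{def:surf_grad_scal},
\[
	\nabla_\Gamma(fg) = \Bigl( \sum_{i=1}^{d-1} \partial_i(\widehat f \widehat g)\, \tau^i \Bigr) \circ \varphi
	= \Bigl( \sum_{i=1}^{d-1} \bigl( \widehat g\,\partial_i \widehat f + \widehat f\,\partial_i \widehat g \bigr) \tau^i \Bigr) \circ \varphi
	= g\,\nabla_\Gamma f + f\,\nabla_\Gamma g,
\]
where in the last step I use that $\widehat g \circ \varphi = g$ and $\widehat f \circ \varphi = f$ pull the scalar factors out of the sum. Chart-independence of $\nabla_\Gamma$ (Remark~\ref{rem:surf_grad_tang}) guarantees the identity is globally well defined on $\Gamma$.

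For the vector identity I would proceed componentwise but more efficiently note that for tangential $\bv, \bu$ the bulk-style formula still holds in charts: writing $\widehat \bv = \bv \circ \varphi^{-1}$, $\widehat \bu = \bu \circ \varphi^{-1}$, apply the scalar product rule to each summand $\widehat \bv \cdot \widehat \bu$ to get $\partial_i(\widehat \bv \cdot \widehat \bu) = \partial_i \widehat \bv \cdot \widehat \bu + \widehat \bv \cdot \partial_i \widehat \bu$, then assemble
\[
	\nabla_\Gamma(\bv \cdot \bu) = \Bigl( \sum_i \bigl( \partial_i \widehat \bv \cdot \widehat \bu + \widehat \bv \cdot \partial_i \widehat \bu \bigr) \tau^i \Bigr) \circ \varphi.
\]
Comparing with Definition~\ref{def:surf_grad_vec}, $\nabla_\Gamma \bv = (\sum_i \partial_i \widehat \bv \otimes \tau^i)\circ\varphi$, so $(\nabla_\Gamma \bv)^T \bu = (\sum_i (\tau^i \cdot \text{components}) \dots)$; here the only genuine point to check is the transpose bookkeeping, namely that $\bigl( \sum_i (\partial_i \widehat\bv \otimes \tau^i) \bigr)^T \widehat\bu = \sum_i (\partial_i \widehat\bv \cdot \widehat\bu)\,\tau^i$ — which is just $(\bs a \otimes \bs b)^T \bs c = (\bs a \cdot \bs c)\bs b$ applied termwise. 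This is the main (and only mildly delicate) obstacle: keeping the indices and the transpose convention from the paper's notation section consistent; everything else is the classical Leibniz rule transported through the chart. I would also remark that the $C^{1,1}$ hypothesis on $\Gamma$ ensures $\tau^i \in W^{1,\infty}$ locally so that $\nabla_\Gamma$ of each factor is an honest $L^p$ (resp. $L^{p'}$) vector field and the products above all lie in $L^1(\Gamma)$, making the identity meaningful as an equality in $L^1(\Gamma)$.
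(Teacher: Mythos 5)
Your proof is correct, and it follows the same basic strategy as the paper (pass to a local chart, apply the classical Leibniz rule for weak derivatives with conjugate exponents, reassemble via the definition of $\nabla_\Gamma$); the scalar case is handled identically. The one genuine difference is in the vector identity: the paper expands $\bv=\sum_i v_i\tau_i$, $\bu=\sum_l u_l\tau_l$ in the tangent frame and differentiates $\bv\cdot\bu=\sum_{i,l}g_{il}v_iu_l$, so the metric $g_{il}$ and the derivatives $\partial_j\tau_i$ of the frame appear and recombine into $\partial_j(\bv\circ\varphi^{-1})\cdot\bu+\partial_j(\bu\circ\varphi^{-1})\cdot\bv$ — a computation that uses the tangentiality hypothesis. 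You instead differentiate the Euclidean components of $\widehat\bv\cdot\widehat\bu$ directly and then do the transpose bookkeeping $(\partial_i\widehat\bv\otimes\tau^i)^T\widehat\bu=(\partial_i\widehat\bv\cdot\widehat\bu)\,\tau^i$ explicitly (which the paper leaves implicit after its chart identity). Your route is shorter, avoids the metric entirely, and as a by-product shows that \eqref{eq:surf_prod_rule} holds for arbitrary, not necessarily tangential, fields $\bv\in W^{1,p}(\Gamma)$, $\bu\in W^{1,p'}(\Gamma)$ with the paper's Definitions~\ref{def:surf_grad_scal} and \ref{def:surf_grad_vec}; the paper's frame computation, on the other hand, makes transparent how the curvature-type terms $\partial_j\tau_i\cdot\tau_l$ pair up and cancel, which is in the spirit of the surrounding boundary-calculus lemmas. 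Both arguments rest on chart-independence of $\nabla_\Gamma$, which you correctly invoke.
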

\begin{proof}	
	The product rule for scalar functions is a simple consequence of the standard product rule.
	For the product rule for tangential vector fields we write
	$\bv = \sum^{d-1}_{i=1} v_i \tau_i$ and $\bu = \sum^{d-1}_{i=1} u_i \tau_i$.
	We first note
	\begin{align*}
		\partial_j (\bv \circ \varphi^{-1})
		= 
			\partial_j \left( \sum^{d-1}_{i=1} (v_i \circ \varphi^{-1}) \tau_i \right)
		=
			\sum^{d-1}_{i=1} \partial_j (v_i \circ \varphi^{-1}) \tau_i + (v_i \circ \varphi^{-1}) \partial_j \tau_i.
	\end{align*}
	With that we obtain
	\begin{multline*}
		\partial_j  ( (\bv \cdot \bu) \circ \varphi^{-1})
		=
			\sum^{d-1}_{i,l=1} \partial_j  (g_{il}(v_i u_l) \circ \varphi^{-1})\\
		=
			\sum^{d-1}_{i,l=1} \left( 
				g_{il} (u_l \circ \varphi^{-1}) \partial_j  (v_i \circ \varphi^{-1})
				+ g_{il} (v_i \circ \varphi^{-1}) \partial_j  (u_l \circ \varphi^{-1})
				+ u_l v_i \partial_j \tau_i \cdot \tau_l
				+ u_l v_i \partial_j \tau_l \cdot \tau_i
			\right)\\
		=
			\partial_j (\bv \circ \varphi^{-1}) \cdot \bu + \partial_j (\bu \circ \varphi^{-1}) \cdot \bv
	\end{multline*}
	and thus \eqref{eq:surf_prod_rule} follows.
\end{proof}

Now, that we have defined the surface gradient for scalar and vector valued functions living on the boundary
we would like to see how this definition relates to the (boundary) trace of the ``bulk'' gradient.
For that we come back to space dimension $d=3$ for the remainder of this section.
It turns out that the surface gradient coincides with the tangential projection of ``bulk'' gradient.
We will make this more explicit in the following paragraph relying on results for the weak tangential trace published in \cite{skrepek_2023}.
We start off by introducing the weak tangential trace, \textit{cf.~}\cite[Def.~4.1]{skrepek_2023}
\begin{definition}[Weak tangential trace]
		We say that $\bv \in W^{1,p}(\Omega)$ possess a weak tangential trace $\bs q$, if
		\[
			\bs{q} \in L_\tau^p(\Gamma) := \set{\bs{g} \in L^p(\Gamma)}{\bs{g} \cdot \bn = 0 \text{ a.e. in } \Gamma},
		\] 
		(where the almost everywhere is to be understood with respect to the measure on $\Gamma$)
		and it holds
		\begin{equation}\label{eq:tang_trace}
			\int_\Omega \bv \cdot (\nabla \times \bs \Phi) - (\nabla \times \bv) \cdot \bs \Phi \diff \X = \int_\Gamma \bs{q} \cdot (\bn \times \bs \Phi) \diff \sigma
		\end{equation}
		for all smooth test function $\bs \Phi \in C_0^\infty(\R^d)$
		and we write $\pi_\tau(\bv)= \bs{q}$.
\end{definition}

\begin{remark}
	The tangential trace operator $\pi_\tau$ is a well-defined operator from $W^{1,p}(\Omega)$ to $L^p(\Gamma)$.
	Defining $\bs{q} = S(\bv) - (S(\bv) \cdot \bn) \bn \in L^p(\Gamma)$ since for $\Gamma \in C^{0,1}$ we have $\bn \in L^\infty(\Gamma)$,
	we obtain the existence of a tangential trace for all $\bv \in W^{1,p}(\Omega)$ 
	and the uniqueness follows from the following observation.
	Assuming there are two tangential traces $\bs{q}$ and $\tilde{\bs{q}}$ we find
	\begin{align*}
		\int_\Gamma (\bs{q} - \tilde{\bs{q}}) \cdot \tilde{\bs \Phi} \dS 
		=
			\int_\Gamma (\bs{q} - \tilde{\bs{q}}) \cdot \left( \bn \times (\tilde{ \bs \Phi} \times \bn) - (\bn \cdot \tilde{ \bs \Phi}) \bn \right) \dS 
		=
			\int_\Gamma (\bs{q} - \tilde{\bs{q}}) \cdot  (\bn \times (\tilde{\bs \Phi} \times \bn))\dS.
	\end{align*}
	Since $\bn \in L^\infty(\Gamma)$, we have $(\tilde{\bs \Phi} \times \bn) \in L^{p'}(\Gamma)$ for $\tilde{\bs \Phi} \in L^{p'}(\Gamma)$,
	the right-hand side is zero by the definition of the tangential trace with $\bs \Phi = (\tilde{\bs \Phi} \times \bn)$ 
	using the density of $\{\bs \Phi|_{\Gamma} | \bs \Phi \in C_0^\infty(\R^d)\}$ in $L^{p'}(\Gamma)$, \cite[Prop.~2.8]{lions_magenes_1961}.
\end{remark}

Via a detour to this weak tangential trace and the density of smooth functions in $W^{1,p}(\Gamma)$ one gets the following result.
\begin{thm}\label{thm:surf_grad_vs_tan_pro_scalar}
	Let $f \in W^{2,p}(\Omega)$. 
	Then the surface gradient is just the tangential projection of the bulk gradient, that is
	\[
		\nabla_\Gamma S(f) = S(\nabla f) - (S(\nabla f) \cdot \bn) \bn
	\]
	holds almost everywhere.
\end{thm}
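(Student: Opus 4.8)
The plan is to recognise $\bs q:=\nabla_\Gamma S(f)$ as a weak tangential trace of $\nabla f$ and then to read off the claim from the remark following the definition of the weak tangential trace, which both asserts uniqueness of $\pi_\tau$ and identifies $\pi_\tau(\bv)=S(\bv)-(S(\bv)\cdot\bn)\bn$. Since $f\in W^{2,p}(\Omega)$ we have $\nabla f\in W^{1,p}(\Omega)$, so $\pi_\tau(\nabla f)$ is defined; writing the trace in local coordinates and using $f\in W^{2,p}(\Omega)$ one checks $S(f)\in W^{1,p}(\Gamma)$, so $\nabla_\Gamma S(f)$ is also well defined, and it lies in $L^p_\tau(\Gamma)$ because $\nabla_\Gamma g\cdot\bn=0$ for every $g\in W^{1,p}(\Gamma)$ by Remark~\ref{rem:surf_grad_tang}. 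Hence, by uniqueness of the weak tangential trace, it is enough to verify that $\bs q=\nabla_\Gamma S(f)$ satisfies the defining identity \eqref{eq:tang_trace} with $\bv=\nabla f$.

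To do this I would combine a bulk and a surface integration by parts. Since $f\in W^{2,p}(\Omega)$ its mixed weak derivatives are symmetric, so $\nabla\times\nabla f=0$, and $\nabla\times\bs\Phi$ is smooth and divergence free; an integration by parts in $\Omega$ therefore gives, for every $\bs\Phi\in C_0^\infty(\R^3)$,
\[
\int_\Omega \nabla f\cdot(\nabla\times\bs\Phi) - (\nabla\times\nabla f)\cdot\bs\Phi\diff\X
= \int_\Gamma S(f)\,\bigl((\nabla\times\bs\Phi)\cdot\bn\bigr)\dS,
\]
which only uses $f\in W^{1,p}(\Omega)$. On the other hand, I claim the surface integration-by-parts identity
\[
\int_\Gamma S(f)\,\bigl((\nabla\times\bs\Phi)\cdot\bn\bigr)\dS = \int_\Gamma \nabla_\Gamma S(f)\cdot(\bn\times\bs\Phi)\dS
\]
holds, and its right-hand side is exactly the right-hand side of \eqref{eq:tang_trace} for $\bs q=\nabla_\Gamma S(f)$. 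Chaining the two displays shows that $\nabla_\Gamma S(f)$ is the weak tangential trace of $\nabla f$; by uniqueness it equals $\pi_\tau(\nabla f)=S(\nabla f)-(S(\nabla f)\cdot\bn)\bn$, which is the assertion (as an a.e.\ identity in $L^p(\Gamma)$).

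It then remains to establish the surface identity, which is essentially \cite[Thm.~4.2]{skrepek_2023}; for completeness I would give a short self-contained argument. For $g$ smooth in local coordinates it reduces, via the classical surface Stokes relation on the closed surface $\Gamma$ together with the triple-product identity $(\nabla_\Gamma g\times\bn)\cdot\bs\Phi=\nabla_\Gamma g\cdot(\bn\times\bs\Phi)$, to $\int_\Gamma g\,((\nabla\times\bs\Phi)\cdot\bn)\dS=\int_\Gamma(\nabla_\Gamma g\times\bn)\cdot\bs\Phi\dS$. Both sides are continuous in $g$ with respect to $\norm{\cdot}_{W^{1,p}(\Gamma)}$ — the left-hand side via $\norm{g}_{L^p(\Gamma)}$ and $\bn\in L^\infty(\Gamma)$, the right-hand side via boundedness of $\nabla_\Gamma\colon W^{1,p}(\Gamma)\to L^p(\Gamma)$ from Definition~\ref{def:surf_grad_scal} and again $\bn\in L^\infty(\Gamma)$ — so the identity passes to all $g\in W^{1,p}(\Gamma)$ by density of functions smooth in local coordinates in $W^{1,p}(\Gamma)$.

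I expect the only genuine obstacle to be the surface-calculus step on a merely Lipschitz boundary: making the surface curl/divergence operators and the classical Stokes relation rigorous at that regularity — equivalently, invoking \cite[Thm.~4.2]{skrepek_2023} with the right hypotheses — and tracking the orientation conventions hidden in $\bn\times(\cdot)$, the surface measure $\dS$, and the definition \eqref{eq:tang_trace}. The remaining ingredients (the bulk integration by parts, the density extension, and the appeal to uniqueness of $\pi_\tau$) are routine. One could alternatively verify the identity directly for $f\in C^\infty(\overline\Omega)$, where it is the elementary fact that $\nabla_\Gamma$ acts as the orthogonal projection of $\nabla$ onto the tangent plane, and then pass to $W^{2,p}(\Omega)$ by density, but the route above follows the organisation already set up in this section.
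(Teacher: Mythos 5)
Your argument is correct in outline, but it is essentially the mirror image of the paper's proof rather than the same route. The paper verifies by a direct computation --- the curl integration-by-parts formula in $\Omega$ combined with the Lagrange identity \eqref{eq:lagr_id} --- that $S(\nabla f)-(S(\nabla f)\cdot\bn)\bn$ satisfies the defining identity \eqref{eq:tang_trace}, i.e.\ that the tangential projection of the bulk gradient is the weak tangential trace $\pi_\tau(\nabla f)$, and then quotes \cite[Thm.~4.2]{skrepek_2023} for the remaining identification $\pi_\tau(\nabla f)=\nabla_\Gamma S(f)$. You do the opposite: you establish $\pi_\tau(\nabla f)=\nabla_\Gamma S(f)$ yourself, via $\int_\Omega\nabla f\cdot(\nabla\times\bs\Phi)\diff\X=\int_\Gamma S(f)\,(\nabla\times\bs\Phi)\cdot\bn\dS$ together with the surface Stokes identity (in the paper's toolkit: $(\nabla\times\bs\Phi)\cdot\bn=\nabla_\Gamma\cdot(\bs\Phi\times\bn)$ plus Corollary~\ref{cor:ibp_boundary} and a density argument), and you import the identification $\pi_\tau(\bv)=S(\bv)-(S(\bv)\cdot\bn)\bn$ from the remark following the definition of the weak tangential trace. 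Two caveats keep this from being a shortcut. First, the existence claim in that remark is only asserted there; its justification is precisely the curl-IBP/Lagrange computation that forms the body of the paper's proof, so your route relocates that computation rather than avoiding it --- if you decline to lean on the remark you must still carry it out for $\bv=\nabla f$. Second, your self-contained surface-calculus step needs a differentiable normal (Corollary~\ref{cor:ibp_boundary} is stated for $\Gamma\in C^{2,1}$), whereas the appendix works by default on Lipschitz boundaries; the paper sidesteps this by citing \cite[Thm.~4.2]{skrepek_2023}, which is exactly the fallback you yourself name. With these two points handled as you indicate, your proof goes through: what it buys is a self-contained argument on smooth boundaries independent of the external reference, at the price of stronger boundary regularity and reliance on the remark's unproved existence statement, whereas the paper's version keeps the elementary bulk computation in-house and delegates the genuinely delicate Lipschitz surface-trace identification to the cited result.
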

\begin{proof}
	Since we have $f \in W^{2,p}(\Omega)$ the trace $S(\nabla f) \in L^p(\Gamma)$
	and thus $S(\nabla f) - (S(\nabla f) \cdot \bn) \bn \in L^p_\tau(\Gamma)$.
	Using a $\mathrm{curl}-$integration by parts rule, 
	which is a simple consequence of Green's Theorem, see \cite[Sec.~3.1, Thm.~1.1]{necas_2012}, 
	and the Lagrange identity for the vector cross product
	\begin{equation}\label{eq:lagr_id}
		(\bs a \times \bs b) \cdot (\bs c \times \bs d) = (\bs a \cdot \bs c) (\bs b \cdot \bs d) - (\bs b \cdot \bs c) (\bs a \cdot \bs d)
	\end{equation}
	we find
	\begin{align*}
		\int_\Omega (\nabla f) &\cdot (\nabla \times \bs \Phi) - (\nabla \times (\nabla f)) \cdot \bs \Phi \diff \X\\
		\underset{\mathrm{curl-I.B.P.}}&{=}
			\int_\Gamma \bs \Phi \cdot \left( S(\nabla f) \times \bn \right) \diff \sigma
		=
			- \int_\Gamma \bs \Phi \cdot \left( \bn \times S(\nabla f) \right) \underbrace{(\bn \cdot \bn)}_{= 1} \diff \sigma\\
		&=
			\int_\Gamma
				 - \bs \Phi \cdot \left( \bn \times S(\nabla f) \right) (\bn \cdot \bn) 
				+ \underbrace{\bn \cdot (\bn \times S(\nabla f))}_{= 0} (\bs \Phi \cdot \bn)
			\diff \sigma\\
		\underset{\eqref{eq:lagr_id}}&{=}
			\int_\Gamma
				(\bn \times \bs \Phi) \cdot \left( (\bn \times S(\nabla f)) \times \bn \right)
			\diff \sigma
		= 
			\int_\Gamma
				(\bn \times \bs \Phi) \cdot \left(S(\nabla f) - ( S(\nabla f) \cdot \bn ) \bn \right)
			\diff \sigma
	\end{align*}
	for all $\bs \Phi \in C^\infty_0(\R^d)$.
	Thus we have $\pi_\tau(\nabla f) = \left(S(\nabla f) - ( S(\nabla f) \cdot \bn ) \bn \right) $
	and using \cite[Thm.~4.2]{skrepek_2023} 
	we find 
	\[
		\left(S(\nabla f) - ( S(\nabla f) \cdot \bn ) \bn \right) = \pi_\tau(\nabla f) = \nabla_\Gamma (S(f)),
	\]
	which finishes the proof.
\end{proof}

Next, we would like a similar result to Theorem~\ref{thm:surf_grad_vs_tan_pro_scalar} for vector valued functions. 
This is basically just using the result for scalar valued functions and writing the vector field in the ``right''
coordinate system, which unsurprisingly is the Euclidean system.
Nonetheless, we thought it important to do this short proof here.
\begin{thm}\label{thm:surf_grad_vs_tan_pro_vec}
	Let $\bv \in W^{2,p}(\Omega)$. We then have
	\[
		\nabla_\Gamma S(\bv) = S(\nabla \bv) - S(\nabla \bv) \bn \otimes \bn.
	\]
\end{thm}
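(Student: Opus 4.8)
The plan is to reduce the vector-valued statement to the scalar-valued statement of Theorem~\ref{thm:surf_grad_vs_tan_pro_scalar}, which has already been proven, by working componentwise in the standard Euclidean basis of $\R^3$. Writing $\bv = \sum_{k=1}^3 v_k \bs{e}_k$ with the fixed Cartesian basis $\{\bs{e}_1,\bs{e}_2,\bs{e}_3\}$ of $\R^3$, each component $v_k \in W^{2,p}(\Omega)$, so Theorem~\ref{thm:surf_grad_vs_tan_pro_scalar} applies to each $v_k$ and gives $\nabla_\Gamma S(v_k) = S(\nabla v_k) - (S(\nabla v_k) \cdot \bn)\bn$.

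First I would recall that the surface gradient of a vector field, Definition~\ref{def:surf_grad_vec}, acts componentwise in a \emph{fixed} basis: since $\nabla_\Gamma \bv = \big(\sum_{i=1}^{d-1} \partial_i(\bv \circ \varphi^{-1}) \otimes \tau^i\big)\circ\varphi$ and $\bv \circ \varphi^{-1} = \sum_k (v_k \circ \varphi^{-1})\bs{e}_k$ with the $\bs{e}_k$ constant, the $k$-th row of $\nabla_\Gamma S(\bv)$ is exactly $(\nabla_\Gamma S(v_k))^T$. Likewise the $k$-th row of $S(\nabla\bv)$ is $(S(\nabla v_k))^T$. So it suffices to check that the $k$-th row of $S(\nabla\bv)\,\bn \otimes \bn = (S(\nabla\bv)\bn)\otimes\bn$ equals $\big((S(\nabla v_k)\cdot\bn)\bn\big)^T$. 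But the $k$-th component of $S(\nabla\bv)\bn$ is $S(\nabla v_k)\cdot\bn$, hence the $k$-th row of $(S(\nabla\bv)\bn)\otimes\bn$ is $(S(\nabla v_k)\cdot\bn)\,\bn^T$, as required.

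Assembling the three rows then yields
\[
	\nabla_\Gamma S(\bv) = S(\nabla\bv) - (S(\nabla\bv)\bn)\otimes\bn = S(\nabla\bv) - S(\nabla\bv)\,\bn\otimes\bn,
\]
where in the last equality I use the matrix convention of the paper, namely that $S(\nabla\bv)\,\bn\otimes\bn$ is the product of the matrix $S(\nabla\bv)$ with the rank-one matrix $\bn\otimes\bn = \bn\bn^T$, which equals $(S(\nabla\bv)\bn)\bn^T$. This completes the proof.

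There is no real obstacle here; the only point requiring a little care is bookkeeping about which index of the matrix is the ``row'' (component of $\bv$) and which is the ``column'' (the gradient direction), so that the tensor product $\bn\otimes\bn$ gets contracted against the correct slot of $\nabla\bv$. Once the convention from Section~\ref{sec:main} for $\nabla\bv$ (rows indexed by the component of $\bv$) and for $\bs A\,\bs a$ and $\bs a\otimes\bs b$ is fixed, the reduction to the scalar case of Theorem~\ref{thm:surf_grad_vs_tan_pro_scalar} is immediate and purely algebraic.
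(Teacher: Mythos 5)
Your proof is correct and follows essentially the same route as the paper: the paper also writes $\bv$ in Euclidean coordinates, uses the linearity of the trace, and reduces to the scalar case of Theorem~\ref{thm:surf_grad_vs_tan_pro_scalar} via Definition~\ref{def:surf_grad_vec}. Your version merely spells out the row-by-row bookkeeping that the paper leaves implicit.
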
 
\begin{proof}
	Writing $\bv$ in Euclidean coordinates we find $v_i \in W^{2,p}(\Omega)$ for $i = 1, \dots, d$, such that
	$\bv = \sum^d_{i=1} v_i \bs{e}_i$, where the $ \bs{e}_i$ denote the standard basis vectors.
	We find $S(\bv) = \sum^d_{i=1} S(v_i) \bs{e}_i$ by the linearity of the trace operator $S$. 
	The claim follows by the definition of the surface gradient for vector valued functions, \textit{cf.~}Definition~\ref{def:surf_grad_vec},
	and Theorem~\ref{thm:surf_grad_vs_tan_pro_scalar}.
\end{proof}

Additionally, we define the divergence of a vector valued function on the boundary.
\begin{definition}\label{def:surf_div}
	Let $\bv \in W^{1,p}(\Gamma)$ we then define the surface divergence $\nabla_\Gamma \cdot \bv : \Gamma \to \R$ by
	\[
		\nabla_\Gamma \cdot \bv = \sum_{i=1}^{d-1} \nabla_\Gamma \bv (\tau^i \circ \varphi) \cdot (\tau_i \circ \varphi).
	\]
\end{definition}

One can again show that this definition is independent of the chosen chart,
which we will refrain from in order to keep things short.
We can again relate the surface divergence to the divergence in the bulk.
\begin{thm}\label{thm:surf_div_form}
	Let $\bv \in W^{2,p}(\Omega)$. We then have
	\begin{equation}\label{eq:surf_div_form}
		\nabla_\Gamma \cdot S(\bv) 
			= 
				S(\nabla \cdot \bv) - S(\nabla \bv) \bn \cdot \bn.
	\end{equation}
\end{thm}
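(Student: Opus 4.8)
The plan is to reduce the vector-valued identity \eqref{eq:surf_div_form} to the scalar identity in Theorem~\ref{thm:surf_grad_vs_tan_pro_scalar} (respectively its vector-valued companion Theorem~\ref{thm:surf_grad_vs_tan_pro_vec}), working componentwise in Euclidean coordinates exactly as in the proof of Theorem~\ref{thm:surf_grad_vs_tan_pro_vec}. First I would write $\bv = \sum_{i=1}^d v_i \bs e_i$ with $v_i \in W^{2,p}(\Omega)$, so that $\nabla \cdot \bv = \sum_{i=1}^d \partial_i v_i$ and, by linearity of the trace, $S(\nabla \cdot \bv) = \sum_{i=1}^d S(\partial_i v_i)$. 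The aim is then to express both sides of \eqref{eq:surf_div_form} in terms of the scalar quantities $S(\nabla v_i)$ and use the already-established relation $\nabla_\Gamma S(v_i) = S(\nabla v_i) - (S(\nabla v_i)\cdot\bn)\bn$.

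Next I would unwind the definition of the surface divergence (Definition~\ref{def:surf_div}). By definition $\nabla_\Gamma \cdot S(\bv) = \sum_{k=1}^{d-1} (\nabla_\Gamma S(\bv))\tau^k \cdot \tau^{\phantom{k}}_{k}$ (contracting the surface gradient matrix against the basis/dual-basis pair), and by Theorem~\ref{thm:surf_grad_vs_tan_pro_vec} we have $\nabla_\Gamma S(\bv) = S(\nabla\bv) - S(\nabla\bv)\bn\otimes\bn$. Substituting, $\nabla_\Gamma\cdot S(\bv)$ equals the contraction of $S(\nabla\bv)$ against $\sum_k \tau^k\otimes\tau_k$ minus the contraction of $S(\nabla\bv)\bn\otimes\bn$ against the same. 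Here the key linear-algebra fact is that $\sum_{k=1}^{d-1}\tau^k\otimes\tau_k$ is precisely the orthogonal projection $P_\Gamma = I - \bn\otimes\bn$ onto the tangent plane (this follows from $g^{-1}$ being the inverse of the Gram matrix $g_{ij}=\tau_i\cdot\tau_j$, together with $\{\tau_1,\dots,\tau_{d-1},\bn\}$ being a basis of $\R^d$). Thus the first contraction gives $S(\nabla\bv):P_\Gamma = \operatorname{tr}(S(\nabla\bv)) - S(\nabla\bv)\bn\cdot\bn = S(\nabla\cdot\bv) - S(\nabla\bv)\bn\cdot\bn$, while the second contraction, $(S(\nabla\bv)\bn\otimes\bn):P_\Gamma$, vanishes because $\bn$ is orthogonal to the range of $P_\Gamma$, i.e. $(\bn\otimes\bn):(I-\bn\otimes\bn) = |\bn|^2 - |\bn|^4 = 0$.

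Putting these together yields exactly $\nabla_\Gamma\cdot S(\bv) = S(\nabla\cdot\bv) - S(\nabla\bv)\bn\cdot\bn$, which is \eqref{eq:surf_div_form}. I expect the main obstacle to be purely bookkeeping: making the identity $\sum_{k}\tau^k\otimes\tau_k = I - \bn\otimes\bn$ precise and chart-independent, and being careful about which slot of the matrix $\nabla_\Gamma S(\bv)$ gets contracted (rows vs. columns), since Definition~\ref{def:surf_grad_vec} uses $\partial_i(\bv\circ\varphi^{-1})\otimes\tau^i$ and Definition~\ref{def:surf_div} then contracts with $\tau^i$ on one side and $\tau_i$ on the other. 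One clean way to avoid any ambiguity is to argue componentwise from the start: show $S(\nabla\bv)\bn\cdot\bn = \sum_i (S(\nabla v_i)\cdot\bn)(\bs e_i\cdot\bn) = \sum_i (S(\nabla v_i)\cdot\bn)\, n_i$, and separately compute $\nabla_\Gamma\cdot S(\bv) = \sum_i \bs e_i \cdot \nabla_\Gamma S(v_i) = \sum_i \bs e_i\cdot\big(S(\nabla v_i) - (S(\nabla v_i)\cdot\bn)\bn\big) = \sum_i S(\partial_i v_i) - \sum_i n_i (S(\nabla v_i)\cdot\bn)$, where the first sum is $S(\nabla\cdot\bv)$; comparing the two expressions gives the claim. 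This route uses only Theorem~\ref{thm:surf_grad_vs_tan_pro_scalar}, linearity of $S$ and $\nabla_\Gamma$, and the elementary identity $\nabla_\Gamma\cdot\bv = \operatorname{tr}(\nabla_\Gamma\bv)$ read off Definitions~\ref{def:surf_grad_vec} and~\ref{def:surf_div}.
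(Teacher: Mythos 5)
Your argument is correct and is essentially the paper's own proof run in the opposite direction: both rest on the projection formula $\nabla_\Gamma S(\bv) = S(\nabla\bv) - S(\nabla\bv)\,\bn\otimes\bn$ from Theorem~\ref{thm:surf_grad_vs_tan_pro_vec} together with the duality $\tau^j\cdot\tau_l=\delta_{jl}$, which you merely repackage as $\sum_k \tau^k\otimes\tau_k = I-\bn\otimes\bn$ (the paper instead expands the Euclidean basis vectors $\bs e_i$ in the tangent--normal frame), and your componentwise variant is likewise the same reduction to the scalar Theorem~\ref{thm:surf_grad_vs_tan_pro_scalar}. The only cosmetic point is to state the vanishing of the second contraction as $(S(\nabla\bv)\bn\otimes\bn):(I-\bn\otimes\bn) = S(\nabla\bv)\bn\cdot\bn - (S(\nabla\bv)\bn\cdot\bn)\,|\bn|^2 = 0$ rather than the scalar shorthand you wrote, but this changes nothing.
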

\begin{proof}
	We again take the Euclidean representation of $\bv$, that is $v_i \in W^{2,p}(\Omega)$ for $i=1, \dots, d$ such that
	$\bv = \sum^d_{i=1} v_i \bs{e}_i$, where the $ \bs{e}_i$ denote the standard basis vectors.
	Then by the definition of the surface divergence, Theorem~\ref{thm:surf_grad_vs_tan_pro_vec}
	and the representations,
	\begin{equation}\label{eq:basis_rep}
		\bs{e}_i 
		= 
			(\bs{e}_i \cdot \bn) \bn + \sum_{j=1}^{d-1} (\bs{e}_i \cdot \tau^j) \tau^j 
		= 
			(\bs{e}_i \cdot \bn) \bn + \sum_{l=1}^{d-1} (\bs{e}_i \cdot \tau_l) \tau_l 
	\end{equation}
	we have
	\begin{align}\label{eq:surf_div_exp}
	\begin{split}
		S(\nabla \cdot \bv)
		&= 
			\sum_{i = 1}^d S(\nabla \bv) \bs{e}_i \cdot \bs{e}_i\\
		&=
			\left(\sum_{i = 1}^d \left( S(\nabla \bv) - S(\nabla \bv) (\bn \otimes \bn) \right) \bs{e}_i \cdot \bs{e}_i\right)
			+ \left( \sum_{i = 1}^d S(\nabla \bv) (\bn \otimes \bn) \bs{e}_i \cdot \bs{e}_i \right).
	\end{split}
	\end{align}
	The second term on the right-hand side is already the one we need, since we have
	\[
		\left( \sum_{i = 1}^d S(\nabla \bv) (\bn \otimes \bn) \bs{e}_i \cdot \bs{e}_i \right)
		=
			\left( \sum_{i = 1}^d (S(\nabla \bv) \bn)_i \bn_i \right)
		=
			(S(\nabla \bv) \bn) \cdot \bn.
	\]
	The first term on the right-hand side of \eqref{eq:surf_div_exp} becomes
	\begin{align*}
		\sum_{i = 1}^d &\left( S(\nabla \bv) - S(\nabla \bv) (\bn \otimes \bn) \right) \bs{e}_i \cdot \bs{e}_i\\
		&=
			\sum_{i = 1}^d \left( S(\nabla \bv) - S(\nabla \bv) (\bn \otimes \bn) \right) 
			\left(\sum_{j=1}^{d-1} (\bs{e}_i \cdot (\tau^j \circ \varphi)) (\tau^j  \circ \varphi) \right)
			\cdot \left( \sum_{l=1}^{d-1} (\bs{e}_i \cdot (\tau_l  \circ \varphi)) (\tau_l  \circ \varphi) \right)\\
		&=
			\sum_{j,l = 1}^{d-1} \left( S(\nabla \bv) - S(\nabla \bv) (\bn \otimes \bn) \right) (\tau^j  \circ \varphi) \cdot (\tau_l  \circ \varphi)
			\left( \sum_{i=1}^d (\bs{e}_i \cdot (\tau^j  \circ \varphi)) (\bs{e}_i \cdot (\tau_l \circ \varphi)) \right)\\
		&=
			\sum_{j,l = 1}^{d-1} \left( S(\nabla \bv) - S(\nabla \bv) (\bn \otimes \bn) \right) (\tau^j  \circ \varphi) \cdot (\tau_l  \circ \varphi) ((\tau^j  \circ \varphi) \cdot (\tau_l  \circ \varphi))\\
		&=
			\sum_{j,l = 1}^{d-1} \left( S(\nabla \bv) - S(\nabla \bv) (\bn \otimes \bn) \right) (\tau^j  \circ \varphi) \cdot (\tau_l  \circ \varphi) \delta_{lj}\\
		&=
			\sum_{l = 1}^{d-1} \nabla_\Gamma S(\bv) (\tau^l  \circ \varphi) \cdot (\tau_l  \circ \varphi)
		=
			\nabla_\Gamma \cdot S(\bv),
	\end{align*}
	where we used that the normal parts of the basis vectors $\bs{e}_i$ vanish due to the fact that the surface gradient is tangential.
	Thus we get \eqref{eq:surf_div_form}.
\end{proof}
Our next goal is to deduce an integration by parts rule on the boundary.
To that aim we first prove the following product rule.
\begin{lem}\label{lem:bd_product_rule}
	For $\Gamma \in C^{1,1},$ $f \in W^{1,p}(\Gamma)$ and $\bv \in W^{1,p'}(\Gamma)$ it holds
	\[
		\nabla_\Gamma (f \bv) = f \nabla_\Gamma \bv + \bv \otimes \nabla_\Gamma f \quad \text{ and } \quad 
		\nabla_\Gamma \cdot (f \bv) = f \; \nabla_\Gamma \cdot \bv + \bv \cdot \nabla_\Gamma f.
	\]
\end{lem}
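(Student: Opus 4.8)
The statement is a pair of product rules for the surface differential operators, and the natural strategy is to reduce both identities to the local-chart definitions (Definition~\ref{def:surf_grad_scal}, Definition~\ref{def:surf_grad_vec}, Definition~\ref{def:surf_div}) and then invoke the ordinary Leibniz rule for weak derivatives on the parameter domain $\Delta$, exactly in the spirit of the proof of Lemma~\ref{lem:surf_prod_rule}. Fix $\bs p \in \Gamma$, a local chart $\varphi$ at $\bs p$ with tangent basis $\tau_i = \partial_i \varphi^{-1}$ and dual basis $\tau^i = \sum_j g^{ij}\tau_j$. First I would pull back: $(f\bv)\circ\varphi^{-1} = (f\circ\varphi^{-1})(\bv\circ\varphi^{-1})$, so for each $i$ the classical/weak product rule on $\Delta$ gives
\[
	\partial_i\big((f\bv)\circ\varphi^{-1}\big)
	= (f\circ\varphi^{-1})\,\partial_i(\bv\circ\varphi^{-1}) + \partial_i(f\circ\varphi^{-1})\,(\bv\circ\varphi^{-1}),
\]
which is justified because $f\in W^{1,p}(\Gamma)$ and $\bv\in W^{1,p'}(\Gamma)$ pull back to $W^{1,p}(\Delta)$ and $W^{1,p'}(\Delta)$ respectively, and $\Gamma\in C^{1,1}$ guarantees the charts are bi-Lipschitz with Lipschitz first fundamental matrix so the pointwise products and their derivatives lie in $L^1_{\mathrm{loc}}(\Delta)$; the regularity of $\bv$ also makes the tensor product $(\bv\circ\varphi^{-1})\otimes\tau^i$ a legitimate matrix-valued $W^{1,1}_{\mathrm{loc}}$ object.

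Second, I would tensor with $\tau^i$ and sum over $i=1,\dots,d-1$. For the gradient identity: by Definition~\ref{def:surf_grad_vec},
\[
	\nabla_\Gamma(f\bv)
	= \Big(\sum_{i=1}^{d-1}\partial_i\big((f\bv)\circ\varphi^{-1}\big)\otimes\tau^i\Big)\circ\varphi
	= f\Big(\sum_{i=1}^{d-1}\partial_i(\bv\circ\varphi^{-1})\otimes\tau^i\Big)\circ\varphi
	+ \bv\otimes\Big(\sum_{i=1}^{d-1}\partial_i(f\circ\varphi^{-1})\,\tau^i\Big)\circ\varphi,
\]
where in the second term I used that $\tau^i$ is a scalar-multiplied tangent vector so $(\bv\circ\varphi^{-1})\otimes(\partial_i(f\circ\varphi^{-1})\tau^i) = \partial_i(f\circ\varphi^{-1})\,(\bv\circ\varphi^{-1})\otimes\tau^i$. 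Recognising the first bracket as $\nabla_\Gamma\bv$ and the second as $\nabla_\Gamma f$ (Definitions~\ref{def:surf_grad_vec} and~\ref{def:surf_grad_scal}) yields $\nabla_\Gamma(f\bv) = f\nabla_\Gamma\bv + \bv\otimes\nabla_\Gamma f$.

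Third, for the divergence identity I would simply feed the gradient identity into Definition~\ref{def:surf_div}: $\nabla_\Gamma\cdot(f\bv) = \sum_{l=1}^{d-1}\nabla_\Gamma(f\bv)(\tau^l\circ\varphi)\cdot(\tau_l\circ\varphi)$, substitute $\nabla_\Gamma(f\bv) = f\nabla_\Gamma\bv + \bv\otimes\nabla_\Gamma f$, and observe that $\big(\bv\otimes\nabla_\Gamma f\big)(\tau^l\circ\varphi)\cdot(\tau_l\circ\varphi) = (\nabla_\Gamma f\cdot(\tau^l\circ\varphi))\,(\bv\cdot(\tau_l\circ\varphi))$; summing over $l$ and using $\sum_l ((\tau^l\circ\varphi)\otimes(\tau_l\circ\varphi))$ acting as the tangential identity (equivalently $\sum_l (\nabla_\Gamma f\cdot\tau^l)\tau_l = \nabla_\Gamma f$, since $\nabla_\Gamma f$ is tangential, cf.\ Remark~\ref{rem:surf_grad_tang}) collapses that contribution to $\bv\cdot\nabla_\Gamma f$, while the $f\nabla_\Gamma\bv$ term gives $f\,\nabla_\Gamma\cdot\bv$ by Definition~\ref{def:surf_div}. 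Chart-independence of both sides is inherited from the already-established chart-independence of $\nabla_\Gamma f$, $\nabla_\Gamma\bv$ and $\nabla_\Gamma\cdot\bv$.

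\textbf{Main obstacle.} The only genuine subtlety is the justification of the Leibniz rule at the level of weak derivatives on $\Delta$ for the product of a $W^{1,p}$ and a $W^{1,p'}$ function — one cannot merely cite the smooth product rule. I would handle this by density of smooth functions in $W^{1,p}(\Gamma)$ (the charts being bi-Lipschitz, $C^\infty(\overline\Delta)$ is dense in $W^{1,p}(\Delta)$), proving the identities first for $f,\bv$ smooth where everything is classical, and then passing to the limit: the right-hand sides converge in $L^1(\Gamma)$ because $fg \mapsto$ product is continuous $W^{1,p}\times W^{1,p'}\to L^1$ with the Hölder pairing $\tfrac1p+\tfrac1{p'}=1$, and the left-hand side converges in the sense of distributions on $\Gamma$, so the limits agree. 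This is entirely routine given the machinery already set up in this appendix, so I would state it compactly rather than belabour it.
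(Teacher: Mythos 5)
Your proof is correct and follows essentially the same route as the paper's: pull back in a local chart, apply the weak Leibniz rule on the flat parameter domain, reassemble via Definitions~\ref{def:surf_grad_scal}--\ref{def:surf_grad_vec}, and then obtain the divergence identity by contracting with the tangent frame and using the tangentiality of $\nabla_\Gamma f$ to collapse $\sum_l (\nabla_\Gamma f\cdot \tau^l)\tau_l$ (the paper performs the same collapse explicitly via the coefficients $g^{ji}$). Your additional density argument justifying the $W^{1,p}\times W^{1,p'}$ Leibniz rule is a harmless elaboration of the step the paper dispatches with ``product rule for weak derivatives in the bulk.''
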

\begin{proof}
	We first calculate the surface gradient of $f \bv$,
	\begin{align*}
		\nabla_\Gamma (f \bv) 
		&= 
			\left( \sum_{i=1}^{d-1} \partial_i \left( (f \circ \varphi^{-1}) (\bv \circ \varphi^{-1} ) \right) \otimes \tau^i \right) \circ \varphi\\
		&=
			\left( \sum_{i=1}^{d-1} \left( \partial_i (f \circ \varphi^{-1}) (\bv \circ \varphi^{-1} ) 
				+ (f \circ \varphi^{-1}) \partial_i (\bv \circ \varphi^{-1} )  \right)
			\otimes \tau^i \right) \circ \varphi 
		=
			\bv \otimes \nabla_\Gamma f + f \nabla_\Gamma \bv,
	\end{align*}
	where we used the product rule for weak derivatives in the bulk.
	Next, by the definition of the surface divergence, we have
	\begin{align*}
		\nabla_\Gamma \cdot (f \bv) 
		&= 
			\sum_{i=1}^{d-1} \nabla_\Gamma (f \bv) (\tau^i \circ \varphi) \cdot (\tau_i \circ \varphi)
		= 
			\sum_{i=1}^{d-1} (\bv \otimes \nabla_\Gamma f + f \nabla_\Gamma \bv) (\tau^i \circ \varphi) \cdot (\tau_i \circ \varphi)\\
		&= 
			\sum_{i=1}^{d-1} (\nabla_\Gamma f \cdot (\tau^i \circ \varphi)) (\tau_i \circ \varphi) \cdot \bv + f \nabla_\Gamma \cdot \bv\\
		&=
			\sum_{i,j=1}^{d-1} \left( \partial_j (f \circ \varphi^{-1}) (\tau^j \cdot \tau^i) \tau_i \right) \circ \varphi \cdot \bv 
			+ f \nabla_\Gamma \cdot \bv\\
		&=
			\sum_{i,j=1}^{d-1} \left( g^{ji} \partial_j (f \circ \varphi^{-1}) \tau_i \right) \circ \varphi	 \cdot \bv + f \nabla_\Gamma \cdot \bv
		=
			\sum_{j=1}^{d-1} \left( \partial_j (f \circ \varphi^{-1}) \tau^j \right) \circ \varphi \cdot \bv + f \nabla_\Gamma \cdot \bv\\
		&=
			\nabla_\Gamma f \cdot \bv + f \nabla_\Gamma \cdot \bv.
	\end{align*}
\end{proof}
Next, we turn to the integration by parts rule.
\begin{thm}\label{thm:bd_div}
	Let $\Gamma \in C^{2,1}$ and $\bv$ be a tangential vector field in $W^{1,p}(\Gamma)$,
	that is we can write $\bv$ as 
	\[
		\bv = \sum_{i=1}^{d-1} v_i  \left(\tau_i \circ \varphi \right) 
	\]
	with $v_i \in W^{1,p}(\Gamma)$.
	It then holds 
	\begin{equation}\label{eq:bd_gauss_thm}
		\int_\Gamma \nabla_\Gamma \cdot \bv \diff \sigma = 0.
	\end{equation}
\end{thm}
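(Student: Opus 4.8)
The plan is to prove this via a partition-of-unity reduction to the flat case, where the divergence theorem on the boundary becomes an ordinary statement about functions of compact support on $\Delta_r \subseteq \R^{d-1}$. First I would fix an atlas $\set{(\varphi_r, V_r)}{r = 1, \dots, M}$ of $\Gamma$ together with a smooth partition of unity $\set{\alpha_r}{r = 1, \dots, M}$ subordinate to the associated open cover, so that $\bv = \sum_{r=1}^M \alpha_r \bv$ and hence, using the product rule from Lemma~\ref{lem:bd_product_rule}, $\nabla_\Gamma \cdot \bv = \sum_{r=1}^M \left( \alpha_r \, \nabla_\Gamma \cdot \bv + \bv \cdot \nabla_\Gamma \alpha_r \right)$. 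Since $\sum_r \alpha_r \equiv 1$ implies $\sum_r \nabla_\Gamma \alpha_r = 0$ on $\Gamma$, the sum $\sum_r \bv \cdot \nabla_\Gamma \alpha_r$ vanishes, so it suffices to show $\int_\Gamma \nabla_\Gamma \cdot (\alpha_r \bv) \diff \sigma = 0$ for each $r$, i.e. we may assume $\bv$ is supported in a single chart domain $V_r$.

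Next I would unfold the definitions. In the chart $\varphi = \varphi_r$, write $\bv = \sum_{i=1}^{d-1} v_i (\tau_i \circ \varphi)$, so that by Definition~\ref{def:surf_div} and Definition~\ref{def:surf_grad_vec} one computes $\nabla_\Gamma \cdot \bv$ in local coordinates in terms of $\partial_j(v_i \circ \varphi^{-1})$, the Christoffel-type terms $\partial_j \tau_i$, and the metric $g$. Using the surface measure from Remark~\ref{rem:surf_measure}, $\diff \sigma = \sqrt{\det g} \, \diff \Y'$, the identity \eqref{eq:bd_gauss_thm} reduces to the classical coordinate formula $\nabla_\Gamma \cdot \bv = \frac{1}{\sqrt{\det g}} \sum_{i=1}^{d-1} \partial_i \left( \sqrt{\det g} \, (v^i \circ \varphi^{-1}) \right)$ for the divergence of a tangential field with contravariant components $v^i$ (which is where the $C^{2,1}$ hypothesis is needed: the metric $g$ and $\sqrt{\det g}$ must be $C^{1,1}$, so that this divergence-form expression makes sense with weak derivatives and the integration by parts below is legitimate). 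I would establish this coordinate identity by a direct computation starting from Definition~\ref{def:surf_div}, expanding $\nabla_\Gamma \bv (\tau^i) \cdot \tau_i$ and regrouping; the key algebraic input is the standard Jacobi-type formula $\partial_j \log \sqrt{\det g} = \sum_{k} g^{k\ell} (\partial_j \tau_k) \cdot \tau_\ell = \Gamma^k_{jk}$ together with $\partial_j g^{i\ell} = -\sum g^{im} g^{\ell n} \partial_j g_{mn}$.

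Once the divergence-form expression is in hand, the integral is immediate: since $\alpha_r \bv$ (hence the $v^i$) has compact support in $\Delta_r \subseteq \R^{d-1}$ after pulling back,
\[
	\int_\Gamma \nabla_\Gamma \cdot \bv \diff \sigma
	= \int_{\Delta_r} \sum_{i=1}^{d-1} \partial_i \left( \sqrt{\det g} \, (v^i \circ \varphi^{-1}) \right) \diff \Y'
	= 0
\]
by the fundamental theorem of calculus applied coordinate-by-coordinate (or, equivalently, the divergence theorem on $\R^{d-1}$ with vanishing boundary data). Summing over $r$ gives \eqref{eq:bd_gauss_thm}.

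I expect the main obstacle to be the derivation of the intrinsic divergence-form formula for $\nabla_\Gamma \cdot \bv$ from Definition~\ref{def:surf_div} — specifically, correctly handling the $\partial_j \tau_i$ terms and showing they assemble exactly into the $\partial_j \log\sqrt{\det g}$ factor. This is a routine but bookkeeping-heavy Riemannian-geometry computation; the $C^{2,1}$ regularity is exactly what keeps every object appearing in it (namely $g^{ij}$, $\partial_j \tau_i$, $\sqrt{\det g}$ and its first derivatives) bounded and, where needed, weakly differentiable, so that no regularity is lost. An alternative that sidesteps some of this is to approximate $\bv$ by smooth tangential fields in $W^{1,p}(\Gamma)$ and invoke the classical divergence theorem on the $C^{2,1}$ manifold $\Gamma$ for smooth data, then pass to the limit using continuity of both sides of \eqref{eq:bd_gauss_thm} in the $W^{1,p}(\Gamma)$ norm; this reduces the analytic content to the smooth case at the cost of needing a density statement for tangential fields, which follows from the chart-wise mollification already used elsewhere in the appendix.
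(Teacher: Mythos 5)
Your proposal is correct and follows essentially the same route as the paper's proof: both rest on deriving the local-coordinate divergence-form identity $\nabla_\Gamma \cdot \bv = \frac{1}{\sqrt{\det g}} \sum_{i} \partial_i \big( \sqrt{\det g}\, (v_i \circ \varphi^{-1}) \big)$ via Jacobi's formula and the symmetry $\partial_i \tau_j = \partial_j \tau_i$ (which is exactly where $\Gamma \in C^{2,1}$ enters), and both conclude with a partition of unity subordinate to the atlas. The only cosmetic difference is that you localize $\bv$ itself through the product rule and then use compact support in $\Delta_r$, whereas the paper keeps $\bv$ global, integrates by parts against $\alpha_r \circ \varphi^{-1}$, and then uses $\sum_r \alpha_r \equiv 1$.
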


\begin{remark}
	In particular the above theorem does \underline{not} hold for non tangential vector fields, 
	which can be seen by explicitly calculating the left-hand side for the domain $\Omega = B_1(0) \subseteq \R^2$ 
	and $\bv = \bn = (x,y)^T$ on $\Gamma$.
\end{remark}

\begin{proof}
	The proof follows \cite[Sec.~2.1.6]{pruss_simonett_2016}.
	We first write the definition of the surface divergence in local coordinates
	to derive the following formula of the surface divergence in local coordinates, 
	\[
		\nabla_\Gamma \cdot \bv
		=
			\left( \frac{1}{\sqrt{\det(g)}} \sum_{i=1}^{d-1} \partial_i \left( \sqrt{\det(g)} (v_i \circ \varphi^{-1}) \right) \right) \circ \varphi.
	\]
    Then from the definition of the surface integral,
	using an atlas for $\Gamma$ and a subordinate partition of unity $(\alpha_r)_{r=1,\dots,M}$ we can use classical integration by parts in the bulk 
	to finish the proof.
	We start by writing the surface divergence in local coordinates.
	First of all we notice that
	\begin{align*}
		\partial_i (\bv \circ \varphi^{-1})
		=
			\partial_i \left( \sum_{j=1}^{d-1} (v_j \circ \varphi^{-1}) \tau_j \right)
		=
			\sum_{j=1}^{d-1}\left( \partial_i (v_j \circ \varphi^{-1}) \tau_j + (v_j \circ \varphi^{-1}) \partial_i \tau_j \right).
	\end{align*}
	With that, we can write the surface divergence as
	\begin{align*}
		\nabla_\Gamma \cdot \bv 
		&=
			\sum_{m = 1}^{d-1} \nabla_\Gamma \bv (\tau^m \circ \varphi) \cdot (\tau_m \circ \varphi)
		= 
			\sum_{m = 1}^{d-1} \left(
				\left( \sum_{i = 1}^{d-1} \partial_i (\bv \circ \varphi^{-1}) \otimes \tau^{i} \right) \; 
				\tau^m \cdot \tau_m
			\right) \circ \varphi \\
		&=
			\sum_{m, i = 1}^{d-1} \left(
				g^{i m} \partial_i (\bv \circ \varphi^{-1}) \cdot \tau_m
			\right) \circ \varphi 
		=
			\sum_{i = 1}^{d-1} \left(
				\partial_i (\bv \circ \varphi^{-1}) \cdot \tau^i
			\right) \circ \varphi\\
		&= 
			\sum_{i,j = 1}^{d-1} \left(
				\left(
					\partial_i (v_j \circ \varphi^{-1}) \delta_{i j}
					+ (v_j \circ \varphi^{-1}) (\partial_i \tau_j \cdot \tau^i)
				\right)
			\right) \circ \varphi\\
		&=
			\left( \left( \sum_{i=1}^{d-1} 
				\partial_i (v_i \circ \varphi^{-1})
			\right)
			+ \left( \sum_{i,j=1}^{d-1}
				(v_j \circ \varphi^{-1})  (\partial_i \tau_j \cdot \tau^i)
			\right) \right) \circ \varphi.
	\end{align*}
	Using $\partial_i \tau_j = \partial_i \partial_j \varphi^{-1} = \partial_j \partial_i \varphi^{-1} = \partial_j \tau_i$ 
	which follows from the fact that partial derivatives commute if they are continuous
	(here we use that $\Gamma \in C^{2,1}$ and thus $\varphi^{-1} \in C^{2,1}$),
	we can further rewrite the second term on the right-hand side as
	\begin{align*}
		\left( \sum_{i,j=1}^{d-1}
			(v_j \circ \varphi^{-1}) (\partial_i \tau_j \cdot \tau^i)
		\right) \circ \varphi
		&=
			\sum_{j=1}^{d-1} \left( 
				(v_j \circ \varphi^{-1}) \sum_{i=1}^{d-1}  (\partial_j \tau_i \cdot \tau^i)
			\right) \circ \varphi\\
		&=
			\frac{1}{2}  \left( \sum_{j=1}^{d-1}
				(v_j \circ \varphi^{-1}) \left(
					\sum_{k=1}^{d-1}  (\partial_j \tau_k \cdot \tau^k)
					+ \sum_{i=1}^{d-1}  (\partial_j \tau_i \cdot \tau^i)
				\right)
			\right) \circ \varphi\\
		&=
			\frac{1}{2} \left( \sum_{j=1}^{d-1} 
				(v_j \circ \varphi^{-1}) \left(
					\sum_{i,k=1}^{d-1} g^{k i} \left( \partial_j \tau_k \cdot \tau_i \right)
					+ \sum_{i,k =1}^{d-1} g^{i k}\left( \partial_j \tau_i \cdot \tau_k \right) 
				\right)
			\right) \circ \varphi\\
		\underset{g^{-1} \mathrm{\, sym.}}&{=}
			\frac{1}{2}  \left( \sum_{j=1}^{d-1} \left(
				(v_j \circ \varphi^{-1}) 
				\sum_{i, k=1}^{d-1} \left( g^{i k} \left( (\partial_j \tau_k \cdot \tau_i) + (\partial_j \tau_i \cdot \tau_k) \right) \right) 
			\right) \right) \circ \varphi\\
		&=
			\frac{1}{2} \left( \sum_{j=1}^{d-1} \left(
				 (v_j \circ \varphi^{-1}) \sum_{i, k=1}^{d-1} \left( g^{i k} \partial_j g_{k i} \right) 
			\right) \right) \circ \varphi \\
		&=
			\frac{1}{2} \left( \sum_{j=1}^{d-1}
				(v_j \circ \varphi^{-1})\mathrm{tr}(g^{-1} \partial_j g) 
			\right) \circ \varphi\\
		\underset{\mathrm{Jacobi's \; form.}}&{=}
			\frac{1}{2 \det(g)} \left( \sum_{j=1}^{d-1}
				 (v_j \circ \varphi^{-1}) \partial_j (\det(g)) 
			\right) \circ \varphi.
	\end{align*}
	Putting these two expansions together we obtain
	\begin{align*}
	\nabla_\Gamma \cdot \bv
		&=
			\left( \frac{1}{\sqrt{\det(g)}} \sum_{i=1}^{d-1} \left(
				\sqrt{\det(g)} \partial_i (v_i \circ \varphi^{-1}) + \frac{1}{2 \sqrt{\det(g)}} (v_i \circ \varphi^{-1}) \partial_i (\det(g)) 
			\right) \right) \circ \varphi\\
		&=
			\left( \frac{1}{\sqrt{\det(g)}} \sum_{i=1}^{d-1} \partial_i \left( \sqrt{\det(g)} (v_i \circ \varphi^{-1}) \right) \right) \circ \varphi.
	\end{align*}
	This finishes our prove of the surface divergence formula in local coordinates and we now turn to the proof of the integral identity \eqref{eq:bd_gauss_thm}.
	Choosing a partition of unity $(\alpha_r)_{r=1, \dots, M}$ subordinate to our atlas domains.
	That is we choose $(\alpha_r)_{r=1, \dots, M}$ subordinate to the open cover
	\[
		\left( 
			T_r^{-1} \left( \Delta_r \times \set{\Y_{rd} \in \R}{\exists \, \Y'_r \in \Delta_r: |\Y_{rd} - a_r(\Y'_r)| < \beta} \right) 
		\right)_{r=1, \dots, M}
		=: 
			\left( 
			T_r^{-1} (O_r)
		\right)_{r=1, \dots, M}
	\] 
	of $\Gamma$.
	We then have by definition of the surface integral, \textit{cf.~}\eqref{eq:bd_Lp_norm}, 
	recalling that $\det(g) = \det((D\varphi^{-1})^T D\varphi^{-1})$, \textit{cf.~}Definition~\ref{def:fund_matrix},
	\begin{align*}
		\int_\Gamma \nabla_\Gamma \cdot \bv \diff \sigma
		&=
			\sum_{r = 1}^M \int_\Gamma \alpha_r \nabla_\Gamma \cdot \bv \diff \sigma\\
		&=
			\sum_{r = 1}^M \int_{\Delta_r}
				\frac{1}{\sqrt{\det(g)}} \sum_{i=1}^{d-1} \partial_i \left( \sqrt{\det(g)} (v_i \circ \varphi^{-1}) \right)
				(\alpha_r \circ \varphi^{-1})
				\sqrt{\det(g)}
			\diff \X'_r\\
		&= 
			- \sum_{r = 1}^M \sum_{i=1}^{d-1} \int_{\Delta_r}
				\sqrt{\det(g)} (v_i \circ \varphi^{-1})
				\partial_i (\alpha_r \circ \varphi^{-1})
			\diff \X'_r\\
		&=
			- \sum_{i=1}^{d-1} \int_{\Delta_r}
				\sqrt{\det(g)} (v_i \circ \varphi^{-1})
				\partial_i (1)
			\diff \X'_r	
		= 
		 	0,
	\end{align*}
	where we used the standard integration by parts rule and the fact that the partition of unity has compact support in $\Delta_r$
	so that no boundary terms appear and that the $\alpha_r$ sum up to $1$.
	To see that $\alpha_r \circ \varphi^{-1}$ is indeed zero on the boundary of $\Delta_r$ we use that for a rigid motion $T$ we have
	$\partial (T(\Omega)) = T(\partial \Omega)$ for any set $\Omega \subseteq \R^d$.
	Thus for $\Y' \in \partial \Delta_r$ we have $(\Y', a_r(\Y')) \in \partial O_r$ 
	and thus $\varphi^{-1}(\Y') = T_r^{-1}(\Y', a_r(\Y')) \in \partial (T_r^{-1}(O_r))$.
\end{proof}

\begin{cor}\label{cor:ibp_boundary}
	For $\Gamma \in C^{2,1}$ and a general (not necessarily tangential) vector field $\bv \in W^{1,p}(\Gamma)$ and $f \in W^{1,p'}(\Gamma)$
	the following integration by parts holds
	\[
		\int_\Gamma f \; \nabla_\Gamma \cdot \bv - f (\bv \cdot \bn) \nabla_\Gamma \cdot \bn + \bv \cdot \nabla_\Gamma f \diff \sigma  = 0.
	\]
\end{cor}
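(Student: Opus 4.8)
The plan is to reduce the identity to the divergence theorem on the closed manifold $\Gamma$, i.e. Theorem~\ref{thm:bd_div}, by splitting $\bv$ into its tangential and normal parts and using the surface product rules of Lemma~\ref{lem:bd_product_rule} together with the fact that surface gradients of scalars are tangential (Remark~\ref{rem:surf_grad_tang}).

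First I would write $\bv = \bv_\tau + (\bv\cdot\bn)\bn$ with $\bv_\tau := \bv - (\bv\cdot\bn)\bn$ the tangential part. Since $\Gamma \in C^{2,1}$ implies $\bn \in C^{1,1}(\Gamma)$, both $\bv\cdot\bn$ and $\bv_\tau$ still belong to $W^{1,p}(\Gamma)$, while $\nabla_\Gamma\cdot\bn \in L^\infty(\Gamma)$; hence, using $f \in W^{1,p'}(\Gamma)\hookrightarrow L^{p'}(\Gamma)$, every integrand appearing below lies in $L^1(\Gamma)$ by H\"older's inequality. Applying the product rule $\nabla_\Gamma\cdot(f\bv) = f\,\nabla_\Gamma\cdot\bv + \bv\cdot\nabla_\Gamma f$ from Lemma~\ref{lem:bd_product_rule} on the one hand, and decomposing $f\bv = f\bv_\tau + \big(f(\bv\cdot\bn)\big)\bn$ together with the analogous rule for the scalar-times-$\bn$ field on the other hand, I obtain
\[
	\nabla_\Gamma\cdot(f\bv) = \nabla_\Gamma\cdot(f\bv_\tau) + f(\bv\cdot\bn)\,\nabla_\Gamma\cdot\bn + \bn\cdot\nabla_\Gamma\big(f(\bv\cdot\bn)\big),
\]
where the last term vanishes because the surface gradient of a scalar is tangential. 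Integrating over $\Gamma$, the field $f\bv_\tau$ is tangential, so $\int_\Gamma \nabla_\Gamma\cdot(f\bv_\tau)\,\dS = 0$ by Theorem~\ref{thm:bd_div}; combining this with $\int_\Gamma \nabla_\Gamma\cdot(f\bv)\,\dS = \int_\Gamma f\,\nabla_\Gamma\cdot\bv + \bv\cdot\nabla_\Gamma f\,\dS$ yields exactly
\[
	\int_\Gamma f\,\nabla_\Gamma\cdot\bv - f(\bv\cdot\bn)\,\nabla_\Gamma\cdot\bn + \bv\cdot\nabla_\Gamma f\,\dS = 0.
\]

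\textbf{Main obstacle.} The only genuine difficulty is regularity bookkeeping: the product $f\bv_\tau$ is, in general, merely in $W^{1,1}(\Gamma)$, whereas Theorem~\ref{thm:bd_div} is stated for $W^{1,p}(\Gamma)$ with $p\in(1,\infty)$, and the Leibniz rules are applied to products of a $W^{1,p'}$ scalar with a $W^{1,p}$ (or $W^{1,\infty}$) field. I would handle this cleanly by first proving the identity for smooth $f$ and $\bv$ — where all integrands are continuous and Theorem~\ref{thm:bd_div} and Lemma~\ref{lem:bd_product_rule} apply directly — and then passing to the limit: each of the three terms is continuous with respect to approximation of $f$ in $W^{1,p'}(\Gamma)$ and of $\bv$ in $W^{1,p}(\Gamma)$ by smooth functions (the first two as products of strongly convergent sequences in the dual pair $L^{p'}$--$L^p$, the third because $\nabla_\Gamma\cdot\bn\in L^\infty(\Gamma)$). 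Alternatively one may note that the proof of Theorem~\ref{thm:bd_div}, being pure integration by parts in local coordinates, extends verbatim to $W^{1,1}(\Gamma)$ tangential fields; either route closes the argument, and everything else is a direct computation.
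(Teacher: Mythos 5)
Your argument is correct and follows essentially the same route as the paper's proof: split $\bv$ into its tangential part $\bv_\tau$ and normal part $(\bv\cdot\bn)\bn$, apply the closed-surface divergence theorem (Theorem~\ref{thm:bd_div}) to the tangential field together with the product rule of Lemma~\ref{lem:bd_product_rule}, and use that the surface gradient is tangential so that $\nabla_\Gamma(\bv\cdot\bn)\cdot\bn=0$. Your additional care about the $W^{1,1}$ regularity of the product $f\bv_\tau$ (handled by smooth approximation) is a sensible refinement of a point the paper passes over silently, but it does not change the approach.
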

The proof is taken from \cite[Sec.~2.1]{pruss_simonett_2016}.
\begin{proof}
	This is a simple consequence of Theorem~\ref{thm:bd_div}.
	For a possible non tangential vector field $\bv$ we can write $\bv = (\bv - (\bv \cdot \bn) \bn) + (\bv \cdot \bn) \bn =: \bv_\tau + (\bv \cdot \bn) \bn$.
	The first part is a tangential vector field and thus by Theorem~\ref{thm:bd_div} and the product rule from Lemma~\ref{lem:bd_product_rule}
	we have
	\begin{multline*}
		\int_\Gamma \nabla_\Gamma f \cdot \bv \diff \sigma
		=
			\int_\Gamma  \nabla_\Gamma f \cdot \bv_\tau \diff \sigma
		=
			- \int_\Gamma 
				f \; \nabla_\Gamma \cdot \bv_\tau
			\diff \sigma
		= 
			- \int_\Gamma 
				f \; \nabla_\Gamma \cdot \bv
				- f \; \nabla_\Gamma  \cdot \left( (\bv \cdot \bn) \bn \right)
			\diff \sigma\\
		=
			- \int_\Gamma 
				f \; \nabla_\Gamma \cdot \bv
				- f \; \nabla_\Gamma (\bv \cdot \bn) \cdot \bn
				- f \; (\bv \cdot \bn) \nabla_\Gamma \cdot \bn
			\diff \sigma
		=
			- \int_\Gamma 
				f \; \nabla_\Gamma \cdot \bv
				- f \; (\bv \cdot \bn) \nabla_\Gamma \cdot \bn
			\diff \sigma,
	\end{multline*}
	where we used that the surface gradient is tangential, such that $\nabla_\Gamma (\bv \cdot \bn) \cdot \bn = 0$, see \cite{skrepek_2023}.
\end{proof}

    
\begin{subsubsection}{Matrix-valued functions on the boundary}

All results from the previous section generalize to matrix-valued functions on the boundary.
We first define.
\begin{definition}
	Let $\bs A: \Gamma \to \R^{d \times d}$ be defined almost everywhere. Then $\bs A$ is in $W^{1,p}(\Gamma)$
	if for an atlas $\set{(\varphi_r, V_r)}{r= 1, \dots, M}$ of $\Gamma$ we have $\bs A \circ \varphi_r^{-1} \in W^{1,p}(\Delta_r)$
	for all $r = 1, \dots, M$.
\end{definition}
As above we define the surface gradient.
\begin{definition}
	For $\bs A \in W^{1,p}(\Gamma)$ we define
	\[
		\nabla_\Gamma \bs A = \left( \sum_{i=1}^{d-1} \partial_i (\bs A \circ \varphi^{-1}) \otimes \tau^i \right) \circ \varphi \in \R^{d \times d \times d}.
	\]
\end{definition}
Using this definition we also define the surface divergence.
\begin{definition}\label{def:surf_div_matrix}
	For $\bs A \in W^{1,p}(\Gamma)$ we define
	\[
		\nabla_\Gamma \cdot \bs A = \sum_{m=1}^{d-1} (\nabla_\Gamma \bs A \cdot (\tau^m \circ \varphi))  (\tau_m \circ \varphi) \in \R^d.
	\]
\end{definition}
These definitions are all independent of the chosen atlas, which follows as before with the help of coordinate transformations.
Additionally, we get the following product rule.
\begin{lem}\label{lem:surf_prod_rule_matrix}
	For $\bs A \in W^{1,p}(\Gamma)$ and $\bv \in W^{1,p}(\Gamma)$ it holds
	\(
		\nabla_\Gamma \cdot (\bs A \bv) = (\nabla_\Gamma \cdot \bs A^T) \cdot \bv + \bs A^T : \nabla_\Gamma \bv.
	\)
\end{lem}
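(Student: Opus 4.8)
The plan is to follow the pattern of Lemma~\ref{lem:bd_product_rule}: fix a local chart $\varphi$ of $\Gamma$, pull $\bs A$ and $\bv$ back to $\Delta_r\subseteq\R^{d-1}$, apply the ordinary Leibniz rule for weak derivatives there, and regroup. Throughout, abbreviate $A:=\bs A\circ\varphi^{-1}$ and $v:=\bv\circ\varphi^{-1}$ on $\Delta_r$; as in Lemma~\ref{lem:bd_product_rule} the matrix--vector product $Av$ is weakly differentiable with $\partial_i(Av)=(\partial_i A)v+A(\partial_i v)$. Independence of the surface divergence from the chosen atlas has already been recorded after Definitions~\ref{def:surf_div} and~\ref{def:surf_div_matrix}, so it suffices to check the asserted identity in a single chart.

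First I would record the two reduced coordinate expressions for the surface divergence. For a vector field $\bu\in W^{1,p}(\Gamma)$, the computation carried out inside the proof of Theorem~\ref{thm:bd_div} gives
\[
  \nabla_\Gamma\cdot\bu=\Big(\sum_{i=1}^{d-1}\partial_i(\bu\circ\varphi^{-1})\cdot\tau^i\Big)\circ\varphi ,
\]
and for a matrix field $\bs A\in W^{1,p}(\Gamma)$ the analogous manipulation — contracting $\tau^m$ against the $\tau^i$-slot of the surface gradient $\nabla_\Gamma\bs A$ in Definition~\ref{def:surf_div_matrix} and using $\sum_m g^{im}\tau_m=\tau^i$ — yields
\[
  \nabla_\Gamma\cdot\bs A=\Big(\sum_{i=1}^{d-1}\bigl(\partial_i(\bs A\circ\varphi^{-1})\bigr)\tau^i\Big)\circ\varphi ,
\]
the summand being an ordinary matrix--vector product; in particular $\nabla_\Gamma\cdot\bs A^T=\bigl(\sum_i(\partial_iA)^T\tau^i\bigr)\circ\varphi$ in the chart.

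Then I would simply compute, in coordinates,
\[
  \nabla_\Gamma\cdot(\bs A\bv)=\sum_{i}\partial_i(Av)\cdot\tau^i
  =\sum_i\bigl((\partial_i A)v\bigr)\cdot\tau^i+\sum_i\bigl(A(\partial_i v)\bigr)\cdot\tau^i .
\]
For the first sum, moving the transpose across the Euclidean inner product gives $\bigl((\partial_i A)v\bigr)\cdot\tau^i=v\cdot\bigl((\partial_iA)^T\tau^i\bigr)$, and summing over $i$ produces $v\cdot\sum_i(\partial_iA)^T\tau^i$, i.e.\ $(\nabla_\Gamma\cdot\bs A^T)\cdot\bv$ after composing with $\varphi$. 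For the second sum, likewise $\bigl(A(\partial_i v)\bigr)\cdot\tau^i=(\partial_i v)\cdot(A^T\tau^i)$; unwinding Definition~\ref{def:surf_grad_vec}, which gives $(\nabla_\Gamma\bv)\circ\varphi^{-1}=\sum_i(\partial_i v)\otimes\tau^i$, one sees that $\bs A^T:\nabla_\Gamma\bv=\sum_i(\partial_i v)\cdot(\bs A^T\tau^i)$ in the chart, so the second sum equals $\bs A^T:\nabla_\Gamma\bv$. Adding the two contributions and composing back with $\varphi$ gives the claim. The proof is entirely routine; the only point requiring attention is the tensor bookkeeping — keeping straight which index each ``$\cdot$'' and the ``$:$'' contract, and where transposes enter — together with the (standard, as in Lemma~\ref{lem:bd_product_rule}) check that $\bs A\bv$ is regular enough for the Leibniz rule to apply. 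There is no analytic obstacle.
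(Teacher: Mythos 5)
Your proof is correct and is essentially the same elementary verification as the paper's: the paper writes $\bs A$ and $\bv$ in Euclidean components and invokes the standard product rule, while you carry out the equivalent computation directly in a local chart, using $\sum_m g^{im}\tau_m=\tau^i$ and moving transposes across the Euclidean inner product. Your reduced chart formulas for $\nabla_\Gamma\cdot$ of vector and matrix fields and the identification $\bs A^T:\nabla_\Gamma\bv=\sum_i(\partial_i(\bv\circ\varphi^{-1}))\cdot(\bs A^T\tau^i)$ are accurate, so there is no gap.
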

\begin{proof}
	Writing $\bs A=(a_{ij})_{i,j=1}^d$ and $\bv$ in Euclidean coordinates, that is
	$\bs A = \sum_{i,j=1}^d a_{ij} \bs{e}_i \otimes \bs{e}_j$  and $\bv = \sum_{i=1}^d v_i \bs{e}_i$
	the identity simply follows from the standard product rule and calculating both side explicitly.
\end{proof}

\begin{cor}
	For $\Gamma \in C^{2,1}$ and $\bs A \in W^{1,p}(\Gamma)$ and $\bv \in W^{1,p}(\Gamma)$ it holds
	\begin{equation}\label{eq:bd_ibp_matrix}
		\int_\Gamma 
			(\nabla_\Gamma \cdot (\bs A^T)) \cdot \bv 
			+ \bs A^T : \nabla_\Gamma \bv  
			- (\bs A \bv \cdot \bn) \, \nabla_\Gamma \cdot \bn = 0.
	\end{equation}
\end{cor}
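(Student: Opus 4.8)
The plan is to obtain \eqref{eq:bd_ibp_matrix} by combining the matrix product rule for the surface divergence, Lemma~\ref{lem:surf_prod_rule_matrix}, with the scalar integration by parts formula on the boundary, Corollary~\ref{cor:ibp_boundary}. First I would note that $\bs A \bv$ is a vector field on $\Gamma$ of class $W^{1,1}(\Gamma)$ — this is clear by H\"older's inequality and the product rule for weak derivatives in the situations where the identity is used in the a priori estimates (there one of the two factors, the director or its derivatives, is additionally in $L^\infty(\Gamma)$), so that Corollary~\ref{cor:ibp_boundary} is applicable to it; in general \eqref{eq:bd_ibp_matrix} is to be read under the corresponding integrability hypothesis. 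Crucially, $\bs A \bv$ need not be tangential, so Theorem~\ref{thm:bd_div} cannot be invoked directly, but Corollary~\ref{cor:ibp_boundary}, which contains the curvature correction $(\bs v\cdot\bn)\nabla_\Gamma\cdot\bn$, can.

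Next I would apply Corollary~\ref{cor:ibp_boundary} with the vector field $\bs A \bv$ in place of $\bv$ and with $f\equiv 1$. Since $\Gamma$ is compact, the constant function $1$ belongs to $W^{1,p'}(\Gamma)$, and by Definition~\ref{def:surf_grad_scal} its surface gradient vanishes, $\nabla_\Gamma 1 = 0$. Hence the term $\bv\cdot\nabla_\Gamma f$ in Corollary~\ref{cor:ibp_boundary} disappears and one is left with
\[
	\int_\Gamma \nabla_\Gamma \cdot (\bs A \bv) \, \dS = \int_\Gamma (\bs A \bv \cdot \bn)\, \nabla_\Gamma \cdot \bn \, \dS .
\]
Then I would rewrite the left-hand side with the product rule of Lemma~\ref{lem:surf_prod_rule_matrix}, namely $\nabla_\Gamma \cdot (\bs A \bv) = (\nabla_\Gamma \cdot \bs A^T) \cdot \bv + \bs A^T : \nabla_\Gamma \bv$, and move the curvature term to the left; this is exactly \eqref{eq:bd_ibp_matrix}. (Equivalently, one can bypass Corollary~\ref{cor:ibp_boundary} and split $\bs A\bv = (\bs A\bv - (\bs A\bv\cdot\bn)\bn) + (\bs A\bv\cdot\bn)\bn$, apply Theorem~\ref{thm:bd_div} to the tangential part, and use the scalar product rule Lemma~\ref{lem:bd_product_rule} together with $\nabla_\Gamma(\bs A\bv\cdot\bn)\cdot\bn = 0$ on the normal part — this is just unfolding the proof of Corollary~\ref{cor:ibp_boundary}.)

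I do not expect a real obstacle here: the statement is a two-step combination of results already proved in this subsection. The only point that deserves care in the write-up is the bookkeeping of transposes — one must make sure the product rule is applied so that $(\nabla_\Gamma\cdot\bs A^T)\cdot\bv$ and $\bs A^T:\nabla_\Gamma\bv$ appear exactly as in the statement, while the normal-component term is $(\bs A\bv\cdot\bn)\,\nabla_\Gamma\cdot\bn$ (with $\bs A\bv$, not $\bs A^T\bv$), since that is what the curvature correction in Corollary~\ref{cor:ibp_boundary} produces when applied to the field $\bs A\bv$ itself.
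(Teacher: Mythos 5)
Your proposal is correct and follows essentially the same route as the paper: apply Corollary~\ref{cor:ibp_boundary} to the (not necessarily tangential) field $\bs A\bv$ with $f\equiv 1$, and then rewrite $\nabla_\Gamma\cdot(\bs A\bv)$ via the product rule of Lemma~\ref{lem:surf_prod_rule_matrix}. Your added remarks on integrability and on the transpose bookkeeping are fine but not needed beyond what the paper's two-line argument already uses.
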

\begin{proof}
	This is a very simple consequence of the product rule from Lemma~\ref{lem:surf_prod_rule_matrix} and the integration by parts rule from
	Corollary~\ref{cor:ibp_boundary}.
	By Corollary~\ref{cor:ibp_boundary} we find
	\[
		\int_\Gamma \nabla_\Gamma \cdot (\bs A \bv) - (\bs A \bv \cdot \bn ) \nabla_\Gamma \cdot \bn \dS = 0
	\]
	and the first term on the left-hand side can be rewritten as
	\(
		\nabla_\Gamma (\bs A \bv) = (\nabla_\Gamma \cdot \bs A^T) \cdot \bv + \bs A^T : \nabla_\Gamma \bv
	\)
	and we are done.
\end{proof}
Now, we still would like to relate the surface divergence of a matrix-valued function on the boundary to the divergence of the bulk divergence.
\begin{cor}\label{cor:surf_div_form_matrix}
	For $\bs A \in W^{2,p}(\Omega)$ we have
	\[
		\nabla_\Gamma \cdot S(\bs A) = \nabla \cdot \bs A - (\nabla \bs A \cdot \bn) \bn.
	\]
\end{cor}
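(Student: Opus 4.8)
The plan is to reduce the matrix-valued statement to the vector-valued surface divergence formula from Theorem~\ref{thm:surf_div_form} by working component-by-component in Euclidean coordinates, exactly as was done for the other matrix generalizations in this section (Lemma~\ref{lem:surf_prod_rule_matrix} and the corollaries following it). Write $\bs A = \sum_{i,j=1}^d a_{ij}\,\bs e_i\otimes\bs e_j$ with $a_{ij}\in W^{2,p}(\Omega)$, and recall from Definition~\ref{def:surf_div_matrix} that $\nabla_\Gamma\cdot S(\bs A)$ has $i$-th component $\sum_{j=1}^d \nabla_\Gamma\cdot S(\bs A^T\bs e_i)\cdot\bs e_j$-type expression; more precisely, the $i$-th row of $\bs A$ is the vector field $\bs a^{(i)} := \sum_{j=1}^d a_{ij}\bs e_j \in W^{2,p}(\Omega)$, and one checks directly from the definitions that $(\nabla_\Gamma\cdot S(\bs A))_i = \nabla_\Gamma\cdot S(\bs a^{(i)})$.

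First I would establish that row-wise identification of $\nabla_\Gamma\cdot S(\bs A)$ by unwinding Definition~\ref{def:surf_div_matrix}: $\nabla_\Gamma\cdot S(\bs A) = \sum_{m=1}^{d-1}(\nabla_\Gamma S(\bs A)\cdot(\tau^m\circ\varphi))(\tau_m\circ\varphi)$, and since $\nabla_\Gamma S(\bs A) = (\sum_{i}\partial_i(S(\bs A)\circ\varphi^{-1})\otimes\tau^i)\circ\varphi$ acts block-diagonally on the row index, the $i$-th component of $\nabla_\Gamma\cdot S(\bs A)$ equals $\sum_{m}(\nabla_\Gamma S(\bs a^{(i)})\cdot(\tau^m\circ\varphi))(\tau_m\circ\varphi) = \nabla_\Gamma\cdot S(\bs a^{(i)})$, which is precisely the scalar-times-vector surface divergence of the $i$-th row. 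Then I apply Theorem~\ref{thm:surf_div_form} to each $\bs a^{(i)}\in W^{2,p}(\Omega)$:
\[
	\nabla_\Gamma\cdot S(\bs a^{(i)}) = S(\nabla\cdot\bs a^{(i)}) - S(\nabla\bs a^{(i)})\bn\cdot\bn.
\]
Finally I reassemble: $\nabla\cdot\bs a^{(i)} = \sum_j\partial_j a_{ij} = (\nabla\cdot\bs A)_i$ by the definition of the divergence of a matrix-valued function from Section~\ref{sec:main}, and $S(\nabla\bs a^{(i)})\bn\cdot\bn = \sum_{j,k}(\partial_k a_{ij})n_j n_k = ((\nabla\bs A)\bn\cdot\bn)_i$ with the convention $\nabla\bs A = [\partial_k\bs A_{ij}]$. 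Hence $\nabla_\Gamma\cdot S(\bs A) = S(\nabla\cdot\bs A) - (\nabla\bs A\cdot\bn)\bn$ componentwise, which is the claim (suppressing $S$ on the right-hand side as elsewhere in the paper).

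I expect the only real subtlety — the ``main obstacle'' — to be bookkeeping: making sure the contraction conventions in $\nabla_\Gamma\cdot$ for matrices (Definition~\ref{def:surf_div_matrix}, where $\nabla_\Gamma\bs A\in\R^{d\times d\times d}$ is contracted against $\tau^m$ on its \emph{last} slot and then multiplied by $\tau_m$) line up with the row-wise reduction, and that $\nabla\bs A\cdot\bn$ on the right-hand side means contraction of the last (derivative) index of $[\partial_k\bs A_{ij}]$ with $\bn$, consistent with the vector case $S(\nabla\bv)\bn\cdot\bn$ in Theorem~\ref{thm:surf_div_form}. Once the indexing is fixed, no new analytic input is needed beyond Theorem~\ref{thm:surf_div_form} and the linearity of the trace $S$; the proof is a two-line computation after the conventions are pinned down, mirroring the proof of Theorem~\ref{thm:surf_div_form} itself.
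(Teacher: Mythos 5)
Your proposal is correct and is essentially the paper's own argument: the paper proves this corollary in one line by writing $\bs A$ in Euclidean coordinates and applying Theorem~\ref{thm:surf_div_form} row by row, which is exactly the reduction you carry out, just with the index bookkeeping spelled out explicitly.
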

\begin{proof}
	This simply follows from writing $\bs A$ in Euclidean coordinates and using Theorem~\ref{thm:surf_div_form}.
\end{proof}

\end{subsubsection}
    \end{subsection}
    
\begin{subsection}{Auxiliary results for the flow model}

\begin{lem}\label{lem:reg_en_terms}
	For $\psi \in W^{4,10/3}(\Omega), \xi \in W^{3,10/3}(\Gamma)$ and $\bd \in W^{4,\infty}(\Omega)$ 
	fulfilling $\varMa \nabla \psi \cdot \bn + \tau \psi = \xi$ and $\bd \cdot \bn = 0$ on $\Gamma$, it holds
	\begin{align}\label{eq:reg_en_terms}
    \begin{split}
		(\varepsilon + \lambda) \int_\Omega
	 		\nabla (\Delta \psi) \cdot \varMa \nabla (\nabla \cdot ((\bd \cdot &\nabla \psi) \bd))
	 	\diff \X\\
	 	&=
	 		(\varepsilon + \lambda) \int_\Omega
	 			|\nabla^3 \psi \cdot \bd |^2 + \varepsilon |(\nabla^3 \psi \cdot \bd) \bd|^2
	 		\diff \X
	 		+ \mathrm{l.o.t}_{\kappa}
    \end{split}
	\end{align}
	for 
	\begin{align}\label{eq:def_kappa_lot}
    \begin{split}
		&\frac{1}{(\varepsilon + \lambda)} \mathrm{l.o.t}_{\kappa} 
		=
			\frac{1}{(\varepsilon + \lambda)} \left( 
				\int_\Omega \mathrm{l.o.t}_{\Omega \kappa} \diff \X
				+ \int_\Gamma \mathrm{l.o.t}_{\Gamma \kappa} \dS
			\right)\\
		&\hspace{1em}:=
			\int_\Omega
	 			\nabla (\Delta \psi) \cdot \varMa 
	 			\, \nabla \cdot \left( (\bd \cdot \nabla \psi) \nabla \bd^T + (\nabla \bd^T \nabla \psi) \otimes \bd \right)
	 		\diff \X\\
	 		&\hspace{2em}+ \int_\Omega
	 			\nabla^3 \psi \threedotsbin \left( 
	 			(\nabla^2 \psi \nabla \bd) \otimes_M \bd
				+ (\nabla^2 \psi \bd)  \otimes \nabla \bd
				\right)^T
	 		\diff \X\\
	 		&\hspace{2em}- \varepsilon \int_\Omega
	 			\nabla \cdot (\nabla^2 \psi \bd) (\nabla \bd : ((\nabla^2 \psi \bd) \otimes \bd))
	 			+ (\nabla \bd :  \nabla^2 \psi) (\nabla \cdot ((\bd \otimes (\nabla^2 \psi \bd)) \bd))
	 		\diff \X\\
	 		&\hspace{2em}+  \varepsilon \int_\Omega
	 			(\nabla \bd :  \nabla^2 \psi) (\nabla \bd : ((\nabla^2 \psi \bd) \otimes \bd))
	 		\diff \X
	 		+ \varepsilon \int_\Omega
		 		\nabla^2 \psi \nabla \bd  : (\nabla^3 \psi \cdot \bd) \bd \otimes \bd
	 		\diff \X\\
	 		&\hspace{2em}+ \varepsilon \int_\Omega
	 			( \nabla^3 \psi \cdot \bd + \nabla^2 \psi \nabla \bd) :
				\left( 
					(\nabla^2 \psi \bd \cdot \bd) \nabla \bd^T
		 			+ (\nabla \bd^T \nabla^2 \psi \bd) \otimes \bd 
		 			+ (\nabla \bd^T \nabla^2 \psi \bd) \otimes \bd
		 		\right)
		 	\diff \X\\
	 		&\hspace{2em}- \int_\Gamma
	 			\left[ (\nabla^2 \psi \bd) \otimes \nabla (\bd \cdot \bn) - ((\nabla^2 \psi \bd) \otimes \bd) \nabla \bn \right] : \nabla^2 \psi
	 		\dS\\
	 		&\hspace{2em}+ \int_\Gamma 
	 			(\nabla^2 \psi \bn) \cdot 
	 			\left[ 
	 				((\nabla^2 \psi \bd) \otimes \nabla (\bd \cdot \bn)) \bn
					- (((\nabla^2 \psi \bd) \otimes \bd) \nabla \bn) \bn
				\right]
	 		\dS\\
	 		&\hspace{2em}- \int_\Gamma
				\nabla^2_\Gamma (\xi - \tau \psi) : ((\nabla^2 \psi \bd) \otimes \bd) 
				+ \left( ((\nabla^2 \psi \bd) \otimes \bd)^T \nabla \bn^T  \nabla \psi \right) \cdot \bn \; \nabla_\Gamma \cdot \bn
			\dS\\
			&\hspace{2em}+ \int_\Gamma
				(\nabla (\nabla \psi \cdot \bn) \cdot \bn)  \nabla_\Gamma \cdot \left( 
					(\nabla_\Gamma (\xi - \tau \psi) \cdot \bd) \bd
					- (\nabla \bn^T \nabla \psi \cdot \bd) \bd
				\right)
		\dS\\
		&\hspace{2em}- \int_\Gamma
			(\nabla (\nabla \psi \cdot \bn) \cdot \bn) ((\nabla^2 \psi \bd) \otimes \bd) : \nabla_\Gamma \bn
			+ \nabla_\Gamma (\nabla \bn^T \nabla \psi) : ((\nabla^2 \psi \bd) \otimes \bd) 
		\dS\\
		&\hspace{2em}- \varepsilon \int_\Gamma
			(\nabla^2 \psi \bd \cdot \bd) \nabla (\bd \cdot \bn) \cdot \nabla^2\psi  \bd
			- (\nabla^2 \psi \bd \cdot \bd)  \nabla \bn^T \bd \cdot \nabla^2 \psi \bd
		\dS\\
		&\hspace{2em}+ \varepsilon \int_\Gamma
			(\nabla^2 \psi \bd \cdot \bn) (\nabla^2 \psi \bd \cdot \bd) \nabla(\bd \cdot \bn) \cdot \bn
			- (\nabla^2 \psi \bd \cdot \bn) (\nabla^2 \psi \bd \cdot \bd) \nabla \bn^T \bd \cdot \bn
		\dS\\
		&\hspace{2em}+ \varepsilon \int_\Gamma
			\nabla_\Gamma \left( \nabla \bn^T \nabla \psi \cdot \bd \right) \cdot ((\nabla^2 \psi \bd \cdot \bd) \bd)
			- \nabla_\Gamma \left( \nabla_\Gamma (\xi - \tau \psi) \cdot \bd \right) \cdot ((\nabla^2 \psi \bd \cdot \bd) \bd))
		\dS.
    \end{split}
	\end{align}
\end{lem}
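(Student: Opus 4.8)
The plan is to prove the identity by a direct algebraic expansion of the integrand, followed by two integrations by parts in the bulk and an integration by parts on $\Gamma$, collecting every remaining contribution into the prescribed term $\mathrm{l.o.t.}_\kappa$. Concretely, first I would expand: using the product rule $\nabla\cdot(\bs a\otimes\bs b)=\nabla\bs a\,\bs b+(\nabla\cdot\bs b)\bs a$ recalled in Section~\ref{sec:main}, the identity $\nabla(\bd\cdot\nabla\psi)=\nabla^2\psi\,\bd+\nabla\bd^T\nabla\psi$, and the formula $\nabla((\bd\cdot\nabla\psi)\bd)=(\bd\cdot\nabla\psi)\nabla\bd+\bd\otimes(\nabla\bd^T\nabla\psi)+\bd\otimes(\nabla^2\psi\,\bd)$ from the main text, one writes
\[
	\nabla\big(\nabla\cdot((\bd\cdot\nabla\psi)\bd)\big)=(\nabla^3\psi\cdot\bd)\bd+\nabla\cdot\big((\bd\cdot\nabla\psi)\nabla\bd^T+(\nabla\bd^T\nabla\psi)\otimes\bd\big)+(\text{terms with a derivative of }\bd\text{ and at most }\nabla^2\psi),
\]
so that, apart from the leading term $(\nabla^3\psi\cdot\bd)\bd$, every contribution carries a derivative of $\bd$. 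Splitting $\varMa=\bs I+\varepsilon\,\bd\otimes\bd$, I would then separate $\nabla(\Delta\psi)\cdot\varMa\,w=\nabla(\Delta\psi)\cdot w+\varepsilon(\nabla(\Delta\psi)\cdot\bd)(\bd\cdot w)$. Since $\nabla(\Delta\psi)$ is of order $\nabla^3\psi$, each pairing of it with a non-leading piece is linear in $\nabla^3\psi$; after one integration by parts moving a derivative off $\Delta\psi$ these reproduce, one for one, the volume integrals in the first lines of $\mathrm{l.o.t.}_{\Omega\kappa}$ in \eqref{eq:def_kappa_lot}. This reduces the claim to evaluating $\int_\Omega\nabla(\Delta\psi)\cdot(\nabla^3\psi\cdot\bd)\bd\diff\X$ and $\varepsilon\int_\Omega(\nabla(\Delta\psi)\cdot\bd)\big(\bd\cdot(\nabla^3\psi\cdot\bd)\bd\big)\diff\X$.

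For these two leading integrals I would integrate by parts twice, exactly as in \eqref{eq:full_sec_deriv}: write $\nabla(\Delta\psi)$ componentwise as the divergence $\nabla\cdot(\nabla^2\psi)$ (using the symmetry of $\nabla^2\psi$), integrate by parts once to put that derivative onto $(\nabla^3\psi\cdot\bd)\bd$ (resp.\ onto $\bd\,(\nabla^3\psi\cdot\bd)\bd$), then integrate by parts a second time to move the $\bd$-contracted derivative, invoking the full symmetry of $\nabla^3\psi$ throughout. The mixed quadratic forms thereby collapse to $|\nabla^3\psi\cdot\bd|^2$ and, for the $\varepsilon$-term, to $\varepsilon|(\nabla^3\psi\cdot\bd)\bd|^2$; one checks this first for constant $\bd$ and then notes that a non-constant $\bd$ only produces, whenever a derivative falls on $\bd$ rather than $\psi$, further bulk terms that are linear in $\nabla^3\psi$ (or quadratic in $\nabla^2\psi$) times derivatives of $\bd$ — precisely the $\nabla^2\psi\,\nabla\bd$-type volume integrals in $\mathrm{l.o.t.}_{\Omega\kappa}$. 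Each of the two bulk integrations by parts also leaves a boundary integral over $\Gamma$.

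The boundary integrals require the most care. On $\Gamma$ one has $\bd\cdot\bn=0$, hence $\varMa\nabla\psi\cdot\bn=\nabla\psi\cdot\bn$, and then the Robin condition gives $\nabla\psi\cdot\bn=\xi-\tau\psi$; every normal derivative of $\psi$ and every once-$\bn$-contracted entry of $\nabla^2\psi$ is replaced by means of this relation, lowering the order of the normal traces of $\psi$ to a controllable level. The remaining tangential derivatives — of $\xi-\tau\psi$ and of entries of $\nabla^2\psi$ — are then integrated by parts on $\Gamma$ (twice where the order requires it, so that a second surface gradient $\nabla^2_\Gamma(\xi-\tau\psi)$ appears) using the surface divergence theorem and its consequences for scalar-, vector- and matrix-valued tangential fields, i.e.\ Theorem~\ref{thm:surf_grad_vs_tan_pro}, Corollary~\ref{cor:ibp_boundary_main}, Corollary~\ref{cor:surf_div_form_matrix} and \eqref{eq:bd_ibp_matrix}, passing between surface and bulk gradients via $\nabla_\Gamma S(f)=S(\nabla f)-(S(\nabla f)\cdot\bn)\bn$ and keeping the curvature terms $\nabla_\Gamma\cdot\bn$ (since $\bd$ is tangential but $\nabla\psi$ is not). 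Carrying this out in the manner of \eqref{eq:bd_exp_1}--\eqref{eq:bd_exp_4} leaves exactly the surface integrals in $\mathrm{l.o.t.}_{\Gamma\kappa}$, and assembling the bulk squares with the bulk and boundary lower-order terms gives \eqref{eq:reg_en_terms}.

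The step I expect to be the real obstacle is not any single estimate but the bookkeeping: one must keep precise track of which indices each factor of $\nabla^3\psi$ is contracted on in order to recognise the two non-negative squares and, equally, to verify that every other term genuinely contains at most one factor of $\nabla^3\psi$ — so that it can later be absorbed by Young's inequality against the $\kappa$-weighted good terms in the proof of Proposition~\ref{propo:reg_en_ineq_2} — and that no uncontrolled normal trace of $\nabla^2\psi$ remains on $\Gamma$. All the integrations by parts, in the bulk and on the boundary, are legitimate under the hypotheses $\psi\in W^{4,10/3}(\Omega)$, $\xi\in W^{3,10/3}(\Gamma)$, $\bd\in W^{4,\infty}(\Omega)$ and $\Gamma\in C^{2,1}$ assumed in the statement.
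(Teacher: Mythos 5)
Your proposal follows essentially the same route as the paper's proof: expand $\nabla(\nabla\cdot((\bd\cdot\nabla\psi)\bd))$ by the product rule, split $\varMa=\bs I+\varepsilon\,\bd\otimes\bd$, integrate the two leading pairings by parts twice in the bulk using the symmetry of $\nabla^3\psi$ to produce the squares $|\nabla^3\psi\cdot\bd|^2$ and $\varepsilon|(\nabla^3\psi\cdot\bd)\bd|^2$, and reduce the boundary integrals via $\bd\cdot\bn=0$, the Robin condition and surface integration by parts (Theorem~\ref{thm:surf_grad_vs_tan_pro}, Corollary~\ref{cor:ibp_boundary_main}), collecting everything else into $\mathrm{l.o.t.}_\kappa$. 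The only slip is cosmetic: the first volume integral in \eqref{eq:def_kappa_lot} is the untouched pairing of $\nabla(\Delta\psi)$ with the non-leading divergence, not the result of a further integration by parts, so your bookkeeping there should be adjusted to match the prescribed form of $\mathrm{l.o.t.}_{\Omega\kappa}$.
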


\begin{proof}
	Using
	\begin{align*}
		\nabla (\nabla \cdot ((\bd \cdot \nabla \psi) \bd)) 
		= 
			\nabla \cdot (\nabla ((\bd \cdot \nabla \psi) \bd))^T
		=
			\nabla \cdot \left( (\bd \cdot \nabla \psi) \nabla \bd^T + (\nabla \bd^T \nabla \psi) \otimes \bd + (\nabla^2 \psi \bd) \otimes \bd \right)
	\end{align*}
	and expanding the matrix $\varMa$ on the left-hand side of \eqref{eq:reg_en_terms} we obtain
	\begin{align}\label{eq:kappa_exp_1}
    \begin{split}
		\int_\Omega
	 		&\nabla (\Delta \psi) \cdot \varMa \nabla (\nabla \cdot ((\bd \cdot \nabla \psi) \bd))
	 	\diff \X\\
	 	&=
	 		\int_\Omega
	 			\nabla (\Delta \psi) \cdot \varMa \nabla \cdot ((\nabla^2 \psi \bd) \otimes \bd) 
	 			+ \nabla (\Delta \psi) \cdot \varMa \,
	 			\nabla \cdot \left( (\bd \cdot \nabla \psi) \nabla \bd^T + (\nabla \bd^T \nabla \psi) \otimes \bd \right)
	 		\diff \X\\
	 	&=
	 		\int_\Omega
	 			\nabla (\Delta \psi) \cdot \nabla \cdot ((\nabla^2 \psi \bd) \otimes \bd) 
	 			+ \varepsilon (\nabla (\Delta \psi) \cdot \bd) (\nabla \cdot ((\nabla^2 \psi \bd) \otimes \bd) \cdot \bd)
	 		\diff \X\\
	 		&\hspace{1em}+ \int_\Omega
	 			\nabla (\Delta \psi) \cdot \varMa 
	 			\, \nabla \cdot \left( (\bd \cdot \nabla \psi) \nabla \bd^T + (\nabla \bd^T \nabla \psi) \otimes \bd \right)
	 		\diff \X.
    \end{split}
	\end{align}
	The integral on the last line is no longer quadratic in the third order derivative of $\psi$ and will be collected in 
	$\mathrm{l.o.t.}_\kappa$.
	We now turn to the first term on the right-hand side of \eqref{eq:kappa_exp_1}.
	For the terms quadratic in the third order derivatives of $\psi$ to occur with a good sign
	we need to integrate by parts two times and handle the occurring boundary terms.
	Using the integration by parts rule for matrices and tensors,
	we find
	\begin{align}\label{eq:double_ibp}
    \begin{split}
		\int_\Omega
	 		\nabla (\Delta \psi) &\cdot \nabla \cdot ((\nabla^2 \psi \bd) \otimes \bd) 
	 	\diff \X
	 	=
	 		\int_\Omega
	 			\nabla \cdot (\nabla^2 \psi) \cdot \nabla \cdot ((\nabla^2 \psi \bd) \otimes \bd) 
	 		\diff \X\\
	 	&=
	 		- \int_\Omega
	 			(\nabla^2 \psi) : \nabla \cdot (\nabla((\nabla^2 \psi \bd) \otimes \bd)^T)
	 		\diff \X
	 		+ \int_\Gamma 
	 			(\nabla^2 \psi \bn) \nabla \cdot ((\nabla^2 \psi \bd) \otimes \bd) 
	 		\dS\\
	 	&=
	 		\int_\Omega
	 			(\nabla^3 \psi) \threedotsbin (\nabla((\nabla^2 \psi \bd) \otimes \bd)^T)
	 		\diff \X
	 		- \int_\Gamma
	 			[(\nabla((\nabla^2 \psi \bd) \otimes \bd)^T) \cdot \bn] : \nabla^2 \psi
	 		\dS\\
	 		&\hspace{1em}+ \int_\Gamma 
	 			(\nabla^2 \psi \bn) \nabla \cdot ((\nabla^2 \psi \bd) \otimes \bd) 
	 		\dS.
    \end{split}
	\end{align}
	The first term gives a term with a good sign quadratic in the third order derivative of $\psi$ 
	and all other terms can be estimated and are part of $\mathrm{l.o.t.}_\kappa$.
	That the first term on the right-hand side gives us terms with a good sign, can be seen by expanding the tensor product and using
	$\nabla (a \otimes b ) = \nabla a \otimes_M b + a \otimes \nabla b$
	and $\nabla (A b) = (\nabla A)^T \cdot b + A \nabla b$,
	where for a matrix $A\in \R^{d\times d}$ and a vector $b\in\R^d$ we have $A \otimes_M b := (a_{ik} b_j)^d_{i,j,k=1}$ and $b \otimes A := (b_i a_{jk})^d_{i,j,k=1}$.
	First we note
	\begin{align*}
		\nabla (\nabla^2 \psi \bd \otimes \bd) 
		&=
			\nabla  (\nabla^2 \psi \bd) \otimes_M \bd + (\nabla^2 \psi \bd) \otimes \nabla \bd\\
		&=
			((\nabla^3 \psi)^T \cdot \bd) \otimes_M \bd 
			+ (\nabla^2 \psi \nabla \bd) \otimes_M \bd
			+ (\nabla^2 \psi \bd)  \otimes \nabla \bd
	\end{align*}
	and then, using $(\nabla^3 \psi)^T = \nabla^3 \psi$,
    where the transposed of a three dimensional tensor $\mathbb{G} = (g_{ijk})_{i,j,k=1}^3$ is defined as
    $\mathbb{G}^T = (g_{ikj})_{i,j,k=1}^3$,
    since we can interchange the order of derivatives,
	($\psi \in W^{4,10/3}(\Omega)$ 
	and thus $\partial_{x_i} \partial_{x_j} \partial_{x_k} \psi \in W^{1,10/3}(\Omega) \hookrightarrow C(\overline{\Omega})$)
	we get, using $ (\bs A \otimes_M \bs b)^T = \bs A \otimes \bs b$ for $\bs A \in \R^{3 \times 3}$ and $\bs b \in \R^3$
	and that for two tensors $\mathbb{G}, \mathbb{B} \in \R^{3 \times 3 \times 3}$ it holds 
	$\mathbb{G}^T \threedotsbin \mathbb{B} = \mathbb{G} \threedotsbin \mathbb{B}^T$,
	\begin{multline*}
		\nabla^3 \psi \threedotsbin ((\nabla^3 \psi)^T \cdot \bd) \otimes_M \bd)^T
		=	
			\nabla^3 \psi^T \threedotsbin ((\nabla^3 \psi)^T \cdot \bd) \otimes \bd)
		=
			\nabla^3 \psi \threedotsbin ((\nabla^3 \psi)^T \cdot \bd) \otimes \bd)\\
		=
			\sum_{i,j,k,l=1}^3 \left( (\partial_{x_k} \partial_{x_j} \partial_{x_i} \psi)   (\partial_{x_l} \partial_{x_j} \partial_{x_i} \psi) \bd_l \bd_k \right)
		= 
			\sum_{i,j=1}^3 (\nabla^3 \psi \cdot \bd)^2_{ij}
		=
			|\nabla^3 \psi \cdot \bd|^2.
	\end{multline*}
	Using $\nabla \cdot (\bs A \bs b) = \nabla \cdot (\bs A^T) \cdot \bs b + (\nabla \bs b) : \bs A^T$ 
	we can rewrite the second term on the right-hand side of \eqref{eq:kappa_exp_1} as
	\begin{align*}
		\varepsilon \int_\Omega
	 		(\nabla &(\Delta \psi) \cdot \bd) (\nabla \cdot ((\nabla^2 \psi \bd) \otimes \bd) \cdot \bd)
	 	\diff \X\\
	 	&=
	 		\varepsilon \int_\Omega
	 			\left[ \nabla \cdot (\nabla^2 \psi \bd) - \nabla \bd :  \nabla^2 \psi \right]
	 			\left[ \nabla \cdot ((\bd \otimes (\nabla^2 \psi \bd)) \bd) - \nabla \bd : ((\nabla^2 \psi \bd) \otimes \bd) \right]
	 		\diff \X\\
	 	&= 
	 		\varepsilon \int_\Omega
	 			\nabla \cdot (\nabla^2 \psi \bd) \nabla \cdot ((\bd  \otimes (\nabla^2 \psi \bd)) \bd)
	 			- \nabla \cdot (\nabla^2 \psi \bd) (\nabla \bd : ((\nabla^2 \psi \bd) \otimes \bd))
	 		\diff \X\\
	 		&\hspace{1em}+ \varepsilon \int_\Omega
	 			(\nabla \bd :  \nabla^2 \psi) (\nabla \bd : ((\nabla^2 \psi \bd) \otimes \bd))
	 			- (\nabla \bd :  \nabla^2 \psi) (\nabla \cdot ((\bd \otimes (\nabla^2 \psi \bd)) \bd))
	 		\diff \X.
	\end{align*}
	The first term also has to be integrated by parts two times to see that it has a good sign 
	and all other terms are in $\mathrm{l.o.t.}_\kappa$.
	Performing the integration by parts we find
	\begin{align}\label{eq:double_ibp_d}
    \begin{split}
		\varepsilon \int_\Omega \nabla &\cdot (\nabla^2 \psi \bd) \nabla \cdot ((\bd  \otimes (\nabla^2 \psi \bd)) \bd) \diff \X
		=
			\varepsilon \int_\Omega \nabla \cdot (\nabla^2 \psi \bd) \nabla \cdot ((\nabla^2 \psi \bd \cdot \bd) \bd) \diff \X\\
		&=
			- \varepsilon \int_\Omega
				\nabla^2 \psi \bd \cdot \nabla \cdot (\nabla((\nabla^2 \psi \bd \cdot \bd) \bd)^T)
			\diff \X
			+\varepsilon \int_\Gamma
				(\nabla^2 \psi \bd \cdot \bn)  \nabla \cdot ((\nabla^2 \psi \bd \cdot \bd) \bd)
			\dS\\
		&=
			\varepsilon \int_\Omega
				\nabla(\nabla^2 \psi \bd) : (\nabla((\nabla^2 \psi \bd \cdot \bd) \bd)^T)
			\diff \X
			- \varepsilon \int_\Gamma
				((\nabla((\nabla^2 \psi \bd \cdot \bd) \bd)^T) \bn) \cdot (\nabla^2 \psi \bd)
			\dS\\
			&\hspace{1em}+ \varepsilon \int_\Gamma
				(\nabla^2 \psi \bd \cdot \bn) \nabla \cdot ((\nabla^2 \psi \bd \cdot \bd) \bd)
			\dS.
    \end{split}
	\end{align}
	The boundary integrals are in $\mathrm{l.o.t.}_\kappa$, which we will see below, 
	and the matrix scalar product gives a term with a good sign and lower order terms.
	Using
	\begin{align*}
		(\nabla ((\nabla^2 \psi \bd \cdot \bd) \bd)^T
		 =
		 	(\nabla^2 \psi \bd \cdot \bd) \nabla \bd^T
		 	+ (\nabla (\nabla^2 \psi \bd)^T \bd) \otimes \bd 
		 	+ (\nabla \bd^T \nabla^2 \psi \bd) \otimes \bd
	\end{align*}
	and
	\begin{align*}
		\nabla (\nabla^2 \psi \bd)^T 
		=
			(\nabla^3 \psi \cdot \bd)^T + (\nabla^2 \psi \nabla \bd)^T
		=
			(\nabla^3 \psi \cdot \bd) + \nabla \bd^T \nabla^2 \psi,
	\end{align*}
	where we again used the fact that we can interchange the order of third order partial derivatives of $\psi$ and thus have 
	$\nabla^3 \psi \cdot \bd = (\nabla^3 \psi \cdot \bd)^T$,
	we obtain
	\begin{align}\label{eq:append_matrix_exp_2}
    \begin{split}
		\nabla(\nabla^2 &\psi \bd) : (\nabla ((\nabla^2 \psi \bd \cdot \bd) \bd)^T\\
		&=
			( \nabla^3 \psi \cdot \bd + \nabla^2 \psi \nabla \bd) : (\nabla ((\nabla^2 \psi \bd \cdot \bd) \bd)^T
		=
			( \nabla^3 \psi \cdot \bd) : (\nabla^3 \psi \cdot \bd) \bd \otimes \bd\\
			&\hspace{1em}+ ( \nabla^3 \psi \cdot \bd + \nabla^2 \psi \nabla \bd) :
			\left( 
				(\nabla^2 \psi \bd \cdot \bd) \nabla \bd^T
		 		+ (\nabla \bd^T \nabla^2 \psi \bd) \otimes \bd 
		 		+ (\nabla \bd^T \nabla^2 \psi \bd) \otimes \bd
		 	\right)\\
		 	&\hspace{1em}+ \nabla^2 \psi \nabla \bd  : (\nabla^3 \psi \cdot \bd) \bd \otimes \bd.
    \end{split}
	\end{align}
	Using $\bs a \otimes \bs b : \bs A= \bs a \cdot \bs A b$ we find for the first term on the right-hand side
	\begin{align*}
		( \nabla^3 \psi \cdot \bd) : (\nabla^3 \psi \cdot \bd) \bd \otimes \bd
		=
			|(\nabla^3 \psi \cdot \bd) \bd|^2.
	\end{align*} 
	All other terms in \eqref{eq:append_matrix_exp_2} are of lower order, that is not quadratic in the third order derivative of $\psi$.
	To see that also the boundary terms are of lower order, where we call boundary terms of lower order if there are no third order derivatives of $\psi$, 
	we need some involved integration by parts to be able to plug in the Robin boundary 
	condition for the electric potential $\psi$ and reduce the order of derivatives by one.
	For the first boundary integral in \eqref{eq:double_ibp} we find
	that the third order term in the second term vanishes since $\bd \cdot \bn = 0$ and
    for $\bs A \in C^1( \Omega)$ and $\bs b \in C^1(\Omega)$ $\nabla(\bs A \bs b) = (\nabla \bs A)^T \cdot \bs b + \bs A \nabla \bs b$,
	\begin{align}\label{eq:bd_kappa_vanish_1}
    \begin{split}
		\nabla((\nabla^2 \psi \bd) \otimes \bd)^T \cdot \bn
		&= 
			\nabla (((\nabla^2 \psi \bd) \otimes \bd) \bn)
			- ((\nabla^2 \psi \bd) \otimes \bd) \nabla \bn \\
		&=
			\nabla ((\bd \cdot \bn) \nabla^2 \psi \bd)
			- ((\nabla^2 \psi \bd) \otimes \bd) \nabla \bn\\
		&=
			(\bd \cdot \bn) \nabla (\nabla^2 \psi \bd)
			+ (\nabla^2 \psi \bd) \otimes \nabla (\bd \cdot \bn)
			- ((\nabla^2 \psi \bd) \otimes \bd) \nabla \bn\\
		&=
			(\nabla^2 \psi \bd) \otimes \nabla (\bd \cdot \bn)
			- ((\nabla^2 \psi \bd) \otimes \bd) \nabla \bn
    \end{split}
	\end{align}
	and thus
	\begin{align*}
		- \int_\Gamma
	 		(\nabla((\nabla^2 \psi \bd) \otimes \bd)^T \cdot \bn) : \nabla^2 \psi
	 	\dS
	 	=
	 		- \int_\Gamma
	 			\left[ (\nabla^2 \psi \bd) \otimes \nabla (\bd \cdot \bn) - ((\nabla^2 \psi \bd) \otimes \bd) \nabla \bn \right] : \nabla^2 \psi
	 		\dS.
	\end{align*}
	Now, we turn to the second boundary integral in \eqref{eq:double_ibp}.
	Using the characterization of the surface divergence of a matrix from Corollary~\ref{cor:surf_div_form_matrix} we find
	\begin{multline}\label{eq:kappa_bd_exp_1}
		\int_\Gamma 
	 		(\nabla^2 \psi \bn) \cdot \nabla \cdot ((\nabla^2 \psi \bd) \otimes \bd) 
	 	\dS\\
	 	=
	 		\int_\Gamma 
	 			(\nabla^2 \psi \bn) \cdot \nabla_\Gamma \cdot ((\nabla^2 \psi \bd) \otimes \bd) 
	 			+ (\nabla^2 \psi \bn) \cdot ((\nabla ((\nabla^2 \psi \bd) \otimes \bd) \cdot \bn) \bn)
	 		\dS.
	\end{multline}
	The higher order term in the second term again vanishes, 
	which can be seen by using $(\mathbb{G} \cdot \bs a) \bs b = (\mathbb{G}^T \cdot \bs b) \bs a$,
    for $\mathbb{G} \in \R^{3 \times 3 \times 3}$ and $\bs a, \bs b \in \R^3$
	\begin{align*}
		(\nabla ((\nabla^2 \psi \bd) \otimes \bd) \cdot \bn) \bn
	 	= 
	 		(\nabla ((\nabla^2 \psi \bd) \otimes \bd)^T \cdot \bn) \bn
	\end{align*}
	and then proceeding as above in \eqref{eq:bd_kappa_vanish_1}.
	We now turn to the first term in \eqref{eq:kappa_bd_exp_1}. To see that we can reduce the order of derivatives by one 
	so that no terms with third order derivatives appear on the boundary we use the differential operators on the boundary
    \textit{cf.~}Theorem~\ref{thm:surf_grad_vs_tan_pro_scalar}
	and integration by parts on the boundary, \textit{cf.~}Corollary~\ref{cor:ibp_boundary}.
	We first note that for any test function $\bv \in L^2(\Gamma)$ we have
	\begin{align}\label{eq:nabla2psi_tested}
    \begin{split}
		\int_\Gamma (\nabla^2 \psi \bn)  \cdot \bv \dS
		&=
			\int_\Gamma
				\nabla (\nabla \psi \cdot \bn) \cdot \bv 
				- (\nabla \bn^T \nabla \psi) \cdot \bv
			\dS\\
		&=
			\int_\Gamma
				\nabla_\Gamma (\nabla \psi \cdot \bn) \cdot \bv 
				+ (\nabla (\nabla \psi \cdot \bn) \cdot \bn) (\bv \cdot \bn)
				- (\nabla \bn^T \nabla \psi) \cdot \bv
			\dS\\
		&=
			\int_\Gamma
				\nabla_\Gamma (\xi - \tau \psi) \cdot \bv 
				+ (\nabla (\nabla \psi \cdot \bn) \cdot \bn) (\bv \cdot \bn)
				- (\nabla \bn^T \nabla \psi) \cdot \bv
			\dS.
    \end{split}
	\end{align}
	Plugging this back into the first term of \eqref{eq:kappa_bd_exp_1}
	and using $\nabla_\Gamma \cdot (\bs A^T \bs b) = (\nabla_\Gamma \cdot \bs A) \cdot \bs b + \bs A : \nabla_\Gamma \bs b$,
	\textit{cf.~}Lemma~\ref{lem:surf_prod_rule_matrix},
	and the integration by parts formular \eqref{eq:bd_ibp_matrix},
	we obtain,
	\begin{align}\label{eq:kappa_bd_exp_2}
    \begin{split}
		\int_\Gamma 
	 		&(\nabla^2 \psi \bn) \cdot \nabla_\Gamma \cdot ((\nabla^2 \psi \bd) \otimes \bd) 
	 	\dS\\
	 	&=
	 		\int_\Gamma
				\nabla_\Gamma (\xi - \tau \psi) \cdot \nabla_\Gamma \cdot ((\nabla^2 \psi \bd) \otimes \bd) 
				+ (\nabla (\nabla \psi \cdot \bn) \cdot \bn) \nabla_\Gamma \cdot ((\nabla^2 \psi \bd) \otimes \bd) \cdot \bn
            \dS\\
            &\hspace{1em}- \int_\Gamma
				((\nabla \bn)^T \nabla \psi) \cdot \nabla_\Gamma \cdot ((\nabla^2 \psi \bd) \otimes \bd) 
			\dS\\
		&=
			\int_\Gamma
				- \nabla^2_\Gamma (\xi - \tau \psi) : ((\nabla^2 \psi \bd) \otimes \bd) 
				+ \left( ((\nabla^2 \psi \bd) \otimes \bd)^T \nabla_\Gamma (\xi - \tau \psi) \right) \cdot \bn \; \nabla_\Gamma \cdot \bn 
			\dS\\
			&\hspace{1em}+ \int_\Gamma
				(\nabla (\nabla \psi \cdot \bn) \cdot \bn) \nabla_\Gamma \cdot ( (\bd \otimes (\nabla^2 \psi \bd)) \bn)
				- (\nabla (\nabla \psi \cdot \bn) \cdot \bn) ((\nabla^2 \psi \bd) \otimes \bd) : \nabla_\Gamma \bn
			\dS\\
			&\hspace{1em}+ \int_\Gamma
				\nabla_\Gamma ((\nabla \bn)^T \nabla \psi) : ((\nabla^2 \psi \bd) \otimes \bd) 
				-  \left( ((\nabla^2 \psi \bd) \otimes \bd)^T \nabla \bn^T \nabla \psi \right) \cdot \bn \; \nabla_\Gamma \cdot \bn
			\dS,
    \end{split}
	\end{align}
	where the second term on the right-hand side vanishes due to $\bd \cdot \bn = 0$ on $\Gamma$.
	Using
	\begin{align*}
		(\bd \otimes (\nabla^2 \psi \bd)) \bn
		&=
			((\nabla^2 \psi \bd) \cdot \bn) \bd
		=
			((\nabla^2 \psi \bn) \cdot \bd) \bd
		=
			(\nabla (\nabla \psi \cdot \bn) \cdot \bd) \bd
			- ((\nabla \bn^T \nabla \psi) \cdot \bd) \bd\\
		&=
			(\nabla_\Gamma (\nabla \psi \cdot \bn) \cdot \bd) \bd
			+ (\nabla (\nabla \psi \cdot \bn) \cdot \bn) (\bn \cdot \bd) \bd
			- ((\nabla \bn^T \nabla \psi) \cdot \bd) \bd\\
		&=
			(\nabla_\Gamma (\xi - \tau \psi) \cdot \bd) \bd
			- ((\nabla \bn^T \nabla \psi) \cdot \bd) \bd
	\end{align*}
	we can reduce the last term in \eqref{eq:kappa_bd_exp_2} including third order derivatives of $\psi$
	and we find
	\begin{align*}
	\int_\Gamma 
	 	(\nabla^2 \psi \bn) &\cdot \nabla_\Gamma \cdot ((\nabla^2 \psi \bd) \otimes \bd) 
	 \dS
	 =
	 	-\int_\Gamma
			\nabla^2_\Gamma (\xi - \tau \psi) : ((\nabla^2 \psi \bd) \otimes \bd)
		\dS\\
		&+ \int_\Gamma
			(\nabla (\nabla \psi \cdot \bn) \cdot \bn)  \nabla_\Gamma \cdot \left( 
				(\nabla_\Gamma (\xi - \tau \psi) \cdot \bd) \bd
				- (\nabla \bn^T \nabla \psi \cdot \bd) \bd
			\right)
		\dS\\
		&- \int_\Gamma
			(\nabla (\nabla \psi \cdot \bn) \cdot \bn) ((\nabla^2 \psi \bd) \otimes \bd) : \nabla_\Gamma \bn
			+ \nabla_\Gamma (\nabla \bn^T \nabla \psi) : ((\nabla^2 \psi \bd) \otimes \bd) 
		\dS\\
		&- \int_\Gamma
			\left( ((\nabla^2 \psi \bd) \otimes \bd)^T \nabla \bn^T  \nabla \psi \right) \cdot \bn \; \nabla_\Gamma \cdot \bn
		\dS.
	\end{align*}
	Next, we turn to the boundary integrals in \eqref{eq:double_ibp_d} and proceed as above.
	For the first boundary integral on the right-hand side of \eqref{eq:double_ibp_d}
	we find as above that the third order term vanishes
	\begin{align*}
		\nabla ((\nabla^2 \psi \bd \cdot \bd) \bd)^T \bn
		&=
			\nabla ((\nabla^2 \psi \bd \cdot \bd) (\bd \cdot \bn)) - (\nabla^2 \psi \bd \cdot \bd) \nabla \bn^T \bd\\
		&=
			(\bd \cdot \bn ) \nabla (\nabla^2 \psi \bd \cdot \bd) + (\nabla^2 \psi \bd \cdot \bd) \nabla (\bd \cdot \bn)
			- (\nabla^2 \psi \bd \cdot \bd) \nabla \bn^T \bd\\
		&=
			(\nabla^2 \psi \bd \cdot \bd) \nabla (\bd \cdot \bn)
			- (\nabla^2 \psi \bd \cdot \bd) \nabla \bn^T \bd.
	\end{align*}
	Next, we turn to the second boundary integral on the right-hand side of \eqref{eq:double_ibp_d}.
    Using the characterization of the surface divergence, \textit{cf.~}Theorem~\ref{thm:surf_div_form}, we find
	\begin{multline*}
		\varepsilon \int_\Gamma
			(\nabla^2 \psi \bd \cdot \bn) \nabla \cdot ((\nabla^2 \psi \bd \cdot \bd) \bd)
		\dS\\
		= 
			\varepsilon \int_\Gamma
				(\nabla^2 \psi \bd \cdot \bn) \nabla_\Gamma \cdot ((\nabla^2 \psi \bd \cdot \bd) \bd)
				+ (\nabla^2 \psi \bd \cdot \bn) \nabla ((\nabla^2 \psi \bd \cdot \bd) \bd) \bn \cdot \bn
			\dS.
	\end{multline*}
	The third order term in the second term on the right again vanishes as above.
	For the first term we use \eqref{eq:nabla2psi_tested} and integrate by parts
	\begin{align*}
		\varepsilon \int_\Gamma
			(\nabla^2 \psi &\bd \cdot \bn) \nabla_\Gamma \cdot ((\nabla^2 \psi \bd \cdot \bd) \bd)
		\dS\\
		&=
			\varepsilon \int_\Gamma
				\nabla_\Gamma (\xi - \tau \psi) \cdot \bd (\nabla_\Gamma \cdot ((\nabla^2 \psi \bd \cdot \bd) \bd))
				- \nabla \bn^T \nabla \psi \cdot \bd \nabla_\Gamma \cdot ((\nabla^2 \psi \bd \cdot \bd) \bd)
			\dS\\
		&=
			\varepsilon \int_\Gamma
				\nabla_\Gamma \left( \nabla \bn^T \nabla \psi \cdot \bd \right) \cdot ((\nabla^2 \psi \bd \cdot \bd) \bd)
				- \nabla_\Gamma \left( \nabla_\Gamma (\xi - \tau \psi) \cdot \bd \right) \cdot ((\nabla^2 \psi \bd \cdot \bd) \bd))
			\dS.
	\end{align*}
	Putting everything together we obtain \eqref{eq:reg_en_terms}.
\end{proof}

\begin{lem}\label{lem:lot_kappa_est}
	Under the assumptions from Lemma~\ref{lem:reg_en_terms} and $\kappa >0$ small enough, 
	that is for $\kappa$ such that $C \kappa \left( 1 + \norm{\bd}^2_{W^{2,\infty}(\Omega)} \right) \leq 1/32$,
	we have
	\[
		\kappa \mathrm{l.o.t.}_\kappa 
		\leq 
			C 
			+ \frac{\kappa}{2} \norm{\nabla (\Delta \psi)}^2_{L^2(\Omega)} 
			+ \frac{1}{4} \norm{\nabla^2 \psi}^2_{L^2(\Omega)}
	\]
	for some constant dependent on $\norm{\psi}_{W^{1,2}(\Omega)}$ and $\norm{\bd}_{W^{1,\infty}(\Omega)}$ and the outer normal field $\bn$.
\end{lem}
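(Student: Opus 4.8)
The plan is to estimate, one at a time, the finitely many bulk and boundary summands listed explicitly in \eqref{eq:def_kappa_lot}. The guiding structural fact — a consequence of the way these terms were produced by the two integrations by parts in the proof of Lemma~\ref{lem:reg_en_terms} — is that none of them is quadratic in the top-order quantity $\nabla^3\psi$: in every bulk summand $\nabla^3\psi$ occurs at most linearly, and — after, in the single remaining genuinely full-$\nabla^3\psi$ summand (the one in which $\nabla^3\psi$ is fully contracted against $(\nabla^2\psi\,\bd)\otimes\nabla\bd$), performing one further integration by parts — it occurs only through $\nabla(\Delta\psi)$, through $\nabla^3\psi\cdot\bd$, or through $\nabla\cdot(\nabla^2\psi\,\bd)=(\nabla\Delta\psi)\cdot\bd+\nabla\bd:\nabla^2\psi$ and $\nabla\cdot((\nabla^2\psi\bd\cdot\bd)\bd)=((\nabla^3\psi\cdot\bd)\bd)\cdot\bd+(\text{order }2)$; each of these exact combinations already sits on the left-hand side of \eqref{eq:reg_en_ineq_2_collection} with a positive multiple of $\kappa$. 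The remaining factor in each bulk summand depends only on $\nabla\psi$, $\nabla^2\psi$ and derivatives of $\bd$ up to order two, hence is pointwise bounded by $C(1+\norm{\bd}_{W^{2,\infty}(\Omega)})(|\nabla^2\psi|+|\nabla\psi|)$. Every boundary summand involves $\nabla^2\psi$ only through its trace and, after inserting the Robin condition $\nabla\psi\cdot\bn+\tau\psi=\xi$ (legitimate because $\bd\cdot\bn=0$ on $\Gamma$ gives $\varMa\nabla\psi\cdot\bn=\nabla\psi\cdot\bn$) and the surface-calculus identities of Section~\ref{sec:bd_func}, it becomes a sum of products of at most one trace of $\nabla^2\psi$ with first-order traces ($\nabla\psi|_\Gamma$, $\xi$, $\tau\psi$) times smooth functions of $\bn$ and of $\bd\in W^{1,\infty}(\Omega)$.

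With this in hand I would proceed as follows. First, I record a $\kappa$- and $\bd$-independent elliptic bound for $\nabla^3\psi$: since $\varMa\nabla\psi\cdot\bn=\nabla\psi\cdot\bn$ on $\Gamma$, the potential $\psi$ solves the \emph{isotropic} Robin problem $-\Delta\psi=f$ in $\Omega$, $\nabla\psi\cdot\bn+\tau\psi=\xi$ on $\Gamma$ with $f:=-\Delta\psi$, so classical $W^{3,2}$-regularity for the Robin Laplacian, \textit{cf.}~\cite[Thm.~3.1.1]{lunardi} (see also \cite[Rem.~2.5.1.2]{grisvard}), gives $\norm{\psi}_{W^{3,2}(\Omega)}\le C(\norm{\nabla(\Delta\psi)}_{L^2(\Omega)}+\norm{\nabla^2\psi}_{L^2(\Omega)}+\norm{\psi}_{W^{1,2}(\Omega)}+\norm{\xi}_{W^{3,10/3}(\Gamma)})$ with $C=C(\Omega,\tau)$. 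Then, for each bulk summand I apply Young's inequality, placing the $\nabla^3\psi$-factor in the squared term and the remaining factor in the conjugate: the squared term is a multiple of $\kappa$ times $|\nabla(\Delta\psi)|^2$, $|\nabla^3\psi\cdot\bd|^2$ or $|(\nabla^3\psi\cdot\bd)\bd|^2$, which — in the time-integrated setting of the proof of Proposition~\ref{propo:reg_en_ineq_2}, where this lemma is applied — is either absorbed into the corresponding left-hand side term of \eqref{eq:reg_en_ineq_2_collection} or, for the part carrying $|\nabla(\Delta\psi)|^2$, kept and reported on the right-hand side as $\tfrac\kappa2\norm{\nabla(\Delta\psi)}^2_{L^2(\Omega)}$; the conjugate term is $\kappa C(1+\norm{\bd}^2_{W^{2,\infty}(\Omega)})/\sigma$ times $\norm{\nabla^2\psi}^2_{L^2(\Omega)}+\norm{\nabla\psi}^2_{L^2(\Omega)}$, the latter being bounded by the first energy inequality. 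For the boundary summands I first use the reductions above and then the trace estimate \eqref{eq:diBene_trace_est} with $p=q=2$, namely $\norm{\nabla^2\psi}_{L^2(\Gamma)}\le\eta\norm{\nabla^3\psi}_{L^2(\Omega)}+C_\eta\norm{\nabla^2\psi}_{L^2(\Omega)}$ and $\norm{\nabla\psi}_{L^2(\Gamma)}\le\eta\norm{\nabla^2\psi}_{L^2(\Omega)}+C_\eta\norm{\psi}_{W^{1,2}(\Omega)}$, followed once more by the elliptic bound for $\norm{\nabla^3\psi}_{L^2(\Omega)}$, so that (since each boundary summand carries the factor $\kappa$) its $\norm{\nabla(\Delta\psi)}^2$- and $\norm{\nabla^2\psi}^2$-contributions again come with a factor $\kappa(1+\norm{\bd}^2_{W^{2,\infty}(\Omega)})$ times powers of $\sigma,\eta$.

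Finally, I fix $\sigma$ and $\eta$ depending only on the finitely many geometric constants and on $\tau,\varepsilon,\lambda$, so that the total coefficient of $\norm{\nabla(\Delta\psi)}^2_{L^2(\Omega)}$ is $\tfrac\kappa2$ (and the pieces earmarked for absorption are a fraction of the dissipation in \eqref{eq:reg_en_ineq_2_collection}), and then I absorb all the $\norm{\nabla^2\psi}^2_{L^2(\Omega)}$-contributions into $\tfrac14\norm{\nabla^2\psi}^2_{L^2(\Omega)}$ by enlarging the generic constant $C$ in the hypothesis $C\kappa(1+\norm{\bd}^2_{W^{2,\infty}(\Omega)})\le 1/32$ to dominate $1/\sigma$, $1/\eta$ and the number of summands; what remains is bounded by a constant depending only on $\norm{\psi}_{W^{1,2}(\Omega)}$, $\norm{\bd}_{W^{1,\infty}(\Omega)}$, $\bn$ and the fixed data $\xi$ and constants $\tau,\varepsilon,\lambda$. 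The main obstacle is the bookkeeping for the boundary integrals: one must verify term by term that, after inserting the Robin condition and integrating by parts on $\Gamma$ via Corollary~\ref{cor:ibp_boundary}, the genuinely second-order normal–normal and tangential–tangential traces of $\psi$ either cancel against one another (as already exploited in the proof of Lemma~\ref{lem:reg_en_terms}) or are paired with a strictly lower-order factor, so that a single application of \eqref{eq:diBene_trace_est} closes the estimate; the other delicate point is keeping track of which of the three ``good'' higher-order quantities $\nabla(\Delta\psi)$, $\nabla^3\psi\cdot\bd$, $(\nabla^3\psi\cdot\bd)\bd$ each $\nabla^3\psi$-contraction reduces to, so that the powers of $\kappa$ match the left-hand side of \eqref{eq:reg_en_ineq_2_collection}. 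Beyond this the argument is a routine Young-and-absorb.
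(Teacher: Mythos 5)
Your argument is, in substance, the paper's: split $\mathrm{l.o.t.}_\kappa$ into bulk and boundary parts, use that every bulk summand contains at most one factor of $\nabla^3\psi$ so that Young's inequality applies, estimate the boundary summands (at worst quadratic in traces of $\nabla^2\psi$) with the trace inequality \eqref{eq:diBene_trace_est}, convert $\norm{\nabla^3\psi}_{L^2(\Omega)}$ into $\norm{\nabla(\Delta\psi)}_{L^2(\Omega)}$ plus lower order, and invoke the smallness hypothesis $C\kappa(1+\norm{\bd}^2_{W^{2,\infty}(\Omega)})\le 1/32$ to absorb the resulting $\norm{\nabla^2\psi}^2_{L^2(\Omega)}$ contributions. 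The one genuine difference is the source of the third-derivative bound: the paper proves it itself (Lemma~\ref{lem:full_third_vs_div}, via two integrations by parts, surface calculus, and the Robin condition), whereas you import it from classical $W^{3,2}$ regularity for the isotropic Robin problem — legitimate here precisely because $\bd\cdot\bn=0$ turns $\varMa\nabla\psi\cdot\bn+\tau\psi=\xi$ into $\nabla\psi\cdot\bn+\tau\psi=\xi$. Your route buys a shorter proof at the cost of a black-box constant; the paper's hand-made lemma yields the explicit factor $2$ and makes the independence of the constant from $\bd$ transparent. For the present lemma either suffices, since the constant in the smallness condition is generic.

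Two bookkeeping points need correcting, though neither invalidates the approach. First, the Young/trace parameters cannot be fixed independently of $\norm{\bd}_{W^{2,\infty}(\Omega)}$: several boundary summands carry prefactors of order $1+\norm{\bd}_{W^{2,\infty}(\Omega)}$, and since both the admissible right-hand side term and your estimate carry the same factor $\kappa$ in front of $\norm{\nabla(\Delta\psi)}^2_{L^2(\Omega)}$, the $\kappa$-smallness cannot repair a coefficient of the form $\kappa(1+\norm{\bd}_{W^{2,\infty}(\Omega)})\,\eta$ there; you must choose $\eta\sim(1+\norm{\bd}_{W^{2,\infty}(\Omega)})^{-1}$ (the paper takes $\delta=\tfrac1{32}(1+\norm{\bd}_{W^{2,\infty}(\Omega)})^{-1}$) and then let the $(1+1/\eta)$-weighted $\norm{\nabla^2\psi}^2_{L^2(\Omega)}$ terms be absorbed by the smallness of $\kappa(1+\norm{\bd}^2_{W^{2,\infty}(\Omega)})$. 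Second, the lemma's right-hand side admits only $\nabla(\Delta\psi)$ and $\nabla^2\psi$; your fallback of parking some $\nabla^3\psi$-contractions against the $|\nabla^3\psi\cdot\bd|^2$-terms on the left of \eqref{eq:reg_en_ineq_2_collection} would prove a variant of the statement rather than the statement itself — but it is also unnecessary, since those contractions are pointwise bounded by $|\nabla^3\psi|$ and hence pass through the same elliptic (or Lemma~\ref{lem:full_third_vs_div}) bound.
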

\begin{proof}
	We first turn to the volume terms of $\mathrm{l.o.t.}_\kappa$.
	Using H\"{o}lder's and Young's inequalities we find
	\begin{align*}
		\kappa &\left|\int_\Omega
	 		\mathrm{l.o.t.}_{\Omega \kappa}
		 \diff \X \right|
		 \leq 
		 	\kappa C \norm{\nabla^3 \psi}_{L^2(\Omega)}
		 	\left(
		 		1 + \norm{\nabla^2 \psi}_{L^2(\Omega)} + \norm{\bd}_{W^{2,\infty}(\Omega)}
		 	\right)
		 	+ \kappa C \norm{\nabla^2 \psi}^2_{L^2(\Omega)}\\
		\underset{\ref{lem:full_third_vs_div}}&{\leq}
			\kappa C \left( \norm{\nabla (\Delta \psi)}_{L^2(\Omega)} + \norm{\nabla^2 \psi}_{L^2(\Omega)} + 1 \right)
		 	\left(
		 		1 + \norm{\nabla^2 \psi}_{L^2(\Omega)} + \norm{\bd}_{W^{2,\infty}(\Omega)}
		 	\right)
		 	+ \kappa C \norm{\nabla^2 \psi}^2_{L^2(\Omega)}\\
		 &\leq
		 	\frac{\kappa}{4} \norm{\nabla (\Delta \psi)}^2_{L^2(\Omega)}
		 	+ \kappa C 
		 	\left(
		 		1 + \norm{\nabla^2 \psi}^2_{L^2(\Omega)} + \norm{\bd}^2_{W^{2,\infty}(\Omega)}
		 	\right)\\
		 &\leq
		 	\frac{\kappa}{4} \norm{\nabla (\Delta \psi)}^2_{L^2(\Omega)}
		 	+ \frac{1}{8} \norm{\nabla^2 \psi}^2_{L^2(\Omega)}
		 	+ C,
	\end{align*}
	where we used Lemma~\ref{lem:full_third_vs_div} and the smallness assumption on $\kappa$ for the last inequality.
	Now, we turn to the boundary integrals in $\mathrm{l.o.t.}_\kappa$, \textit{cf.~}\eqref{eq:def_kappa_lot}.
	For that we use that we can estimate the surface differential operators with the bulk one, 
	\textit{cf.~}Lemma~\ref{lem:est_surf_grad} below, to obtain
	\begin{align*}
		\kappa (\varepsilon &+ \lambda) \left| - \int_\Gamma
	 		\left[ (\nabla^2 \psi \bd) \otimes \nabla (\bd \cdot \bn) - ((\nabla^2 \psi \bd) \otimes \bd) \nabla \bn \right] : \nabla^2 \psi
	 	\dS \right|\\
		&\leq
			\kappa C \left( 1 + \norm{\bd}_{W^{2,\infty}(\Omega)} \right) \norm{\nabla^2 \psi}^2_{L^2(\Gamma)}
		\leq
			\kappa C \left( 1 + \norm{\bd}^2_{W^{2,\infty}(\Omega)} \right) \norm{\nabla^2 \psi}^2_{L^2(\Omega)}
			+ \frac{\kappa}{32} \norm{\nabla^3 \psi}^2_{L^2(\Omega)}\\
		&\leq
			\kappa C \left( 1 + \norm{\bd}^2_{W^{2,\infty}(\Omega)} \right) \norm{\nabla^2 \psi}^2_{L^2(\Omega)}
			+ \frac{\kappa}{16} \norm{\nabla (\Delta \psi)}^2_{L^2(\Omega)}
			+ C\\
		&\leq
			C
			+ \frac{1}{32} \norm{\nabla^2 \psi}^2_{L^2(\Omega)}
			+ \frac{\kappa}{16} \norm{\nabla (\Delta \psi)}^2_{L^2(\Omega)},
	\end{align*}
	where we used the trace estimate~\cite[Prop.~8.2]{diBenedetto} to find
	\[
		 \norm{\nabla^2 \psi}^2_{L^2(\Gamma)} 
		 \leq 
		 	\delta \norm{\nabla^3 \psi}^2_{L^2(\Omega)} + (1 + \delta^{-1}) \norm{\nabla^2 \psi}^2_{L^2(\Omega)}
	\]
	with $\delta = \frac{1}{32} \left( 1 + \norm{\bd}_{W^{2,\infty}(\Omega)} \right)^{-1}$,
	Lemma~\ref{lem:full_third_vs_div} to estimate
	\[
		\norm{\nabla^3 \psi }^2_{L^2(\Omega)} 
		\leq 2 \norm{\nabla (\Delta \psi)}^2_{L^2(\Omega)} + C \left( \norm{\nabla^2 \psi}^2_{L^2(\Omega)} + 1 \right)
	\]
	and the smallness assumption for $\kappa$.
	All other boundary terms in $\mathrm{l.o.t.}_\kappa$ can be estimated similarly
	and we obtain 
	\[
		\kappa \left| \int_\Gamma \mathrm{l.o.t}_{\Gamma \kappa} \dS \right|
		\leq 
			\frac{1}{8} \norm{\nabla^2 \psi}^2_{L^2(\Omega)}
			+ \frac{\kappa}{4} \norm{\nabla (\Delta \psi)}^2_{L^2(\Omega)}
			+ C.
	\]
\end{proof}

\begin{lem}\label{lem:all_lower_ord_terms}
	Under the assumptions of Proposition~\ref{propo:reg_en_ineq_2}, 
	we can estimate $\mathrm{l.o.t.}_\kappa$ from Lemma~\ref{lem:reg_en_terms}, 
	$\mathrm{l.o.t.}_\Omega$ given in \eqref{eq:omega_lot},
	and $\mathrm{l.o.t.}_{\Gamma}$ given in \eqref{eq:gamma_lot}
	by:
	\begin{multline}\label{eq:all_lot_est}
	\left| 2 \int_0^{\Tmax} \left(
		\int_\Omega \mathrm{l.o.t.}_\Omega \diff \X 
	 	+ \int_\Gamma \mathrm{l.o.t.}_\Gamma \dS 
	 	+ \kappa \mathrm{l.o.t.}_\kappa \right)
	 \diff t \right|
		\leq 
			C
			+ \frac{\tau}{2}  \norm{\nabla_\Gamma \psi}^2_{L^2(0,\Tmax;L^2(\Gamma))}\\
			+ \tau \frac{\varepsilon + \lambda}{2} \norm{\nabla_\Gamma \psi \cdot \bd}^2_{L^2(0,\Tmax;L^2(\Gamma))}
			+ \frac{7}{8}\norm{\nabla^2 \psi}^2_{L^2(0,\Tmax;L^2(\Omega))}
			+ \frac{\kappa}{2} \norm{\nabla (\Delta \psi)}^2_{L^2(0,\Tmax;L^2(\Omega))}.
	\end{multline}
\end{lem}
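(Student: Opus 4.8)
The plan is to estimate the three groups $\int_\Omega\mathrm{l.o.t.}_\Omega\diff\X$, $\int_\Gamma\mathrm{l.o.t.}_\Gamma\dS$ and $\kappa\,\mathrm{l.o.t.}_\kappa$ separately, producing in each case only a fixed finite constant together with arbitrarily small multiples of the four ``good'' quantities that carry a fixed positive prefactor on the left-hand side of \eqref{eq:reg_en_ineq_2_collection}, namely $\norm{\nabla_\Gamma\psi}^2_{L^2(0,\Tmax;L^2(\Gamma))}$, $\norm{\nabla_\Gamma\psi\cdot\bd}^2_{L^2(0,\Tmax;L^2(\Gamma))}$, $\norm{\nabla^2\psi}^2_{L^2(0,\Tmax;L^2(\Omega))}$ and $\norm{\nabla(\Delta\psi)}^2_{L^2(0,\Tmax;L^2(\Omega))}$. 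Since only finitely many Young parameters will enter and each may be taken as small as desired, their contributions can be kept within the budgets $\tfrac{\tau}{2}$, $\tau\tfrac{\varepsilon+\lambda}{2}$, $\tfrac78$ and $\tfrac{\kappa}{2}$ required in \eqref{eq:all_lot_est}. The fixed constants will come from the regularity of $\xi$ and $\bd$ imposed in Assumption~\ref{ass:reg}, from the smoothness of $\Gamma$ (so that $\bn\in C^\infty(\Gamma)$ and in particular $\nabla_\Gamma\cdot\bn\in L^\infty(\Gamma)$), and from the uniform bounds $\norm{\psi}_{L^\infty(0,\Tmax;W^{1,2}(\Omega))}\le C$ and, by the trace theorem, $\norm{\psi}_{L^\infty(0,\Tmax;L^2(\Gamma))}\le C$ furnished by the first energy inequality \eqref{eq:reg_en_ineq_1}; integration in time of an $L^\infty(0,\Tmax)$-bounded quantity costs only a factor $\Tmax<\infty$.

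First I would dispose of $\kappa\,\mathrm{l.o.t.}_\kappa$ by simply invoking Lemma~\ref{lem:lot_kappa_est}: under the smallness assumption \eqref{eq:d_W2inf_bound} it gives, pointwise in time, $\kappa\,\mathrm{l.o.t.}_\kappa\le C+\tfrac{\kappa}{2}\norm{\nabla(\Delta\psi)}^2_{L^2(\Omega)}+\tfrac14\norm{\nabla^2\psi}^2_{L^2(\Omega)}$, so integrating over $(0,\Tmax)$ yields exactly the admissible $\tfrac{\kappa}{2}$-share of $\norm{\nabla(\Delta\psi)}^2_{L^2(0,\Tmax;L^2(\Omega))}$ and a $\tfrac14$-share of the Hessian budget. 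Because the other two groups contain, by their explicit form \eqref{eq:omega_lot} and \eqref{eq:gamma_lot}, no third-order derivatives of $\psi$, no further $\norm{\nabla(\Delta\psi)}^2$ terms are generated and the $\tfrac{\kappa}{2}$-budget is exactly met, leaving $\tfrac58$ of the Hessian budget for the rest. For $\int_\Omega\mathrm{l.o.t.}_\Omega$, by \eqref{eq:omega_lot} the integrand is bilinear in $\nabla^2\psi$ and $\nabla\psi$ with coefficients bounded by $\norm{\bd}_{W^{1,\infty}(\Omega)}$, so H\"older and Young give $\bigl|\int_\Omega\mathrm{l.o.t.}_\Omega\diff\X\bigr|\le\delta\norm{\nabla^2\psi}^2_{L^2(\Omega)}+C(\delta,\norm{\bd}_{W^{1,\infty}(\Omega)})\norm{\nabla\psi}^2_{L^2(\Omega)}$; after integration the first term goes into the Hessian budget with $\delta$ small and the second is $\le C\,\Tmax\,\norm{\nabla\psi}^2_{L^\infty(0,\Tmax;L^2(\Omega))}\le C$ by \eqref{eq:reg_en_ineq_1}.

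The main step is $\int_\Gamma\mathrm{l.o.t.}_\Gamma$, for which I would sort the summands of \eqref{eq:gamma_lot} into three types. The two summands paired with $\nabla_\Gamma\xi$, namely $-2\nabla_\Gamma\xi\cdot\nabla_\Gamma\psi$ and $-(\varepsilon+\lambda)(\nabla_\Gamma\xi\cdot\bd)(\nabla_\Gamma\psi\cdot\bd)$, are handled by Young's inequality with weights $\tau/4$, resp.\ $\tau(\varepsilon+\lambda)/4$, absorbing the $\nabla_\Gamma\psi$-factors into the $\tfrac{\tau}{2}$- and $\tau\tfrac{\varepsilon+\lambda}{2}$-budgets, while the $\nabla_\Gamma\xi$-factors contribute $\le C\norm{\xi}^2_{L^\infty(0,\Tmax;W^{1,2}(\Gamma))}\le C$ by Assumption~\ref{ass:reg}. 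The summands quadratic in $\nabla\psi|_\Gamma$ with a fixed geometric prefactor, namely $\nabla\bn^T\nabla\psi\cdot\nabla\psi$ and $(\varepsilon+\lambda)(\bd\cdot\nabla\psi)\nabla\bn^T\bd\cdot\nabla\psi$, cannot be absorbed into the tangential good terms since $\norm{\nabla\bn}_{L^\infty(\Gamma)}$ is not small; instead I would bound $|\nabla\psi|^2$ on $\Gamma$ and apply the trace estimate \eqref{eq:diBene_trace_est} with $u=\nabla\psi$, $p=q=2$ and small $\delta$, turning each into $\le\delta'\norm{\nabla^2\psi}^2_{L^2(\Omega)}+C_{\delta'}\norm{\nabla\psi}^2_{L^2(\Omega)}$, hence into the Hessian budget plus a constant after integration and \eqref{eq:reg_en_ineq_1}. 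Finally, the summands involving $(\xi-\tau\psi)$: the quadratic one $(\xi-\tau\psi)^2\nabla_\Gamma\cdot\bn$ is $\le C(\norm{\xi}^2_{L^2(\Gamma)}+\norm{\psi}^2_{L^2(\Gamma)})$, integrable in time; the two summands linear in $(\xi-\tau\psi)$ and in $\nabla\psi|_\Gamma$ are first Young-split into a $|\nabla\psi|^2$-on-$\Gamma$ piece, routed through \eqref{eq:diBene_trace_est} as before, and an $(\xi-\tau\psi)^2$ piece handled like the quadratic one. Choosing all the finitely many $\delta$'s small enough to respect the budgets (the Hessian budget $\tfrac78$ comfortably absorbs $\tfrac14$ from Lemma~\ref{lem:lot_kappa_est} plus these $\delta$-contributions) then yields \eqref{eq:all_lot_est}.

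The hard part is not any single inequality but the bookkeeping together with one structural observation that must be checked: precisely the boundary summands carrying non-small prefactors $\norm{\nabla\bn}_{L^\infty(\Gamma)}$ and $\norm{\bd}_{W^{1,\infty}(\Omega)}$ are, possibly after a Young step, quadratic in $|\nabla\psi|$ on $\Gamma$ and can therefore be redirected into the full bulk Hessian $\norm{\nabla^2\psi}^2_{L^2(\Omega)}$ via the trace inequality \eqref{eq:diBene_trace_est}, whose leading constant is arbitrarily small, whereas only the terms genuinely paired with the datum $\nabla_\Gamma\xi$ are charged to the weak boundary dissipation $\tau\int_\Gamma|\nabla_\Gamma\psi|^2$; verifying that the resulting $\delta$-budgets across the three groups are mutually consistent is the only delicate point.
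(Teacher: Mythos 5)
Your proposal is correct and follows essentially the same route as the paper: bound $\kappa\,\mathrm{l.o.t.}_\kappa$ by Lemma~\ref{lem:lot_kappa_est}, treat $\mathrm{l.o.t.}_\Omega$ by H\"older--Young against the Hessian plus the energy bound on $\nabla\psi$, and estimate $\mathrm{l.o.t.}_\Gamma$ term by term, charging the $\nabla_\Gamma\xi$-terms to the tangential boundary budgets and routing the boundary terms quadratic or linear in $\nabla\psi|_\Gamma$ into $\norm{\nabla^2\psi}^2_{L^2(\Omega)}$ via trace estimates. The only cosmetic difference is that the paper handles two of the quadratic boundary terms with a Sobolev--Slobodeckij trace-plus-interpolation bound $\norm{\nabla\psi}^2_{L^2(\Gamma)}\leq C\norm{\psi}^{1/2}_{W^{1,2}(\Omega)}\norm{\psi}^{3/2}_{W^{2,2}(\Omega)}$, whereas you apply the $\delta$-trace inequality \eqref{eq:diBene_trace_est} uniformly, which is equally valid.
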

\begin{proof}
	We recall
	\[
		\mathrm{l.o.t.}_\Omega 
		= (\varepsilon + \lambda) \nabla^2 \psi : \left( (\bd \cdot \nabla \psi) \nabla \bd + \bd \otimes  (\nabla \bd^T \nabla \psi) \right)
	\]
	from \eqref{eq:omega_lot} and find
	\begin{multline*}
		\left| 2 \int_0^{\Tmax} \int_\Omega \mathrm{l.o.t.}_\Omega  \diff \X \diff t \right| 
		\leq 
			2 (\varepsilon + \lambda) \int_0^{\Tmax} \int_\Omega
				 \left| \nabla^2 \psi : \left( (\bd \cdot \nabla \psi) \nabla \bd + \bd \otimes  (\nabla \bd^T \nabla \psi) \right) \right|
			\diff \X \diff t\\
		\leq
			\frac{1}{8} \norm{\nabla^2 \psi}^2_{L^2(0,\Tmax;L^2(\Omega)}
			+ C \norm{\bd}^4_{W^{1,\infty}(\Omega)} \norm{\nabla \psi}^2_{L^2(0,\Tmax;L^2(\Omega))}
		\leq
			\frac{1}{8} \norm{\nabla^2 \psi}^2_{L^2(0,\Tmax; L^2(\Omega))} + C.
	\end{multline*}
	Next, we recall the definition of $\mathrm{l.o.t.}_\Gamma$, \textit{cf.~}\eqref{eq:gamma_lot},
	\begin{align}\label{eq:gamma_lot_est}
    \begin{split}
		&\mathrm{l.o.t.}_\Gamma = 
			- 2 \nabla_\Gamma \xi \cdot \nabla_\Gamma \psi
			+ \nabla \bn^T \nabla \psi \cdot \nabla \psi
			+ (\xi - \tau \psi)^2 \nabla_\Gamma \cdot \bn
			- \nabla \bn^T \nabla \psi \cdot \bn (\xi - \tau \psi)\\
			&\hspace{1em}+ (\varepsilon + \lambda)\left( 
				(\bd \cdot \nabla \psi) \nabla \bn^T\bd  \cdot \nabla \psi
				- (\nabla_\Gamma \xi \cdot \bd) (\nabla_\Gamma \psi \cdot \bd)
				- (\xi - \tau \psi) (\bd \cdot \nabla \psi) \nabla \bn^T \bd \cdot \bn
			\right).
    \end{split}
	\end{align}
	We estimate each term individually.
	First, using H\"{o}lder's inequality, we find
	\begin{align*}
		\left| 2 \int_0^{\Tmax} \int_\Gamma (- 2 \nabla_\Gamma \xi \cdot \nabla_\Gamma \psi) \dS \diff t \right| 
		\leq
			C \norm{\nabla_\Gamma \xi}^2_{L^2(0,\Tmax;L^2(\Gamma))}
			+ \frac{\tau}{2}\norm{\nabla_\Gamma \psi}^2_{L^2(0,\Tmax;L^2(\Gamma))}.
	\end{align*}
	For the second term we use a trace estimate
	and a Sobolev--Slobodeckij interpolation \cite[Thm.~II.3-3]{oru_1998}, to estimate the gradient of $\psi$ on the boundary by
	\begin{equation*}
		\norm{\nabla \psi}^2_{L^2(\Gamma)} 
		\leq 
			\norm{\nabla \psi}^2_{W^{1/4, 2}(\Gamma)}
		\leq	
			C \norm{\nabla \psi}^2_{W^{3/4,2}(\Omega)}
		\leq
			C \norm{\psi}^2_{W^{7/4, 2}(\Omega)}
		\leq
		C \norm{\psi}^{1/2}_{W^{1,2}(\Omega)} \norm{\psi}^{3/2}_{W^{2,2}(\Omega)}.
	\end{equation*}
	With this estimate the second term in \eqref{eq:gamma_lot_est} can be estimated as follows,
	\begin{align}\label{eq:lot_est_interpol}
    \begin{split}
		\hspace*{-1em}\bigg| 2 \int_0^{\Tmax} \int_\Gamma \nabla \bn^T \nabla \psi \cdot \nabla \psi \dS \diff t \bigg| 
		&\leq 
			C \norm{\nabla \psi}^2_{L^2(0,\Tmax;L^2(\Gamma))}
		\leq 
			C 
			\int_0^{\Tmax} \norm{\psi}^{1/2}_{W^{1,2}(\Omega)} \norm{\psi}^{3/2}_{W^{2,2}(\Omega)} \diff t\\
		\underset{\mathrm{Young}}&{\leq}
			C
			\norm{\psi}^{2}_{L^2(0,\Tmax;W^{1,2}(\Omega))}
			+ \frac{1}{8} \norm{\nabla^2 \psi}^2_{L^2(0,\Tmax; L^2(\Omega))}.
    \end{split}
	\end{align} 
	The last term on the right-hand side can be absorbed into the $|\nabla^2 \psi|^2$ term of \eqref{eq:reg_en_ineq_2_collection} 
	with good sign and the first term is bounded by the first energy inequality.
	For the third term in \eqref{eq:gamma_lot_est} we find
	\begin{align*}
		\left| 2 \int_0^{\Tmax} \int_\Gamma (\xi - \tau \psi)^2 \nabla_\Gamma \cdot \bn \dS \diff t \right| 
		&\leq 
			C \norm{\xi - \tau \psi}^2_{L^2(0, \Tmax; L^2(\Gamma))}\\
		&\leq
			C
			\left( 
				\norm{\xi}^2_{L^2(0, \Tmax; L^2(\Gamma))}
				+ C \norm{\psi}_{L^2(0, \Tmax; W^{1,2}(\Omega))}
			\right)
		\leq 
			C.
	\end{align*}
	The fourth term in \eqref{eq:gamma_lot_est} can be handled, using the trace estimate \cite[Prop.~8.2]{diBenedetto},
	which gives us that for all $p \in [1, d)$ there exists $C>0$ such that for all $\delta > 0$ and all $u \in W^{1,p}(\Omega)$ 
	\begin{align}
		\norm{u}_{L^q(\Gamma)} \leq \delta \norm{\nabla u}_{L^p(\Omega)} + C \left( 1 + \frac{1}{\delta} \right) \norm{u}_{L^p(\Omega)}
	\end{align}
	holds, for $q \in \left[1,\frac{(d - 1)p}{(d - p)} \right]$.
	With $q=2=p$, which is a valid choice for $d = 3$, we find
	\begin{align*}
		\bigg| 2 \int_0^{\Tmax} &\int_\Gamma (- (\nabla \bn)^T \nabla \psi \cdot \bn (\xi - \tau \psi)) \dS \diff t \bigg| \\
		&\leq
			2 \norm{\nabla \bn}_{L^\infty(\Gamma)} \norm{\bn}_{L^\infty(\Gamma)}
			\int_0^{\Tmax}
				\norm{\nabla \psi}_{L^2(\Gamma)} \norm{\xi}_{L^2(\Gamma)}
				+ \tau \norm{\nabla \psi}_{L^2(\Gamma)} \norm{\psi}_{L^2(\Gamma)}
			\diff t\\
		&\leq
			\int_0^{\Tmax}
				\frac{1}{8} \norm{\nabla^2 \psi}^2_{L^2(\Omega)}
				+ C \left(
					\norm{\nabla \psi}^2_{W^{1,2}(\Omega)} 
					+ \norm{\xi}^2_{L^2(\Gamma)}
				\right)
			\diff t
		\leq
			C + \frac{1}{8} \norm{\nabla^2 \psi}^2_{L^2(0,\Tmax; L^2(\Omega))}.
	\end{align*}
	For the fifth term in \eqref{eq:gamma_lot_est}, which is the first on the second line,
	we proceed analogously to \eqref{eq:lot_est_interpol} and find
	\begin{align*}
		\bigg| 2 (\varepsilon + \lambda) \int_0^{\Tmax} \int_\Gamma (\bd &\cdot \nabla \psi) \nabla \bn^T\bd  \cdot \nabla \psi\dS \diff t \bigg| \\
		&\leq 
		 	C
		 	\norm{\bd}^2_{W^{1,\infty}(\Omega)} 
		 	\norm{\nabla \psi}^2_{L^2(0,\Tmax;L^2(\Gamma))}\\
		 &\leq
		 	C
		 	\norm{\bd}^8_{W^{1,\infty}(\Omega)} 
		 	\norm{\psi}^2_{L^2(0,\Tmax;W^{1,2}(\Omega))}
		 	+ \frac{1}{8}\norm{\nabla^2 \psi}^2_{L^2(0,\Tmax;L^2(\Omega))}.
	\end{align*}
	Next we turn to the second term on the second line of \eqref{eq:gamma_lot_est} and 
	find
	\begin{multline*}
		\left| 2 (\varepsilon + \lambda) \int_0^{\Tmax} \int_\Gamma 
			(- (\nabla_\Gamma \xi \cdot \bd) (\nabla_\Gamma \psi \cdot \bd)) 
		\dS \diff t \right| \\
		\leq 
			C \norm{\xi}^2_{L^2(0,\Tmax; W^{1,2}(\Gamma))} \norm{\bd}^2_{W^{1,\infty}(\Omega)}
			+ \tau \frac{\varepsilon + \lambda}{2} \norm{\nabla_\Gamma \psi \cdot \bd}^2_{L^2(0,\Tmax;L^2(\Gamma))}.
	\end{multline*}
	Finally, we turn to the last term on the second line of \eqref{eq:gamma_lot_est}
	and find
	\begin{align*}
		\bigg| 2 (\varepsilon + \lambda) &\int_0^{\Tmax} \int_\Gamma 
			(- (\xi - \tau \psi) (\bd \cdot \nabla \psi) \nabla \bn^T \bd \cdot \bn)
		\dS \diff t \bigg| \\
		&\leq
			C \norm{\bd}^2_{W^{1, \infty}(\Omega)}
			\int_0^{\Tmax} \norm{\nabla \psi}_{L^2(\Gamma)} \left(\norm{\xi}_{L^2(\Gamma)} + \tau \norm{\psi}_{L^2(\Gamma)} \right) \diff t\\
		&\leq 
			C \norm{\bd}^2_{W^{1, \infty}(\Omega)}
			\int_0^{\Tmax} 
				\norm{\psi}_{W^{2,2}(\Omega)} \left(\norm{\xi}_{L^2(\Gamma)} + \tau \norm{\psi}_{W^{1,2}(\Omega)} \right) 
			\diff t\\
		&\leq
			C \left( \norm{\bd}^4_{W^{1,\infty}(\Omega)} + 1 \right)
			\left(\norm{\xi}^2_{L^2(0,\Tmax;L^2(\Gamma))} + \norm{\psi}^2_{L^2(0,\Tmax;W^{1,2}(\Omega))} \right)\\
			&\hspace{1em}+ \frac{1}{8} \norm{\nabla^2 \psi}^2_{L^2(0,\Tmax;L^2(\Omega))}.
	\end{align*}
	By Lemma~\ref{lem:lot_kappa_est} we have
	\[
		\kappa \mathrm{l.o.t.}_\kappa 
		\leq 
			C 
			+ \frac{\kappa}{2} \norm{\nabla (\Delta \psi)}^2_{L^2(0,\Tmax;L^2(\Omega))} 
			+ \frac{1}{4} \norm{\nabla^2 \psi}^2_{L^2(0,\Tmax;L^2(\Omega))}.
	\]
	Putting this inequality and the estimates for $\mathrm{l.o.t.}_\Omega$ and $\mathrm{l.o.t.}_\Gamma$
	together we obtain \eqref{eq:all_lot_est}.
\end{proof}

\begin{lem}\label{lem:full_third_vs_div}
	For $\psi \in W^{4,10/3}(\Omega), \xi \in W^{3,10/3}(\Gamma)$ and $\bd \in W^{4,\infty}(\Omega)$ 
	fulfilling $\varMa \nabla \psi \cdot \bn + \tau \psi = \xi$ and $\bd \cdot \bn = 0$ on $\Gamma$, it holds
	\[
		\norm{\nabla^3 \psi}_{L^2(\Omega)} 
		\leq 
			2 \norm{\nabla (\Delta \psi)}_{L^2(\Omega)}
			+ C \left( \norm{\nabla^2 \psi}_{L^2(\Omega)} + 1  \right)
	\]
    for some constant dependent on $\norm{\psi}_{W^{1,2}(\Omega)}$.
\end{lem}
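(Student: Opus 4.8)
The statement is an elliptic‑regularity‑type bound: the full third derivative of $\psi$ is controlled by the ``Laplacian part'' $\nabla\Delta\psi$ plus lower order, and the role of the Robin boundary condition together with $\bd\cdot\bn=0$ is to make the boundary corrections manageable. I would prove it by two integrations by parts in the bulk and then a careful reduction of the resulting boundary integral using the surface calculus from Section~\ref{subsec:trace_thm}. First, applying the standard Hessian--Laplacian identity to $u:=\partial_k\psi\in W^{3,10/3}(\Omega)$ (integrate $\int_\Omega\partial_{ij}u\,\partial_{ij}u$ once in $\partial_i$, then in $\partial_j$, which is legitimate because all third partials of $\psi$ commute since $\psi\in W^{4,10/3}(\Omega)\hookrightarrow C^3(\overline\Omega)$) gives $\int_\Omega|\nabla^2 u|^2=\int_\Omega(\Delta u)^2+\int_\Gamma\big((\nabla u)^T\nabla^2 u\,\bn-(\Delta u)(\nabla u\cdot\bn)\big)\dS$. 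Summing over $k$ and using $\sum_k(\nabla\partial_k\psi)^T\nabla^2\partial_k\psi\,\bn=\tfrac12\,\bn\cdot\nabla|\nabla^2\psi|^2$ and $\sum_k(\Delta\partial_k\psi)(\nabla\partial_k\psi\cdot\bn)=\nabla\Delta\psi\cdot(\nabla^2\psi\,\bn)$ yields the exact identity
\[
	\int_\Omega|\nabla^3\psi|^2\diff\X=\int_\Omega|\nabla\Delta\psi|^2\diff\X+\int_\Gamma\Big(\tfrac12\,\bn\cdot\nabla|\nabla^2\psi|^2-\nabla\Delta\psi\cdot(\nabla^2\psi\,\bn)\Big)\dS .
\]
The coefficient $1$ in front of $\int_\Omega|\nabla\Delta\psi|^2$ is precisely what produces the factor $2$ in the final estimate.

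\textbf{The boundary integral.} This is the heart of the matter. A priori both terms contain third derivatives of $\psi$, so they must be reduced to second‑order (and lower) boundary quantities. The key input is the boundary relation $\nabla\psi\cdot\bn=\xi-\tau\psi$ on $\Gamma$, valid because $\bd\cdot\bn=0$ forces $\varMa\nabla\psi\cdot\bn=\nabla\psi\cdot\bn$. Differentiating this relation in tangential directions (i.e.\ applying $\nabla_\Gamma$, using Theorem~\ref{thm:surf_grad_vs_tan_pro} to split every full gradient into its surface part and its normal part, and using $\nabla^2\psi\,\bn=\nabla(\nabla\psi\cdot E(\bn))-(\nabla E(\bn))^T\nabla\psi$ on $\Gamma$) expresses all \emph{mixed} normal--tangential second derivatives of $\psi$ on $\Gamma$ in terms of $\nabla\psi$ and the data $\xi$, while the purely normal second derivative is eliminated via $\partial_{\bn\bn}\psi=\Delta\psi-\Delta_\Gamma\psi$. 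One then checks that the worst term, the one quadratic in $\partial_{\bn\bn}\psi$ multiplied by $\partial_{\bn}^{3}\psi$, \emph{cancels} between the two summands of the boundary integral; the remaining third‑order contributions are all of the form ``(known first‑order boundary quantity)$\times$(tangential third derivative)'' and are brought down to ``(tangential second derivative)$\times$(tangential second derivative)'' by integration by parts on $\Gamma$ (Corollary~\ref{cor:ibp_boundary_main}), the terms carrying a normal factor of $\bd$ dropping out because $\bd\cdot\bn=0$ — exactly the bookkeeping carried out in (a lighter form of) the proof of Lemma~\ref{lem:reg_en_terms}. After this reduction the boundary integral is bounded by a sum of squared $L^2(\Gamma)$‑norms of $\nabla^2\psi$, $\nabla\psi$, $\Delta_\Gamma\psi$, $\nabla_\Gamma\xi$, $\Delta_\Gamma\xi$ and of curvature coefficients of $\Gamma$; the latter are finite by the assumed regularity $\xi\in W^{3,10/3}(\Gamma)$, $\bd\in W^{4,\infty}(\Omega)$ and smoothness of $\Omega$.

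\textbf{Absorption and conclusion.} For the $\nabla^2\psi$ boundary term I apply the trace estimate~\eqref{eq:diBene_trace_est} componentwise, $\norm{\nabla^2\psi}^2_{L^2(\Gamma)}\leq\delta\norm{\nabla^3\psi}^2_{L^2(\Omega)}+C(1+\delta^{-1})\norm{\nabla^2\psi}^2_{L^2(\Omega)}$ (note $\Delta_\Gamma\psi$ is also controlled by $\nabla^2\psi$ on $\Gamma$), and for the $\nabla\psi$ boundary term the trace/Sobolev--Slobodeckij interpolation $\norm{\nabla\psi}^2_{L^2(\Gamma)}\leq C\norm{\psi}^{1/2}_{W^{1,2}(\Omega)}\norm{\psi}^{3/2}_{W^{2,2}(\Omega)}$ used already in Lemma~\ref{lem:all_lower_ord_terms}, followed by Young's inequality, which again produces a small multiple of $\norm{\nabla^2\psi}^2_{L^2(\Omega)}$ together with a constant depending on $\norm{\psi}_{W^{1,2}(\Omega)}$. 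Choosing $\delta$ small enough that the total third‑derivative boundary contribution is at most $\tfrac34\norm{\nabla^3\psi}^2_{L^2(\Omega)}$, inserting into the identity of the first paragraph and rearranging gives $\tfrac14\norm{\nabla^3\psi}^2_{L^2(\Omega)}\leq\norm{\nabla\Delta\psi}^2_{L^2(\Omega)}+C(\norm{\nabla^2\psi}^2_{L^2(\Omega)}+1)$; taking square roots and using $\sqrt{a+b}\leq\sqrt a+\sqrt b$ yields the claimed inequality. The main obstacle is the second paragraph: confirming that, once the boundary condition is invoked, every genuinely third‑order boundary quantity either cancels or is integrated by parts on $\Gamma$ into something controllable. (As a fallback, the Agmon--Douglis--Nirenberg $H^3$‑estimate for the Robin Laplacian, $\norm{\psi}_{W^{3,2}(\Omega)}\leq C(\norm{\Delta\psi}_{W^{1,2}(\Omega)}+\norm{\xi}_{W^{3/2,2}(\Gamma)}+\norm{\psi}_{W^{1,2}(\Omega)})$, together with $\norm{\Delta\psi}_{L^2(\Omega)}\leq\sqrt3\,\norm{\nabla^2\psi}_{L^2(\Omega)}$, gives the estimate with a generic constant in front of $\norm{\nabla\Delta\psi}_{L^2(\Omega)}$, which is also sufficient for every place the lemma is used.)
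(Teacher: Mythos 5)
Your argument is essentially the paper's own proof: the component-wise double integration by parts yields exactly the identity the paper derives in tensor form, and the subsequent reduction of the boundary integral via the Robin condition (made available by $\bd\cdot\bn=0$), cancellation of the purely normal third-order term, integration by parts on $\Gamma$, and absorption through the trace estimate and the Sobolev--Slobodeckij interpolation is precisely the route taken there. The boundary bookkeeping you only sketch is carried out in the paper in \eqref{eq:full_third_bd_exp_1}--\eqref{eq:full_third_bd_exp_4}, and your Agmon--Douglis--Nirenberg fallback would also suffice (at the cost of replacing the explicit factor $2$ by a generic constant, which the later absorption arguments can accommodate), so the proposal is sound.
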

\begin{proof}
	Assuming that $\psi \in W^{5,10/3}(\Omega)$ the fourth order derivatives are continuous and we can interchange them 
	to obtain $\nabla \cdot (\nabla^3 \psi) = \nabla (\nabla \cdot \nabla^2 \psi)$.
	Using integration by parts two times, we find
	\begin{multline}\label{eq:full_third_exp}
		\norm{\nabla^3 \psi}^2_{L^2(\Omega)} = \int_\Omega \nabla^3 \psi \threedotsbin \nabla^3 \psi \diff \X
		=
			- \int_\Omega \nabla^2 \psi : \nabla \cdot \nabla^3 \psi \diff \X
			+ \int_\Gamma
				(\nabla^3 \psi \cdot \bn) : \nabla^2 \psi
			\dS\\
		=
			\int_\Omega (\nabla \cdot \nabla^2 \psi) \cdot (\nabla \cdot \nabla^2 \psi) \diff \X
			- \int_\Gamma
				(\nabla^2 \psi \bn) \cdot (\nabla \cdot \nabla^2 \psi)
			\dS
			+ \int_\Gamma
				(\nabla^3 \psi \cdot \bn) : \nabla^2 \psi
			\dS.
	\end{multline}
    By the density of smooth functions in $W^{4,10/30}(\Omega)$ this also holds for $\psi \in W^{4,10/30}(\Omega)$.
	The volume term is already the one we aimed for, since
	\[
		\nabla \cdot \nabla^2 \psi = \nabla \cdot (\nabla (\nabla \psi))^T
		= \nabla (\nabla \cdot \nabla \psi) = \nabla (\Delta \psi)
	\] 
	and we turn to the boundary integrals.
	The first boundary term on the right-hand side can be rewritten using the surface divergence of a matrix, 
	\textit{cf.~}Definition~\ref{def:surf_div_matrix} 
	and Corollary~\ref{cor:surf_div_form_matrix}:
	\begin{multline}\label{eq:full_third_bd_exp_1}
		- \int_\Gamma
			(\nabla^2 \psi \bn) \cdot (\nabla \cdot \nabla^2 \psi)
		\dS
		=
			- \int_\Gamma
				(\nabla^2 \psi \bn) \cdot (\nabla_\Gamma \cdot \nabla^2 \psi)
				+ (\nabla^2 \psi \bn) (\nabla^3 \psi \cdot \bn) \bn
			\dS\\
		=
			- \int_\Gamma
				(\nabla^2 \psi \bn) \cdot (\nabla_\Gamma \cdot \nabla^2 \psi)
				+ (\nabla^2 \psi \bn) \cdot (\nabla (\nabla^2 \psi \bn) \bn)
				-  (\nabla^2 \psi \bn) \cdot (\nabla^2 \psi \nabla \bn \bn)
			\dS.
	\end{multline}
	The last term is already of lower order, the second term will cancel with part of the other boundary term in \eqref{eq:full_third_exp}
	and the first one has to be estimated.
	For that we first note that for any test function $\bv \in L^2(\Gamma)$ we have
	\begin{align*}
		\int_\Gamma (\nabla^2 \psi \bn)  \cdot \bv \dS
		&=
			\int_\Gamma
				\nabla (\nabla \psi \cdot \bn) \cdot \bv 
				- (\nabla \bn^T \nabla \psi) \cdot \bv
			\dS\\
		&=
			\int_\Gamma
				\nabla_\Gamma (\nabla \psi \cdot \bn) \cdot \bv 
				+ (\nabla (\nabla \psi \cdot \bn) \cdot \bn) (\bv \cdot \bn)
				- (\nabla \bn^T \nabla \psi) \cdot \bv
			\dS\\
		&=
			\int_\Gamma
				\nabla_\Gamma (\xi - \tau \psi) \cdot \bv 
				+ (\nabla (\nabla \psi \cdot \bn) \cdot \bn) (\bv \cdot \bn)
				- (\nabla \bn^T \nabla \psi) \cdot \bv
			\dS.
	\end{align*}
	Plugging this back in and using $\nabla_\Gamma \cdot (\bs A^T \bs b) = (\nabla_\Gamma \cdot \bs A) \cdot \bs b + \bs A : \nabla_\Gamma \bs b$, 
	\textit{cf.~}Lemma~\ref{lem:surf_prod_rule_matrix}, we obtain,
	\begin{align}\label{eq:full_third_bd_exp_2}
    \begin{split}
		-{}& \int_\Gamma
			(\nabla^2 \psi \bn) \cdot (\nabla_\Gamma \cdot \nabla^2 \psi)
		\dS\\
		={}&
			\int_\Gamma
				\nabla_\Gamma (\xi - \tau \psi) \cdot  (\nabla_\Gamma \cdot \nabla^2 \psi)
				+ (\nabla (\nabla \psi \cdot \bn) \cdot \bn) ((\nabla_\Gamma \cdot \nabla^2 \psi) \cdot \bn)
				- (\nabla \bn^T \nabla \psi) \cdot (\nabla_\Gamma \cdot \nabla^2 \psi)
			\dS\\
		={}&
			\int_\Gamma
				- \nabla^2_\Gamma (\xi - \tau \psi) : \nabla^2 \psi
				+ \nabla_\Gamma (\xi - \tau \psi) \cdot (\nabla^2 \psi \bn) \, \nabla_\Gamma \cdot \bn
				+ (\nabla (\nabla \psi \cdot \bn) \cdot \bn) ((\nabla_\Gamma \cdot (\nabla^2 \psi \bn))
			\dS\\
			&+ \int_\Gamma
				\nabla_\Gamma (\nabla \bn^T \nabla \psi) : \nabla^2 \psi
                - (\nabla (\nabla \psi \cdot \bn) \cdot \bn) (\nabla^2 \psi : \nabla_\Gamma \bn)
            \dS\\
            &- \int_\Gamma
				(\nabla \bn^T \nabla \psi) \cdot (\nabla^2 \psi \bn) \, \nabla_\Gamma \cdot \bn
			\dS,
    \end{split}
	\end{align}
	where we used the integration by parts rule \eqref{eq:bd_ibp_matrix}.
	All terms except the third on the right-hand side are already of lower order and for this term we note
	\begin{align}\label{eq:full_third_bd_exp_3}
    \begin{split}
		\int_\Gamma
			&(\nabla (\nabla \psi \cdot \bn) \cdot \bn) ((\nabla_\Gamma \cdot (\nabla^2 \psi \bn))
		\dS\\
		&=
			- \int_\Gamma
				\nabla_\Gamma (\nabla (\nabla \psi \cdot \bn) \cdot \bn) \cdot (\nabla^2 \psi \bn)
				-  (\nabla (\nabla \psi \cdot \bn) \cdot \bn) ((\nabla^2 \psi \bn) \cdot \bn) \nabla_\Gamma \cdot \bn
			\dS\\
		&=
			- \int_\Gamma
				\nabla_\Gamma (\xi - \tau \psi) \cdot \nabla_\Gamma (\nabla (\nabla \psi \cdot \bn) \cdot \bn) 
				+ (\nabla (\nabla \psi \cdot \bn) \cdot \bn) (\nabla_\Gamma (\nabla (\nabla \psi \cdot \bn) \cdot \bn) \cdot \bn)
			\dS\\
			&\hspace{1em}+ \int_\Gamma
				(\nabla \bn^T \nabla \psi) \cdot \nabla_\Gamma (\nabla (\nabla \psi \cdot \bn) \cdot \bn)
				+ (\nabla (\nabla \psi \cdot \bn) \cdot \bn) ((\nabla^2 \psi \bn) \cdot \bn) \nabla_\Gamma \cdot \bn
			\dS\\
		&=
			\int_\Gamma
				\Delta_\Gamma (\xi - \tau \psi) (\nabla (\nabla \psi \cdot \bn) \cdot \bn) 
				- \nabla_\Gamma \cdot (\nabla \bn^T \nabla \psi) (\nabla (\nabla \psi \cdot \bn) \cdot \bn)
				\dS\\
				&\hspace{1em}+ \int_\Gamma
					 (\nabla (\nabla \psi \cdot \bn) \cdot \bn) 
					 (\nabla \bn^T \nabla \psi) \cdot \bn \, \nabla_\Gamma \cdot \bn
				+ (\nabla (\nabla \psi \cdot \bn) \cdot \bn) ((\nabla^2 \psi \bn) \cdot \bn) \nabla_\Gamma \cdot \bn
			\dS,
    \end{split}
	\end{align}
    where the second term after the second equality sign vanishes, since the surface gradient is tangential and thus $\nabla_\Gamma (\nabla (\nabla \psi \cdot \bn) \cdot \bn) \cdot \bn = 0$ on $\Gamma$.
	Now we turn to the second boundary integral in \eqref{eq:full_third_exp},
	\begin{align}\label{eq:full_third_bd_exp_4}
    \begin{split}
		\int_\Gamma (\nabla^3 \psi \cdot \bn) : \nabla^2 \psi \dS
		&=
			\int_\Gamma
				\nabla (\nabla^2 \psi \bn) : \nabla^2 \psi - (\nabla^2 \psi \nabla \bn) : \nabla^2 \psi
			\dS\\
		&=
			\int_\Gamma
				\nabla_\Gamma (\nabla^2 \psi \bn) : \nabla^2 \psi 
				+ (\nabla (\nabla^2 \psi \bn) \bn \otimes \bn : \nabla^2 \psi)
				- (\nabla^2 \psi \nabla \bn) : \nabla^2 \psi
			\dS\\
		&=
			- \int_\Gamma
				(\nabla^2 \psi \bn) \cdot \nabla_\Gamma \cdot (\nabla^2 \psi)
				- (\nabla^2 \psi \bn) \cdot (\nabla^2 \psi \bn) \nabla_\Gamma \cdot \bn
			\dS\\
			&\hspace{1em}+ \int_\Gamma
				(\nabla (\nabla^2 \psi \bn) \bn) \cdot (\nabla^2 \psi \bn)
				- (\nabla^2 \psi \nabla \bn) : \nabla^2 \psi
			\dS.
   \end{split}
	\end{align}
	The first term on the last line indeed cancels with the second term in \eqref{eq:full_third_bd_exp_1} and 
    the first term on the first line is identical to the first term on the right-hand side \eqref{eq:full_third_bd_exp_1} and thus can be handled identical.
	Putting \eqref{eq:full_third_bd_exp_2}, \eqref{eq:full_third_bd_exp_3} and \eqref{eq:full_third_bd_exp_4} back into \eqref{eq:full_third_bd_exp_1}, 
    where the terms from \eqref{eq:full_third_bd_exp_2} and \eqref{eq:full_third_bd_exp_3} now appear twice due to the term from \eqref{eq:full_third_bd_exp_4}, we can rewrite \eqref{eq:full_third_exp} as 
	\begin{align*}
		&\norm{\nabla^3 \psi}^2_{L^2(\Omega)}
		=
			\norm{\nabla (\Delta \psi)}^2_{L^2(\Omega)}
			- 2 \int_\Gamma
				(\nabla^2 \psi \bn) \cdot (\nabla_\Gamma \cdot \nabla^2 \psi)
			\dS\\
			&\hspace{1em}+ \int_\Gamma
				|\nabla^2 \psi \bn|^2 \nabla_\Gamma \cdot \bn
				- (\nabla^2 \psi \nabla \bn) : \nabla^2 \psi
				+ (\nabla^2 \psi \bn) \cdot (\nabla^2 \psi \nabla \bn \bn)
			\dS\\
		&=
			\norm{\nabla (\Delta \psi)}^2_{L^2(\Omega)}
			- 2 \int_\Gamma
				\nabla^2_\Gamma (\xi - \tau \psi) : \nabla^2 \psi
				- \nabla_\Gamma (\xi - \tau \psi) \cdot (\nabla^2 \psi \bn) \nabla_\Gamma \cdot \bn
			\dS\\
			&\hspace{1em}- 2 \int_\Gamma
				(\nabla (\nabla \psi \cdot \bn) \cdot \bn) (\nabla^2 \psi : \nabla_\Gamma \bn)
				- \nabla_\Gamma (\nabla \bn^T \nabla \psi) : \nabla^2 \psi
				+ (\nabla \bn^T \nabla \psi) \cdot (\nabla^2 \psi \bn) \nabla_\Gamma \bn
			\dS\\
			&\hspace{1em}+ 2 \int_\Gamma
				\Delta_\Gamma (\xi - \tau \psi) (\nabla (\nabla \psi \cdot \bn) \cdot \bn) 
				- \nabla_\Gamma \cdot (\nabla \bn^T \nabla \psi) (\nabla (\nabla \psi \cdot \bn) \cdot \bn)
				\dS\\
			&\hspace{1em}+ \int_\Gamma
				(\nabla (\nabla \psi \cdot \bn) \cdot \bn) (\nabla \bn^T \nabla \psi) \cdot \bn \nabla_\Gamma \cdot \bn
				+ (\nabla (\nabla \psi \cdot \bn) \cdot \bn) ((\nabla^2 \psi \bn) \cdot \bn) \nabla_\Gamma \cdot \bn
			\dS\\
			&\hspace{1em}+ \int_\Gamma
				|\nabla^2 \psi \bn|^2 \nabla_\Gamma \cdot \bn
				- (\nabla^2 \psi \nabla \bn) : \nabla^2 \psi
				+ (\nabla^2 \psi \bn) \cdot (\nabla^2 \psi \nabla \bn \bn)
			\diff \dS\\
		&\leq
			\norm{\nabla (\Delta \psi)}^2_{L^2(\Omega)}
			+ C \norm{\nabla^2 \psi}^2_{L^2(\Gamma)}
			+ C \norm{\nabla \psi}^2_{L^2(\Gamma)}
			+ C\\
		&\leq
			\norm{\nabla (\Delta \psi)}^2_{L^2(\Omega)}
			+ \frac{1}{2} \norm{\nabla^3 \psi}^2_{L^2(\Omega)}
			+ C \norm{\nabla^2 \psi}^2_{L^2(\Omega)}
			+ C,
	\end{align*}
	where we used the estimates from Lemma~\ref{lem:est_surf_grad}
	and the trace embedding \cite[Prop.~8.2]{diBenedetto}.
	The term with the third order derivative can be absorbed into the left-hand side of \eqref{eq:full_third_exp} and thus our proof is complete.
\end{proof}

\begin{lem}\label{lem:est_surf_grad}
	For $f \in W^{2,2}(\Omega)$, $\bv \in W^{2,2}(\Omega)^3$ and $g \in W^{3,2}(\Omega)$ it holds
	\begin{align*}
		&\norm{\nabla_\Gamma S(f)}_{L^2(\Gamma)} \leq \norm{S(\nabla f)}_{L^2(\Gamma)}, \quad
		\norm{\nabla_\Gamma S(\bv)}_{L^2(\Gamma)} \leq \norm{S(\nabla \bv)}_{L^2(\Gamma)}, \\[1ex]
        &\norm{\nabla_\Gamma \cdot S(\bv)}_{L^2(\Gamma)} \leq \norm{S(\nabla \bv)}_{L^2(\Gamma)}, \\[1ex]
		&\norm{\nabla^2_\Gamma S(g)}_{L^2(\Gamma)} \leq \norm{S(\nabla^2 g)}_{L^2(\Gamma)} + C \norm{S(\nabla g)}_{L^2(\Gamma)} \text{ and }\\[1ex]
		&\norm{\Delta_\Gamma S(g)}_{L^2(\Gamma)} \leq \norm{S(\nabla^2 g)}_{L^2(\Gamma)} + C \norm{S(\nabla g)}_{L^2(\Gamma)}
	\end{align*}
	where the constant $C>0$ depends on $\norm{\nabla \bn}_{L^\infty(\Gamma)}$.
\end{lem}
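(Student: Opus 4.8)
The plan is to prove each inequality by expressing the surface differential operator through the bulk trace via the characterisations in Theorem~\ref{thm:surf_grad_vs_tan_pro_scalar}, Theorem~\ref{thm:surf_grad_vs_tan_pro_vec}, and Theorem~\ref{thm:surf_div_form}, and then estimating pointwise on $\Gamma$. For the first three bounds the key observation is that the surface gradient (resp. divergence) is simply a projection of the bulk one: from $\nabla_\Gamma S(f) = S(\nabla f) - (S(\nabla f) \cdot \bn) \bn$ we have, pointwise a.e.\ on $\Gamma$, the orthogonal decomposition $|S(\nabla f)|^2 = |\nabla_\Gamma S(f)|^2 + |S(\nabla f) \cdot \bn|^2 \geq |\nabla_\Gamma S(f)|^2$, and integrating over $\Gamma$ gives $\norm{\nabla_\Gamma S(f)}_{L^2(\Gamma)} \leq \norm{S(\nabla f)}_{L^2(\Gamma)}$. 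The analogous matrix decomposition $S(\nabla \bv) = \nabla_\Gamma S(\bv) + S(\nabla \bv) \bn \otimes \bn$ is orthogonal in the Frobenius inner product, so the same argument yields the second inequality, and for the surface divergence one simply uses $|\nabla_\Gamma \cdot S(\bv)| = |S(\nabla \bv) : (\bs I - \bn \otimes \bn)| \leq |S(\nabla \bv)|$ by Cauchy--Schwarz (since $|\bs I - \bn \otimes \bn| = \sqrt{d-1} \leq \sqrt{2}$ gives a harmless constant; or, more carefully, $\nabla_\Gamma \cdot S(\bv) = S(\nabla \cdot \bv) - S(\nabla \bv)\bn \cdot \bn = \tr(\nabla_\Gamma S(\bv))$ and $|\tr \bs A| \leq |\bs A|$ for the tangential part which has rank $\leq d-1$, but the cleanest route giving constant $1$ is to observe $\nabla_\Gamma \cdot S(\bv)$ is the trace of the tangential projection $\nabla_\Gamma S(\bv)$, whose Frobenius norm is already bounded by $\norm{S(\nabla\bv)}_{L^2(\Gamma)}$). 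I will write it using the trace-of-projection identity to keep the constant equal to $1$ as stated.

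For the last two inequalities, involving $\nabla_\Gamma^2 S(g)$ and $\Delta_\Gamma S(g)$, one iterates. Applying the vector-valued result to $\bv = \nabla g \in W^{2,2}(\Omega)^3$ gives $\nabla_\Gamma S(\nabla g) = S(\nabla^2 g) - S(\nabla^2 g)\bn \otimes \bn$; however $\nabla_\Gamma^2 S(g)$ is \emph{not} quite $\nabla_\Gamma(\nabla_\Gamma S(g))$ composed with a projection but rather $\nabla_\Gamma$ of the tangential field $\nabla_\Gamma S(g) = S(\nabla g) - (S(\nabla g)\cdot \bn)\bn$. Using the product rule from Lemma~\ref{lem:surf_prod_rule} (or Lemma~\ref{lem:bd_product_rule}) one expands
\[
	\nabla_\Gamma^2 S(g) = \nabla_\Gamma S(\nabla g) - \nabla_\Gamma\bigl( (S(\nabla g)\cdot \bn)\bn \bigr) = \nabla_\Gamma S(\nabla g) - \bn \otimes \nabla_\Gamma(S(\nabla g)\cdot \bn) - (S(\nabla g)\cdot \bn)\nabla_\Gamma \bn,
\]
so that $|\nabla_\Gamma^2 S(g)| \leq |S(\nabla^2 g)| + |\nabla_\Gamma(S(\nabla g)\cdot\bn)| + \norm{\nabla\bn}_{L^\infty(\Gamma)}|S(\nabla g)|$. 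The middle term is again controlled by the surface-gradient bound applied to the scalar $S(\nabla g)\cdot \bn$: $|\nabla_\Gamma(S(\nabla g)\cdot\bn)| \leq |S(\nabla(\nabla g \cdot \bn))| \leq |S(\nabla^2 g)| + \norm{\nabla\bn}_{L^\infty(\Gamma)}|S(\nabla g)|$ after a bulk product rule $\nabla(\nabla g \cdot E(\bn)) = \nabla^2 g\, E(\bn) + \nabla E(\bn)^T \nabla g$. Collecting these and integrating over $\Gamma$ gives $\norm{\nabla_\Gamma^2 S(g)}_{L^2(\Gamma)} \leq \norm{S(\nabla^2 g)}_{L^2(\Gamma)} + C\norm{S(\nabla g)}_{L^2(\Gamma)}$ with $C = C(\norm{\nabla\bn}_{L^\infty(\Gamma)})$. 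The Laplace--Beltrami bound $\norm{\Delta_\Gamma S(g)}_{L^2(\Gamma)} \leq \norm{S(\nabla^2 g)}_{L^2(\Gamma)} + C\norm{S(\nabla g)}_{L^2(\Gamma)}$ then follows immediately from $\Delta_\Gamma S(g) = \nabla_\Gamma \cdot \nabla_\Gamma S(g) = \tr(\nabla_\Gamma^2 S(g))$ and $|\tr \bs A| \leq \sqrt{d}\,|\bs A|$, absorbing the dimensional constant; alternatively and more in the spirit of the stated constant $1$ on the leading term, one applies Theorem~\ref{thm:surf_div_form} to $\bv = \nabla_\Gamma S(g)$ — but since this $\bv$ only lives on $\Gamma$, it is cleaner to route through the trace-of-Hessian bound just derived.

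I do not anticipate a genuine obstacle here; the only mild care needed is bookkeeping with the curvature terms that arise whenever one differentiates along $\Gamma$ a field with a nonzero normal component (i.e.\ whenever the $\bn$-extension $E(\bn)$ is differentiated), since $S(\nabla g)$ itself is not tangential. The point that makes the estimates clean is that these curvature contributions always carry a factor $\nabla_\Gamma \bn$ or $S(\nabla E(\bn))$, which is bounded in $L^\infty(\Gamma)$ by $\norm{\nabla \bn}_{L^\infty(\Gamma)}$ (recall $\Gamma \in C^{2,1}$ in the standing regularity, so $\bn \in W^{1,\infty}(\Gamma)$), hence they only contribute to the lower-order term $C\norm{S(\nabla g)}_{L^2(\Gamma)}$ and never touch the leading $\norm{S(\nabla^2 g)}_{L^2(\Gamma)}$ term. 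One should also note that all quantities are well-defined since $f, \bv \in W^{2,2}(\Omega)$ implies $S(\nabla f), S(\nabla \bv) \in L^2(\Gamma)$ by the trace theorem, and $g \in W^{3,2}(\Omega)$ implies $S(\nabla^2 g) \in L^2(\Gamma)$ likewise, so no density argument beyond the already-invoked smooth approximation is required.
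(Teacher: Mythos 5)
Your approach is exactly the paper's: the proof there is a one-liner appealing to Theorems~\ref{thm:surf_grad_vs_tan_pro_scalar}, \ref{thm:surf_grad_vs_tan_pro_vec} and \ref{thm:surf_div_form} and ``straight forward calculations'', and your writeup is precisely that calculation — pointwise orthogonal decomposition of the bulk trace into tangential and normal parts for the first two bounds, and iteration with curvature terms (carrying $\nabla\bn$, hence contributing only to the lower-order term) for the Hessian and Laplace--Beltrami bounds. That part is sound.

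One concrete flaw: your ``more careful'' claim that $|\tr \bs A|\le|\bs A|$ for matrices of rank $\le d-1$, which you invoke to recover the constant $1$ in the surface-divergence bound, is false — take $\bs A=\mathrm{diag}(1,1,0)$, which has rank $2$, trace $2$, but Frobenius norm $\sqrt{2}$; the correct elementary bound is $|\tr\bs A|\le\sqrt{\mathrm{rank}(\bs A)}\,|\bs A|$. So the trace-of-projection route only yields $\norm{\nabla_\Gamma\cdot S(\bv)}_{L^2(\Gamma)}\le\sqrt{2}\,\norm{S(\nabla\bv)}_{L^2(\Gamma)}$, and in fact the constant-$1$ form cannot hold pointwise (the same $\nabla\bv=\mathrm{diag}(1,1,0)$ at a flat boundary point with $\bn=\bs e_3$ gives surface divergence $2$ versus $|\nabla\bv|=\sqrt2$); likewise your iteration gives the Hessian bound with leading constant $2$ rather than $1$, since the term $\nabla_\Gamma(S(\nabla g)\cdot\bn)$ produces a second copy of $|S(\nabla^2 g)|$. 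These dimensional factors are immaterial for every use of the lemma in the paper (only generic constants enter Lemmas~\ref{lem:lot_kappa_est}--\ref{lem:full_third_vs_div}), so your proof is acceptable once you either drop the false rank argument and accept the $\sqrt{2}$ (resp.\ $2$) factors, or simply restate the lemma with a harmless dimensional constant in front of the leading terms.
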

\begin{proof}
	Using the characterization of the surface derivatives by the projection of the bulk derivatives, 
	\textit{cf.~}Theorem~\ref{thm:surf_grad_vs_tan_pro_scalar}, Theorem~\ref{thm:surf_grad_vs_tan_pro_vec}
	and Theorem~\ref{thm:surf_div_form} this follows from straight forward calculations.
\end{proof}
\end{subsection}

	\begin{subsection}{Some additional proofs for the interested reader}\label{sec:app_add_proofs}
	
		\begin{proof}[Proof (of Lemma~\ref{lem:prop_reg_op})]
			By the Hille--Yosida generation theorem we know that $(0,\infty)$ is in the Resolvent set $\rho(A)$ of $A$, 
			see for example \cite[Thm.~3.5]{engel_nagel}, and the Resolvent 
			$R(\lambda, A):= (\lambda - A)^{-1}: X \to D(A) \subseteq X$ is well-defined for all $\lambda \in \rho(A)$ and in $\mathcal{L}(X)$.
			Thus the well-definedness of $R_\kappa$ follows from
			\[
				R_\kappa = (I - \kappa A)^{-1} = \left( \kappa \left( \frac{1}{\kappa} - A \right) \right)^{-1} 
				= \frac{1}{\kappa} R(1/\kappa, A).
			\]
			We now prove the continuity from item \ref{item:R_kappa_strong_cont}. 
			For that we use \cite[Lem.~3.2]{pazy_1983}, by which we have $\lim_{\lambda \to \infty} \lambda R(\lambda, A) x = x $ for all $x \in X$
			and the uniform bound $\norm{R(\lambda, A)}_{\mathcal{L}(X)} \leq M/\lambda$ for some $M>0$, 
			which also comes from the Hille--Yosida generation theorem.
			With these tools, we can estimate
			\begin{align*}
			\begin{split}
				\norm{R_\kappa(x_\kappa) - x}_X 
				&\leq 
					\norm{R_\kappa (x_\kappa - x)}_X + \norm{R_\kappa(x) - x}_X\\
				&\leq
					\frac{1}{\kappa} \norm{R(1/\kappa, A)}_{\mathcal{L}(X)} \norm{x_\kappa - x}_X 
					+ \norm{\frac{1}{\kappa} R(1/\kappa, A)(x) - x}_X\\
				&\leq 
					M \norm{x_\kappa - x}_X 
					+ \norm{\frac{1}{\kappa} R(1/\kappa, A)(x) - x}_X,
			\end{split}
			\end{align*}
			by the above mentioned bound on the operator norm of the Resolvent.
			The right-hand side goes to zero, by the strong convergence of $\{ x_\kappa \}$ and the continuity result from \cite[Lem.~3.2]{pazy_1983}
			mentioned above.
			We next prove item \ref{item:R_kappa_weak_cont}. For arbitrary $\varphi \in X^*$ we have
			\begin{align*}
				\dualP{\varphi}{R_\kappa(x_\kappa) - x} 
				= 
					\dualP{\varphi}{R_\kappa(x_\kappa) - x_\kappa} + \dualP{\varphi}{x_\kappa - x}.
			\end{align*}
			The second term on the right-hand side goes to zero for $\kappa \to 0$ by the definition of weak convergence
			and for the first term we observe
			\begin{align*}
				|\dualP{\varphi}{R_\kappa(x_\kappa) - x_\kappa}|
				&=
					\left| \dualP{\varphi}{\left( \frac{1}{\kappa} R(1/\kappa, A) - I \right) x_\kappa} \right|
				\leq
					\norm{\varphi}_{X^*} \norm{x_\kappa}_X \norm{\frac{1}{\kappa} R(1/\kappa, A) - I}_{\mathcal{L}(X)}\\
				&\leq
					C \norm{\varphi}_{X^*} 
					\norm{\frac{1}{\kappa} R(1/\kappa, A) - \left( \frac{1}{\kappa} - A \right)R(1/\kappa, A) }_{\mathcal{L}(X)}\\
				&\leq 
					C \norm{\varphi}_{X^*} 
					\norm{\frac{1}{\kappa} - \left( \frac{1}{\kappa} - A \right)}_{\mathcal{L}(D(A), X)} \norm{R(1/\kappa, A)}_{\mathcal{L}(X)}\\
				&\leq
					C \norm{\varphi}_{X^*} \norm{A}_{\mathcal{L}(D(A), X)} \kappa,
			\end{align*}
			where we used that weakly convergent sequences are bounded 
			and the uniform bound on the operator norm of the Resolvent given by the Hille--Yosida theorem.
			The right-hand side goes to zero for $\kappa \searrow 0$ and the weak convergence of $(R_\kappa(x_\kappa))$ to $x$ follows.
			Item \ref{item:R_kappa_uniform_bound} is again a simple consequence of this uniform bound.
			Item \ref{item:R_kappa_bound} can be deduce by a simple estimation using $AR(\lambda,A) = \lambda R(\lambda, A) - I$,
			see for example \cite[Sec.~1, Chap.~IV]{engel_nagel}.
			For all $x \in X$ we have,
			\begin{align*}
				\norm{R_\kappa(x)}_Y 
				&\leq 
					C \norm{R_\kappa(x)}_{D(A)}
				=
					C \left( \norm{R_\kappa(x)}_X + \norm{A(R_\kappa(x))}_X \right)\\
				&=
					C \left( \norm{R_\kappa(x)}_X + \frac{1}{\kappa} \norm{A(R(1/\kappa, A)(x))}_X \right)\\
				&\leq 
					C \left( \norm{x}_X + \frac{1}{\kappa} \norm{\frac{1}{\kappa}R(1/\kappa, A)(x)}_X + \frac{1}{\kappa} \norm{x}_X \right)\\
				&\leq 
					C \left( 1 + \frac{1}{\kappa} \right) \norm{x}_X + C \frac{1}{\kappa} \norm{R_\kappa(x)}_X
				\leq 
					C (1 + 1/\kappa) \norm{x}_X
			\end{align*}
			for $C>$ independent of $\kappa$,
			where we used the estimate from item \ref{item:R_kappa_uniform_bound} for the first and the last inequality.
		\end{proof}
		
		\begin{proof}[Proof (of Lemma~\ref{lem:bounds_for_c})]
			By the non-negativity of $\cpm$ and mass conservation we obtain 
			\[
				\norm{\cpm}_{L^\infty(0,T;L^1(\Omega))} = \norm{\cpm_0}_{L^1(\Omega)}
			\]
			and thus by Corollary~\ref{cor:ellip_W1q_est} and the Sobolev embedding $W^{1,6/5}(\Omega) \hookrightarrow L^2(\Omega)$ 
			we obtain 
			\begin{equation}\label{eq:L2_est_phi}
				\norm{\varphi}_{L^\infty(0,T;L^2(\Omega))} 
				\leq C \norm{\varphi}_{L^\infty(0,T;W^{1,6/5}(\Omega))} 
				\leq \frac{C}{\kappa} \sigmp{\norm{\cpm_0}_{L^1(\Omega)}}.
			\end{equation}
			Testing \eqref{eq:reg_p} with $\psi$ we obtain
			\[
				\int_\Omega |\nabla \psi|^2_{\varMa} \diff \X + \tau \int_\Gamma |\psi|^2 \dS
				= \int_\Omega \varphi \psi \diff \X + \int_\Gamma \psi \xi \dS,
			\]
			applying Young's inequality we can estimate 
			\[
				\norm{\psi}_{L^\infty(0,T;L^2(\Omega))} 
				\leq  
					C \left( \norm{\varphi}_{L^\infty(0,T;L^2(\Omega))} + \norm{\xi}_{L^\infty(0,T;L^2(\Gamma))} \right).
			\]
			Combining this estimate with Agmon--Douglis--Nirenberg elliptic estimates, see \cite[Thm.~3.1.1]{lunardi}, we obtain
			\begin{align}\label{eq:W22_est_psi}
			\begin{split}
				\norm{\psi}_{L^\infty(0,T;W^{2,2}(\Omega))} 
				&\leq 
					C \left( 
						\norm{\psi}_{L^\infty(0,T;L^2(\Omega))}
						+ \norm{\varphi}_{L^\infty(0,T;L^2(\Omega))}
						+ \norm{E(\xi)}_{L^\infty(0,T;W^{1,2}(\Omega))}
					\right)\\
				&\leq
					C \left( 
						\norm{\varphi}_{L^\infty(0,T;L^2(\Omega))}
						+ \norm{\xi}_{L^\infty(0,T;W^{1,2}(\Gamma))}
					\right)\\
				&\leq
					\frac{C}{\kappa} \sigmp{\norm{\cpm_0}_{L^1(\Omega)}} + C \norm{\xi}_{L^\infty(0,T;W^{1,2}(\Gamma))},
			\end{split}
			\end{align}
			where $E$ denotes the trace extension operator.
			Next, we test \eqref{eq:reg_np} with $\cpm$. By an integration by parts, see \cite[Cor.~8.1.10]{emmrich}, we obtain
			\begin{multline}\label{eq:reg_np_tested}
				 \frac{\diff}{\diff t} \frac{1}{2} \norm{\cpm(t)}^2_{L^2(\Omega)} + \int_\Omega |\nabla \cpm(t)|^2_{\lamMa} \diff \X
				 = 
				 	- \int_\Omega \cpm(t) \lamMa \nabla \psi(t) \cdot \nabla \cpm(t) \diff \X\\
				 \leq
				 	\norm{\cpm(t)}_{L^3(\Omega)} 
				 	\norm{\lamMa}_{L^\infty(\Omega)} 
				 	\norm{\nabla \psi(t)}_{L^6(\Omega)}
				 	\norm{\nabla \cpm(t)}_{L^2(\Omega)}
			\end{multline}
			for almost all $t \in (0,T)$, where we used that $\bv$ is divergence free and H\"{o}lder's inequality.
            Using Gagliardo--Nirenberg's inequality and a classical Gronwall argument we 
            obtain the bound of $\cpm$ in 
            \[
                L^\infty(0,T;L^2(\Omega)) \cap L^2(0,T;W^{1,2}(\Omega)) 
				\hookrightarrow L^\infty(0,T;L^2(\Omega)) \cap L^2(0,T;L^6(\Omega)) \hookrightarrow L^3(0,T;L^3(\Omega))
            \]
            and thus the boundedness of the first two terms in \eqref{eq:bounds_for_c} follows.
			For the third term in \eqref{eq:bounds_for_c} we note that 
			by elliptic regularity \cite[Thm.~3.1.1]{lunardi}, we have
			\begin{align*}
				\norm{\varphi}_{L^\infty(0,T;W^{2,2}(\Omega))} 
				\leq 
					C \left( 
						\frac{1}{\kappa} \sigmp{\norm{\cpm}_{L^\infty(0,T;L^2(\Omega))}} 
						+ \left( \frac{1}{\kappa} + 1\right) \norm{\varphi}_{L^\infty(0,T;L^2(\Omega))} 
					\right)
                \leq
                    C(\kappa)
			\end{align*}
			and thus by  higher order elliptic estimates \cite[Rem.~2.5.1.2]{grisvard}, and \eqref{eq:W22_est_psi} we obtain
			\begin{align*}
				\norm{\psi}_{L^\infty(0,T;W^{4,2}(\Omega))} 
				\leq 
					C \left( 
						\norm{\varphi }_{L^\infty(0,T;W^{2,2}(\Omega))} + \norm{\xi}_{L^\infty(0,T;W^{3,2}(\Gamma))}
					\right)
                \leq
                    C(\kappa),
			\end{align*}
			which finishes the proof of Lemma~\ref{lem:bounds_for_c}.
		\end{proof}
	
	\end{subsection}

\end{section}

\end{document}